\documentclass[12pt]{amsart}
\usepackage{a4wide}

\usepackage{amssymb,amsmath,stmaryrd}
\usepackage[cmtip,all]{xy}

\newtheorem{theorem}{Theorem}[section]
\newtheorem{lemma}[theorem]{Lemma}
\newtheorem{proposition}[theorem]{Proposition}
\newtheorem{corollary}[theorem]{Corollary}
\newtheorem*{maintheorem}{Main Theorem}

\theoremstyle{remark}
\newtheorem{remark}[theorem]{\it Remark}
\newtheorem{example}[theorem]{\it Example}

\newcommand{\C}{\ensuremath{\mathbb{C}}}
\newcommand{\R}{\ensuremath{\mathbb{R}}}
\newcommand{\tr}{\ensuremath{\mbox{tr}}}
\newcommand{\g}[1]{\ensuremath{\mathfrak{#1}}}
\newcommand{\Exp}{\ensuremath{\mathop{\rm Exp}\nolimits}}
\newcommand{\ad}{\ensuremath{\mathop{\rm ad}\nolimits}}
\newcommand{\Ad}{\ensuremath{\mathop{\rm Ad}\nolimits}}

\newcommand{\spp}{\ensuremath{\g{s}_{\g{p}}^\perp}}

\begin{document}
\title[Hyperpolar Homogeneous Foliations]
{Hyperpolar Homogeneous Foliations\\
on Symmetric Spaces of Noncompact Type}

\author[J. Berndt]{J\"{u}rgen Berndt}
\address{Department of Mathematics,
King's College London, United Kingdom.}
\email{jurgen.berndt@kcl.ac.uk}

\author[J. C. D\'\i{}az-Ramos]{Jos\'{e} Carlos D\'\i{}az-Ramos}
\address{Department of Geometry and Topology,
University of Santiago de Compostela, Spain.}
\email{josecarlos.diaz@usc.es}

\author[H. Tamaru]{Hiroshi Tamaru}
\address{Department of Mathematics, Hiroshima
University, Japan.}
\email{tamaru@math.sci.hiroshima-u.ac.jp}

\thanks{The second author has been supported by a Marie-Curie
European Reintegration Grant (PERG04-GA-2008-239162) and by projects
MTM2009-07756 and INCITE09207151PR (Spain). The third author has been
supported in part by Grant-in-Aid for Young Scientists (B) 20740040,
The Ministry of Education, Culture, Sports, Science and Technology,
Japan.}

\subjclass[2010]{Primary 53C12, 53C35; Secondary 57S20, 22E25.}


\keywords{Riemannian symmetric spaces of noncompact type,
homogeneous  foliations, hyperpolar foliations}

\begin{abstract}
A foliation ${\mathcal F}$ on a Riemannian manifold $M$ is
hyperpolar if it admits a flat section, that is, a connected closed
flat submanifold of $M$ that intersects each leaf of ${\mathcal F}$
orthogonally. In this article we classify the hyperpolar homogeneous
foliations on every Riemannian symmetric space $M$ of noncompact
type.

These foliations are constructed as follows. Let $\Phi$ be an
orthogonal subset of a set of simple roots associated with the
symmetric space $M$. Then $\Phi$ determines a horospherical
decomposition $M=F_\Phi^s\times\mathbb{E}^{\mathop{\rm
rank}M-\lvert\Phi\rvert}\times N_\Phi$, where $F_\Phi^s$ is the
Riemannian product of $\lvert\Phi\rvert$ symmetric spaces of rank
one. Every hyperpolar homogeneous foliation on $M$ is isometrically
congruent to the product of the following objects: a particular
homogeneous codimension one foliation on each symmetric space of
rank one in $F_\Phi^s$, a foliation by parallel affine subspaces on
the Euclidean space $\mathbb{E}^{\mathop{\rm
rank}M-\lvert\Phi\rvert}$, and the horocycle subgroup $N_\Phi$ of
the parabolic subgroup of the isometry group of $M$ determined by
$\Phi$.
\end{abstract}

\maketitle

\section{Introduction}

Let $M$ be a connected complete Riemannian manifold and $H$ a
connected closed subgroup of the isometry group $I(M)$ of $M$. Then
each orbit $H \cdot p = \{ h(p) : h \in H\}$, $p \in M$, is a
connected closed submanifold of $M$. A connected complete
submanifold ${\mathcal S}$ of $M$ that meets each orbit of the
$H$-action and intersects the orbit $H \cdot p$ perpendicularly at
each point $p \in {\mathcal S}$ is called a section of the action. A
section ${\mathcal S}$ is always a totally geodesic submanifold of
$M$ (see e.g.\ \cite{HLO}). In general, actions do not admit a
section. The action of $H$ on $M$ is called polar if it has a
section, and it is called hyperpolar if it has a flat section. For
motivation and classification of polar and hyperpolar actions on
Euclidean spaces and symmetric spaces of compact type we refer to
the papers by Dadok \cite{Da85}, Podest\`{a} and Thorbergsson
\cite{PT99}, and Kollross \cite{Ko02}, \cite{Ko07}. If all orbits of
$H$ are principal, then the orbits form a homogeneous foliation
${\mathcal F}$ on $M$. In general, a foliation ${\mathcal F}$ on $M$
is called homogeneous if the subgroup of $I(M)$ consisting of all
isometries preserving ${\mathcal F}$ acts transitively on each leaf
of ${\mathcal F}$. Homogeneous foliations are basic examples of
metric foliations. A homogeneous foliation is called polar resp.\
hyperpolar if its leaves coincide with the orbits of a polar resp.\
hyperpolar action.

An action of the Euclidean space ${\mathbb E}^n$ is polar if and
only if it is hyperpolar. An example of a polar homogeneous
foliation on ${\mathbb E}^n$ is the foliation given by the Euclidean
subspace ${\mathbb E}^k$, $0 < k < n$, and its parallel affine
subspaces. A corresponding section is given by the Euclidean space
${\mathbb E}^{n-k}$ which is perpendicular to ${\mathbb E}^k$ at the
origin $0$. In fact, every polar homogeneous foliation on ${\mathbb
E}^n$ is isometrically congruent to one of these foliations. The
main result of this paper is the classification of all hyperpolar
homogeneous foliations on Riemannian symmetric spaces of noncompact
type. For codimension one foliations this was already achieved by
the first and third author in \cite{BT03}. We mention that on
symmetric spaces of compact type every hyperpolar action has a
singular orbit, and there is no relation between such actions using
duality between symmetric spaces of compact and noncompact type. The
methodology for the classification presented in this paper is
significantly different from the known methodologies in the compact
case. Our methodology is conceptual and based on structure theory of
parabolic subalgebras of real semisimple Lie algebras which is
irrelevant in the compact case.

We will see that these foliations can be constructed from rather
elementary foliations on Euclidean spaces and the hyperbolic spaces
over normed real division algebras. We first describe these
elementary foliations. Each Riemannian symmetric spaces of rank one
is a hyperbolic space ${\mathbb F}H^n$ over a normed real division
algebra ${\mathbb F} \in \{{\mathbb R} , {\mathbb C} , {\mathbb H} ,
{\mathbb O} \}$, where $n \geq 2$, and $n = 2$ if ${\mathbb F} =
{\mathbb O}$. It was proved in \cite{BT04} that on each hyperbolic
space ${\mathbb F}H^n$ there exist exactly two isometric congruency
classes of homogeneous codimension one foliations. One of these two
classes is determined by the horosphere foliation on ${\mathbb
F}H^n$. We denote by ${\mathcal F}_{\mathbb F}^n$ a representative
of the other congruency class, and refer to Section \ref{polhypfol}
for an explicit description.  If $M = {\mathbb F}_1H^{n_1} \times
\ldots \times {\mathbb F}_kH^{n_k}$ is the Riemannian product of $k$
Riemannian symmetric spaces of rank one, then ${\mathcal
F}_{{\mathbb F}_1}^{n_1} \times \ldots \times {\mathcal F}_{{\mathbb
F}_k}^{n_k}$ is a hyperpolar homogeneous foliation on $M$. If $V$ is
a linear subspace of ${\mathbb E}^m$, we denote by ${\mathcal
F}_V^m$ the homogeneous foliation on ${\mathbb E}^m$ whose leaves
are the affine subspaces of ${\mathbb E}^m$ which are parallel to
$V$. We will now explain how these particular foliations lead to the
classification of hyperpolar homogeneous foliations on Riemannian
symmetric spaces of noncompact type.

Let $M = G/K$ be a Riemannian symmetric space of noncompact type,
where $G$ is the connected component of the isometry group of $M$
containing the identity transformation. We denote by $r$ the rank of
$M$. The Lie algebra ${\mathfrak g}$ of $G$ is a semisimple real Lie
algebra. Let ${\mathfrak k}$ be the Lie algebra of $K$, ${\mathfrak
g} = {\mathfrak k} \oplus {\mathfrak p}$ be a Cartan decomposition
of ${\mathfrak g}$, ${\mathfrak a}$ be a maximal abelian subspace of
${\mathfrak p}$, and ${\mathfrak g}_0 \oplus \left(
\bigoplus_{\lambda \in \Sigma} {\mathfrak g}_{\lambda} \right)$ be
the corresponding restricted root space decomposition of ${\mathfrak
g}$. The set $\Sigma$ denotes the corresponding set of restricted
roots. We choose a subset $\Lambda \subset \Sigma$ of simple roots
and denote by $\Sigma^+$ the resulting set of positive restricted
roots in $\Sigma$. It is well known that there is a one-to-one
correspondence between the subsets $\Phi$ of $\Lambda$ and the
conjugacy classes of parabolic subalgebras ${\mathfrak q}_\Phi$ of
${\mathfrak g}$. Let $\Phi$ be a subset of $\Lambda$ and consider
the Langlands decomposition ${\mathfrak q}_\Phi = {\mathfrak m}_\Phi
\oplus {\mathfrak a}_\Phi \oplus {\mathfrak n}_\Phi$ of the
corresponding parabolic subalgebra ${\mathfrak q}_\Phi$ of
${\mathfrak g}$. This determines a corresponding Langlands
decomposition $Q_\Phi = M_\Phi A_\Phi N_\Phi$ of the parabolic
subgroup $Q_\Phi$ of $G$ with Lie algebra ${\mathfrak q}_\Phi$ and a
horospherical decomposition $M = F_\Phi^s \times {\mathbb
E}^{r-r_\Phi} \times N_\Phi$ of the symmetric space $M$. Here,
$r_\Phi$ is equal to the cardinality $|\Phi|$ of the set $\Phi$,
$F_\Phi^s = M_\Phi \cdot o$ is a semisimple Riemannian symmetric
space of noncompact type with rank equal to $r_\Phi$ embedded as a
totally geodesic submanifold in $M$, and ${\mathbb E}^{r-r_\Phi} =
A_\Phi \cdot o$ is an $(r-r_\Phi)$-dimensional Euclidean space
embedded as a totally geodesic submanifold in $M$. Now assume that
$\Phi$ is a subset of $\Lambda$ with the property that any two roots
in $\Phi$ are not connected in the Dynkin diagram of the restricted
root system associated with $\Lambda$. We call such a subset $\Phi$
an orthogonal subset of $\Lambda$. Each simple root $\alpha \in
\Phi$ determines a hyperbolic space ${\mathbb F}_\alpha
H^{n_\alpha}$ embedded in $M$ as a totally geodesic submanifold, and
$F_\Phi^s$ is isometric to the Riemannian product of $r_\Phi$
Riemannian symmetric spaces of rank one,
\begin{equation*}
F_\Phi^s \cong \prod_{\alpha \in \Phi} {\mathbb F}_\alpha
H^{n_\alpha}.
\end{equation*}
We denote by ${\mathcal F}_\Phi$ the hyperpolar homogeneous
foliation on this product of hyperbolic spaces as described above,
that is,
\begin{equation*} {\mathcal F}_\Phi =
\prod_{\alpha \in \Phi} {\mathcal F}_{{\mathbb
F}_\alpha}^{n_\alpha}.
\end{equation*}
We are now in a position to state the main
result of this paper.

\begin{maintheorem}
Let $M$ be a connected Riemannian symmetric space of noncompact
type.
\begin{itemize}
\item[(i)] Let $\Phi$ be an orthogonal subset of $\Lambda$ and $V$ be
a linear subspace of ${\mathbb E}^{r-r_\Phi}$. Then
\begin{equation*}
{\mathcal F}_{\Phi,V} = {\mathcal F}_\Phi \times {\mathcal
F}_V^{r-r_\Phi} \times N_\Phi \subset F_\Phi^s \times {\mathbb
E}^{r-r_\Phi} \times N_\Phi = M
\end{equation*}
is a hyperpolar homogeneous foliation on $M$.

\item[(ii)] Every
hyperpolar homogeneous foliation on $M$ is isometrically congruent
to ${\mathcal F}_{\Phi,V}$ for some orthogonal subset $\Phi$ of
$\Lambda$ and some linear subspace $V$ of ${\mathbb E}^{r-r_\Phi}$.
\end{itemize}
\end{maintheorem}

For $\Phi = \emptyset$ the symmetric space $F_\Phi^s$ consists of a
single point and we need to assume that $\dim V < r$ in this case to
get a proper foliation. The foliation ${\mathcal
F}_{\emptyset,\{0\}}$ is the horocycle foliation on $M$.

We briefly describe how to construct a subgroup of $G$ whose orbits
form the foliation ${\mathcal F}_{\Phi,V}$. Since $A_\Phi$ acts
freely on $M$ and ${\mathbb E}^{r-r_\Phi} = A_\Phi \cdot o$, there
is a canonical identification of ${\mathbb E}^{r-r_\Phi}$ with the
Lie algebra ${\mathfrak a}_\Phi \subset {\mathfrak a}$. We define a
nilpotent subalgebra ${\mathfrak n}$ of ${\mathfrak g}$ by
${\mathfrak n} = {\mathfrak n}_\emptyset$ and put
$\g{a}=\g{a}_\emptyset$. Then the closed subgroup $AN$ of $G$ with
Lie algebra ${\mathfrak a} \oplus {\mathfrak n}$ acts simply
transitively on $M$, and $M$ is isometric to the solvable Lie group
$AN$ equipped with a suitable left-invariant Riemannian metric. Let
$\ell_\Phi$ be an $r_\Phi$-dimensional linear subspace of
${\mathfrak n}$ such that $\dim (\ell_\Phi \cap {\mathfrak
g}_\alpha) = 1$ for all $\alpha \in \Phi$. We denote by ${\mathfrak
a}^\Phi$ the orthogonal complement of ${\mathfrak a}_\Phi$ in
${\mathfrak a}$ and by ${\mathfrak n} \ominus \ell_\Phi$ the
orthogonal complement of $\ell_\Phi$ in ${\mathfrak n}$. Here, the
orthogonal complement is taken with respect to the standard positive
definite inner product on ${\mathfrak g}$ given by the Killing form
on ${\mathfrak g}$ and the Cartan involution on ${\mathfrak g}$
determined by ${\mathfrak k}$. Then
\begin{equation*}
{\mathfrak s}_{\Phi,V} = ({\mathfrak a}^\Phi \oplus V) \oplus
({\mathfrak n} \ominus \ell_\Phi) \subset {\mathfrak a} \oplus
{\mathfrak n}
\end{equation*}
is a subalgebra of ${\mathfrak a} \oplus {\mathfrak n}$. Denote by
$S_{\Phi,V}$ the connected closed subgroup of $AN$ with Lie algebra
${\mathfrak s}_{\Phi,V}$. Then the action of $S_{\Phi,V}$ on $M$ is
hyperpolar and the orbits of this action form the hyperpolar
homogeneous foliation ${\mathcal F}_{\Phi,V}$ on $M$. We will see
later in this paper that for a given set $\Phi$ different choices of
$\ell_\Phi$ lead to isometrically congruent foliations on $M$.

We now describe the contents of this paper in more detail. In
Section \ref{secHomgoeneousFoliations} we show that all homogeneous
foliations on Hadamard manifolds can be produced by isometric
actions of solvable Lie groups all of whose orbits are principal. In
Section \ref{secPreliminaries} we present the aspects of the general
theory of symmetric spaces of noncompact type and of parabolic
subalgebras of real semisimple Lie algebras which are relevant for
our paper. In Section \ref{polhypfol} we prove a necessary and
sufficient Lie algebraic criterion for an isometric Lie group action
inducing a foliation on a symmetric space of noncompact type to be
polar or hyperpolar. Using this criterion we present examples of
polar and of hyperpolar actions on symmetric spaces of noncompact
type. In this section we also prove part (i) of the main theorem,
which is the easiest part of the proof. Section
\ref{secClassification} constitutes the main part of this paper and
contains the proof of part (ii) of the main theorem. Finally, in
Section \ref{secGeometry} we discuss aspects of the geometry of the
leaves of the hyperpolar homogeneous foliations on symmetric spaces
of noncompact type.

\section{Homogeneous foliations on Hadamard manifolds}
\label{secHomgoeneousFoliations}

A simply connected complete Riemannian manifold with nonpositive
sectional curvature is called a {\it Hadamard manifold}.

\begin{proposition}\label{thAllOrbitsPrincipal}
Let $M$ be a Hadamard manifold and $H$ be a connected closed
subgroup of $I(M)$ whose orbits form a homogeneous foliation on $M$.
Then each orbit of $H$ is a principal orbit.
\end{proposition}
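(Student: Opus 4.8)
The plan is to exploit that $H$, being closed in $I(M)$, acts properly on $M$: the orbits are closed embedded submanifolds, every isotropy group $H_p$ is compact, and $M$ splits into orbit types with a unique principal type whose union is open, dense and connected in $M$. Since the orbits of $H$ are the leaves of a foliation they all have the same dimension $k$, and as the principal orbits are dense and have maximal dimension, $k$ is the maximal orbit dimension. Consequently every orbit of $H$ is either principal or exceptional, and it suffices to exclude exceptional orbits; equivalently, one has to show that the slice representation of $H_p$ on the normal space $\nu_p(H\cdot p)$ is trivial for every $p\in M$.

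So assume, for contradiction, that $L=H\cdot p$ is exceptional, and let $c$ be the codimension of the foliation. The slice representation $H_p\to\mathrm{O}(\nu_pL)\cong\mathrm{O}(c)$ has finite image $\Gamma\neq 1$: it is finite because $L$ already has maximal dimension (a positive-dimensional image would give an orbit of larger dimension in a suitable normal direction), and it is nontrivial because $L$ is exceptional. Since $H_p$ is compact and surjects onto the finite group $\Gamma$, it contains an element $g$ of finite order whose image $\sigma\in\Gamma$ in $\mathrm{O}(\nu_pL)$ is nontrivial (extract a torsion element from the closure of a suitable cyclic subgroup of $H_p$). For a generic small $v\in\nu_pL$ the orbit through $\exp_p(v)$ is principal and the natural map onto $L$ is a nontrivial $|\Gamma|$-fold covering.

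The main obstacle is to turn the assumption ``$M$ Hadamard'' into a contradiction at this point; I would use the convexity of fixed point sets of isometries of nonpositively curved spaces. Since the geodesic between two points of a Hadamard manifold is unique, the fixed point set $M^g=\{x\in M:g(x)=x\}$ is convex, hence a connected, closed, totally geodesic Hadamard submanifold of $M$; it contains $p$, and it is a proper submanifold because $T_pM^g=\mathrm{Fix}\!\left(g_*|_{T_pL}\right)\oplus\mathrm{Fix}(\sigma)$ with $\mathrm{Fix}(\sigma)\subsetneq\nu_pL$. The goal is to show that the orbits of $H$ meeting $M^g$ near $p$ are all non-principal — a point $\exp_p(w)$ with $w\in\mathrm{Fix}(\sigma)$ keeps $g$ in its isotropy group, and $g$ still acts nontrivially on the normal space of the orbit through it — and then to obtain a contradiction by induction on the dimension of $M^g$, since at the bottom of the resulting chain of totally geodesic subspaces the corresponding isometry would have to act trivially on a normal space where it does not. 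An alternative, purely metric, way to reach the contradiction is to note that $\exp_p(v)$ and $g\cdot\exp_p(v)$ are distinct points of the principal leaf through $\exp_p(v)$, both at distance $|v|$ from $p$, so that their $\mathrm{CAT}(0)$ midpoint lies strictly closer to $p$ while remaining fixed by $g$; iterating this, together with properness and closedness of the leaves, should again force a contradiction. Making either of these arguments fully rigorous — in particular handling the case where $\sigma$ has no nonzero fixed vector, and keeping the genericity of $v$ under iteration — is where the real work of the proof lies; the remaining ingredients are standard facts about proper isometric actions and Hadamard manifolds.
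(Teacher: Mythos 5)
Your setup is fine up to the point where you identify the slice image $\Gamma$ as a nontrivial finite group and a torsion element $g\in H_p$ acting nontrivially on $\nu_pL$; that much agrees with the paper. But after that you do not actually reach a contradiction, and you say so yourself: both of the Hadamard-specific arguments you sketch are left with acknowledged holes. In the fixed-point-set route, if $\sigma$ has no nonzero fixed vector then $M^g$ may lie inside the orbit $L$ itself, and the claim that nearby orbits meeting $M^g$ stay non-principal breaks down; the proposed induction on $\dim M^g$ has no clear base or descent once $\mathrm{Fix}(\sigma)=0$. In the midpoint route, $g$ sends the segment $[q,g(q)]$ to $[g(q),g^2(q)]$, so the CAT(0) midpoint is only visibly $g$-fixed when $g$ has order two; for higher-order torsion you would need to pass to the circumcenter of the whole $\langle g\rangle$-orbit, and you would still have to show that this center can be chosen to lie on a single leaf and that iterating preserves the genericity of $v$. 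These are exactly the issues you flag as "where the real work lies", so as written the proof has a genuine gap.

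The paper avoids all of this by choosing the exceptional orbit more cleverly rather than arguing locally at an arbitrary one. It takes a maximal compact subgroup $K\subset H$ and applies Cartan's Fixed Point Theorem (this is the only place the Hadamard hypothesis is used) to get a point $o$ with $K\subset H_o$; properness forces $H_o=K$, and maximality of $K$ forces $H\cdot o$ to be exceptional once any exceptional orbit exists. The decisive step is then purely topological: $H\cdot o\cong H/K$ is diffeomorphic to $\mathbb{R}^k$, hence simply connected, and the homotopy exact sequence $\pi_1(H/K)\to\pi_0(K)\to\pi_0(H)=1$ forces $K$ to be connected. A connected compact group with zero-dimensional slice orbits (which you also established) must act trivially, giving the contradiction immediately. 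So the ingredient your proposal is missing is precisely this replacement of a local convexity or midpoint argument by the global statement that a fixed point of a maximal compact subgroup has a simply connected orbit, which makes the isotropy group connected and renders the slice representation trivial without any case analysis on $\sigma$.
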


\begin{proof}
Assume that there exists an exceptional orbit, that is, a
non-principal orbit whose dimension coincides with the dimension of
the principal orbits. Let $K$ be a maximal compact subgroup of $H$.
By Cartan's Fixed Point Theorem (see e.g.~\cite{E96}, p.~21), $K$
has a fixed point $o\in M$. Since $K$ is maximal compact, the orbit
through $o$ must be exceptional and $K=H_o$. Then $H\cdot o=H/K$ is
diffeomorphic to $\R^k$, where $k$ is the dimension of the foliation
(see for example \cite[p. 148, Theorem 3.4]{OV94}). Since the orbit
$H \cdot o$ is simply connected, the stabilizer $K$ is connected.
The cohomogeneity of the slice representation at $o$ coincides with
the cohomogeneity of  the action of $H$ on $M$, and since all the
orbits of $H$ have the same dimension it follows that the orbits of
the slice representation at $o$ are zero-dimensional. Since $K$ is
connected, it follows that the orbits of the slice representation at
$o$ are points. This means that $K$ acts trivially on the normal
space $\nu_o(H\cdot o)$ of $H \cdot o$ at $o$, which contradicts the
assumption that the orbit $H\cdot o$ is exceptional.
\end{proof}

We will now use the previous result to show that every homogeneous
foliation on a Hadamard manifold can be realized as the orbits of
the action of a closed solvable group  of isometries.

\begin{proposition} \label{ReduceToSolvable}
Let $M$ be a Hadamard manifold and let $H$ be a connected closed
subgroup of $I(M)$ whose orbits form a homogeneous foliation
${\mathcal F}$ on $M$. Then there exists a connected closed solvable
subgroup $S$ of $H$ such that the leaves of ${\mathcal F}$ coincide
with the orbits of $S$.
\end{proposition}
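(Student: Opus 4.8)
The plan is to use Proposition~\ref{thAllOrbitsPrincipal} and Cartan's Fixed Point Theorem to cut $H$ down to a solvable subgroup with the same orbits. First I would record the following. By Proposition~\ref{thAllOrbitsPrincipal} all orbits of $H$ are principal, and since $H$ is closed in $I(M)$ and $M$ is complete the $H$-action is proper, so all isotropy groups are compact. Let $\hat K$ be a maximal compact subgroup of $H$. By Cartan's Fixed Point Theorem $\hat K$ fixes some $o\in M$, so $\hat K\subseteq H_o$; as $\hat K$ is not contained in any strictly larger compact subgroup, $\hat K=H_o$. The same remark gives $H_q=\hat K$ for every $q$ in the fixed point set $\mathrm{Fix}(\hat K)\subseteq M$; and since all orbits are principal, $H_p$ is conjugate to $\hat K$ for all $p\in M$, so choosing $g$ with $gH_pg^{-1}=\hat K$ we find $H_{gp}=\hat K$, i.e.\ $gp$ lies on the leaf $H\cdot p$ and in $\mathrm{Fix}(\hat K)$. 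Hence \emph{every leaf meets $\mathrm{Fix}(\hat K)$}.

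The next step is to reduce to a purely group-theoretic statement: it is enough to find a connected solvable subgroup $S$ of $H$ with $H=S\hat K$. Granting this, $S$ acts transitively on $H/\hat K$, hence --- via the $H$-equivariant identification $H\cdot q\cong H/H_q=H/\hat K$ --- transitively on every leaf $H\cdot q$ with $q\in\mathrm{Fix}(\hat K)$; since each leaf contains such a point, every leaf is a single $S$-orbit, and conversely an arbitrary $S$-orbit $S\cdot x$ lies in a leaf $H\cdot q=S\cdot q$ and therefore equals it. So the orbits of $S$ are exactly the leaves of $\mathcal F$. Passing to the closure of $S$ in $I(M)$ --- again connected and solvable, still inside the closed subgroup $H$, and with the same orbits --- one may take $S$ closed.

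To construct such an $S$, I would work at the Lie algebra level: it suffices to produce a solvable subalgebra $\mathfrak s$ of the Lie algebra $\mathfrak h$ of $H$ with $\mathfrak h=\hat{\mathfrak k}+\mathfrak s$, where $\hat{\mathfrak k}$ is the Lie algebra of $\hat K$; the group-level statement $H=S\hat K$ then follows by a standard Iwasawa-type argument using that $\hat K$ is maximal compact. Take a Levi decomposition $\mathfrak h=\mathfrak l\oplus\mathfrak r$ with $\mathfrak r$ the solvable radical and $\mathfrak l$ semisimple, and an Iwasawa decomposition $\mathfrak l=\mathfrak k_{\mathfrak l}\oplus\mathfrak a_{\mathfrak l}\oplus\mathfrak n_{\mathfrak l}$; then $\mathfrak s:=(\mathfrak a_{\mathfrak l}\oplus\mathfrak n_{\mathfrak l})\oplus\mathfrak r$ is a solvable subalgebra (because $\mathfrak r$ is an ideal), and $\mathfrak h=\hat{\mathfrak k}+\mathfrak s$ reduces to the inclusion $\mathfrak k_{\mathfrak l}\subseteq\hat{\mathfrak k}+\mathfrak s$.

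I expect this inclusion to be the crux, and the only place where the Hadamard hypothesis is used in an essential way. In general the Lie algebra of the maximal compact \emph{subgroup} of a Lie group can be strictly smaller than its maximal compactly embedded subalgebra: for the universal cover of $\mathrm{SL}(2,\mathbb R)$ the former is trivial while $\mathfrak k_{\mathfrak l}\cong\mathfrak{so}(2)$, and no solvable subgroup can reproduce the (free, three-dimensional) orbits of such a group. The hard part will be to rule out this ``infinite-center'' behaviour for the groups $H$ arising here, using that $H$ acts properly and isometrically on a Hadamard manifold with compact isotropy --- for instance by analysing the one-parameter subgroup $\exp(tX)$ with $X\in\mathfrak k_{\mathfrak l}$ and invoking Cartan's Fixed Point Theorem for suitable compact subquotients --- so as to conclude that, after conjugation, $\mathfrak k_{\mathfrak l}\subseteq\hat{\mathfrak k}+\mathfrak r$. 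Once this is in place one gets $\mathfrak h=\hat{\mathfrak k}+\mathfrak s$, and the reduction above completes the proof.
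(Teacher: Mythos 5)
Your argument breaks off precisely at the step you yourself call ``the crux'': you reduce everything to the inclusion $\mathfrak{k}_{\mathfrak{l}}\subseteq\hat{\mathfrak{k}}+\mathfrak{s}$ (equivalently, that after conjugation $\mathfrak{k}_{\mathfrak{l}}\subseteq\hat{\mathfrak{k}}+\mathfrak{r}$), you correctly observe via $\widetilde{\mathrm{SL}}(2,\mathbb{R})$ that this is not automatic, and then you only \emph{describe} what a proof would have to do (``the hard part will be to rule out this infinite-center behaviour\dots'') without actually doing it. The surrounding scaffolding is fine --- all orbits principal by Proposition~\ref{thAllOrbitsPrincipal}, $\hat K=H_o$ for a $\hat K$-fixed point $o$, every leaf meeting $\mathrm{Fix}(\hat K)$ because isotropy groups along a foliation by principal orbits are all conjugate, and the passage to the closure of $S$ --- but the central claim needed to produce $S$ with $H=S\hat K$ is not established.

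It is also worth noting that the paper, after the same Levi--Malcev plus Iwasawa decomposition $\mathfrak{h}=\mathfrak{k}_{\mathfrak{l}}\oplus\mathfrak{s}$, does \emph{not} try to prove the global group-theoretic identity $H=S\hat K$ against a maximal compact $\hat K$. Instead it works with the connected subgroup $K\subset H$ whose Lie algebra is $\mathfrak{k}_{\mathfrak{l}}$ itself, uses the Cartan Fixed Point Theorem for $K$ to find a common fixed point $o$, deduces $H\cdot o=S\cdot o$ from $H=SK$, and then passes from the single orbit $H\cdot o$ to \emph{all} orbits by a geometric argument: every orbit contains a point $\exp_o(\xi)$ with $\xi\in\nu_o(H\cdot o)$, the slice representation at $o$ is trivial because $H\cdot o$ is principal, and the equivariance of the normal exponential map then forces $S\cdot\exp_o(\xi)=H\cdot\exp_o(\xi)$. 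This replaces your reduction to $H=S\hat K$ and avoids having to compare $\mathfrak{k}_{\mathfrak{l}}$ with $\hat{\mathfrak{k}}$ at all. Both routes do ultimately rely on the group $K$ generated by $\mathfrak{k}_{\mathfrak{l}}$ being compact (the paper simply writes ``the compact group $K$''), so you have correctly identified the delicate point; but your version of the proof requires you to resolve it, whereas once that is granted the paper's geometric argument finishes the job without the additional Iwasawa-type product decomposition $H=S\hat K$ that your outline still leaves open.
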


\begin{proof}
Consider a Levi-Malcev decomposition ${\mathfrak h} = {\mathfrak l}
\inplus {\mathfrak r}$ (semidirect sum of Lie algebras) of the Lie
algebra ${\mathfrak h}$ of $H$ into the radical ${\mathfrak r}$ of
${\mathfrak h}$ and a Levi subalgebra ${\mathfrak l}$. The radical
${\mathfrak r}$ is the largest solvable ideal in ${\mathfrak h}$
and ${\mathfrak l}$ is a semisimple subalgebra. Let ${\mathfrak l} =
{\mathfrak k} \oplus {\mathfrak a} \oplus {\mathfrak n}$ (direct sum
of vector spaces) be an Iwasawa decomposition of ${\mathfrak l}$.
Then ${\mathfrak a}$ is an abelian subalgebra of ${\mathfrak l}$,
${\mathfrak n}$ is a nilpotent subalgebra of ${\mathfrak l}$, and
${\mathfrak d} = {\mathfrak a} \inplus {\mathfrak n}$ (semidirect
sum of Lie algebras) is a solvable subalgebra of ${\mathfrak l}$.
Since the semidirect sum of two solvable Lie algebras is again
solvable, the subalgebra ${\mathfrak s} = {\mathfrak d} \inplus
{\mathfrak r}$ (semidirect sum of Lie algebras) is a solvable
subalgebra of ${\mathfrak h}$, and we have ${\mathfrak h} =
{\mathfrak k} \oplus {\mathfrak s}$ (direct sum of vector spaces).
Let $S$ be the connected solvable subgroup of $H$ with Lie algebra
${\mathfrak s}$ and let $K$ be the connected subgroup of $H$ with
Lie algebra $\g{k}$. Since $M$ is a Hadamard manifold, Cartan's
Fixed Point Theorem implies that the
compact group $K$ has a fixed point $o \in M$. Since $H = SK$, it
follows that the orbits $H \cdot o$ and $S \cdot o$ coincide.

By Proposition \ref{thAllOrbitsPrincipal}, the orbit $H \cdot o$ is
a principal orbit of the $H$-action. Let $p$ be a point in $M$ which
does not lie on the principal orbit $H \cdot o$. Since $H \cdot o$
is a closed subset of $M$, there exists a point $q \in H \cdot o$
such that the distance $t$ between $p$ and $q$ minimizes the
distance between $p$ and $H \cdot o$. Since $M$ is complete there
exists a geodesic joining $q$ and $p$, and a standard variational
argument shows that this geodesic intersects the orbit $H \cdot o$
perpendicularly. This proves that every orbit of $H$ is of the form
$H\cdot p$ with $p=\exp_o(\xi)$ and $\xi\in\nu_o(H\cdot o)$. Since
$H \cdot o$ is a principal orbit of the $H$-action on $M$ and
$S\subset H$, the slice representation at $o$ of each of these two
actions is trivial. This fact and $H \cdot o = S \cdot o$ imply
\begin{align*}
S \cdot p & =  \{ s(\exp_o(\xi)):s \in S\} = \{ \exp_{s(o)}(s_*\xi)
:s \in S\} \\ &  = \{ \exp_{h(o)}(h_*\xi):h \in H\} = \{
h(\exp_o(\xi)):h \in H\} = H \cdot p,
\end{align*}
which shows that the actions of $S$ and $H$ are orbit equivalent.

Since $S$ is solvable, its closure $\bar{S}$ in $I(M)$ is a closed
solvable subgroup of $I(M)$ (see e.g.\ \cite{O93}, p.~54, Theorem
5.3). Since the actions of $S$ and $H$ are orbit equivalent, the
orbits of $S$ are closed, and hence by \cite{D}, the actions of $S$
and $\bar{S}$ are orbit equivalent. This finishes the proof of the
proposition.
\end{proof}

\section{Riemannian symmetric spaces of noncompact
type}\label{secPreliminaries}

In this section we present some material about Riemannian symmetric
spaces of noncompact type. We follow \cite{H78} for the theory of
symmetric spaces and \cite{K96} for the theory of semisimple Lie
algebras.

Let $M$ be a connected Riemannian symmetric space of noncompact
type. We denote by $n$ the dimension of $M$ and by $r$ the rank of
$M$. It is well known that $M$ is a Hadamard manifold and therefore
diffeomorphic to $\R^n$. Let $G$ be the connected component of the
isometry group of $M$ containing the identity transformation of $M$.
We fix a point $o\in M$ and denote by $K$ the isotropy subgroup of
$G$ at $o$. We identify $M$ with the homogeneous space $G/K$ in the
usual way and denote by $\g{g}$ and $\g{k}$ the Lie algebra of $G$
and $K$, respectively. Let $B$ be the Killing form of $\g{g}$ and
define $\g{p}$ as the orthogonal complement of $\g{k}$ with respect
to $B$. Then $\g{g}=\g{k}\oplus\g{p}$ is a Cartan decomposition of
$\g{g}$. If $\theta$ is the corresponding Cartan involution, we can
define a positive definite inner product on $\g{g}$ by $\langle
X,Y\rangle=-B(X,\theta Y)$ for all $X,Y\in\g{g}$. We identify
$\g{p}$ with $T_oM$ and we normalize the Riemannian metric on $M$ so
that its restriction to $T_o M\times T_o M=\g{p}\times\g{p}$
coincides with $\langle\,\cdot\,,\,\cdot\,\rangle$.

We now fix a maximal abelian subspace $\g{a}\subset\g{p}$ and denote
by $\g{a}^*$ the dual space of $\g{a}$. For each $\lambda\in\g{a}^*$
we define $\g{g}_\lambda=\{X\in\g{g}:\ad(H)X=\lambda(H)X \mbox{ for
all }H\in\g{a}\}$. We say that $0 \neq \lambda\in\g{a}^*$ is a
restricted root if $\g{g}_\lambda\neq \{0\}$, and we denote by
$\Sigma$ the set of all restricted roots. Since $\g{a}$ is abelian,
$\ad(\g{a})$ is a commuting family of self-adjoint linear
transformations of $\g{g}$. This implies that the subset
$\Sigma\subset\g{a}^*$ of all restricted roots is nonempty, finite
and $\g{g}=\g{g}_0\oplus(\bigoplus_{\lambda\in\Sigma}\g{g}_\lambda)$
is an orthogonal direct sum called the restricted root space
decomposition of $\g{g}$ determined by $\g{a}$. Here,
$\g{g}_0=\g{k}_0\oplus\g{a}$, where $\g{k}_0=Z_{\g{k}}(\g{a})$ is
the centralizer of $\g{a}$ in $\g{k}$. For each $\lambda\in\g{a}^*$
let $H_\lambda\in\g{a}$ denote the dual vector in $\g{a}$ with
respect to the Killing form, that is, $\lambda(H)=\langle
H_\lambda,H\rangle$ for all $H\in\g{a}$. This also defines an inner
product on $\g{a}^*$ by setting $\langle\lambda,\mu\rangle=\langle
H_\lambda,H_\mu\rangle$ for all $\lambda,\mu\in\g{a}^*$.

We now introduce an ordering in $\Sigma$ and denote by $\Sigma^+$
the resulting set of positive roots. We denote by
$\Lambda=\{\alpha_1,\dots,\alpha_r\}$ the set of simple roots of
$\Sigma^+$ in line with the notation used in \cite{K96}. By
$\{H^1,\dots,H^r\} \subset \g{a}$ we denote the dual basis of
$\{\alpha_1,\dots,\alpha_r\}$, that is, $\alpha_i(H^j)=\delta_i^j$,
where $\delta$ is the Kronecker delta. Then each root
$\lambda\in\Sigma$ can be written as $\lambda=\sum_{i=1}^r
c_i\alpha_i$ where all the $c_i$ are integers, and they are all
nonpositive or nonnegative depending on whether the root is negative
or positive. The sum $\sum_{i=1}^r c_i$ is called the level of the
root.

The subspace $\g{n}=\bigoplus_{\lambda\in\Sigma^+}\g{g}_\lambda$ of
$\g{g}$ is a nilpotent subalgebra of $\g{g}$. Moreover,
$\g{a}\oplus\g{n}$ is a solvable subalgebra of $\g{g}$ with
$[\g{a}\oplus\g{n},\g{a}\oplus\g{n}]=\g{n}$. We can write $\g{g}$ as
the direct sum of vector subspaces
$\g{g}=\g{k}\oplus\g{a}\oplus\g{n}$, the so-called Iwasawa
decomposition of $\g{g}$. Let $A$, $N$ and $AN$ be the connected
subgroups of $G$ with Lie algebra $\g{a}$, $\g{n}$ and
$\g{a}\oplus\g{n}$, respectively. All these subgroups are simply
connected and $G$ is diffeomorphic to the product $K\times A\times
N$. Moreover, the simply connected solvable Lie group
$AN$ acts simply transitively on $M$.
We can then equip  $AN$ with a
left-invariant Riemannian metric
$\langle\,\cdot\,,\,\cdot\,\rangle_{AN}$ so that $M$ and $AN$
become isometric. This metric is determined by $\langle
H_1+X_1,H_2+X_2\rangle_{AN}=\langle H_1,H_2\rangle+(1/2)\langle
X_1,X_2\rangle$ for all $H_1,H_2\in\g{a}$ and
$X_1,X_2\in\g{n}$ (see e.g.\ the proof of Proposition 4.4
in \cite{Ta07}). Consider $X,Y,Z\in\g{a}\oplus\g{n}$ as
left-invariant vector fields on $AN$. If $\nabla$ denotes the
Levi-Civita covariant derivative of $M = AN$, the equality
$\langle\ad(X)Y,Z\rangle=-\langle\ad(\theta X)Z,Y\rangle$
implies that the Koszul formula can be written as
\begin{equation*}
4\langle\nabla_XY,Z\rangle_{AN}=\langle[X,Y]+(1-\theta)[\theta
X,Y],Z\rangle.
\end{equation*}

We will now associate to each subset $\Phi$ of $\Lambda$ a parabolic
subalgebra ${\mathfrak q}_\Phi$ of ${\mathfrak g}$. Let $\Phi$ be a
subset of $\Lambda$. We denote by $\Sigma_\Phi$ the root subsystem
of $\Sigma$ generated by $\Phi$, that is, $\Sigma_\Phi$ is the
intersection of $\Sigma$ and the linear span of $\Phi$, and put
$\Sigma_\Phi^+ = \Sigma_\Phi \cap \Sigma^+$. Let
\begin{equation*}
{\mathfrak l}_\Phi = {\mathfrak g}_0 \oplus \left(\bigoplus_{\lambda
\in \Sigma_\Phi} {\mathfrak g}_{\lambda}\right) \ {\rm and}\
{\mathfrak n}_\Phi = \bigoplus_{\lambda \in \Sigma^+\setminus
\Sigma_\Phi^+} {\mathfrak g}_{\lambda}.
\end{equation*}
Then ${\mathfrak l}_\Phi$ is a reductive subalgebra of ${\mathfrak
g}$ and ${\mathfrak n}_\Phi$ is a nilpotent subalgebra of
${\mathfrak g}$. Let
\begin{equation*}
{\mathfrak a}_\Phi = \bigcap_{\alpha \in \Phi} {\rm ker}\,\alpha \ \
{\rm and}\ \ {\mathfrak a}^\Phi = {\mathfrak a} \ominus {\mathfrak
a}_\Phi.
\end{equation*}
Then ${\mathfrak a}_\Phi$ is an abelian subalgebra of ${\mathfrak
g}$ and ${\mathfrak l}_\Phi$ is the centralizer and the normalizer
of ${\mathfrak a}_\Phi$ in ${\mathfrak g}$. The abelian subalgebra
${\mathfrak a}_\Phi$ is also known as the split component of the
reductive Lie algebra ${\mathfrak l}_\Phi$. Since $[{\mathfrak
l}_\Phi,{\mathfrak n}_\Phi] \subset {\mathfrak n}_\Phi$,
\begin{equation*}
{\mathfrak q}_\Phi = {\mathfrak l}_\Phi \oplus {\mathfrak n}_\Phi
\end{equation*}
is a subalgebra of ${\mathfrak g}$, the so-called parabolic
subalgebra of ${\mathfrak g}$ associated with the subset $\Phi$ of
$\Lambda$. The subalgebra ${\mathfrak l}_\Phi = {\mathfrak q}_\Phi
\cap \theta({\mathfrak q}_\Phi)$ is a reductive Levi subalgebra of
${\mathfrak q}_\Phi$ and ${\mathfrak n}_\Phi$ is the unipotent
radical of ${\mathfrak q}_\Phi$, and therefore the decomposition
${\mathfrak q}_\Phi = {\mathfrak l}_\Phi \oplus {\mathfrak n}_\Phi$
is a semidirect sum of the Lie algebras ${\mathfrak l}_\Phi$ and
${\mathfrak n}_\Phi$. The decomposition ${\mathfrak q}_\Phi =
{\mathfrak l}_\Phi \oplus {\mathfrak n}_\Phi$ is known as the
Chevalley decomposition of the parabolic subalgebra ${\mathfrak
q}_\Phi$.

We now define a reductive subalgebra ${\mathfrak m}_\Phi$ of
${\mathfrak g}$ by
\begin{equation*}
{\mathfrak m}_\Phi = {\mathfrak l}_\Phi \ominus {\mathfrak a}_\Phi =
{\mathfrak k}_0 \oplus {\mathfrak a}^\Phi \oplus
\left(\bigoplus_{\lambda \in \Sigma_\Phi} {\mathfrak
g}_{\lambda}\right).
\end{equation*}
The subalgebra ${\mathfrak m}_\Phi$ normalizes ${\mathfrak a}_\Phi
\oplus {\mathfrak n}_\Phi$, and
\begin{equation*}
{\mathfrak g}_\Phi = [{\mathfrak m}_\Phi,{\mathfrak m}_\Phi] =
[{\mathfrak l}_\Phi,{\mathfrak l}_\Phi]
\end{equation*}
is a semisimple subalgebra of ${\mathfrak g}$. The center
${\mathfrak z}_\Phi$ of ${\mathfrak m}_\Phi$ is contained in
${\mathfrak k}_0$ and induces the direct sum decomposition
${\mathfrak m}_\Phi = {\mathfrak z}_\Phi \oplus {\mathfrak g}_\Phi$.
The decomposition
\begin{equation*}
{\mathfrak q}_\Phi = {\mathfrak m}_\Phi \oplus {\mathfrak a}_\Phi
\oplus {\mathfrak n}_\Phi
\end{equation*}
is known as the Langlands decomposition of the parabolic subalgebra
${\mathfrak q}_\Phi$.

For $\Phi = \emptyset$ we obtain ${\mathfrak l}_\emptyset =
{\mathfrak g}_0$, ${\mathfrak m}_\emptyset = {\mathfrak k}_0$,
${\mathfrak a}_\emptyset = {\mathfrak a}$ and ${\mathfrak
n}_\emptyset = {\mathfrak n}$. In this case ${\mathfrak q}_\emptyset
= {\mathfrak k}_0 \oplus {\mathfrak a} \oplus {\mathfrak n} =
{\mathfrak g}_0 \oplus {\mathfrak n}$ is a minimal parabolic
subalgebra of ${\mathfrak g}$. For $\Phi = \Lambda$ we obtain
${\mathfrak l}_\Lambda = {\mathfrak m}_\Lambda = {\mathfrak g}$ and
${\mathfrak a}_\Lambda = {\mathfrak n}_\Lambda = \{0\}$. Each
parabolic subalgebra of ${\mathfrak g}$ is conjugate in ${\mathfrak
g}$ to ${\mathfrak q}_\Phi$ for some subset $\Phi$ of $\Lambda$. The
set of conjugacy classes of parabolic subalgebras of ${\mathfrak g}$
therefore has $2^r$ elements. Two parabolic subalgebras ${\mathfrak
q}_{\Phi_1}$ and ${\mathfrak q}_{\Phi_2}$ of ${\mathfrak g}$ are
conjugate in the full automorphism group ${\rm Aut}({\mathfrak g})$
of ${\mathfrak g}$ if and only if there exists an automorphism $F$
of the Dynkin diagram associated to $\Lambda$ with $F(\Phi_1) =
\Phi_2$. Every parabolic subalgebra contains a minimal parabolic
subalgebra.

Each parabolic subalgebra ${\mathfrak q}_\Phi$ determines a
gradation of ${\mathfrak g}$. For this we define $H^\Phi =
\sum_{\alpha_i \in \Lambda \setminus \Phi} H^i$ and put ${\mathfrak
g}_\Phi ^k = \bigoplus_{\lambda \in \Sigma \cup
\{0\},\lambda(H^\Phi) = k} {\mathfrak g}_\lambda$. Then ${\mathfrak
g} = \bigoplus_{k \in {\mathbb Z}} {\mathfrak g}_\Phi^k$ is a
gradation of ${\mathfrak g}$ with ${\mathfrak g}_\Phi^0 = {\mathfrak
l}_\Phi$, $\bigoplus_{k > 0} {\mathfrak g}_\Phi^k = {\mathfrak
n}_\Phi$ and $\bigoplus_{k \geq 0} {\mathfrak g}_\Phi^k = {\mathfrak
q}_\Phi$. The vector $H^\Phi \in {\mathfrak a}$ is called the
characteristic element of the gradation. The Cartan involution
$\theta$ acts grade-reversing on the gradation, that is, we have
$\theta {\mathfrak g}_\Phi^k = {\mathfrak g}_\Phi^{-k}$ for all $k
\in {\mathbb Z}$. Moreover, this gradation is of type $\alpha_0$,
that is, ${\mathfrak g}_\Phi^{k+1} = [{\mathfrak
g}_\Phi^1,{\mathfrak g}_\Phi^k]$  and ${\mathfrak g}_\Phi^{-k-1} =
[{\mathfrak g}_\Phi^{-1},{\mathfrak g}_\Phi^{-k}]$ holds for all $k
> 0$ (see e.g.\ \cite{KA88}). If $\lambda$ is the
highest root in $\Sigma$ and $m_\Phi = \lambda(H^\Phi)$, we have
${\mathfrak g}_\Phi^{m_\Phi} \neq \{0\}$ and ${\mathfrak g}_\Phi^{k}
= \{0\}$ for all $k > m_\Phi$. For $\Phi = \emptyset$ we have
${\mathfrak n}_\emptyset = {\mathfrak n}$, and we also use the
notation ${\mathfrak n}^k = {\mathfrak g}_\emptyset^k$ for all $k >
0$. Thus we have a gradation ${\mathfrak n} = \bigoplus_{k =
1}^{m_\emptyset} {\mathfrak n}^k $ of ${\mathfrak n}$ which is
generated by ${\mathfrak n}^1$. Note that $m_\emptyset$ is the level
of the highest root in $\Sigma^+$. For each $k
> 0$ we define
\begin{equation*} {\mathfrak p}^k = {\mathfrak p} \cap \left(
{\mathfrak g}_\emptyset^k \oplus {\mathfrak g}_\emptyset^{-k}
\right), \end{equation*} which gives a direct sum decomposition
${\mathfrak p} = {\mathfrak a} \oplus \left( \bigoplus_{k =
1}^{m_\emptyset} {\mathfrak p}^k \right)$.

For each $\lambda \in \Sigma$ we define
\begin{equation*}
{\mathfrak k}_\lambda = {\mathfrak k} \cap ({\mathfrak g}_\lambda
\oplus {\mathfrak g}_{-\lambda})\ \ {\rm and}\ \ {\mathfrak
p}_\lambda = {\mathfrak p} \cap ({\mathfrak g}_\lambda \oplus
{\mathfrak g}_{-\lambda}).
\end{equation*}
Then we have ${\mathfrak p}_\lambda = {\mathfrak p}_{-\lambda}$,
${\mathfrak k}_\lambda = {\mathfrak k}_{-\lambda}$ and ${\mathfrak
p}_\lambda \oplus {\mathfrak k}_\lambda = {\mathfrak g}_\lambda
\oplus {\mathfrak g}_{-\lambda}$ for all $\lambda \in \Sigma$. It is
easy to see that the subspaces
\begin{equation*}
{\mathfrak p}_\Phi = {\mathfrak l}_\Phi \cap {\mathfrak p} =
{\mathfrak a} \oplus \left( \bigoplus_{\lambda \in \Sigma_{\Phi}}
{\mathfrak p}_\lambda \right) \ {\rm and}\ {\mathfrak p}_\Phi^s =
{\mathfrak m}_\Phi \cap {\mathfrak p} = {\mathfrak g}_\Phi \cap
{\mathfrak p} = {\mathfrak a}^\Phi \oplus \left( \bigoplus_{\lambda
\in \Sigma_{\Phi}} {\mathfrak p}_\lambda \right)
\end{equation*}
are Lie triple systems in ${\mathfrak p}$. We define a subalgebra
${\mathfrak k}_\Phi$ of ${\mathfrak k}$ by
\begin{equation*}
{\mathfrak k}_\Phi = {\mathfrak q}_\Phi \cap {\mathfrak k} =
{\mathfrak l}_\Phi \cap {\mathfrak k} = {\mathfrak m}_\Phi \cap
{\mathfrak k} = {\mathfrak k}_0 \oplus \left( \bigoplus_{\lambda \in
\Sigma_{\Phi}} {\mathfrak k}_\lambda \right).
\end{equation*}
Then ${\mathfrak g}_\Phi = ({\mathfrak g}_\Phi \cap {\mathfrak
k}_\Phi) \oplus {\mathfrak p}_\Phi^s$ is a Cartan decomposition of
the semisimple subalgebra ${\mathfrak g}_\Phi$ of ${\mathfrak g}$
and ${\mathfrak a}^\Phi$ is a maximal abelian subspace of
${\mathfrak p}_\Phi^s$. If we define $({\mathfrak g}_\Phi)_0 =
({\mathfrak g}_\Phi \cap {\mathfrak k}_0) \oplus {\mathfrak
a}^\Phi$, then ${\mathfrak g}_\Phi = ({\mathfrak g}_\Phi)_0 \oplus
\left(\bigoplus_{\lambda \in \Sigma_\Phi} {\mathfrak
g}_{\lambda}\right)$ is the restricted root space decomposition of
${\mathfrak g}_\Phi$ with respect to ${\mathfrak a}^\Phi$ and $\Phi$
is the corresponding set of simple roots. Since ${\mathfrak m}_\Phi
= {\mathfrak z}_\Phi \oplus {\mathfrak g}_\Phi$ and ${\mathfrak
z}_\Phi \subset {\mathfrak k}_0$, we see that ${\mathfrak g}_\Phi
\cap {\mathfrak k}_0 = {\mathfrak k}_0 \ominus {\mathfrak z}_\Phi$.

We now relate these algebraic constructions to the geometry of the
symmetric space $M$. Let $\Phi$ be a subset of $\Lambda$ and $r_\Phi
= |\Phi|$. We denote by $A_\Phi$ the connected abelian subgroup of
$G$ with Lie algebra ${\mathfrak a}_\Phi$ and by $N_\Phi$ the
connected nilpotent subgroup of $G$ with Lie algebra ${\mathfrak
n}_\Phi$. The centralizer $L_\Phi = Z_G({\mathfrak a}_\Phi)$ of
${\mathfrak a}_\Phi$ in $G$ is a reductive subgroup of $G$ with Lie
algebra ${\mathfrak l}_\Phi$. The subgroup $A_\Phi$ is contained in
the center of $L_\Phi$. The subgroup $L_\Phi$ normalizes $N_\Phi$
and $Q_\Phi = L_\Phi N_\Phi$ is a subgroup of $G$ with Lie algebra
${\mathfrak q}_\Phi$. The subgroup $Q_\Phi$ coincides with the
normalizer $N_G({\mathfrak l}_\Phi \oplus {\mathfrak n}_\Phi)$ of
${\mathfrak l}_\Phi \oplus {\mathfrak n}_\Phi$ in $G$, and hence
$Q_\Phi$ is a closed subgroup of $G$. The subgroup $Q_\Phi$ is the
parabolic subgroup of $G$ associated with the subsystem $\Phi$ of
$\Lambda$.

Let $G_\Phi$ be the connected subgroup of $G$ with Lie algebra
${\mathfrak g}_\Phi$. Since ${\mathfrak g}_\Phi$ is semisimple,
$G_\Phi$ is a semisimple subgroup of $G$. The intersection $K_\Phi$
of $L_\Phi$ and $K$, i.e. $K_\Phi = L_\Phi \cap K$, is a maximal
compact subgroup of $L_\Phi$ and ${\mathfrak k}_\Phi$ is the Lie
algebra of $K_\Phi$. The adjoint group ${\rm Ad}(L_\Phi)$ normalizes
${\mathfrak g}_\Phi$, and consequently $M_\Phi = K_\Phi G_\Phi$ is a
subgroup of $L_\Phi$. One can show that $M_\Phi$ is a closed
reductive subgroup of $L_\Phi$, $K_\Phi$ is a maximal compact
subgroup of $M_\Phi$, and the center $Z_\Phi$ of $M_\Phi$ is a
compact subgroup of $K_\Phi$. The Lie algebra of $M_\Phi$ is
${\mathfrak m}_\Phi$ and $L_\Phi$ is isomorphic to the Lie group
direct product $M_\Phi \times A_\Phi$, i.e. $L_\Phi = M_\Phi \times
A_\Phi$. For this reason $A_\Phi$ is called the split component of
$L_\Phi$. The parabolic subgroup $Q_\Phi$ acts transitively on $M$
and the isotropy subgroup at $o$ is $K_\Phi$, that is, $M =
Q_\Phi/K_\Phi$.

Since ${\mathfrak g}_\Phi = ({\mathfrak g}_\Phi \cap {\mathfrak
k}_\Phi) \oplus {\mathfrak p}_\Phi^s$ is a Cartan decomposition of
the semisimple subalgebra ${\mathfrak g}_\Phi$, we have $[{\mathfrak
p}_\Phi^s,{\mathfrak p}_\Phi^s] = {\mathfrak g}_\Phi \cap {\mathfrak
k}_\Phi$. Thus $G_\Phi$ is the connected closed subgroup of $G$ with
Lie algebra $[{\mathfrak p}_\Phi^s,{\mathfrak p}_\Phi^s] \oplus
{\mathfrak p}_\Phi^s$. Since ${\mathfrak p}_\Phi^s$ is a Lie triple
system in ${\mathfrak p}$, the orbit $F_\Phi^s = G_\Phi \cdot o$ of
the $G_\Phi$-action on $M$ containing $o$ is a connected totally
geodesic submanifold of $M$ with $T_oF_\Phi^s = {\mathfrak
p}_\Phi^s$. If $\Phi = \emptyset$, then $F_\emptyset^s = \{o\}$,
otherwise $F_\Phi^s$ is a Riemannian symmetric space of noncompact
type and ${\rm rank}(F_\Phi^s) = r_\Phi$, and
\begin{equation*}
F_\Phi^s = G_\Phi \cdot o = G_\Phi/(G_\Phi\cap K_\Phi) = M_\Phi
\cdot o = M_\Phi/K_\Phi.
\end{equation*}
The submanifold $F_\Phi^s$ is also known
as a boundary component of $M$ in the context of the maximal Satake
compactification of $M$ (see e.g.\ \cite{BJ}).

Clearly, ${\mathfrak a}_\Phi$ is a Lie triple system as well, and
the corresponding totally geodesic submanifold is a Euclidean space
\begin{equation*}
{\mathbb E}^{r-r_\Phi} = A_\Phi \cdot o.
\end{equation*}
Since the action of $A_\Phi$ on $M$ is free and $A_\Phi$ is simply
connected, we can identify ${\mathbb E}^{r-r_\Phi}$, $A_\Phi$ and
${\mathfrak a}_\Phi$ canonically. This identification will be used
throughout this paper.

Finally, ${\mathfrak p}_\Phi ={\mathfrak p}_\Phi^s \oplus {\mathfrak
a}_\Phi $ is a Lie triple system, and the corresponding totally
geodesic submanifold $F_\Phi$ is the symmetric space
\begin{equation*}
F_\Phi = L_\Phi \cdot o = L_\Phi/K_\Phi = (M_\Phi \times
A_\Phi)/K_\Phi =
 F_\Phi^s \times {\mathbb E}^{r-r_\Phi}.
\end{equation*}

The submanifolds $F_\Phi$ and $F_\Phi^s$ have a natural geometric
interpretation. Denote by $\bar{C}^+(\Lambda) \subset {\mathfrak a}$
the closed positive Weyl chamber which is determined by the simple
roots $\Lambda$. Let $Z$ be nonzero vector in $\bar{C}^+(\Lambda)$
such that $\alpha(Z) = 0$ for all $\alpha \in \Phi$ and $\alpha(Z) >
0$ for all $\alpha \in \Lambda \setminus \Phi$, and consider the
geodesic $\gamma_Z(t) = {\rm Exp}(tZ) \cdot o$ in $M$ with
$\gamma_Z(0) = o$ and $\dot{\gamma}_Z(0) = Z$. The totally geodesic
submanifold $F_\Phi$ is the union of all geodesics in $M$ which are
parallel to $\gamma_Z$, and $F_\Phi^s$ is the semisimple part of
$F_\Phi$ in the de Rham decomposition of $F_\Phi$ (see e.g.\
\cite{E96}, Proposition 2.11.4  and Proposition 2.20.10).

The group $Q_\Phi$ is diffeomorphic to the product $M_\Phi\times
A_\Phi\times N_\Phi$. This analytic diffeomorphism induces an
analytic diffeomorphism between $F_\Phi^s \times {\mathbb
E}^{r-r_\Phi} \times N_\Phi$ and $M$ known as a horospherical
decomposition of the symmetric space $M$.

\section{Polar and hyperpolar foliations}\label{polhypfol}

We first prove an algebraic characterization of polar actions and of
hyperpolar actions on Riemannian symmetric spaces of noncompact type
(see also Proposition 4.1 in \cite{Ko07}), and then present some
examples.

\begin{theorem}\label{thPolar}
Let $M = G/K$ be a Riemannian symmetric space of noncompact type and
$H$ be a connected closed subgroup of $G$ whose orbits form a
homogeneous foliation ${\mathcal F}$ on $M$. Consider the
corresponding Cartan decomposition $\g{g} = \g{k} \oplus \g{p}$ and
define
\begin{equation*}
{\g{h}_{\g{p}}^\perp}=\{\xi\in\g{p}:\langle\xi,Y\rangle=0\mbox{ for
all }Y\in\g{h}\}.
\end{equation*}
Then the following statements hold:
\begin{itemize}
\item[(i)] The action of $H$ on $M$ is polar if and only
if ${\g{h}_{\g{p}}^\perp}$
is a Lie triple system in $\g{p}$ and $\g{h}$ is orthogonal to the
subalgebra
$[{\g{h}_{\g{p}}^\perp},{\g{h}_{\g{p}}^\perp}]
\oplus{\g{h}_{\g{p}}^\perp}$
of $\g{g}$.

\item[(ii)] The action of $H$ on $M$ is hyperpolar if and only if
${\g{h}_{\g{p}}^\perp}$ is an abelian subspace of $\g{p}$.
\end{itemize}
In both cases, let ${H_{\g{p}}^\perp}$ be the connected subgroup of
$G$ with Lie algebra
$[{\g{h}_{\g{p}}^\perp},{\g{h}_{\g{p}}^\perp}]
\oplus{\g{h}_{\g{p}}^\perp}$.
Then the orbit ${\mathcal S} = {H_{\g{p}}^\perp} \cdot o$ is a
section of the $H$-action on $M$.
\end{theorem}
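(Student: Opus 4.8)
The plan is to reduce everything to the statement that a connected closed subgroup $H$ of $G$ acting with a homogeneous (hence, by Proposition \ref{thAllOrbitsPrincipal}, principal) foliation on $M$ is polar if and only if the normal space $\nu_o(H\cdot o) = {\g{h}_{\g{p}}^\perp}$ (under the identification $T_oM = \g{p}$, noting $\g{h}\cap\g{p}$ need not equal the tangent space but $T_o(H\cdot o)$ is the projection of $\g{h}$ to $\g{p}$, so its orthogonal complement in $\g{p}$ is exactly ${\g{h}_{\g{p}}^\perp}$) is the tangent space of a totally geodesic submanifold meeting all orbits orthogonally. First I would recall the general principle (see e.g.\ \cite{HLO}): a section of a principal foliation, if it exists, must be totally geodesic, must pass through any prescribed point after translating, and must have tangent space at $o$ equal to $\nu_o(H\cdot o)$. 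Conversely, if $V \subset \g{p}$ is a Lie triple system with $\exp_o V$ meeting every $H$-orbit, then $\exp_o V$ is a section precisely when it meets every orbit orthogonally, and by homogeneity and the fact that the orbits foliate $M$, it suffices to check orthogonality along $\exp_o V$ itself. So the crux is: ${\g{h}_{\g{p}}^\perp}$ is a Lie triple system and the associated totally geodesic submanifold ${\mathcal S} = {H_{\g{p}}^\perp}\cdot o$ meets every orbit and does so orthogonally.

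For the orthogonality and the ``meets every orbit'' conditions I would work in the solvable model $M = AN$ from Section \ref{secPreliminaries}, where the Levi-Civita connection is given by the displayed Koszul formula $4\langle\nabla_XY,Z\rangle_{AN}=\langle[X,Y]+(1-\theta)[\theta X,Y],Z\rangle$. The key computation is to show: for $\xi \in {\g{h}_{\g{p}}^\perp}$ and $Y \in \g{h}$, the orbit $H\cdot\exp_o(t\xi)$ is perpendicular at $\exp_o(t\xi)$ to ${\mathcal S}$, which — differentiating and using that totally geodesic submanifolds through $o$ are exactly $\exp_o$ of Lie triple systems — comes down to showing that $[\xi,Y]$ has no component in the tangent space of ${\mathcal S}$, equivalently $\langle [\xi, Y], \eta\rangle = 0$ for all $\eta \in {\g{h}_{\g{p}}^\perp}$ and its iterated brackets. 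Using $\langle[\xi,Y],\eta\rangle = -\langle Y,[\xi,\eta]\rangle$ (from $\ad$-skew-adjointness, since $\xi \in \g{p}$ so $\theta\xi = -\xi$ gives $\langle \ad(\xi)Y,\eta\rangle = \langle Y, \ad(\xi)\eta\rangle$ when... one must be careful with signs: $\langle\ad(X)Y,Z\rangle = -\langle\ad(\theta X)Z,Y\rangle$, so for $\xi\in\g{p}$, $\langle[\xi,Y],\eta\rangle = \langle[\xi,\eta],Y\rangle$), one sees that the hypothesis ``$\g{h}$ orthogonal to $[{\g{h}_{\g{p}}^\perp},{\g{h}_{\g{p}}^\perp}]\oplus{\g{h}_{\g{p}}^\perp}$'' is exactly what kills the bracket $[\xi,\eta] \in \g{k}$-part, and ``${\g{h}_{\g{p}}^\perp}$ a Lie triple system'' is what makes ${H_{\g{p}}^\perp}\cdot o$ totally geodesic with the right tangent space. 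One also needs that ${\mathcal S}$ meets every $H$-orbit: since the foliation has all orbits principal of the same dimension and ${\mathcal S}$ has complementary dimension and meets orbits orthogonally wherever it meets them, a dimension-counting / properness argument (every point of $M$ lies at minimal distance from $H\cdot o$ along a geodesic normal to $H\cdot o$, cf.\ the proof of Proposition \ref{ReduceToSolvable}) shows surjectivity.

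For part (ii) I would argue that ${\mathcal S} = {H_{\g{p}}^\perp}\cdot o$ is flat if and only if ${\g{h}_{\g{p}}^\perp}$ is an abelian subspace of $\g{p}$: a totally geodesic submanifold of a symmetric space corresponding to a Lie triple system $V$ is flat iff $[[V,V],V] = 0$, and since we are inside $\g{p}$ with $[V,V]\subset\g{k}$, flatness is equivalent to $V$ being abelian (an abelian subspace of $\g{p}$ is automatically a Lie triple system, so the Lie-triple-system hypothesis of (i) is subsumed); moreover one checks the orthogonality hypothesis of (i) is automatic here because $[{\g{h}_{\g{p}}^\perp},{\g{h}_{\g{p}}^\perp}]=0$ reduces it to $\g{h}\perp{\g{h}_{\g{p}}^\perp}$, which holds by definition of ${\g{h}_{\g{p}}^\perp}$. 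So (ii) follows from (i) once the flatness characterization is in place.

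The main obstacle I anticipate is the ``only if'' direction of (i): given that a section ${\mathcal S}'$ exists, one must show it forces ${\g{h}_{\g{p}}^\perp}$ to be a Lie triple system and the orthogonality relation. The point is that a section, being totally geodesic and (after applying a suitable isometry in $H$, using that all orbits are principal so $H$ acts transitively on the normal bundle data) passing through $o$ with $T_o{\mathcal S}' = \nu_o(H\cdot o) = {\g{h}_{\g{p}}^\perp}$, must be $\exp_o$ of a Lie triple system — that already gives the first condition — and then orthogonality of ${\mathcal S}'$ to the orbits at points $\exp_o(t\xi)$, differentiated at $t=0$, yields precisely $\langle[\xi,\eta],Y\rangle = 0$, i.e.\ the orthogonality relation. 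Handling the reduction ``WLOG the section passes through $o$'' cleanly — using that $H\cdot o$ is principal and that sections through different points of a principal orbit are related by elements of $H$ — is the delicate bookkeeping step, but it is standard in submanifold geometry and can be cited.
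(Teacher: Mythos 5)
Your plan has the right skeleton and matches the paper's proof in its main steps: reduce (ii) to (i) via the standard characterization that a totally geodesic submanifold corresponding to a Lie triple system $V$ is flat iff $V$ is abelian; construct $\mathcal{S}=H_{\g p}^\perp\cdot o$ from the Lie triple system $\g{h}_{\g p}^\perp$; show $\mathcal{S}$ meets every orbit by a closest-point/variational argument on the closed orbits; and characterize orthogonality at $o$ by the bracket identity $\tfrac{d}{dt}\big|_{t=0}\langle X^*_{\gamma(t)},\eta^*_{\gamma(t)}\rangle = -\langle[\xi,\eta],X\rangle$, which is precisely where the condition ``$\g{h}\perp[\g{h}_{\g p}^\perp,\g{h}_{\g p}^\perp]\oplus\g{h}_{\g p}^\perp$'' comes in. Your handling of the ``only if'' direction (a section through $o$ has tangent space $\nu_o(H\cdot o)=\g{h}_{\g p}^\perp$, hence a Lie triple system; differentiate the orthogonality at $\gamma(0)$) is exactly the paper's.

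The genuine gap is in the \emph{converse} (``if'') direction of (i), which you gesture at but do not close. The derivative-at-$t=0$ computation only shows that the pairing of the Killing fields is stationary at the base point; it does not by itself give orthogonality of $T_{\gamma(t)}(H\cdot\gamma(t))$ and $T_{\gamma(t)}\mathcal{S}$ for all $t$. The paper's resolution is a Jacobi field argument: for $X\in\g{h}$, the restriction $X^*_\gamma$ of the Killing field to a geodesic $\gamma$ of $\mathcal{S}$ is a Jacobi field with initial values $X^*_\gamma(0)=X_{\g p}$ and $(X^*_\gamma)'(0)=-[\xi,X]_{\g p}$, \emph{both} of which lie in $\g{h}_{\g p}=\nu_o\mathcal{S}$ once the orthogonality hypothesis is imposed; since $\mathcal{S}$ is totally geodesic, a Jacobi field with normal initial conditions stays in the normal bundle, giving orthogonality for all $t$. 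Without this step your sketch does not prove polarity in the ``if'' direction. A secondary, smaller point: the appeal to the solvable model $M\cong AN$ and its Koszul formula is not well adapted here, because $\g{h}$ is an arbitrary subalgebra of $\g{g}$, not one contained in $\g{a}\oplus\g{n}$; the paper's use of Killing vector fields on $M$, defined for every $X\in\g{g}$, sidesteps this and is the cleaner route.
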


\begin{proof}
Statement (ii) is an obvious consequence of statement (i). So we
proceed with proving (i).

If the action of $H$ on $M$ is polar, then ${\g{h}_{\g{p}}^\perp}$
is a Lie triple system by definition of a polar action. We now
assume that ${\g{h}_{\g{p}}^\perp}$ is a Lie triple system. We have
to show that the action of $H$ on $M$ is polar if and only if
$\g{h}$ is orthogonal to
$[{\g{h}_{\g{p}}^\perp},{\g{h}_{\g{p}}^\perp}]
\oplus{\g{h}_{\g{p}}^\perp}$.
Since ${\g{h}_{\g{p}}^\perp}$ is a Lie triple system, the orbit
${\mathcal S} = {H_{\g{p}}^\perp} \cdot o$ is a connected complete
totally geodesic submanifold of $M$. Let $p$ be a point in $M$ which
does not lie on the orbit $H \cdot o$. Since $H \cdot o$ is a closed
submanifold of $M$, there exists a point $q \in H \cdot o$ such that
the distance between $p$ and $q$ is equal to the distance between
$p$ and $H \cdot o$. Since $M$ is complete, there exists a geodesic
in $M$ from $p$ to $q$ such that the distance from $p$ to $q$ can be
measured along this geodesic. A standard variational argument shows
that this geodesic intersects $H \cdot o$ perpendicularly. It
follows now easily that ${\mathcal S}$ intersects each orbit. Since
$H$ induces a foliation, it therefore remains to show that $T_p(H
\cdot p)$ and $T_p{\mathcal S}$ are orthogonal for each $p \in
{\mathcal S}$ if and only if $\g{h}$ is orthogonal to
$[{\g{h}_{\g{p}}^\perp},{\g{h}_{\g{p}}^\perp}]
\oplus{\g{h}_{\g{p}}^\perp}$.

Let $\gamma$ be the geodesic in ${\mathcal S}$ with $\gamma(0) = o$
and $\dot{\gamma}(0) = \xi \in {\g{h}_{\g{p}}^\perp}$, and assume
that $\xi \neq 0$. For $X \in \g{h}$ and $\eta \in
{\g{h}_{\g{p}}^\perp}$ we denote by $X^*$ and $\eta^*$ the Killing
vector fields on $M$ that are induced from $X$ and $\eta$,
respectively. Then we have
\begin{equation*}
T_{\gamma(t)}(H \cdot \gamma(t)) = \{ X^*_{\gamma(t)}:X \in \g{h}\}\
,\ T_{\gamma(t)}{\mathcal S} = \{\eta^*_{\gamma(t)} : \eta \in
{\g{h}_{\g{p}}^\perp} \}.
\end{equation*}
The restrictions of two
such Killing vector fields $X^*$ and $\eta^*$ to $\gamma$ satisfy
the equation
\begin{equation*}
\left.\frac{d}{dt}\right|_{t=0} \langle
X^*_{\gamma(t)},\eta^*_{\gamma(t)} \rangle = \langle
[\xi^*,X^*]_o,\eta^*_o \rangle + \langle X^*_o , [\xi^*,\eta^*]_o
\rangle = - \langle [ \xi,\eta] , X \rangle,
\end{equation*}
using the facts that $[\xi^*,X^*] = -[\xi,X]^*$, $[\xi^*,\eta^*]
= -[\xi,\eta]^*$, $[\xi,\eta] \in {\mathfrak k}$,
and that $\ad(\xi)$ is a self-adjoint endomorphism
on $\g{g}$. From this it easily follows that $\g{h}$ is orthogonal
to
$[{\g{h}_{\g{p}}^\perp},{\g{h}_{\g{p}}^\perp}]
\oplus{\g{h}_{\g{p}}^\perp}$
if $T_p(H \cdot p)$ and $T_p{\mathcal S}$ are orthogonal for each $p
\in {\mathcal S}$. Conversely, assume that $\g{h}$ is orthogonal to
$[{\g{h}_{\g{p}}^\perp},{\g{h}_{\g{p}}^\perp}]
\oplus{\g{h}_{\g{p}}^\perp}$.
Then, for each $X \in \g{h}$, the restriction $X^*_\gamma$ of the
Killing vector field $X^*$ to $\gamma$ is the Jacobi vector field
along $\gamma$ with initial values $X^*_\gamma(0) = X^*_o =
X_{\g{p}} \in \g{h}_{\g{p}}$ and $(X^*_\gamma)^\prime(0) =
[\xi^*,X^*]_o = -[\xi,X]^*_o = -[\xi,X]_{\g{p}}  \in \g{h}_{\g{p}}$.
Here the subscript indicates orthogonal projection onto $\g{p}$.
Since both initial values are in $\g{h}_{\g{p}} = \nu_o{\mathcal
S}$, it follows that $X^*_\gamma$ takes values in the normal bundle
of ${\mathcal S}$ along $\gamma$. This implies that $T_{\gamma(t)}(H
\cdot \gamma(t))$ and $T_{\gamma(t)}{\mathcal S}$ are orthogonal for
each $t \in {\mathbb R}$. Since this holds for each geodesic
$\gamma$ in ${\mathcal S}$ with $\gamma(0) = o$ and $\dot{\gamma}(0)
= \xi \in {\g{h}_{\g{p}}^\perp}$, $\xi \neq 0$, we conclude that
$T_p(H \cdot p)$ and $T_p{\mathcal S}$ are orthogonal for each $p
\in {\mathcal S}$.
\end{proof}

We will use the previous result to show polarity and hyperpolarity
of certain actions.

\begin{proposition}
Let $M$ be a Riemannian symmetric space of noncompact type and
consider a horospherical decomposition $F_\Phi^s \times {\mathbb
E}^{r-r_\Phi} \times N_\Phi$ of $M$. Let $V$ be a linear subspace of
${\mathbb E}^{r-r_\Phi}$ and assume that $(\Phi,V) \neq
(\emptyset,{\mathbb E}^r)$. Then the action of $V \times N_\Phi
\subset A_\Phi \times N_\Phi$ on $M$ is polar and $F_\Phi^s \times
({\mathbb E}^{r-r_\Phi} \ominus V)$ is a section of this action.
Moreover, the action of $V \times N_\Phi$ on $M$ is hyperpolar if
and only if $\Phi = \emptyset$.
\end{proposition}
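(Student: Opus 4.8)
The plan is to apply Theorem~\ref{thPolar} to the subgroup $H = V \times N_\Phi$. So the first task is to compute the space $\g{h}_{\g{p}}^\perp$. Write $\g{h} = V \oplus \g{n}_\Phi$, where $V \subset \g{a}_\Phi \subset \g{a} \subset \g{p}$. The orthogonal complement in $\g{p}$ must be taken with respect to $\langle\,\cdot\,,\,\cdot\,\rangle = -B(\,\cdot\,,\theta\,\cdot\,)$. Since $\g{n}_\Phi = \bigoplus_{\lambda \in \Sigma^+ \setminus \Sigma_\Phi^+} \g{g}_\lambda$ lies entirely in $\g{k}\oplus\g{n}$ and has trivial $\g{p}$-component interacting with the root space structure, the key observation is that for $\xi \in \g{p}$, the condition $\langle \xi, X\rangle = 0$ for all $X \in \g{n}_\Phi$ is equivalent to $\xi$ having no component in $\bigoplus_{\lambda \in \Sigma^+ \setminus \Sigma_\Phi^+} \g{p}_\lambda$; combined with orthogonality to $V$ inside $\g{a}$, one gets
\begin{equation*}
\g{h}_{\g{p}}^\perp = ({\mathbb E}^{r-r_\Phi} \ominus V) \oplus \g{p}_\Phi^s = ({\mathfrak a}_\Phi \ominus V) \oplus {\mathfrak a}^\Phi \oplus \Bigl( \bigoplus_{\lambda \in \Sigma_\Phi} {\mathfrak p}_\lambda \Bigr),
\end{equation*}
which is exactly $T_o\bigl(F_\Phi^s \times ({\mathbb E}^{r-r_\Phi} \ominus V)\bigr)$.

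Next I would verify the polarity criterion of Theorem~\ref{thPolar}(i). The space $\g{h}_{\g{p}}^\perp = ({\mathfrak a}_\Phi \ominus V) \oplus \g{p}_\Phi^s$ is a Lie triple system: the excerpt already records that $\g{p}_\Phi^s$ is a Lie triple system and that $\g{a}_\Phi$ is a Lie triple system, and since $[\g{p}_\Phi^s, \g{a}_\Phi] = 0$ (because $\g{m}_\Phi$ centralizes $\g{a}_\Phi$), the direct sum of the two, with $\g{a}_\Phi$ replaced by the subspace $\g{a}_\Phi \ominus V$, is again a Lie triple system. Then $[\g{h}_{\g{p}}^\perp, \g{h}_{\g{p}}^\perp] \oplus \g{h}_{\g{p}}^\perp$ is contained in $\g{g}_\Phi \oplus \g{a}_\Phi$, which is a subalgebra lying in $\g{l}_\Phi$. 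One must check $\langle \g{h}, \g{g}_\Phi \oplus \g{a}_\Phi\rangle = 0$. The $\g{a}_\Phi$-part: $V \perp (\g{a}_\Phi \ominus V)$ by construction, and $\g{n}_\Phi \perp \g{a}$ always. The $\g{g}_\Phi$-part: $\g{g}_\Phi = (\g{g}_\Phi \cap \g{k}_\Phi) \oplus \g{p}_\Phi^s$ involves only root spaces $\g{g}_{\pm\lambda}$ with $\lambda \in \Sigma_\Phi$, whereas $\g{n}_\Phi$ involves only root spaces $\g{g}_\mu$ with $\mu \in \Sigma^+ \setminus \Sigma_\Phi^+$, and distinct root spaces are orthogonal; also $V \perp \g{g}_\Phi$ since $V \subset \g{a}$ and $\g{g}_\Phi$ meets $\g{a}$ only in $\g{a}^\Phi \perp \g{a}_\Phi \supset V$. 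Hence the criterion is satisfied and the action is polar with section $H_{\g{p}}^\perp \cdot o = F_\Phi^s \times ({\mathbb E}^{r-r_\Phi}\ominus V)$, modulo checking this orbit is all of the totally geodesic submanifold with the stated tangent space at $o$ (which it is, since $G_\Phi \times A_\Phi$ acts with that orbit). The hypothesis $(\Phi, V) \neq (\emptyset, {\mathbb E}^r)$ is exactly what guarantees $\g{h} \neq \{0\}$, i.e.\ that this is a genuine foliation.

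Finally, for the hyperpolarity statement I would apply Theorem~\ref{thPolar}(ii): the action is hyperpolar iff $\g{h}_{\g{p}}^\perp = ({\mathfrak a}_\Phi \ominus V) \oplus \g{p}_\Phi^s$ is abelian. If $\Phi = \emptyset$, then $\g{p}_\Phi^s = \{0\}$ and $\g{h}_{\g{p}}^\perp = \g{a} \ominus V \subset \g{a}$ is abelian, so the action is hyperpolar. Conversely, if $\Phi \neq \emptyset$, then $\g{p}_\Phi^s \supset \g{p}_\lambda$ for some $\lambda \in \Sigma_\Phi$, and $\g{p}_\Phi^s$ is the tangent space at $o$ of the symmetric space $F_\Phi^s$ of noncompact type and rank $r_\Phi \geq 1$; since a symmetric space of noncompact type has strictly negative curvature somewhere (indeed it is not flat), $\g{p}_\Phi^s$ is not abelian. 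More concretely, pick $\lambda \in \Sigma_\Phi$ and a root vector giving nonzero $\g{p}_\lambda$; then $[\g{p}_\lambda, \g{p}_\lambda] = \g{k}_\lambda \neq \{0\}$, so $\g{h}_{\g{p}}^\perp$ fails to be abelian. Hence hyperpolarity forces $\Phi = \emptyset$. I expect the main obstacle to be the careful computation of $\g{h}_{\g{p}}^\perp$ and the bookkeeping of orthogonality relations among $V$, $\g{a}^\Phi$, $\g{a}_\Phi$, the root spaces in $\Sigma_\Phi$, and the root spaces in $\Sigma^+\setminus\Sigma_\Phi^+$ — everything else then follows formally from Theorem~\ref{thPolar} and the structural facts assembled in Section~\ref{secPreliminaries}.
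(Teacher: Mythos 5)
Your proof mirrors the paper's argument: compute $\g{h}_{\g{p}}^\perp = \g{p}_\Phi \ominus V = ({\mathfrak a}_\Phi\ominus V)\oplus{\mathfrak p}_\Phi^s$, verify the criterion of Theorem~\ref{thPolar}(i) using the orthogonality of the root spaces in $\Sigma_\Phi$ and in $\Sigma^+\setminus\Sigma_\Phi^+$ together with the orthogonality of ${\mathfrak m}_\Phi$ and ${\mathfrak a}_\Phi\oplus{\mathfrak n}_\Phi$, and then observe that hyperpolarity holds precisely when the section $F_\Phi^s\times({\mathbb E}^{r-r_\Phi}\ominus V)$ is flat, i.e.\ when $\Phi=\emptyset$. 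One small inaccuracy in your ``more concretely'' aside: $[{\mathfrak p}_\lambda,{\mathfrak p}_\lambda]$ is not ${\mathfrak k}_\lambda$ — it lies in ${\mathfrak k}_0\oplus{\mathfrak k}_{2\lambda}$ and in fact vanishes when $\dim{\mathfrak g}_\lambda=1$ and $2\lambda\notin\Sigma$ — but your preceding non-flatness argument is already sufficient and is exactly what the paper uses; a correct concrete witness is $[H_\lambda,(1-\theta)X_\lambda]=\langle\lambda,\lambda\rangle(1+\theta)X_\lambda\neq 0$ for $\lambda\in\Sigma_\Phi$ and nonzero $X_\lambda\in{\mathfrak g}_\lambda$, both vectors lying in ${\mathfrak p}_\Phi^s\subset\g{h}_{\g{p}}^\perp$.
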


\begin{proof}
The subspace $(V \oplus \g{n}_\Phi)^\perp_{\mathfrak p} = \g{p}_\Phi
\ominus V$ of $\g{p}$ is a Lie triple system and $F_\Phi^s \times
({\mathbb E}^{r-r_\Phi} \ominus V)$ is the connected complete
totally geodesic submanifold of $M$ corresponding to $\g{p}_\Phi
\ominus V$. Next, we have $[(V \oplus \g{n}_\Phi)^\perp_{\mathfrak
p} , (V \oplus \g{n}_\Phi)^\perp_{\mathfrak p}] = [\g{p}_\Phi
\ominus V , \g{p}_\Phi \ominus V] \subset \g{k}_\Phi \subset
\g{m}_\Phi$, and since $\g{m}_\Phi$ is orthogonal to ${\mathfrak
a}_\Phi \oplus {\mathfrak n}_\Phi$, we see that $V \oplus
\g{n}_\Phi$ is orthogonal to $[(V \oplus
\g{n}_\Phi)^\perp_{\mathfrak p} , (V \oplus
\g{n}_\Phi)^\perp_{\mathfrak p}] \oplus (V \oplus
\g{n}_\Phi)^\perp_{\mathfrak p}$. Since $A_\Phi \times N_\Phi$ acts
freely on $M$, it is clear that $V \times N_\Phi$ induces a
foliation on $M$. From Theorem \ref{thPolar} we conclude that the
action of $V \times N_\Phi$ on $M$ is polar and that $F_\Phi^s
\times ({\mathbb E}^{r-r_\Phi} \ominus V)$ is a section of the
action. The statement about hyperpolarity follows from the fact that
$F_\Phi^s \times ({\mathbb E}^{r-r_\Phi} \ominus V)$ is flat if and
only if $\Phi = \emptyset$.
\end{proof}

The previous result provides examples of polar actions which are not
hyperpolar on each Riemannian symmetric space of noncompact type
with rank $\geq 2$.  It is worthwhile to compare this with the
results by Kollross \cite{Ko07} that in the compact case polar
actions are in general hyperpolar. Special cases of these actions on
Hermitian symmetric spaces of noncompact type have also been
discussed by Kobayashi \cite{Kob05} in the context of strongly
visible actions on complex manifolds.

\begin{remark}
Let $M$ be a symmetric space of noncompact type with the property
that its restricted root system contains two simple roots of the
same length which are not connected in the Dynkin diagram. The
following example illustrates that the condition in Theorem
\ref{thPolar} (i) that $\g{h}$ is orthogonal to
$[{\g{h}_{\g{p}}^\perp},{\g{h}_{\g{p}}^\perp}]
\oplus{\g{h}_{\g{p}}^\perp}$
is necessary for polarity. Let us consider
$\g{h}=(\g{a}\ominus\R(H_\alpha-H_\beta))
\oplus(\g{n}\ominus\R(X_\alpha+X_\beta))$, with $\alpha$ and $\beta$
two simple roots of the same length which are not connected in the
Dynkin diagram, and $X_\alpha\in\g{g}_\alpha$,
$X_\beta\in\g{g}_\beta$ unit vectors. In order to prove that this is
indeed a subalgebra, by the properties of root systems it suffices
to show that $[H,X_\alpha-X_\beta]\in\g{h}$ for any
$H\in\g{a}\ominus\R(H_\alpha-H_\beta)$ (because $\langle
X_\alpha+X_\beta,X_\alpha-X_\beta\rangle=0$). However, if
$H\in\g{a}\ominus\R(H_\alpha-H_\beta)$ we have $\alpha(H)=\beta(H)$,
which implies
$[H,X_\alpha-X_\beta]=\alpha(H)(X_\alpha-X_\beta)\in\g{h}$ as
desired.

By construction we have
${\g{h}_{\g{p}}^\perp}=\R(H_\alpha-H_\beta)\oplus
\R(1-\theta)(X_\alpha+X_\beta)$. A simple calculation using
$\langle\alpha,\alpha\rangle=\langle\beta,\beta\rangle$ and
$\langle\alpha,\beta\rangle=0$ shows that
\begin{equation*}
[H_\alpha-H_\beta,(1-\theta)(X_\alpha+X_\beta)]=
\langle\alpha,\alpha\rangle(1+\theta)(X_\alpha-X_\beta).
\end{equation*}
This implies in particular that ${\g{h}_{\g{p}}^\perp}$ is not
abelian. Using again
$\langle\alpha,\alpha\rangle=\langle\beta,\beta\rangle$ and
$\langle\alpha,\beta\rangle=0$ we get
\begin{equation*}
[H_\alpha-H_\beta,(1+\theta)(X_\alpha-X_\beta)] =
\langle\alpha,\alpha\rangle(1-\theta)(X_\alpha+X_\beta),
\end{equation*}
and also using $[X_\alpha,X_\beta]=0$ (because $\alpha$ and $\beta$
are not connected in the Dynkin diagram) we obtain
\begin{equation*}
[(1-\theta)(X_\alpha+X_\beta), (1+\theta)(X_\alpha-X_\beta)]  =
-2(H_\alpha-H_\beta).
\end{equation*}
All in all this means that ${\g{h}_{\g{p}}^\perp}$ is a non-abelian
Lie triple system. However, $\g{h}$ cannot give rise to a polar
action because $\g{h}$ is not perpendicular to
$[{\g{h}_{\g{p}}^\perp},{\g{h}_{\g{p}}^\perp}]
=\R(1+\theta)(X_\alpha-X_\beta)$.

This action has the interesting feature that it gives a homogeneous
foliation with the property that the normal bundle consists of Lie
triple systems. It is easy to see that the totally geodesic
submanifold of $M$ generated by any of these Lie triple systems is a
real hyperbolic plane. These real hyperbolic planes have the
property that they do not intersect orthogonally the other orbits.
It is interesting to observe that the normal bundle is not
integrable, as otherwise the integral manifolds would provide
sections and then the action would be polar.
\end{remark}

\begin{remark}\label{exNoFoliation}
The hypothesis in Theorem \ref{thPolar} that $H$ induces a foliation
is necessary. For example, in $\g{sl}_2(\C)$ consider the usual
Cartan decomposition $\g{sl}_2(\C)=\g{su}_2\oplus\g{p}$, where
$\g{p}$ denotes the real vector space of $(2 \times 2)$-Hermitian
matrices with trace zero. Let $\g{a}$ be the subspace of diagonal
matrices in $\g{sl}_2(\C)$ with real coefficients and $\g{t}$ the
subspace of diagonal matrices in $\g{sl}_2(\C)$ with purely
imaginary coefficients. Also, denote by $\g{n}$ the set of strictly
upper triangular matrices in $\g{sl}_2(\C)$. Then,
$\g{su}_2\oplus\g{a}\oplus\g{n}$ is an Iwasawa decomposition of
$\g{sl}_2(\C)$ and $\g{t}\oplus\g{a}$ is a Cartan subalgebra of
$\g{sl}_2(\C)$. Consider the vectors
\begin{equation*}
B=\left(\begin{array}{cc}
1   &   i\\
0   &   -1
\end{array}\right), X=\left(\begin{array}{cc}
-i   &   1\\
0   &   i
\end{array}\right), \xi=\left(\begin{array}{cc}
1   &   -2i\\
-2i   &   -1
\end{array}\right)
\mbox{ and } E=\left(\begin{array}{cc}
0   &   i/2\\
0   &   0
\end{array}\right).
\end{equation*}
Let $\g{h}$ be the Lie subalgebra of $\g{t}\oplus\g{a}\oplus\g{n}$
spanned by $B$ and $X$. Then ${\g{h}_{\g{p}}^\perp}=\R \xi$ is
abelian because it is one-dimensional. The connected closed subgroup
$H$ of $SL_2(\C)$ with Lie algebra $\g{h}$ acts hyperpolarly on the
real hyperbolic space $\R H^3=SL_2(\C)/SU_2$ but does not give rise
to a hyperpolar foliation. To see this let $g=\Exp(E)$. It is easy
to verify that $\Ad(g)B \in {\mathfrak a}$ and $\Ad(g)X \in
{\mathfrak t}$, and hence $\Ad(g)\g{h}=\g{t}\oplus\g{a}$. The
corresponding connected subgroup of $SL_2(\C)$ acts with
cohomogeneity one on $\R H^3$. This action has one singular orbit, a
totally geodesic $\R H^1\subset\R H^3$, and the other orbits are the
tubes around it. Obviously, the action of $H$ is orbit equivalent to
this one.
\end{remark}

We continue with a discussion of some further hyperpolar actions on
Riemannian symmetric spaces of nonpositive curvature.

\begin{example} ({\it Polar and hyperpolar homogeneous foliations
on Euclidean spaces.})
Let $m$ be a positive integer. For each linear subspace $V$ of
the $m$-dimensional Euclidean space ${\mathbb E}^m$ we define a
homogeneous hyperpolar foliation ${\mathcal F}_V^m$ on ${\mathbb
E}^m$ by
\begin{equation*}
({\mathcal F_V^m})_p = p + V = \{ p + v \mid v \in V\}
\end{equation*}
for all $p \in {\mathbb E}^m$. Geometrically, the foliation
${\mathcal F}_V^m$ consists of the union of all affine subspaces of
${\mathbb E}^m$ which are parallel to $V$. It is obvious that
${\mathcal F_V^m}$ is a hyperpolar homogeneous foliation on
${\mathbb E}^m$ whenever $0 < \dim V < m$.

Indeed, any hyperpolar homogeneous foliation on a Euclidean space
$\mathbb{E}^m$ is isometrically congruent to one of these examples.
Assume that $H$ acts isometrically on $\mathbb{E}^m$ and that its
orbits form a hyperpolar homogeneous foliation. Since the action of
$H$ is isometric and gives a foliation on $\mathbb{E}^m$ it suffices
to prove that each orbit of $H$ is totally geodesic.

On the contrary, assume that the orbit of $H$ through $o$ is not
totally geodesic. Then, there exist a nonzero vector $v\in
T_o(H\cdot o)$ and a unit vector $\xi\in\nu_o(H\cdot o)$ such that
$A_\xi v=c v$ with $c\neq 0$, where $A_\xi$ denotes the shape
operator of $H\cdot o$ with respect to $\xi$. Since the orbit
through $o$ is principal, $\xi$ induces an equivariant normal vector
field on $H\cdot o$ which we also denote by $\xi$. This vector field
satisfies $\xi_{h(o)}=h_*\xi_o$ for all $h\in H$. Consider the point
$p=\exp_o(\frac{1}{c}\xi_o)$. Since $\xi$ is equivariant, the orbit
of $H$ through $p$ is $H\cdot
p=\{\exp_{h(o)}(\frac{1}{c}\xi_{h(o)}):h\in H\}$. Hence we can
define the map $F:H\cdot o\to H\cdot p$,
$h(o)\mapsto\exp_{h(o)}(\frac{1}{c}\xi_{h(o)})
=h(o)+\frac{1}{c}\xi_{h(o)}$.  Since the action of $H$ is polar, the
equivariant vector field $\xi$ is parallel with respect to the
normal connection (see e.g.\ \cite{BCO}, p.~44, Corollary 3.2.5),
and thus we get $F_{*o}v=v-\frac{1}{c}A_\xi v=0$, which contradicts
the fact that $H$ gives a foliation.
\end{example}

\begin{example} ({\it Codimension one foliations on Riemannian
manifolds.})
Let $M$ be a connected complete Riemannian manifold and
${\mathcal F}$ be a homogeneous foliation on $M$ with codimension
one. Then ${\mathcal F}$ is hyperpolar. In fact, consider a geodesic
$\gamma : {\mathbb R} \to M$ which is parametrized by arc length and
for which $\dot{\gamma}(0)$ is perpendicular to ${\mathcal
F}_{\gamma(0)}$. Since $M$ is connected and complete, $\gamma$ must
intersect each leaf of ${\mathcal F}$, and since ${\mathcal F}$ is
homogeneous, the geodesic intersects each leaf orthogonally.
Therefore ${\mathcal S} = \gamma({\mathbb R})$ is a section of
${\mathcal F}$. Clearly, ${\mathcal S}$ is a flat totally geodesic
submanifold of $M$, and hence ${\mathcal F}$ is hyperpolar.
\end{example}

\begin{example} ({\it Hyperpolar homogeneous foliations on
hyperbolic spaces.})
Let $M$ be a Riemannian symmetric space of rank one, that is,
$M$ is a hyperbolic space ${\mathbb F}H^n$ over a normed real
division algebra ${\mathbb F} \in \{{\mathbb R}, {\mathbb C},
{\mathbb H}, {\mathbb O}\}$. Here $n \geq 2$, and $n = 2$ if
${\mathbb F} = {\mathbb O}$. Using the notations introduced in the
previous section, we have
\begin{equation*}
{\mathfrak g} =
\begin{cases}
{\mathfrak s}{\mathfrak o}_{1,n} & \text{if ${\mathbb F} = {\mathbb
R}$,} \\
{\mathfrak s}{\mathfrak u}_{1,n} & \text{if ${\mathbb F} = {\mathbb
C}$,} \\
{\mathfrak s}{\mathfrak p}_{1,n} & \text{if ${\mathbb F} = {\mathbb
H}$,} \\
{\mathfrak f}_4^{-20} & \text{if ${\mathbb F} = {\mathbb O}$}.
\end{cases}
\end{equation*}
The restricted root space decomposition of ${\mathfrak g}$ is of the
form
\begin{equation*}
{\mathfrak g} = {\mathfrak g}_{-2\alpha} \oplus {\mathfrak
g}_{-\alpha} \oplus {\mathfrak g}_0 \oplus {\mathfrak g}_\alpha
\oplus {\mathfrak g}_{2\alpha},\end{equation*} where $\dim
{\mathfrak g}_\alpha = \dim {\mathfrak g}_{-\alpha} =
(n-1)\dim_{\mathbb R}{\mathbb F}$, $\dim {\mathfrak g}_{2\alpha} =
\dim {\mathfrak g}_{-2\alpha} = \dim_{\mathbb R}{\mathbb F} - 1$.
Moreover, we have ${\mathfrak g}_0 = {\mathfrak k}_0 \oplus
{\mathfrak a}$ with a one-dimensional subspace ${\mathfrak a}
\subset {\mathfrak p}$ and
\begin{equation*}
{\mathfrak k}_0 \cong
\begin{cases}
{\mathfrak s}{\mathfrak o}_{n-1} & \text{if ${\mathbb F} = {\mathbb
R}$,} \\
{\mathfrak u}_{n-1} & \text{if ${\mathbb F} = {\mathbb
C}$,} \\
{\mathfrak s}{\mathfrak p}_{n-1} \oplus {\mathfrak s}{\mathfrak p}_1
& \text{if ${\mathbb F} = {\mathbb
H}$,} \\
{\mathfrak s}{\mathfrak o}_7 & \text{if ${\mathbb F} = {\mathbb
O}$}.
\end{cases}
\end{equation*}
Let $\ell$ be a one-dimensional linear subspace of ${\mathfrak
g}_\alpha$ and define
\begin{equation*}
{\mathfrak s}_\ell = {\mathfrak a} \oplus
({\mathfrak g}_\alpha \ominus \ell) \oplus {\mathfrak g}_{2\alpha} =
{\mathfrak a} \oplus ({\mathfrak n} \ominus \ell).
\end{equation*}
The subspace ${\mathfrak s}_\ell$ is a subalgebra of ${\mathfrak a}
\oplus {\mathfrak n}$ of codimension one, and the corresponding
connected closed subgroup $S_\ell$ of $AN$ acts freely on ${\mathbb
F}H^n$ with cohomogeneity one. The corresponding homogeneous
foliation ${\mathcal F}_\ell$ on ${\mathbb F}H^n$ is hyperpolar
according to the previous example. Since $K_0$ acts transitively on
the unit sphere in ${\mathfrak g}_\alpha$ by means of the adjoint
representation, ${\mathcal F}_\ell$ and ${\mathcal F}_{\ell^\prime}$
are orbit equivalent for any two one-dimensional linear subspaces
$\ell,\ell^\prime$ of ${\mathfrak g}_\alpha$. We denote by
${\mathcal F}_{\mathbb F}^n$ a representative of the set of
hyperpolar homogeneous foliations of the form ${\mathcal F}_\ell$ on
${\mathbb F}H^n$. We mention that the leaf of ${\mathcal F}_{\mathbb
F}^n$ containing $o \in {\mathbb F}H^n$ is a minimal hypersurface in
${\mathbb F}H^n$. If ${\mathbb F} = {\mathbb R}$, this leaf is a
totally geodesic real hyperbolic hyperplane ${\mathbb R}H^{n-1}
\subset {\mathbb R}H^n$. If ${\mathbb F} = {\mathbb C}$, this leaf
is the minimal ruled real hypersurface in ${\mathbb C}H^n$ which is
determined by a horocycle in a totally geodesic and totally real
${\mathbb R}H^2 \subset {\mathbb C}H^n$. For more details on these
foliations we refer to \cite{Be98} and \cite{BD06}. It was shown in
\cite{BT03} that apart from this foliation and the horosphere
foliation there are no other homogeneous hyperpolar foliations on
Riemannian symmetric spaces of rank one.
\end{example}

\begin{example} ({\it Hyperpolar homogeneous foliations on products
of hyperbolic spa\-ces.}) Let
\begin{equation*}
M = {\mathbb F}_1H^{n_1} \times \ldots \times {\mathbb
F}_kH^{n_k}
\end{equation*}
be the Riemannian product of $k$ Riemannian symmetric
spaces of rank one, where $k$ is a positive integer. Then
\begin{equation*}
{\mathcal F}_{{\mathbb F}_1}^{n_1} \times \ldots \times {\mathcal
F}_{{\mathbb F}_k}^{n_k}
\end{equation*}
is a hyperpolar homogeneous foliation on $M$. This is an elementary
consequence of the previous example.
\end{example}

\begin{example} ({\it Hyperpolar homogeneous foliations on products
of hyperbolic
spaces and Euclidean spaces.}) Let
\begin{equation*}
M = {\mathbb F}_1H^{n_1} \times \ldots \times {\mathbb
F}_kH^{n_k} \times {\mathbb E}^{m}
\end{equation*}
be the Riemannian product of
$k$ Riemannian symmetric spaces of rank one and an $m$-dimensional
Euclidean space, where $k$ and $m$ are positive integers. Moreover,
let $V$ be a linear subspace of ${\mathbb E}^m$. Then
\begin{equation*}
{\mathcal F}_{{\mathbb F}_1}^{n_1} \times \ldots \times {\mathcal
F}_{{\mathbb F}_k}^{n_k} \times {\mathcal F}_V^m
\end{equation*}
is a hyperpolar homogeneous foliation on $M$.
\end{example}

\begin{example}\label{homfolsym} ({\it Homogeneous foliations on
symmetric spaces of noncompact type.}) Let $M$ be a Riemannian
symmetric space of noncompact type and $\Phi$ be a subset of
$\Lambda$ with the property that any two roots in $\Phi$ are not
connected in the Dynkin diagram of the restricted root system
associated with $\Lambda$. We call such a subset $\Phi$ an
orthogonal subset of $\Lambda$. Each simple root $\alpha \in \Phi$
determines a totally geodesic hyperbolic space ${\mathbb F}_\alpha
H^{n_\alpha} \subset M$. In fact, ${\mathbb F}_\alpha H^{n_\alpha}
\subset M$ is the orbit of the connected subgroup of $G$ with Lie
algebra ${\mathfrak g}_{\{\alpha\}}$. Then $F_\Phi$ is isometric to
the Riemannian product of $r_\Phi$ Riemannian symmetric spaces of
rank one and an $(r-r_\Phi)$-dimensional Euclidean space,
\begin{equation*}
F_\Phi = F_\Phi^s \times {\mathbb E}^{r-r_\Phi} \cong \left(
\prod_{\alpha \in \Phi} {\mathbb F}_{\alpha} H^{n_\alpha} \right)
\times {\mathbb E}^{r-r_\Phi}.
\end{equation*}
Note that ${\mathbb F}_\alpha = {\mathbb R}$ if $\alpha$ is reduced
and ${\mathbb F}_{\alpha} \in \{{\mathbb C},{\mathbb H},{\mathbb
O}\}$ if $\alpha$ is non-reduced (i.e., if $2\alpha \in \Sigma$ as
well). Then
\begin{equation*}
{\mathcal F}_\Phi = \prod_{\alpha \in \Phi} {\mathcal F}_{{\mathbb
F}_\alpha}^{n_\alpha}.
\end{equation*}
is a hyperpolar homogeneous foliation on ${\mathcal F}_\Phi^s$. Let
$V$ be a linear subspace of ${\mathbb E}^{r-r_\Phi}$. Then
\begin{equation*}
{\mathcal F}_{\Phi,V} = {\mathcal F}_\Phi \times {\mathcal
F}_V^{r-r_\Phi} \times N_\Phi \subset F_\Phi^s \times {\mathbb
E}^{r-r_\Phi} \times N_\Phi = F_\Phi \times N_\Phi \cong M
\end{equation*}
is a homogeneous foliation on $M$. We will see below that it is
hyperpolar.

Recall that each foliation ${\mathcal F}_{{\mathbb
F}_\alpha}^{n_\alpha}$ on ${\mathbb F}_{\alpha} H^{n_\alpha}$
corresponds to a subalgebra of ${\mathfrak g}_{\{\alpha\}}$ of the
form $\g{a}^{\{\alpha\}} \oplus (\g{g}_\alpha \ominus \ell_\alpha)
\oplus \g{g}_{2\alpha}$ with some one-dimensional linear subspace
$\ell_\alpha$ of ${\mathfrak g}_\alpha$. As a consequence the
foliation ${\mathcal F}_\Phi$ on $F_\Phi^s$ corresponds to the
subalgebra ${\mathfrak a}^\Phi \oplus \left( \bigoplus_{\alpha \in
\Phi} \left( ({\mathfrak g}_\alpha \ominus \ell_\alpha)  \oplus
{\mathfrak g}_{2\alpha} \right) \right) = {\mathfrak a}^\Phi \oplus
({\mathfrak n}_\Phi \ominus \ell_\Phi)$ of ${\mathfrak g}_\Phi$,
where $\ell_\Phi=\bigoplus_{\alpha\in\Phi}\ell_\alpha$. Therefore
the foliation ${\mathcal F}_{\Phi,V}$ on $M$ corresponds to the
subalgebra
\begin{equation*}
\g{s}_{\Phi,V}=(\g{a}^\Phi \oplus V)
\oplus(\g{n}_\Phi\ominus\ell_\Phi)=(\g{a}^\Phi \oplus V
\oplus\g{n}_\Phi)\ominus\ell_\Phi \subset {\mathfrak a} \oplus
{\mathfrak n}_\Phi
\end{equation*}
of ${\mathfrak q}_\Phi$, where we identify canonically $V \subset
{\mathbb E}^{r-r_\Phi} = A_\Phi \cdot o$ with the corresponding
subspace of ${\mathfrak a}_\Phi$.

It is easy to see from the arguments given above that different
choices of $\ell_\alpha$ and $\ell_{\alpha}^\prime$ in ${\mathfrak
g}_\alpha$ lead to isometrically congruent foliations ${\mathcal
F}_\Phi$ and ${\mathcal F}_\Phi^\prime$ on $F_\Phi^s$. However, it
is not obvious that different choices of $\ell_\alpha$ and
$\ell^\prime_\alpha$ in ${\mathfrak g}_\alpha$ lead to isometrically
congruent foliations ${\mathcal F}_{\Phi,V}$ and ${\mathcal
F}_{\Phi,V}^\prime$ on $M$. That this is in fact true follows from
the following two facts. On a semisimple symmetric space the
holonomy algebra is isomorphic to the Lie algebra of the isotropy
subgroup of the isometry group. Moreover, on a simply connected
symmetric space each element of the holonomy group at a point $o$
induces an isometry of the symmetric space with fixed point $o$.
Hence, different choices of $\ell_\alpha$ and $\ell^\prime_\alpha$
in ${\mathfrak g}_\alpha$ lead to isometrically congruent foliations
${\mathcal F}_{\Phi,V}$ and ${\mathcal F}_{\Phi,V}^\prime$ on $M$.

We note that these homogeneous foliations on symmetric spaces of
noncompact type have also been discussed by Koike \cite{Koi082} in
the context of his investigations about ``complex hyperpolar
actions".
\end{example}

\medskip
We are now in the position to formulate the main result of this
paper.

\begin{theorem}\label{maintheorem}
Let $M$ be a connected Riemannian symmetric space of noncompact
type.
\begin{itemize}
\item[(i)] Let $\Phi$ be an orthogonal subset of $\Lambda$ and $V$ be
a linear subspace of ${\mathbb E}^{r-r_\Phi}$. Then
\begin{equation*}
{\mathcal F}_{\Phi,V} = {\mathcal F}_\Phi \times {\mathcal
F}_V^{r-r_\Phi} \times N_\Phi \subset F_\Phi^s \times {\mathbb
E}^{r-r_\Phi} \times N_\Phi = M
\end{equation*}
is a hyperpolar homogeneous foliation on $M$. \item[(ii)] Every
hyperpolar homogeneous foliation on $M$ is isometrically congruent
to ${\mathcal F}_{\Phi,V}$ for some orthogonal subset $\Phi$ of
$\Lambda$ and some linear subspace $V$ of ${\mathbb E}^{r-r_\Phi}$.
\end{itemize}
\end{theorem}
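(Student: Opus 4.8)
The plan is to handle the two parts separately, part~(i) being short given the preceding work and part~(ii) requiring the structural classification. \emph{Part~(i).} By Example~\ref{homfolsym}, $\mathcal F_{\Phi,V}$ is the orbit foliation of the connected closed subgroup $S_{\Phi,V}$ of $AN$ with Lie algebra $\g{s}_{\Phi,V}=(\g{a}^\Phi\oplus V)\oplus(\g{n}\ominus\ell_\Phi)$, and it is a homogeneous foliation on $M$. By Theorem~\ref{thPolar}(ii) it therefore suffices to check that $\spp$ is abelian, with $\g{s}=\g{s}_{\Phi,V}$. Writing $\g{p}=\g{a}\oplus\bigoplus_{\lambda\in\Sigma^+}\g{p}_\lambda$ with $\g{p}_\lambda=(1-\theta)\g{g}_\lambda$ and using $\langle X,Y\rangle=-B(X,\theta Y)$ together with the $B$-orthogonality of $\g{g}_\lambda$ and $\g{g}_\mu$ for $\mu\neq-\lambda$, one finds that $\xi\in\spp$ has its $\g{a}$-component in $\g{a}_\Phi\ominus V$ (from orthogonality to $\g{a}^\Phi\oplus V$), zero $\g{p}_\lambda$-component for every positive root $\lambda$ that is not a simple root lying in $\Phi$ (from orthogonality to $\g{g}_\lambda\subseteq\g{n}\ominus\ell_\Phi$), and $\g{p}_\alpha$-component confined to the line $\g{p}_{\ell_\alpha}:=\g{p}\cap(\ell_\alpha\oplus\theta\ell_\alpha)$ for each $\alpha\in\Phi$; hence $\spp=(\g{a}_\Phi\ominus V)\oplus\bigoplus_{\alpha\in\Phi}\g{p}_{\ell_\alpha}$. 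This is abelian: $\g{a}_\Phi$ is abelian; $[\g{a}_\Phi,\g{p}_{\ell_\alpha}]=0$ because $\alpha$ vanishes on $\g{a}_\Phi=\bigcap_{\beta\in\Phi}\ker\beta$; $[\g{p}_{\ell_\alpha},\g{p}_{\ell_\beta}]=0$ for distinct $\alpha,\beta\in\Phi$ because orthogonality of $\Phi$ prevents $\pm\alpha\pm\beta$ from being restricted roots; and each $\g{p}_{\ell_\alpha}$ is one-dimensional. Thus $\mathcal F_{\Phi,V}$ is hyperpolar, with flat section $\exp_o(\spp)$.

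For part~(ii), I would start from Proposition~\ref{ReduceToSolvable}, which lets us take $\mathcal F$ to be the orbit foliation of a connected closed solvable subgroup of $G$, and reduce further, up to isometric congruence, to a subgroup $S\subseteq AN$: a solvable group is amenable, hence fixes a point of the compact space $G/K_0AN$ and so lies, up to conjugacy, in a minimal parabolic subgroup $K_0AN$, and the compact factor can then be discarded without changing the orbit foliation exactly as in the proof of Proposition~\ref{ReduceToSolvable}. Since $AN$ acts simply transitively, $S$ acts freely, so its orbits are automatically the leaves of a homogeneous foliation, and by Theorem~\ref{thPolar}(ii) the subspace $\spp$ of $\g{p}$ is abelian, where $\g{s}\subseteq\g{a}\oplus\g{n}$ is the Lie algebra of $S$. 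The problem is thereby reduced to classifying, up to conjugation inside $AN$ and by $K_0=Z_K(\g{a})$, the subalgebras $\g{s}$ of $\g{a}\oplus\g{n}$ for which $\spp$ is abelian. Here it is convenient to note that replacing each component $(1-\theta)X_\lambda$ of $\xi\in\g{p}$ by $X_\lambda\in\g{g}_\lambda$ defines a linear isomorphism of $\spp$ onto $\g{m}:=(\g{a}\oplus\g{n})\ominus\g{s}$, so that $\dim\g{m}=\dim\spp\leq r$.

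The structural analysis would then proceed as follows. (1)~Using that $\g{s}$ is a subalgebra of codimension $\leq r$ and that $\g{n}=\bigoplus_{k\geq1}\g{n}^k$ is generated by $\g{n}^1=\bigoplus_{\alpha\in\Lambda}\g{g}_\alpha$ with $[\g{g}_\lambda,\g{g}_\mu]\subseteq\g{g}_{\lambda+\mu}$, show that $\g{s}\supseteq\bigoplus_{k\geq2}\g{n}^k$, so that $\g{m}\subseteq\g{a}\oplus\g{n}^1$. (2)~Using that $\spp$ is abelian, show that $\g{m}\cap\g{n}^1=\bigoplus_{\alpha\in\Phi}\ell_\alpha$, where $\Phi=\{\alpha\in\Lambda:\g{g}_\alpha\cap\g{m}\neq\{0\}\}$ and each $\ell_\alpha=\g{g}_\alpha\cap\g{m}$ is one-dimensional: a higher-dimensional $\ell_\alpha$ would make $(1-\theta)\ell_\alpha\subseteq\g{p}_\alpha$ non-abelian, and a ``mixed'' vector $X_\alpha+X_\beta$ of the sort appearing in Remark~\ref{exNoFoliation} would create a non-abelian pair in $\spp$. (3)~For distinct $\alpha,\beta\in\Phi$, since $[\g{g}_\alpha,\g{g}_{-\beta}]\subseteq\g{g}_{\alpha-\beta}=\{0\}$ automatically, vanishing of the bracket $[(1-\theta)\ell_\alpha,(1-\theta)\ell_\beta]\subseteq\spp$ forces $\g{g}_{\alpha+\beta}=\{0\}$; that is, $\Phi$ is an orthogonal subset of $\Lambda$. (4)~If $\g{b}$ denotes the $\g{a}$-projection of $\g{s}$, the identity $[H,(1-\theta)X_\alpha]=\alpha(H)(1+\theta)X_\alpha$ combined with abelianness of $\spp$ forces $\alpha|_{\g{a}\ominus\g{b}}=0$ for every $\alpha\in\Phi$, hence $\g{b}=\g{a}^\Phi\oplus V$ for some linear subspace $V\subseteq\g{a}_\Phi$; a final conjugation by a suitable element of $N$ removes the remaining coupling between $\g{b}$ and $\ell_\Phi=\bigoplus_{\alpha\in\Phi}\ell_\alpha$, so that $\g{s}=\g{s}_{\Phi,V}$, while the holonomy/isometry argument in the last paragraph of Example~\ref{homfolsym} shows that the choice of the lines $\ell_\alpha$ does not affect the isometric congruence class of $\mathcal F_{\Phi,V}$. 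I expect the main obstacle to be steps~(1)--(2): extracting from ``$\g{s}$ a subalgebra of $\g{a}\oplus\g{n}$'' together with ``$\spp$ abelian'' the rigidity that $\g{m}$ lies inside $\g{a}\oplus\g{n}^1$ and meets each simple root space in at most a line and no other root space at all. This is a delicate piece of bookkeeping with the restricted root system, the type-$\alpha_0$ gradation, and the bracket relations, and is where the bulk of Section~\ref{secClassification}, and any genuine difficulty, should be concentrated.
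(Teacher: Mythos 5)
Your argument for part~(i) matches the paper's: compute $\spp$ explicitly, check it is abelian using the orthogonality of $\Phi$, and invoke Theorem~\ref{thPolar}(ii). That part is fine.

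Part~(ii), however, has a genuine gap at the very first reduction. The amenability argument does place $S$ (up to conjugacy) inside a minimal parabolic $Q_\emptyset = K_0AN$, but you cannot then ``discard the compact factor'' to land inside $AN$: the subalgebra $\g{k}_0$ is \emph{not} an ideal of $\g{q}_\emptyset$ (it is $\g{a}\oplus\g{n}$ that is the ideal), so $\pi_{\g{a}\oplus\g{n}}$ is not a Lie algebra homomorphism, $\pi_{\g{a}\oplus\g{n}}(\g{s})$ need not be a subalgebra, and when it is, nothing a priori guarantees that its orbits coincide with those of $S$. The argument of Proposition~\ref{ReduceToSolvable} does not transfer, because there the discarded piece is a Levi factor $K$ of a group satisfying $H=SK$, whereas here the compact piece sits \emph{inside} the already-solvable $S$ with no clean splitting $\g{s}=(\g{s}\cap(\g{a}\oplus\g{n}))\oplus\g{t}_S$. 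Accordingly the paper does not perform this reduction at the outset: Proposition~\ref{thBorel} only places $\g{s}$ inside a maximally noncompact Borel subalgebra $\g{t}\oplus\g{a}\oplus\g{n}$, the toroidal part $\g{t}$ is carried through all of Section~\ref{secClassification}, and the orbit equivalence with the projection $\g{s}_n$ is established only at the end (Propositions~\ref{thProperties}(v)--(vii) and~\ref{thCongruencyGeneral}), in a way that uses both the already-determined form of $\g{s}_n$ and, crucially, the hypothesis that the orbits form a foliation. Remark~\ref{exNoFoliation} exists precisely to show that without the foliation hypothesis the $\g{t}$-part genuinely changes the orbit type.

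Your step~(1) is also unjustified as stated: a subalgebra $\g{s}\subseteq\g{a}\oplus\g{n}$ of codimension $\leq r$ need not contain $\bigoplus_{k\geq2}\g{n}^k$. For the split form of type $A_2$ with $\Sigma^+=\{\alpha,\beta,\alpha+\beta\}$, the subalgebra $\g{s}=\g{a}\oplus\g{g}_\alpha$ has codimension $2=r$ yet misses $\g{g}_{\alpha+\beta}$ entirely; it is the abelian condition on $\spp$ (which fails for that example) that forces the confinement, and extracting it is exactly where the paper spends most of its effort. It introduces the auxiliary subalgebra $\tilde{\g{s}}=\g{s}+(\g{n}\ominus\g{n}^1)$, for which $\tilde{\g{s}}_{\g{p}}^\perp\subseteq\g{a}\oplus\g{p}^1$ holds trivially, determines $\tilde{\g{s}}_n$ via Proposition~\ref{thAbelian} and Theorem~\ref{thsppLevel1}, and then climbs back to $\g{s}_n$ through Lemmas~\ref{thCentre}--\ref{thhatsn} using $\alpha$-strings and the type-$\alpha_0$ gradation. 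Your steps~(2)--(4) identify the right target structure, but they inherit the dependence on the foliation hypothesis (through Proposition~\ref{thProperties}) that your opening reduction assumed could be dispensed with, and so the proposal is a plan that cannot be carried out as written.
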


\begin{proof}
We prove part (i) of the theorem here. Section
\ref{secClassification}  is devoted to the proof of part (ii).

According to Theorem \ref{thPolar} we have to prove that
$\g{s}_{\Phi,V}$ is a subalgebra and that
$(\g{s}_{\Phi,V})_{\g{p}}^\perp=\{\xi\in\g{p}:
\langle\xi,Y\rangle=0\mbox{ for all }Y\in\g{s}_{\Phi,V}\}$ is
abelian. Assume that the one-dimensional linear space
$\ell_\alpha=\ell_\Phi\cap\g{g}_\alpha$ is generated by the nonzero
vector $E_\alpha$.

The fact that $\g{s}_{\Phi,V}$ is a subalgebra follows from the
elementary properties of root systems. It is easy to see that
\begin{equation*}
(\g{s}_{\Phi,V})_{\g{p}}^\perp=(\g{a}_\Phi\ominus V)\oplus\left(
\mathop{\bigoplus}_{\alpha\in\Phi}\R((1-\theta)E_{\alpha})\right).
\end{equation*}
We now check that $(\g{s}_{\Phi,V})_{\g{p}}^\perp$ is abelian.

If $H,H'\in\g{a}_\Phi\ominus V$ we obviously have $[H,H']=0$. If
$H\in\g{a}_\Phi\ominus V$ and $\alpha\in\Phi$ we have
$[H,(1-\theta)E_{\alpha}]=\alpha(H)(1+\theta)E_{\alpha}=0$ by
definition of $\g{a}_\Phi$. If $\alpha,\beta\in\Phi$ with
$\alpha\neq\beta$, then $[(1-\theta)E_{\alpha},(1-\theta)
E_{\beta}]=(1+\theta)[E_{\alpha},E_{\beta}]
-(1+\theta)[E_{\alpha},\theta E_{\beta}]$.  Now,
$[E_{\alpha},E_{\beta}]\in\g{g}_{\alpha+\beta}=0$ because
$\alpha+\beta$ is not a root (since $\alpha$ and $\beta$ are not
connected in the Dynkin diagram) and $[E_{\alpha},\theta
E_{\beta}]\in\g{g}_{\alpha-\beta}=0$ as $\alpha-\beta$ is not a root
(because both $\alpha$ and $\beta$ are simple).
\end{proof}

\section{Classification}\label{secClassification}

In this section we prove Theorem \ref{maintheorem} (ii), thus
settling the main result of this paper.

A subalgebra $\g{b}$ of a Lie algebra ${\mathfrak g}$ is called a
\emph{Borel subalgebra} if $\g{b}$ is a maximal solvable subalgebra
of $\g{g}$. Borel subalgebras of real semisimple Lie algebras have
been described in \cite{M61}. Any such Borel subalgebra can be
written as $\g{t}\oplus\g{a}\oplus\g{n}$, where
$\g{h}=\g{t}\oplus\g{a}$ is a Cartan subalgebra of $\g{g}$ and
$\g{n}$ is nilpotent. The subspace $\g{t}$ is called the toroidal
part of $\g{h}$ and consists of all $X\in\g{h}$ for which the
eigenvalues of $\ad(X)$ are purely imaginary. The subspace $\g{a}$
is called the vector part of $\g{h}$ and consists of all $X\in\g{h}$
for which the eigenvalues of $\ad(X)$ are real. There exists a
Cartan decomposition $\g{g}=\g{k}\oplus\g{p}$ such that
$\g{t}\subset\g{k}$ and $\g{a}\subset\g{p}$. We say that $\g{h}$ or
$\g{b}$ is \emph{maximally noncompact} if $\g{a}$ is maximal abelian
in $\g{p}$ and \emph{maximally compact} if $\g{t}$ is maximal
abelian in $\g{k}$. We use this description for the following

\begin{proposition}\label{thBorel}
Let $M=G/K$ be a symmetric space of noncompact type. Let $S$ be a
closed subgroup of $G$ which induces a hyperpolar foliation. Then
the action of $S$ is orbit equivalent to the action of a closed
solvable subgroup whose Lie algebra is contained in a maximally
noncompact Borel subalgebra.
\end{proposition}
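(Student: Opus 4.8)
The plan is to reduce to the solvable case and then argue by induction on $\dim M$, at each step passing to a proper parabolic subalgebra and its semisimple Levi factor. First I would invoke Proposition~\ref{ReduceToSolvable} to replace $S$ by a connected closed solvable subgroup with the same orbits; after renaming, $\g{s}$ is solvable, $\spp$ is abelian in $\g{p}$ by Theorem~\ref{thPolar}(ii), and all orbits of $S$ are principal by Proposition~\ref{thAllOrbitsPrincipal}, so $S$ has no fixed point in $M$. Since the maximally noncompact Borel subalgebras of $\g{g}$ form a single $G$-conjugacy class and conjugation by an element of $G$ is an orbit equivalence, it suffices to conjugate $\g{s}$ into the minimal parabolic subalgebra $\g{q}_\emptyset=\g{k}_0\oplus\g{a}\oplus\g{n}$: then, $\g{a}\oplus\g{n}$ being an ideal of $\g{q}_\emptyset$, the image of $\g{s}$ in $\g{q}_\emptyset/(\g{a}\oplus\g{n})\cong\g{k}_0$ is a solvable, hence abelian, subalgebra of the compact Lie algebra $\g{k}_0$; after a further conjugation by an element of $K_0$ (which preserves the splitting $\g{k}_0\oplus\g{a}\oplus\g{n}$) this image lies in a fixed maximal abelian subalgebra $\g{t}_0$ of $\g{k}_0$, and therefore $\g{s}\subseteq\g{t}_0\oplus\g{a}\oplus\g{n}$, a maximally noncompact Borel subalgebra.

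The inductive claim I would prove is: if $\g{s}\subseteq\g{g}$ is solvable, $\spp$ is abelian in $\g{p}$, and the corresponding connected subgroup $S$ has no fixed point in $M$, then $\g{s}$ is conjugate into a minimal parabolic subalgebra of $\g{g}$. By the description of Borel subalgebras in \cite{M61}, $\g{s}$ is contained in a Borel subalgebra $\g{b}=\g{t}\oplus\g{a}_\g{b}\oplus\g{n}_\g{b}$ with $\g{t}\oplus\g{a}_\g{b}$ a $\theta$-stable Cartan subalgebra (after conjugating so that its Cartan involution is $\theta$), $\g{t}\subseteq\g{k}$ its toroidal and $\g{a}_\g{b}\subseteq\g{p}$ its vector part. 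If $\g{a}_\g{b}=\{0\}$, then $\g{b}=\g{t}$ is a compact Cartan subalgebra (its nilpotent part is then necessarily trivial) and $\g{s}\subseteq\g{t}\subseteq\g{k}$ would fix $o$, which is excluded. Hence $\g{a}_\g{b}\neq\{0\}$, so $\g{b}$, and with it $\g{s}$, is contained in a proper parabolic subalgebra, which up to conjugation is $\g{q}_\Phi=\g{m}_\Phi\oplus\g{a}_\Phi\oplus\g{n}_\Phi$ for some $\Phi\subsetneq\Lambda$. If $\Phi=\emptyset$ we are done, so assume $\Phi\neq\emptyset$.

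To close the induction I would pass to $\g{g}_\Phi$. Composing the Langlands projection $\g{q}_\Phi\to\g{m}_\Phi$ with the quotient $\g{m}_\Phi\to\g{m}_\Phi/\g{z}_\Phi\cong\g{g}_\Phi$ sends $\g{s}$ to a solvable subalgebra $\bar{\g{s}}$ of $\g{g}_\Phi$, whose associated symmetric space is the totally geodesic submanifold $F_\Phi^s\subseteq M$ of strictly smaller dimension. A direct computation (using $\g{a}_\Phi\perp\g{p}_\Phi^s$ and $\g{p}_\lambda\perp\g{p}_\mu$ for $\lambda\notin\Sigma_\Phi$, $\mu\in\Sigma_\Phi$) shows that the above composite followed by the orthogonal projection $\g{p}\to\g{p}_\Phi^s$ agrees on $\g{s}$ with the orthogonal projection $\g{s}\to\g{p}\to\g{p}_\Phi^s$; hence $\bar{\g{s}}^{\perp}_{\g{p}_\Phi^s}=\g{p}_\Phi^s\cap\spp$ is abelian in $\g{p}_\Phi^s$. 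Moreover $\bar S$ has no fixed point in $F_\Phi^s$: conjugating $\g{s}$ by an element of $G_\Phi$ --- which changes neither the abelianness of $\spp$ (hyperpolarity being conjugation invariant) nor the absence of a fixed point nor the inclusion $\g{s}\subseteq\g{q}_\Phi$ --- one could otherwise assume such a fixed point to be $o$, so that $\bar{\g{s}}\subseteq\g{g}_\Phi\cap\g{k}$, which forces $\g{s}_\g{p}\perp\g{p}_\Phi^s$, i.e.\ $\g{p}_\Phi^s\subseteq\spp$, contradicting the abelianness of $\spp$ since $F_\Phi^s$ has positive rank and so $\g{p}_\Phi^s$ is a non-abelian Lie triple system. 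The inductive hypothesis applied to $\bar{\g{s}}\subseteq\g{g}_\Phi$ thus yields $g\in G_\Phi\subseteq G$ with $\Ad(g)\bar{\g{s}}$ inside a minimal parabolic subalgebra of $\g{g}_\Phi$; since $\Ad(g)$ preserves $\g{q}_\Phi$, and the preimage in $\g{q}_\Phi$ of a minimal parabolic subalgebra of $\g{g}_\Phi$ is a minimal parabolic subalgebra of $\g{g}$, we conclude that $\Ad(g)\g{s}$ lies in a minimal parabolic subalgebra of $\g{g}$, completing the induction. I expect the main obstacle to be the bookkeeping in this last step: verifying cleanly that hyperpolarity descends to $\g{g}_\Phi$ in the required form and that, together with the absence of fixed points, it rules out a fixed point of $\bar S$ on $F_\Phi^s$.
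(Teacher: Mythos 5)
Your route is genuinely different from the paper's. The paper stays at the top level: it places $\g{s}$ in a Borel subalgebra $\g{t}\oplus\tilde{\g{a}}\oplus\tilde{\g{n}}$ with an adapted Cartan decomposition and shows directly, by contradiction, that $\tilde{\g{a}}$ must be maximal abelian in $\g{p}$ --- if not, $\g{a}\ominus\tilde{\g{a}}$ and a root space $\g{p}_\lambda$ with $\lambda_{\vert\tilde{\g{a}}}=0$ both sit inside $\spp$ and fail to commute. You instead induct on $\dim M$, descending through a proper parabolic $\g{q}_\Phi$ to the Levi factor $\g{g}_\Phi$. Several of your intermediate steps are correct and well observed: the case $\g{a}_{\g{b}}=\{0\}$ forces $\g{s}\subset\g{k}$ and hence a fixed point; the identity $\bar{\g{s}}^{\perp}_{\g{p}_\Phi^s}=\g{p}_\Phi^s\cap\spp$ follows from $\g{p}_\Phi^s\perp(\g{z}_\Phi\oplus\g{a}_\Phi\oplus\g{n}_\Phi)$; and the preimage in $\g{q}_\Phi$ of a minimal parabolic of $\g{g}_\Phi$ is indeed $\g{q}_\emptyset$.

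There is, however, a genuine gap. Your inductive statement deliberately (and sensibly) replaces the foliation hypothesis by the purely algebraic condition ``$\spp$ is abelian,'' but that condition is \emph{not} stable under conjugation by a general $g\in G$, and your proof conjugates $\g{s}$ in several places: once to align the Cartan involution adapted to $\g{b}$ with $\theta$, once to put the parabolic containing $\g{b}$ in the standard position $\g{q}_\Phi$, and again inside the no-fixed-point argument by an element of $G_\Phi$. The phrase ``hyperpolarity being conjugation invariant'' silently identifies ``$\spp$ abelian'' with hyperpolarity, but by Theorem~\ref{thPolar} that equivalence is proved only when the orbits form a foliation --- exactly the hypothesis you dropped. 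The paper's own Remark~\ref{exNoFoliation} exhibits the failure: there $\g{h}\subset\g{sl}_2(\C)$ is abelian, $\g{h}_{\g{p}}^\perp=\R\xi$ is abelian, and $H$ has no fixed point, yet $\Ad(g)\g{h}=\g{t}\oplus\g{a}$ has $(\Ad(g)\g{h})_{\g{p}}^\perp=\g{p}\ominus\g{a}$, a two-dimensional \emph{non-abelian} subspace of $\g{p}$ for $\R H^3=SL_2(\C)/SU_2$. At the top level of the induction the original $\g{s}$ does induce a foliation, so Theorem~\ref{thPolar}(ii) saves the abelianness at any base point; but after projecting to $\bar{\g{s}}\subset\g{g}_\Phi$, which need not induce a foliation on $F_\Phi^s$, you no longer have that safety net, and the inductive hypothesis can fail after the very first conjugation inside the inductive step. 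To repair this you would either have to carry a conjugation-stable hypothesis through the induction (e.g.\ ``$(\Ad(g)\g{s})_{\g{p}}^\perp$ is abelian for every $g\in G$,'' which holds for the original $\g{s}$ and does pass to $\bar{\g{s}}$ along $G_\Phi$), or keep the foliation property and show it descends to $F_\Phi^s$ --- neither of which the proposal does. The paper's direct argument sidesteps all of this because it conjugates only once and applies Theorem~\ref{thPolar}(ii) only to the original foliation-inducing $S$.
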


\begin{proof}
By means of Proposition \ref{ReduceToSolvable} we may assume that
$S$ is solvable and closed in $I(M)$. The Lie algebra $\g{s}$ of $S$
is contained in a Borel subalgebra $\g{b}$ of $\g{g}$. As we
explained above, there exists a Cartan decomposition
$\g{g}=\g{k}\oplus\g{p}$ such that $\g{b} =
\g{t}\oplus\tilde{\g{a}}\oplus\tilde{\g{n}}$ with
$\g{t}\subset\g{k}$ and $\tilde{\g{a}}\subset\g{p}$. Since
$\tilde{\g{a}}$ is abelian we have the decomposition
$\g{g}=\tilde{\g{g}}_0\oplus\left(
\bigoplus_{\tilde{\lambda}\in\tilde{\Sigma}}
\tilde{\g{g}}_{\tilde{\lambda}}\right)$, where $\tilde{\Sigma}$ is
the set of roots with respect to $\tilde{\g{a}}$ and
$\tilde{\g{g}}_{\tilde{\lambda}}=\{X\in\g{g}:\ad(H)X
=\tilde{\lambda}(H)X \ {\rm for\ all\ }H\in\tilde{\g{a}}\}$. We can
choose an ordering in $\tilde{\g{a}}$ that induces a set of positive
roots $\tilde{\Sigma}^+$ in such a way that
$\tilde{\g{n}}=\bigoplus_{\tilde{\lambda}\in\tilde{\Sigma}^+}
\tilde{\g{g}}_{\tilde{\lambda}}$. It remains to prove that this
Borel subalgebra is maximally noncompact, that is, $\tilde{\g{a}}$
is maximal abelian in $\g{p}$.

On the contrary, assume that $\tilde{\g{a}}$ is not maximal abelian.
Let $\g{a}$ be a maximal abelian subspace of $\g{p}$ containing
$\tilde{\g{a}}$. Then we have the usual restricted root space
decomposition ${\g{g}}=\g{g}_0\oplus
\left(\bigoplus_{\lambda\in\Sigma}\g{g}_{\lambda}\right)$. We choose
an ordering of $\g{a}$ compatible with that of $\tilde{\g{a}}$ and
denote by $\Sigma^+$ the corresponding set of positive roots, and
write $\g{n}=\bigoplus_{\lambda\in\Sigma^+} \g{g}_{\lambda}$. We
have the relations
\begin{equation*}
\tilde{\g{a}}=\mathop{\bigcap_{\lambda\in\Sigma^+}}_{
\lambda_{\vert\tilde{\g{a}}}=0} \mathop{\rm Ker}\lambda,\qquad
\tilde{\g{g}}_0=\g{g}_0\oplus\biggl(\
\mathop{\bigoplus_{\lambda\in\Sigma^+}}_{
\lambda_{\vert\tilde{\g{a}}}=0} \g{g}_{\lambda}\biggr),\qquad
\tilde{\g{g}}_{\tilde{\lambda}}=
\mathop{\bigoplus_{\lambda\in\Sigma^+}}_{
\lambda_{\vert\tilde{\g{a}}}=\tilde{\lambda}} \g{g}_{\lambda}.
\end{equation*}

Recall from Theorem \ref{thPolar} (ii) that $S$ acts hyperpolarly on
$M$ if and only if $\spp=\{\xi\in\g{p}:\xi\perp\g{s}\}$ is abelian.
Obviously, $\g{a}\ominus\tilde{\g{a}}\subset\g{p}$ and
$\g{a}\ominus\tilde{\g{a}}$ is orthogonal to $\tilde{\g{n}}$, and so
$\g{a}\ominus\tilde{\g{a}}\subset\spp$. On the other hand,
$\bigoplus_{\lambda\in\Sigma^+,
\lambda_{\vert\tilde{\g{a}}}=0}\g{g}_{\lambda}\subset
\tilde{\g{g}}_0\subset\g{n}\ominus\tilde{\g{n}}$, and so
$\bigoplus_{\lambda\in\Sigma^+,
\lambda_{\vert\tilde{\g{a}}}=0}\g{p}_{\lambda} \subset\spp$.
Altogether this implies $(\g{a}\ominus\tilde{\g{a}})\oplus\left(
\bigoplus_{\lambda\in\Sigma^+,
\lambda_{\vert\tilde{\g{a}}}=0}\g{p}_{\lambda}\right) \subset\spp$.
Now choose $\lambda\in\Sigma^+$ with
$\lambda_{\vert\tilde{\g{a}}}=0$. By the first relation above, we
can choose $H\in\g{a}\ominus\tilde{\g{a}}$ with $\lambda(H)\neq 0$.
If $X_{\lambda}\in\g{g}_{\lambda}$ is a nonzero vector then
$[H,(1-\theta)X_{\lambda}]= (1+\theta)\lambda(H)X_{\lambda}\neq 0$,
which contradicts the fact that $\spp$ is abelian. Hence,
$\tilde{\g{a}}$ must be maximal abelian in $\g{p}$ and the theorem
follows.
\end{proof}

We now prove that the foliations in Example \ref{homfolsym} exhaust
all the possibilities for hyperpolar homogeneous foliations up to
orbit equivalence. Let $S$ be a connected closed subgroup of the
isometry group inducing a hyperpolar homogeneous foliation on $M$.
From now on we fix a Cartan decomposition $\g{g}=\g{k}\oplus\g{p}$
and a maximally noncompact Borel subalgebra
$\g{t}\oplus\g{a}\oplus\g{n}$ with $\g{t}\subset\g{k}$ and
$\g{a}\subset\g{p}$ maximal abelian. According to Proposition
\ref{thBorel} we may assume that the Lie algebra $\g{s}$ of $S$ is
solvable and that $\g{s}\subset\g{t}\oplus\g{a}\oplus\g{n}$. The
proof goes as follows. First we classify the abelian subspaces of
$\g{a}\oplus\g{p}^1$. A bit more work leads to a description of all
subalgebras $\g{s}$ of $\g{t}\oplus\g{a}\oplus\g{n}$ for which
$\spp$ is abelian and contained in $\g{a}\oplus\g{p}^1$. Hence, the
problem reduces to prove that, if $\g{s}$ is a subalgebra of
$\g{t}\oplus\g{a}\oplus\g{n}$ for which $\spp$ is abelian and the
corresponding connected subgroup of $G$ with Lie algebra $\g{s}$
induces a foliation on $M$, then $\spp\subset\g{a}\oplus\g{p}^1$. We
will consider an auxiliary subalgebra
$\tilde{\g{s}}=\g{s}+\left(\g{n}\ominus\g{n}^1\right)$. This
subalgebra satisfies
$\tilde{\g{s}}_{\g{p}}^\perp\subset\g{a}\oplus\g{p}^1$ and hence its
projection onto $\g{a}\oplus\g{n}$ is one of the known examples.
Then $\spp$ is contained in the centralizer of
$\tilde{\g{s}}_{\g{p}}^\perp$ in $\g{p}$. A bit more work allows us
to calculate $\spp$ explicitly using the fact that $\g{s}$ is a
subalgebra. Then we will conclude that the projection of $\g{s}$
onto $\g{a}\oplus\g{n}$ is one of the known examples. The final step
is to prove that $\g{s}$ induces the same orbits as its projection
onto $\g{a}\oplus\g{n}$.

In what follows (until Lemma~\ref{thBracketsn} inclusive) we will
work in a context slightly more general than that described above.
Let $\g{s}$ be a subalgebra of $\g{t}\oplus\g{a}\oplus\g{n}$ such
that $\spp$ is abelian. Hence, it is not assumed that the orbits of
the connected subgroup of $G$ whose Lie algebra is $\g{s}$ form a
foliation. Example \ref{exNoFoliation} shows that this can happen.
We first state a few basic lemmas.

From now on, if $\g{v}$ is a vector subspace of $\g{g}$, we denote
by $\pi_{\g{v}}$ the orthogonal projection of $\g{g}$ onto $\g{v}$.
Also, we denote by $\g{s}_n=\pi_{\g{a}\oplus\g{n}}(\g{s})$ the
projection of $\g{s}$ onto $\g{a}\oplus\g{n}$, the noncompact part
of $\g{t}\oplus\g{a}\oplus\g{n}$.

We will first derive some elementary results.

\begin{lemma}\label{thPolarization}
Let $\lambda\in\Sigma$ and $X,Y\in\g{g}_\lambda$. Then
$(1-\theta)[\theta X,Y]=2\langle X,Y\rangle H_\lambda$.
\end{lemma}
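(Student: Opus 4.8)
The plan is to locate $[\theta X,Y]$ inside the grade-zero space $\g{g}_0$ and read off its $\g{a}$-component. Since $\theta\g{g}_\lambda=\g{g}_{-\lambda}$ we have $\theta X\in\g{g}_{-\lambda}$, hence $[\theta X,Y]\in[\g{g}_{-\lambda},\g{g}_\lambda]\subset\g{g}_0=\g{k}_0\oplus\g{a}$. Write $[\theta X,Y]=T+A$ with $T\in\g{k}_0\subset\g{k}$ and $A\in\g{a}\subset\g{p}$. Because $\theta$ fixes $\g{k}$ pointwise and acts as $-\mathrm{id}$ on $\g{p}$, we get $(1-\theta)[\theta X,Y]=(1-\theta)T+(1-\theta)A=0+2A=2A$. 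Thus the lemma reduces to the identity $A=\langle X,Y\rangle H_\lambda$.

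To prove this I would pair $[\theta X,Y]$ against an arbitrary $Z\in\g{a}$ using the Killing form $B$. On the one hand $B(T+A,Z)=B(A,Z)$, since $B(\g{k},\g{p})=0$ (the Cartan decomposition is $B$-orthogonal). On the other hand, $\ad$-invariance of $B$ together with $[Z,Y]=\lambda(Z)Y$ gives
\begin{equation*}
B([\theta X,Y],Z)=B(\theta X,[Y,Z])=-\lambda(Z)\,B(\theta X,Y).
\end{equation*}
Now $\theta$-invariance of $B$ yields $B(\theta X,Y)=B(X,\theta Y)=-\langle X,Y\rangle$ by the very definition of $\langle\,\cdot\,,\,\cdot\,\rangle$. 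Combining these, $B(A,Z)=\lambda(Z)\langle X,Y\rangle$ for all $Z\in\g{a}$. Finally, on $\g{a}$ one has $\lambda(Z)=\langle H_\lambda,Z\rangle=B(H_\lambda,Z)$ (using $\theta Z=-Z$ on $\g{a}$), so $B(A,Z)=B(\langle X,Y\rangle H_\lambda,Z)$ for every $Z\in\g{a}$; since $B$ is positive definite, hence nondegenerate, on $\g{a}\subset\g{p}$, this forces $A=\langle X,Y\rangle H_\lambda$, as desired.

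This argument is essentially bookkeeping, and I do not expect a genuine obstacle. The one point requiring care is the sign convention: keeping straight that $\theta$ acts as $-\mathrm{id}$ on $\g{a}$, so that $\langle\,\cdot\,,\,\cdot\,\rangle$ and $B$ agree up to sign in the relevant places and in particular $\lambda=B(H_\lambda,\,\cdot\,)$ holds on $\g{a}$, and that $B$ restricted to $\g{p}$ (hence to $\g{a}$) is positive definite, which is what licenses the final cancellation. Alternatively one could first verify the quadratic case $X=Y$ and then polarize, but since the stated identity is already bilinear in $(X,Y)$ the direct computation above is just as short.
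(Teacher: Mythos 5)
Your proof is correct, and it takes a genuinely different route from the paper's. The paper's proof is a one-liner: it polarizes the standard quadratic identity $[\theta Z,Z]=\langle Z,Z\rangle H_\lambda$ for $Z\in\g{g}_\lambda$. Concretely, one expands $[\theta(X+Y),X+Y]=\langle X+Y,X+Y\rangle H_\lambda$, subtracts the diagonal terms $[\theta X,X]$ and $[\theta Y,Y]$, and uses $[\theta Y,X]=-\theta[\theta X,Y]$ to recognize the remaining cross terms as $(1-\theta)[\theta X,Y]$. Your argument instead derives the identity from scratch: you locate $[\theta X,Y]$ in $\g{g}_0=\g{k}_0\oplus\g{a}$, observe that $(1-\theta)$ kills the $\g{k}_0$-part and doubles the $\g{a}$-part, and then pin down the $\g{a}$-part by pairing against arbitrary $Z\in\g{a}$ via the Killing form, using $\ad$-invariance, $\theta$-invariance, and nondegeneracy of $B$ on $\g{a}$. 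Every step is sound — in particular the associativity $B([\theta X,Y],Z)=B(\theta X,[Y,Z])$, the sign bookkeeping $B(\theta X,Y)=-\langle X,Y\rangle$, and the identification $\lambda(Z)=B(H_\lambda,Z)$ on $\g{a}$ all check out. The trade-off is that the paper's proof is shorter but presupposes the quadratic case as known (it is a standard fact, e.g.\ in Helgason or Knapp), whereas yours is self-contained and makes the mechanism transparent; you even note the polarization route as an alternative at the end. Either is acceptable.
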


\begin{proof}
It follows from polarization of the identity
$[\theta(X+Y),X+Y]=\langle X+Y,X+Y\rangle H_\lambda$.
\end{proof}

\begin{lemma}\label{th1dim}
Let $\alpha$ be a simple root and $\g{v}\subset\g{g}_\alpha$ be a
linear subspace such that $[\g{v},\g{v}]=\{0\}$. Then
$[\g{v},\theta\g{v}]\subset\g{a}$ if and only if $\g{v}$ is
one-dimensional.
\end{lemma}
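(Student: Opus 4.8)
The plan is to derive both implications from the polarization identity of Lemma~\ref{thPolarization} together with the $\theta$-eigenspace decomposition $\g{g}_0=\g{k}_0\oplus\g{a}$, on which $\theta$ acts as $+\mathrm{id}$ on $\g{k}_0$ and as $-\mathrm{id}$ on $\g{a}$. Since $\alpha$ is a simple root we have $\g{v}\subset\g{g}_\alpha$ and $\theta\g{g}_\alpha=\g{g}_{-\alpha}$, so $[\g{v},\theta\g{v}]\subset[\g{g}_\alpha,\g{g}_{-\alpha}]\subset\g{g}_0$. (I assume $\g{v}\neq\{0\}$; the degenerate case $\g{v}=\{0\}$ satisfies the left-hand condition trivially.)

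For the implication ``$\g{v}$ one-dimensional $\Rightarrow[\g{v},\theta\g{v}]\subset\g{a}$'', first I would write $\g{v}=\R X$ and note that $\theta[\theta X,X]=[X,\theta X]=-[\theta X,X]$, so $[\theta X,X]$ lies in the $(-1)$-eigenspace of $\theta$ inside $\g{g}_0$, which is exactly $\g{a}$; hence $[\g{v},\theta\g{v}]=\R[\theta X,X]\subset\g{a}$. Note that the hypothesis $[\g{v},\g{v}]=\{0\}$ is not used here, being automatic in dimension one.

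For the converse I would assume $[\g{v},\theta\g{v}]\subset\g{a}$ and $[\g{v},\g{v}]=\{0\}$. Given $X,Y\in\g{v}$, decompose $[\theta X,Y]=Z+H$ with $Z\in\g{k}_0$, $H\in\g{a}$; applying $1-\theta$ and using Lemma~\ref{thPolarization} gives $2H=(1-\theta)[\theta X,Y]=2\langle X,Y\rangle H_\alpha$, so $H=\langle X,Y\rangle H_\alpha$. Since $[\theta X,Y]\in\g{a}$ by hypothesis, the component $Z$ vanishes, and therefore $[\theta X,Y]=\langle X,Y\rangle H_\alpha$ for all $X,Y\in\g{v}$. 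The key step is then the Jacobi identity: for $X,Y,Z\in\g{v}$ one has $[Y,Z]=0$, whence
\[
0=[\theta X,[Y,Z]]=[[\theta X,Y],Z]+[Y,[\theta X,Z]]=\langle\alpha,\alpha\rangle\bigl(\langle X,Y\rangle Z-\langle X,Z\rangle Y\bigr),
\]
using $[H_\alpha,W]=\langle\alpha,\alpha\rangle W$ for $W\in\g{g}_\alpha$. Since $\langle\alpha,\alpha\rangle\neq0$ this yields $\langle X,Y\rangle Z=\langle X,Z\rangle Y$; taking $X=Y\neq0$ and using positive definiteness of $\langle\,\cdot\,,\,\cdot\,\rangle$ shows that every $Z\in\g{v}$ is a scalar multiple of $X$, so $\dim\g{v}\le1$.

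The computation is short and I do not expect a genuine obstacle. The only point requiring care is the bookkeeping that separates the $\g{k}_0$- and $\g{a}$-components of $[\theta X,Y]$, together with the observation that, because $\alpha$ is a single simple root, $\g{v}$ sits inside one root space and Lemma~\ref{thPolarization} applies without modification. Conceptually, the content is that $[\g{v},\theta\g{v}]\subset\g{a}$ forces the bracket onto its ``diagonal'' value $\langle X,Y\rangle H_\alpha$, after which Jacobi together with $[\g{v},\g{v}]=\{0\}$ immediately collapses $\g{v}$ to a line.
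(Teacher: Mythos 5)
Your proof is correct. It rests on the same two pillars as the paper's proof — Lemma~\ref{thPolarization} and the Jacobi identity applied in conjunction with $[\g{v},\g{v}]=\{0\}$ — but packages them differently. The paper argues by contradiction: it picks two orthogonal nonzero $X,Y\in\g{v}$, uses polarization plus the hypothesis to force $[\theta X,Y]\in\g{k}_0\cap\g{a}=\{0\}$, and then computes $\langle[\theta X,Y],[\theta X,Y]\rangle=\langle\alpha,\alpha\rangle\langle X,X\rangle\langle Y,Y\rangle>0$ via self-adjointness of $\ad X$ and Jacobi, yielding the contradiction. You instead first establish the explicit identity $[\theta X,Y]=\langle X,Y\rangle H_\alpha$ for \emph{all} $X,Y\in\g{v}$ (by comparing the $\g{a}$-component from polarization with the vanishing $\g{k}_0$-component from the hypothesis), and then expand $[\theta X,[Y,Z]]=0$ by Jacobi to get $\langle X,Y\rangle Z=\langle X,Z\rangle Y$, collapsing $\g{v}$ to a line. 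Your version is direct rather than by contradiction, avoids choosing orthogonal vectors, and makes the structural form of the bracket on $\g{v}\times\theta\g{v}$ explicit; the paper's version is marginally more self-contained in that it never needs the full identity $[\theta X,Y]=\langle X,Y\rangle H_\alpha$. Both are complete and correct.
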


\begin{proof}
If $\g{v}=\R X$ with nonzero $X\in\g{g}_\alpha$, then $[\theta X,
X]=\langle X,X\rangle H_\alpha\in\g{a}$. For the converse, assume
that $\g{v}$ has dimension greater than $1$ and that
$[\g{v},\theta\g{v}]\subset\g{a}$. Let $X,Y\in\g{v}$ be two nonzero
orthogonal vectors. By Lemma \ref{thPolarization} and orthogonality
of $X$ and $Y$, $(1-\theta)[\theta X,Y]=2\langle X,Y\rangle
H_\alpha=0$, so $[\theta X,Y]\in\g{k}_0\cap\g{a}=\{0\}$. Now,
$\langle[\theta X,Y],[\theta X,Y]\rangle =-\langle[X,[\theta
X,Y]],Y\rangle$ and using the Jacobi identity and the fact that
$[\g{v},\g{v}]=\{0\}$ we get $[X,[\theta X,Y]]=-[Y,[X,\theta
X]]=\langle
X,X\rangle[Y,H_\alpha]=-\langle\alpha,\alpha\rangle\langle
X,X\rangle Y$. Altogether this implies $\langle[\theta X,Y],[\theta
X,Y]\rangle=\langle\alpha,\alpha\rangle\langle X,X\rangle\langle
Y,Y\rangle>0$, which gives a contradiction.
\end{proof}

\begin{lemma}\label{thNonzeroBracket}
Let $\lambda,\mu\in\Sigma$ such that $\lambda-\mu\not\in\Sigma$. Let
$X\in\g{g}_\lambda$ and $Y\in\g{g}_\mu$ be nonzero vectors. If
$[X,Y]=0$ then $\lambda+\mu$ is not a root. In particular, if
$\alpha,\beta\in\Lambda$ and $X\in\g{g}_\alpha$ and
$Y\in\g{g}_\beta$ are nonzero vectors, then $[X,Y]=0$ implies that
$\alpha$ and $\beta$ are not connected in the Dynkin diagram.
\end{lemma}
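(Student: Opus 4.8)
The plan is to deduce the statement from Lemma~\ref{thPolarization} --- which provides the relevant $\g{sl}_2$-structure --- together with the elementary geometry of the (possibly non-reduced) restricted root system. Throughout I assume $\lambda\neq\mu$; this costs nothing for the applications, since the difference of two distinct simple roots is never a root (so the hypothesis $\lambda-\mu\notin\Sigma$ is automatic for $\alpha,\beta\in\Lambda$), and for $\alpha=\beta$ the assertion about the Dynkin diagram is vacuous. Note that $\lambda\neq\mu$ and $\lambda-\mu\notin\Sigma$ together give $\g{g}_{\mu-\lambda}=\g{g}_{\lambda-\mu}=\{0\}$.

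First I would show that $[X,Y]=0$ forces $\langle\lambda,\mu\rangle=0$. Since $[\theta X,X]$ lies in $\g{g}_0\cap\g{p}=\g{a}$, applying Lemma~\ref{thPolarization} with $Y=X$ gives $[\theta X,X]=\langle X,X\rangle H_\lambda$. Now expand the Jacobi identity for the triple $\theta X,\,X,\,Y$: the summand $[\theta X,[X,Y]]$ vanishes by hypothesis, the summand $[X,[Y,\theta X]]$ vanishes because $[Y,\theta X]\in\g{g}_{\mu-\lambda}=\{0\}$, and $[Y,[\theta X,X]]=\langle X,X\rangle[Y,H_\lambda]=-\langle X,X\rangle\langle\lambda,\mu\rangle\,Y$. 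Since $X\neq0$, $Y\neq0$ and $\langle\,\cdot\,,\,\cdot\,\rangle$ is positive definite, it follows that $\langle\lambda,\mu\rangle=0$.

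Next I would rule out $\lambda+\mu\in\Sigma$. Suppose it is a root. Because $\mu-\lambda\notin\Sigma\cup\{0\}$, the $\lambda$-string through $\mu$ has no term below $\mu$; as it contains $\mu+\lambda$, its upper length $q$ satisfies $q\geq1$, and the root-string identity $2\langle\mu,\lambda\rangle/\langle\lambda,\lambda\rangle=-q$ forces $\langle\lambda,\mu\rangle<0$. (Equivalently, in the $\g{sl}_2$-triple built from $X$ and $\theta X$ via Lemma~\ref{thPolarization}, $\ad(\theta X)^2$ annihilates $\g{g}_{\lambda+\mu}$ because it lands in $\g{g}_{\mu-\lambda}=\{0\}$, and a nonzero vector killed by the square of the lowering operator in a finite-dimensional $\g{sl}_2$-module has weight at most $1$, which forces $2+2\langle\mu,\lambda\rangle/\langle\lambda,\lambda\rangle\le1$.) This contradicts the previous step, so $\lambda+\mu$ is not a root.

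For the final clause, for $\alpha,\beta\in\Lambda$ the hypothesis $\alpha-\beta\notin\Sigma$ always holds, and for distinct simple roots the condition $\alpha+\beta\in\Sigma$ is equivalent to $\langle\alpha,\beta\rangle\neq0$, i.e.\ to $\alpha$ and $\beta$ being joined in the Dynkin diagram. I do not anticipate any real obstacle: the content is entirely the Jacobi computation, and the only points needing attention are that $[\theta X,X]$ genuinely lies in $\g{a}$ (so Lemma~\ref{thPolarization} determines it exactly), the sign in $[Y,H_\lambda]=-\langle\lambda,\mu\rangle Y$, and the fact that restricted root systems are honest root systems so the string argument applies verbatim.
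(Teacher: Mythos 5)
Your proof is correct and follows essentially the same route as the paper's: a Jacobi identity computation using Lemma~\ref{thPolarization} to force $\langle\lambda,\mu\rangle=0$, followed by the root-string observation that a vanishing Cartan integer together with $\lambda-\mu\notin\Sigma$ rules out $\lambda+\mu\in\Sigma$. The only cosmetic difference is that you apply Jacobi to the triple $\theta X,X,Y$ (with $[\theta X,X]=\langle X,X\rangle H_\lambda$), whereas the paper applies it to $X,Y,\theta Y$ (with $[Y,\theta Y]=-\langle Y,Y\rangle H_\mu$), and you spell out the root-string step and the implicit assumption $\lambda\neq\mu$ a bit more explicitly than the paper does.
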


\begin{proof}
Assume that $[X,Y]=0$. Since $[\theta Y,X]\in\g{g}_{\lambda-\mu}=0$
we have, using the Jacobi identity, that
\begin{equation*}
0=[[X,Y],\theta Y]=-[[Y,\theta Y],X]=\langle Y,Y\rangle[H_\mu,X]
=\langle Y,Y\rangle\langle\lambda,\mu\rangle X.
\end{equation*}
Hence, $\langle\lambda,\mu\rangle=0$. Since $\lambda
-\mu\not\in\Sigma$ and the corresponding Cartan integer satisfies
$A_{\mu\lambda}=0$, we get that $\lambda\pm\mu\not\in\Sigma$. For
the second part, just note that $\alpha-\beta$ is not a root.
\end{proof}

\begin{lemma}\label{thTspp}
Let $\Psi\subset\Sigma^+$. For each $\lambda\in\Psi$ let
$v_\lambda\subset\g{g}_\lambda$ be a one-dimensional linear
subspace. Then the linear subspace $[\g{k}_0,\g{a}\oplus
\left(\bigoplus_{\lambda\in\Psi}v_\lambda\right)]$ is orthogonal to
$\g{a}\oplus
\left(\bigoplus_{\lambda\in\Psi}(1-\theta)v_\lambda\right)
\oplus\left(\bigoplus_{\lambda\in\Sigma^+\setminus\Psi}
\g{p}_\lambda\right)$.
\end{lemma}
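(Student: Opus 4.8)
The plan is to reduce everything to the single observation that $\ad(Z)$ is skew-symmetric with respect to the inner product $\langle X,Y\rangle=-B(X,\theta Y)$ whenever $Z\in\g{k}$. First I would use that $\g{k}_0=Z_\g{k}(\g{a})$ centralizes $\g{a}$, so $[\g{k}_0,\g{a}]=\{0\}$ and hence
\begin{equation*}
\left[\g{k}_0,\ \g{a}\oplus\bigoplus_{\lambda\in\Psi}v_\lambda\right]=\sum_{\lambda\in\Psi}[\g{k}_0,v_\lambda].
\end{equation*}
Since $\g{k}_0\subset\g{g}_0$ and $[\g{g}_0,\g{g}_\lambda]\subset\g{g}_\lambda$, each summand $[\g{k}_0,v_\lambda]$ lies in $\g{g}_\lambda$ with $\lambda\in\Psi\subset\Sigma^+$. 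Therefore it suffices to prove, for each fixed $\lambda\in\Psi$, that the subspace $[\g{k}_0,v_\lambda]\subset\g{g}_\lambda$ is orthogonal (with respect to $\langle\,\cdot\,,\,\cdot\,\rangle$) to each of the three pieces appearing in the asserted decomposition.

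Two of the three checks are immediate from the orthogonality of distinct restricted root spaces: $\g{g}_\lambda\perp\g{g}_0\supset\g{a}$; and for $\mu\in\Sigma^+\setminus\Psi$ one has $\g{p}_\mu\subset\g{g}_\mu\oplus\g{g}_{-\mu}$ with $\lambda\notin\{\mu,-\mu\}$ (as $\lambda,\mu$ are distinct positive roots), so $\g{g}_\lambda\perp\g{p}_\mu$. For the third piece, write $(1-\theta)v_\mu\subset v_\mu\oplus\theta v_\mu$ with $v_\mu\subset\g{g}_\mu$ and $\theta v_\mu\subset\g{g}_{-\mu}$; the summand $\theta v_\mu$ is orthogonal to $\g{g}_\lambda$ exactly as above, and $v_\mu$ is orthogonal to $\g{g}_\lambda$ unless $\mu=\lambda$. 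So everything comes down to the case $\mu=\lambda$, that is, to showing $[\g{k}_0,v_\lambda]\perp v_\lambda$.

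This last point, which I regard as the only real content of the lemma, will follow from skew-symmetry of $\ad(Z)$ for $Z\in\g{k}_0\subset\g{k}$: using $\theta Z=Z$, the $\ad$-invariance of the Killing form, and the fact that $\theta$ is a Lie algebra automorphism (so $\theta[Z,Y]=[\theta Z,\theta Y]=[Z,\theta Y]$), one computes $\langle\ad(Z)X,Y\rangle=-\langle X,\ad(Z)Y\rangle$ for all $X,Y\in\g{g}$, hence $\langle\ad(Z)X,X\rangle=0$. Applying this with $X$ a generator of the one-dimensional space $v_\lambda$ gives $\langle[Z,X],X\rangle=0$ for all $Z\in\g{k}_0$, i.e.\ $[\g{k}_0,v_\lambda]\perp v_\lambda$; that $v_\lambda$ is one-dimensional is precisely what makes this orthogonality suffice. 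Combining the three orthogonality statements proves the lemma, and I do not anticipate any obstacle beyond this short skew-symmetry computation.
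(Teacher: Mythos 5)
Your argument is correct and follows essentially the same route as the paper's: after noting $[\g{k}_0,\g{a}]=0$, you use that $[\g{k}_0,v_\lambda]\subset\g{g}_\lambda$ together with orthogonality of distinct restricted root spaces to dispatch everything except the $\mu=\lambda$ term, which you settle by skew-symmetry of $\ad(Z)$ for $Z\in\g{k}_0$ combined with one-dimensionality of $v_\lambda$. The paper's proof makes exactly these same two observations (skew-symmetry and root-space orthogonality), so there is no substantive difference.
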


\begin{proof}
Obviously, $[\g{k}_0,\g{a}]=0$, so there is nothing to prove in this
case. Assume that each $v_\lambda$ is spanned by a corresponding
vector $E_\lambda$. Let $T\in\g{k}_0$. If $H\in\g{a}$ then
$\langle[T,E_\lambda],H\rangle=-\langle E_\lambda,[T,H]\rangle=0$.
Since $[\g{k}_0,\g{g}_\lambda]\subset\g{g}_\lambda$, for any
$\mu\in\Sigma^+$ with $\mu\neq\lambda$ and any $\xi\in\g{p}_\mu$ we
obviously have $\langle[T,E_\lambda],\xi\rangle=0$. Finally, since
$\ad(T)$ is skewsymmetric,
$\langle[T,E_\lambda],(1-\theta)E_\lambda\rangle=
\langle[T,E_\lambda],E_\lambda\rangle=0$, from where the result
follows.
\end{proof}

We now proceed with the first step of the proof, which is describing
abelian subspaces of $\g{a}\oplus\g{p}^1$. Recall that ${\mathfrak
p}^1 = {\mathfrak p} \cap \left( {\mathfrak g}_\emptyset^1 \oplus
{\mathfrak g}_\emptyset^{-1} \right)$. First, we need the following
lemma.

\begin{lemma}\label{thq1dim}
Let $\g{q}\subset\g{a}\oplus\g{p}^1$ be an abelian subspace and
define $\Psi=\{\alpha\in\Lambda:\pi_{\g{g}_\alpha}(\g{q})\neq 0\}$.
Then $\dim\pi_{\g{g}_\alpha}(\g{q})=1$ for all $\alpha\in\Psi$.
\end{lemma}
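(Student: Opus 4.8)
The plan is to argue by contradiction, assuming that $\dim\pi_{\g{g}_\alpha}(\g{q})\geq 2$ for some $\alpha\in\Psi$, and derive a contradiction with the hypothesis that $\g{q}$ is abelian. The decisive structural input is that $\g{g}_\emptyset^1=\bigoplus_{\alpha\in\Lambda}\g{g}_\alpha$ (the grade-one part with respect to the characteristic element $H^\emptyset=\sum_i H^i$ consists exactly of the simple root spaces), so that an element of $\g{p}^1$ has the form $\sum_{\alpha\in\Lambda}(1-\theta)X_\alpha$ with $X_\alpha\in\g{g}_\alpha$, and a general element of $\g{a}\oplus\g{p}^1$ is $H+\sum_{\alpha\in\Lambda}(1-\theta)X_\alpha$.

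First I would reduce to the $\g{p}^1$-part. Projecting $\g{q}$ onto a fixed simple root space $\g{g}_\alpha$ and recalling that $\g{q}$ is abelian, I want to show that the $\g{g}_\alpha$-components $\{X_\alpha\}$ of the elements of $\g{q}$ span a subspace $\g{v}_\alpha\subset\g{g}_\alpha$ with $[\g{v}_\alpha,\g{v}_\alpha]=\{0\}$. This should follow by taking two elements $\xi=H+\sum_\gamma(1-\theta)X_\gamma$, $\xi'=H'+\sum_\gamma(1-\theta)Y_\gamma$ of $\g{q}$, computing $[\xi,\xi']=0$, and isolating the component of the bracket that lands in $\g{g}_{2\alpha}$: the only contributions to $\g{g}_{2\alpha}$ come from $[X_\alpha,Y_\alpha]$ and $[\theta X_\alpha,\theta Y_\alpha]$ (since $2\alpha$ is a root at level $2$, while $\alpha+\gamma$ for $\gamma\neq\alpha$ simple is either not a root or, crucially, if it is a root, it is \emph{not} $2\alpha$; a small root-system check handles the possibility $\alpha+\gamma=2\alpha$, which is impossible), and grade considerations separate $[X_\alpha,Y_\alpha]\in\g{g}_{2\alpha}$ from the other pieces. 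Hence $[X_\alpha,Y_\alpha]=0$ for all such pairs, so $[\g{v}_\alpha,\g{v}_\alpha]=\{0\}$.

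Next, using that $\g{q}$ is abelian I want to conclude $[\g{v}_\alpha,\theta\g{v}_\alpha]\subset\g{a}$, and then invoke Lemma~\ref{th1dim} to force $\dim\g{v}_\alpha=1$, which is exactly the claim since $\g{v}_\alpha=\pi_{\g{g}_\alpha}(\g{q})$. To get $[\g{v}_\alpha,\theta\g{v}_\alpha]\subset\g{a}$: from $[\xi,\xi']=0$, project onto $\g{g}_0=\g{k}_0\oplus\g{a}$; the degree-zero part of $[(1-\theta)X_\alpha,(1-\theta)Y_\alpha]$ is $[X_\alpha,\theta Y_\alpha]+[\theta X_\alpha,Y_\alpha]$ (the terms $[X_\alpha,Y_\alpha]$, $[\theta X_\alpha,\theta Y_\alpha]$ live in degrees $\pm 2$), while brackets with the abelian part $H,H'$ and cross-terms $[(1-\theta)X_\alpha,(1-\theta)Y_\beta]$ with $\beta\neq\alpha$ contribute to $\g{g}_0$ only through $[X_\alpha,\theta Y_\beta]\in\g{g}_{\alpha-\beta}$, which vanishes since $\alpha-\beta$ is not a root for distinct simple roots. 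Collecting the degree-zero component of $0=[\xi,\xi']$ and noting the $\g{a}$-parts $[H,(1-\theta)X_\gamma]=\gamma(H)(1+\theta)X_\gamma$ have no $\g{g}_0$-component beyond what sits in higher root spaces, one extracts $[X_\alpha,\theta Y_\alpha]+[\theta X_\alpha,Y_\alpha]=0$ modulo $\g{a}$; applying Lemma~\ref{thPolarization} to $X_\alpha,Y_\alpha\in\g{g}_\alpha$ turns this into $\langle X_\alpha,Y_\alpha\rangle H_\alpha\in\g{a}$ for the symmetric part and identifies the $\g{k}_0$-part of $[X_\alpha,\theta Y_\alpha]$ as the obstruction, giving $[X_\alpha,\theta Y_\alpha]-[\theta X_\alpha, Y_\alpha]\in\g{k}_0$ must also vanish. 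In short, $[\g{v}_\alpha,\theta\g{v}_\alpha]\subset\g{a}$, and Lemma~\ref{th1dim} finishes the proof.

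The main obstacle I anticipate is the bookkeeping in the degree-zero computation: carefully isolating which brackets among $[H,\,\cdot\,]$, $[X_\alpha,\theta Y_\alpha]$, $[\theta X_\alpha,Y_\alpha]$, and the cross-terms actually land in $\g{g}_0$ versus in $\g{g}_{\pm\alpha},\g{g}_{\pm 2\alpha},\g{g}_{\alpha\pm\beta}$, and confirming that no $\g{g}_{2\alpha}$-contribution can come from a sum $\alpha+\gamma$ with $\gamma$ a \emph{different} simple root. Both points are routine root-system facts (distinct simple roots $\alpha,\beta$ have $\alpha-\beta\notin\Sigma$, and $\alpha+\gamma=2\alpha$ would force $\gamma=\alpha$), but they are the place where the argument could go wrong if the grading is not tracked precisely.
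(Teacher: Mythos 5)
Your step 1 is sound and matches one input the paper also uses: extracting the $\g{g}_{2\alpha}$-component of $[\xi,\eta]=0$ does give $[\xi_\alpha,\eta_\alpha]=0$ for all $\xi,\eta\in\g{q}$, hence $[\g{v}_\alpha,\g{v}_\alpha]=\{0\}$ for $\g{v}_\alpha=\pi_{\g{g}_\alpha}(\g{q})$. But your step 2 has a genuine gap. The $\g{g}_0$-component of $[\xi,\eta]=0$ yields only the aggregate relation
\begin{equation*}
\sum_{\gamma\in\Psi}(1+\theta)[\xi_\gamma,\theta\eta_\gamma]=0,
\end{equation*}
i.e.\ $\sum_\gamma K_\gamma=0$ where $K_\gamma\in\g{k}_0$ is the $\g{k}_0$-part of $[\xi_\gamma,\theta\eta_\gamma]$. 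Nothing in this identity isolates the single term $K_\alpha$: the $K_\gamma$ for different $\gamma$ all live in $\g{k}_0$ with no grading to separate them, so they may cancel without individually vanishing. You cannot conclude $[\g{v}_\alpha,\theta\g{v}_\alpha]\subset\g{a}$, and hence you cannot yet invoke Lemma~\ref{th1dim}. (When $|\Psi|=1$ the sum has one term and your argument closes; the case $|\Psi|\geq 2$ is exactly where it breaks.)

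The paper resolves this by a more global argument: besides the $\g{g}_0$- and $\g{g}_{2\alpha}$-components, it also records the $\g{g}_{\alpha+\beta}$-relation $[\xi_\alpha,\eta_\beta]+[\xi_\beta,\eta_\alpha]=0$, then applies $\ad(\theta\xi_\alpha)$ to the entire sum $\sum_\beta\bigl([\xi_\beta,\theta\eta_\beta]+\langle\xi_\beta,\eta_\beta\rangle H_\beta\bigr)=0$ and, using Jacobi and these cross-relations, reduces everything to a scalar identity $\sum_{\beta}\langle\alpha,\beta\rangle\langle\xi_\beta,\xi_\beta\rangle=0$. The contradiction is then obtained from the linear independence of the simple roots, not from termwise vanishing. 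To repair your proof you would have to incorporate this cross-interference (the $\g{g}_{\alpha+\beta}$-constraints and the $\ad(\theta\xi_\alpha)$ trick); as written, the isolation of the $\alpha$-term is unjustified.
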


\begin{proof}
Assume the statement is not true. Any two vectors $\xi,\eta\in\g{q}$
can be written as
$\xi=\xi_0+\sum_{\alpha\in\Psi}(1-\theta)\xi_\alpha$ and
$\eta=\eta_0+\sum_{\alpha\in\Psi}(1-\theta)\eta_\alpha$, with
$\xi_\alpha,\eta_\alpha\in\g{g}_\alpha$. We denote by
$\Psi'\subset\Psi\subset\Lambda$ the subset of roots
$\alpha\in\Lambda$ such that $\xi_\alpha$ and $\eta_\alpha$ are
linearly independent. If the statement of this lemma is not true, we
can find $\xi$ and $\eta$ such that the corresponding $\Psi'$ is
nonempty. An easy calculation taking into account that
$\alpha-\beta$ is not a root if $\alpha,\beta\in\Lambda$ yields
\begin{equation*}
0=[\xi,\eta]= \sum_\alpha(1+\theta)(\alpha(\xi_0)\eta_\alpha
-\alpha(\eta_0)\xi_\alpha)
-\sum_\alpha(1+\theta)[\xi_\alpha,\theta\eta_\alpha]
+\sum_{\alpha,\beta}(1+\theta)[\xi_\alpha,\eta_\beta].
\end{equation*}
Then it follows in particular that
$\sum_\alpha(1+\theta)[\xi_\alpha,\theta\eta_\alpha]=0$,
$[\xi_\alpha,\eta_\alpha]=0$ for all $\alpha\in\Psi$ and
$[\xi_\alpha,\eta_\beta]+[\xi_\beta,\eta_\alpha]=0$ for all
$\alpha,\beta\in\Psi$ with $\alpha\neq\beta$.

By Lemma \ref{thPolarization},
$(1-\theta)[\theta\xi_\alpha,\eta_\alpha]
=2\langle\xi_\alpha,\eta_\alpha\rangle H_\alpha$, which implies
\begin{equation*}
0=\frac{1}{2}\sum_{\alpha\in\Psi}
(1+\theta)[\xi_\alpha,\theta\eta_\alpha]
=\sum_{\alpha\in\Psi}([\xi_\alpha,\theta\eta_\alpha]
+\langle\xi_\alpha,\eta_\alpha\rangle H_\alpha)
=\sum_{\alpha\in\Psi'}([\xi_\alpha,\theta\eta_\alpha]
+\langle\xi_\alpha,\eta_\alpha\rangle H_\alpha)
\end{equation*}
the last equality following from the fact that
$[\theta\xi_\alpha,\xi_\alpha]=\langle\xi_\alpha,\xi_\alpha\rangle
H_\alpha$.

For $\alpha\in\Psi'$, using $[\xi_\alpha,\eta_\alpha]=0$ and
$[\theta\xi_\alpha,\xi_\alpha]=\langle\xi_\alpha,\xi_\alpha\rangle
H_\alpha$, we get
\begin{equation*}
[[\xi_\alpha,\theta\eta_\alpha],\theta\xi_\alpha]=
-[[\theta\eta_\alpha,\theta\xi_\alpha],\xi_\alpha]
-[[\theta\xi_\alpha,\xi_\alpha],\theta\eta_\alpha]
=\langle\alpha,\alpha\rangle\langle\xi_\alpha,\xi_\alpha\rangle
\theta\eta_\alpha.
\end{equation*}

Now choose $\alpha,\beta\in\Psi'$ with $\beta\neq\alpha$. Since
$\beta-\alpha$ is not a root and
$[\theta\eta_\beta,\theta\xi_\alpha]=\theta[\eta_\beta,\xi_\alpha]
=\theta[\xi_\beta,\eta_\alpha]=[\theta\xi_\beta,\theta\eta_\alpha]$
we obtain
\begin{align*}
[[\xi_\beta,\theta\eta_\beta],\theta\xi_\alpha]
&={}-[[\theta\eta_\beta,\theta\xi_\alpha],\xi_\beta]
-[[\theta\xi_\alpha,\xi_\beta],\theta\eta_\beta]]\
={} -[[\theta\xi_\beta,\theta\eta_\alpha],\xi_\beta]\\
&=[[\theta\eta_\alpha,\xi_\beta],\theta\xi_\beta]
+[[\xi_\beta,\theta\xi_\beta],\theta\eta_\alpha]\
={} \langle\alpha,\beta\rangle\langle\xi_\beta,
\xi_\beta\rangle\theta\eta_\alpha.
\end{align*}
Taking into account the last two displayed equations we conclude
\begin{equation*}
[[\xi_\beta,\theta\eta_\beta],\theta\xi_\alpha]
=\langle\alpha,\beta\rangle\langle\xi_\beta,
\xi_\beta\rangle\theta\eta_\alpha,
\mbox{ for all }\alpha,\beta\in\Psi'.
\end{equation*}

Therefore, for arbitrary $\alpha\in\Psi'$, the identity
$\sum_{\beta\in\Psi'}([\xi_\beta,\theta\eta_\beta]
+\langle\xi_\beta,\eta_\beta\rangle H_\beta)=0$ yields
\begin{align*}
0 &= \left[\sum_{\beta\in\Psi'}([\xi_\beta,\theta\eta_\beta]
    +\langle\xi_\beta,\eta_\beta\rangle H_\beta),
    \theta\xi_\alpha\right]
= \sum_{\beta\in\Psi'}
    \left([[\xi_\beta,\theta\eta_\beta],\theta\xi_\alpha]
    +\langle\xi_\beta,\eta_\beta\rangle [H_\beta,
    \theta\xi_\alpha]\right)\\
&= \sum_{\beta\in\Psi'}
    \left(\langle\alpha,\beta\rangle\langle\xi_\beta,\xi_\beta\rangle
    \theta\eta_\alpha-\langle\alpha,\beta\rangle
    \langle\xi_\beta,\xi_\beta\rangle\theta\xi_\alpha\right)\\
&= \left(\sum_{\beta\in\Psi'}
    \langle\alpha,\beta\rangle\langle\xi_\beta,\xi_\beta\rangle
    \right)\theta\eta_\alpha
    -\left(\sum_{\beta\in\Psi'}
    \langle\alpha,\beta\rangle
    \langle\xi_\beta,\xi_\beta\rangle\right)\theta\xi_\alpha.
\end{align*}
Since $\alpha\in\Psi'$, $\theta\eta_\alpha$ and
$\theta\xi_\alpha$ are linearly independent, the only way the above
equality can hold is when the coefficients of $\theta\eta_\alpha$
and $\theta\xi_\alpha$ are simultaneously zero. In particular,
$\sum_{\beta\in\Psi'}
\langle\alpha,\beta\rangle\langle\xi_\beta,\xi_\beta\rangle=0$. Hence,
$\sum_{\beta\in\Psi'}\langle\xi_\beta,\xi_\beta\rangle\beta$ is
orthogonal to $\mathop{\rm span}\Psi'$. Since it is also a
vector in $\mathop{\rm span}\Psi'$ and the simple roots are
linearly independent, it follows that
$\langle\xi_\beta,\xi_\beta\rangle=0$ for all $\beta\in\Psi'$,
contradiction.
\end{proof}

We say that a subset $\Phi\subset\Lambda$ is \emph{connected} if the
subdiagram of the Dynkin diagram determined by the roots of $\Phi$
is connected. We say that two subsets $\Phi,\Phi^\prime
\subset\Lambda$ are \emph{disconnected} or \emph{orthogonal} if for
any $\alpha\in\Phi$ and any $\beta\in\Phi^\prime$, $\alpha$ and
$\beta$ are not connected in the Dynkin diagram (that is,
$\alpha+\beta$ is not a root).

\begin{proposition}\label{thAbelian}
Let $\g{q}\subset\g{a}\oplus\g{p}^1$ be an abelian subspace and
$\Psi=\{\alpha\in\Lambda:\pi_{\g{g}_\alpha}(\g{q})\neq 0\}$. This
set can be decomposed as $\Psi=\bigcup_{i=1}^k\Psi_i$ with
$\Psi_i\subset\Lambda $ connected, and $\Psi_i$ and $\Psi_j$
disconnected whenever $i\neq j$. Then there exists a map
$c:\Psi\to\R$, $\alpha\mapsto c_\alpha$, and vectors
$E_i\in\bigoplus_{\alpha\in\Psi_i}\g{g}_\alpha$,
$i\in\{1,\dots,k\}$, with $\pi_{\g{g}_\alpha}(E_i)\neq 0$ for all
$\alpha\in\Psi_i$, such that $\g{q}$ is a linear subspace of
\begin{equation*}
\g{v}_{\g{q}}=\g{a}_\Psi
\oplus\left(\bigoplus_{i=1}^k\R\left(\sum_{\alpha\in\Psi_i}
{c_\alpha}H_\alpha +(1-\theta)E_i \right)\right).
\end{equation*}
\end{proposition}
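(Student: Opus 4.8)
The plan is to analyze an abelian subspace $\g{q} \subset \g{a} \oplus \g{p}^1$ by decomposing it according to its "shape" along the simple root spaces. By Lemma~\ref{thq1dim} we already know $\dim \pi_{\g{g}_\alpha}(\g{q}) = 1$ for every $\alpha \in \Psi$, so for each such $\alpha$ there is a well-defined line $\pi_{\g{g}_\alpha}(\g{q}) = \R E_\alpha \subset \g{g}_\alpha$ (up to scaling). The first task is to show that these local data glue: on each connected component $\Psi_i$ of $\Psi$ the vectors $E_\alpha$, $\alpha \in \Psi_i$, assemble into a single vector $E_i = \sum_{\alpha \in \Psi_i} E_\alpha$, and the $\g{a}$-components of elements of $\g{q}$ that "carry" the $\Psi_i$-part are forced to be a fixed multiple of $\sum_{\alpha \in \Psi_i} c_\alpha H_\alpha$. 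Writing any $\xi \in \g{q}$ as $\xi = \xi_0 + \sum_{\alpha \in \Psi}(1-\theta)\xi_\alpha$ with $\xi_0 \in \g{a}$, $\xi_\alpha \in \g{g}_\alpha$, the bracket relation $[\xi,\eta]=0$ for $\xi,\eta \in \g{q}$ splits, exactly as in the proof of Lemma~\ref{thq1dim}, into the $\g{k}_0$-part (contribution in $\bigoplus_\alpha \g{k}_\alpha$), the $\g{a}$-part, and the part in $\bigoplus_{\alpha \neq \beta} \g{g}_{\alpha+\beta}$. Using that $\alpha - \beta$ is never a root for $\alpha,\beta \in \Lambda$, and that $\alpha+\beta$ is a root precisely when $\alpha,\beta$ lie in the same connected component, one extracts the key identities analogous to those in Lemma~\ref{thq1dim}.

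**Extracting the linear dependencies.** Since $\dim\pi_{\g{g}_\alpha}(\g{q})=1$, for any $\xi,\eta \in \g{q}$ and any $\alpha \in \Psi$ the vectors $\xi_\alpha, \eta_\alpha \in \g{g}_\alpha$ are linearly dependent, say $\eta_\alpha = t_\alpha(\eta) \xi_\alpha$ for a fixed reference $\xi \in \g{q}$ with all components nonzero (such a $\xi$ exists by a genericity/sum argument). The relation $[\xi_\alpha, \eta_\beta] + [\xi_\beta, \eta_\alpha] = 0$ for $\alpha \neq \beta$ in the same component, combined with $\eta_\gamma = t_\gamma \xi_\gamma$, gives $(t_\alpha - t_\beta)[\xi_\alpha, \xi_\beta] = 0$; since $\alpha,\beta$ are connected, $[\xi_\alpha,\xi_\beta] \neq 0$ (one checks this using Lemma~\ref{thNonzeroBracket} — or rather its contrapositive — together with the fact that $E_\alpha, E_\beta$ can be taken to be part of a suitable root vector configuration), hence $t_\alpha = t_\beta$ whenever $\alpha,\beta$ are connected. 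By connectedness of $\Psi_i$, the scalar $t_\alpha$ is constant on each $\Psi_i$; call it $t_i(\eta)$. This says precisely that the $\Psi_i$-component of every $\eta \in \g{q}$ is $t_i(\eta)$ times the fixed vector $E_i := \sum_{\alpha\in\Psi_i}\xi_\alpha$, i.e. $\pi_{\bigoplus_{\alpha\in\Psi_i}\g{g}_\alpha}(\g{q})$ is the single line $\R(1-\theta)E_i$. Then the $\g{a}$-part of the bracket relation, which after polarization (Lemma~\ref{thPolarization}) reads $\sum_{\alpha \in \Psi}\big([\xi_\alpha,\theta\eta_\alpha] + \langle\xi_\alpha,\eta_\alpha\rangle H_\alpha\big) = 0$ up to the $(1+\theta)$ factor, together with $\eta_\alpha = t_i\xi_\alpha$ on $\Psi_i$ and $[\xi_\alpha,\eta_\alpha]=0$, forces the $\g{a}$-component $\eta_0$ of $\eta$ to be determined on each block: its projection onto $\mathrm{span}\{H_\alpha : \alpha\in\Psi_i\}$ equals $t_i(\eta)$ times a fixed vector of the form $\sum_{\alpha\in\Psi_i}c_\alpha H_\alpha$. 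Defining $c_\alpha$ by this fixed vector (for a reference element of $\g{q}$ projecting nontrivially to the $i$-th block) yields the map $c:\Psi\to\R$, and the freely varying remainder of $\eta_0$ lands in $\g{a}_\Psi = \g{a}\ominus\mathrm{span}\{H_\alpha:\alpha\in\Psi\}$.

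**Assembling the conclusion.** Putting these together: every $\eta\in\g{q}$ has the form $\eta = (\text{something in }\g{a}_\Psi) + \sum_{i=1}^k t_i(\eta)\big(\sum_{\alpha\in\Psi_i}c_\alpha H_\alpha + (1-\theta)E_i\big)$, which is exactly the statement that $\g{q}\subset \g{v}_{\g{q}}$ with $\g{v}_{\g{q}}$ as displayed. The decomposition $\Psi = \bigcup_{i=1}^k \Psi_i$ into connected components pairwise disconnected is automatic from the definition of connectedness in the Dynkin diagram, and $\pi_{\g{g}_\alpha}(E_i)\neq 0$ for all $\alpha\in\Psi_i$ holds by construction since $E_i$ was built from a reference vector with all components nonzero.

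**Main obstacle.** I expect the delicate point to be proving $[\xi_\alpha,\xi_\beta]\neq 0$ when $\alpha,\beta$ are connected simple roots and $\xi_\alpha,\xi_\beta$ are the given (a priori arbitrary) nonzero vectors in $\g{g}_\alpha,\g{g}_\beta$ — this is needed to deduce $t_\alpha=t_\beta$. A naive appeal to "$\alpha+\beta$ is a root, so $[\g{g}_\alpha,\g{g}_\beta]\neq 0$" only gives that the bracket is nonzero for \emph{some} pair of vectors, not for the specific ones in hand. The resolution is to use the already-derived relation $[\xi_\alpha,\eta_\alpha]=0$ (so $\g{g}_\alpha$ is one-dimensional in the relevant sense, or more precisely the $\mathfrak{sl}_2$-triple through $\xi_\alpha$ behaves well) and a Jacobi-identity computation in the style of Lemma~\ref{th1dim} and Lemma~\ref{thNonzeroBracket} to show that $[\xi_\alpha,\xi_\beta]=0$ would force $\langle\alpha,\beta\rangle = 0$, contradicting connectedness. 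Handling the bookkeeping when some components $\xi_\alpha$ of the reference vector could a priori vanish — ensuring a genuinely "generic" reference element of $\g{q}$ exists that is nonzero in every $\g{g}_\alpha$ with $\alpha\in\Psi$ — is a routine but necessary preliminary; it follows from the definition $\Psi = \{\alpha : \pi_{\g{g}_\alpha}(\g{q})\neq 0\}$ together with the fact that a finite union of proper subspaces cannot exhaust $\g{q}$.
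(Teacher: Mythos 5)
Your overall architecture is sound and essentially parallels the paper's, but there is a genuine error in the step that pins down the $\g{a}$-components. You claim that the relation
\begin{equation*}
\sum_{\alpha\in\Psi}\bigl([\xi_\alpha,\theta\eta_\alpha]+\langle\xi_\alpha,\eta_\alpha\rangle H_\alpha\bigr)=0
\end{equation*}
(which you call the ``$\g{a}$-part'') forces $\eta_0$ to be a fixed multiple of $\sum_{\alpha\in\Psi_i}c_\alpha H_\alpha$ on each block. But this equation is the $\g{k}_0$-component of $[\xi,\eta]$ and does not involve $\eta_0$ at all; worse, once you substitute $\eta_\alpha=t_i\xi_\alpha$ and use Lemma~\ref{thPolarization} (which gives $[\xi_\alpha,\theta\xi_\alpha]=-\langle\xi_\alpha,\xi_\alpha\rangle H_\alpha$), it collapses to the identity $0=0$ and carries no information whatsoever. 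The constraint on $\eta_0$ actually comes from the $\g{k}_\alpha$-components of $[\xi,\eta]=0$, namely $\alpha(\xi_0)\eta_\alpha-\alpha(\eta_0)\xi_\alpha=0$ for each $\alpha\in\Psi$; substituting $\eta_\alpha=t_i\xi_\alpha$ with $\xi_\alpha\neq0$ yields $\alpha(\eta_0)=t_i\,\alpha(\xi_0)$, which is precisely the relation that determines the $\operatorname{span}\{H_\alpha:\alpha\in\Psi_i\}$-projection of $\eta_0$ as $t_i$ times the corresponding projection of the reference. The paper encodes this same information by showing $\alpha(\xi_0)=0$ whenever the $\Psi_i$-coefficient $x_i$ of $\xi$ vanishes, and that the ratio $\alpha(\xi_0)/x_i$ is independent of $\xi$. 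As written, your proof would not go through without replacing the cited equation by the $\g{k}_\alpha$-components.

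A secondary remark: the worry in your ``Main obstacle'' paragraph is unfounded. Lemma~\ref{thNonzeroBracket} already states that for arbitrary nonzero $X\in\g{g}_\lambda$, $Y\in\g{g}_\mu$ with $\lambda-\mu\notin\Sigma$, vanishing of $[X,Y]$ forces $\lambda+\mu\notin\Sigma$; so for connected simple roots $\alpha,\beta$ (where $\alpha+\beta\in\Sigma$ and $\alpha-\beta\notin\Sigma$) the bracket $[\xi_\alpha,\xi_\beta]$ of the specific nonzero vectors in hand is automatically nonzero. No additional ``suitable root vector configuration'' or separate Jacobi computation is needed; you had already cited the right lemma earlier in the argument. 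Also note that there is no $\g{a}$-component of $[\xi,\eta]$ to speak of since $\xi,\eta\in\g{p}$ implies $[\xi,\eta]\in\g{k}$; the decomposition is into $\g{k}_0$, $\g{k}_\alpha$, $\g{k}_{2\alpha}$, and $\g{k}_{\alpha+\beta}$ components, and keeping these straight is exactly where the gap above arose.
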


\begin{proof}
Using the fact that the sets $\Psi_i$ are disconnected, it is easy
to see that a subalgebra $\g{v}_\g{q}$ as considered above is
abelian. Hence, in order to prove the proposition, it suffices to
take an abelian subalgebra $\g{q}\subset\g{a}\oplus\g{p}^1$ and
prove that it can be realized as a subspace of one of the Lie
subalgebras $\g{v}_\g{q}$ as defined above. For that, consider
$\Psi=\{\alpha\in\Lambda:\pi_{\g{g}_\alpha}(\g{q})\neq 0\}$ and
write $\Psi=\bigcup_{i=1}^k\Psi_i$ with $\Psi_i\subset\Lambda$
connected, and $\Psi_i$ and $\Psi_j$ disconnected whenever $i\neq
j$. Our first assertion is that for each $i\in\{1,\dots,k\}$ there
exists a nonzero vector
$E_i\in\bigoplus_{\alpha\in\Psi_i}\g{g}_\alpha$ such that any vector
$\xi\in\g{q}$ can be written as $\xi=\xi_0+\sum_{i=1}^k x_i
(1-\theta)E_i$ for certain $\xi_0\in\g{a}$ and $x_i\in\R$.

We fix $i\in\{1,\dots,k\}$. From Lemma \ref{thq1dim} it follows that
$\dim\pi_{\g{g}_\alpha}(\g{q})=1$ for all $\alpha\in\Psi$. Hence,
for each $\alpha\in\Psi$ we can choose a nonzero vector
$E_\alpha\in\g{g}_\alpha$ such that any vector $\xi\in\g{q}$ can be
written as $\xi=\xi_0+\sum_{\alpha\in\Psi}a_\alpha(1-\theta)
E_\alpha$ for certain $\xi_0\in\g{a}$ and $a_\alpha\in\R$. If, on
the contrary, the previous assertion is not true, we can find
$\xi=\xi_0+\sum_{\alpha}a_\alpha(1-\theta) E_\alpha\in\g{q}$ and
$\eta=\eta_0+\sum_{\alpha}b_\alpha(1-\theta) E_\alpha\in\g{q}$ such
that for some $\alpha,\beta\in\Psi_i$ connected in the Dynkin
diagram the vectors $(a_\alpha,a_\beta),(b_\alpha,b_\beta)\in\R^2$
are linearly independent. Since $\g{q}$ is abelian, we have
\begin{equation*}
0=[\xi,\eta]=\sum_\alpha(\alpha(\xi_0)b_\alpha
-\alpha(\eta_0)a_\alpha)(1+\theta)E_\alpha
+\sum_{\alpha,\beta}a_\alpha b_\beta(1+\theta)[E_\alpha,E_\beta].
\end{equation*}
In particular, taking the $\g{g}_{\alpha+\beta}$ component we get
$a_\alpha b_\beta-a_\beta b_\alpha=0$ by Lemma
\ref{thNonzeroBracket}, which contradicts the fact that
$(a_\alpha,a_\beta)$ and $(b_\alpha,b_\beta)$ are linearly
independent.

Therefore, we have proved our assertion, that is, any vector
$\xi\in\g{q}$ can be written as $\xi=\xi_0+\sum_{i=1}^k x_i
(1-\theta)E_i$ for certain $\xi_0\in\g{a}$, $x_i\in\R$ and
$E_i\in\bigoplus_{\alpha\in\Psi_i}\g{g}_\alpha$. By the definition
of $\Psi$, it is obvious that $\pi_{\g{g}_\alpha}(E_i)\neq 0$ for
all $\alpha\in\Psi_i$, and indeed we can write
$E_i=\sum_{\alpha\in\Psi_i}E_\alpha$ with suitable
$E_\alpha\in\g{g}_\alpha$ (note that these might be different from
the above $E_\alpha$'s). Since $\Psi_i$ and $\Psi_j$ are
disconnected if $i\neq j$, it is clear that $[E_i,E_j]=0$ for all
$i,j\in\{1,\dots,k\}$.

We choose such a vector $\xi=\xi_0+\sum_{j=1}^k x_j (1-\theta)E_j$
and assume $x_i=0$. By definition of $\Psi$ there certainly exists
$\eta=\eta_0+\sum_{j=1}^k y_j (1-\theta)E_j\in\g{q}$ with $y_i\neq
0$. Hence,
\begin{equation*}
0=[\xi,\eta]=\sum_{j=1}^k(1+\theta) \left(\sum_{\alpha\in\Psi_j}
\left(y_j\alpha(\xi_0)-x_j\alpha(\eta_0)\right)E_\alpha\right),
\end{equation*}
and taking the $\g{g}_\alpha$-component for any $\alpha\in\Psi_i$ we
get $\alpha(\xi_0)=0$ because $x_i=0$ and $y_i\neq 0$. This implies
that for each $\xi\in\g{q}$ and each $\alpha\in\Psi_i$ we can write
$\langle\xi,H_\alpha\rangle
=c_\alpha(\xi)\langle\alpha,\alpha\rangle x_i$ (if $x_i=0$ any
$c_\alpha(\xi)$ will do). The next step is to prove that we can
choose the same $c_\alpha$ for all $\xi\in\g{q}$.

Assume that we cannot choose the $c_\alpha$'s in such a way. Then
there would be $\xi=\xi_0+\sum_{j=1}^k x_j (1-\theta)E_j\in\g{q}$
and $\eta=\eta_0+\sum_{j=1}^k y_j (1-\theta)E_j\in\g{q}$ such that
$c_\alpha(\xi)\neq c_\alpha(\eta)$ for some $\alpha\in\Psi_i$. This
of course implies that $x_i,y_i\neq 0$. Taking the corresponding
$\g{g}_\alpha$-component of $[\xi,\eta]$ as in the previous
displayed formula we get
\begin{equation*}
0=y_i\alpha(\xi_0)-x_i\alpha(\eta_0)=
y_i\langle\xi,H_\alpha\rangle-x_i\langle\eta,H_\alpha\rangle =x_i
y_i\langle\alpha,\alpha\rangle(c_\alpha(\xi)- c_\alpha(\eta)),
\end{equation*}
which leads to a contradiction. Hence we can choose the $c_\alpha$
independently of $\xi\in\g{q}$, which allows us to define a function
$c:\Psi\to\R$ by $\langle\xi,H_\alpha\rangle=c_\alpha
\langle\alpha,\alpha\rangle x_i$, with the notation as above.

Finally, if $\xi=\xi_0+\sum_{i=1}^k x_i (1-\theta)E_i\in\g{q}$ we
can write $\xi_0=\xi_0'+\sum_{\alpha\in\Psi}a_\alpha H_\alpha$ with
$\xi_0'\in\g{a}_\Psi=\g{a}\ominus\left( \bigoplus_{\alpha\in\Psi}\R
H_\alpha\right)$ and $a_\alpha\in\R$. Here, the $a_\alpha$ must
satisfy $c_\alpha \langle\alpha,\alpha\rangle
x_i=\langle\xi,H_\alpha\rangle
=a_\alpha\langle\alpha,\alpha\rangle$, so $\g{q}$ is contained in
one of the model spaces in the statement of the proposition.
\end{proof}

The following lemma is useful to understand how $\g{s}$ and $\spp$
are related.

\begin{lemma}\label{thsspp}
If $\g{s}\subset\g{t}\oplus\g{a}\oplus\g{n}$ is a subalgebra and
$\spp=\{\xi\in\g{p}:\xi\perp\g{s}\}$, then
$\g{s}_n=\pi_{\g{a}\oplus\g{n}}(\g{s}) =
\{X\in\g{a}\oplus\g{n}:X\perp\spp\}$.
\end{lemma}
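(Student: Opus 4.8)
*If $\g{s}\subset\g{t}\oplus\g{a}\oplus\g{n}$ is a subalgebra and $\spp=\{\xi\in\g{p}:\xi\perp\g{s}\}$, then $\g{s}_n=\pi_{\g{a}\oplus\g{n}}(\g{s}) = \{X\in\g{a}\oplus\g{n}:X\perp\spp\}$.*

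The plan is to compute the orthogonal complement of $\spp$ inside $\g{a}\oplus\g{n}$ directly and recognize it as $\g{s}_n$, using the fact that the inner product $\langle\,\cdot\,,\,\cdot\,\rangle$ restricted to $\g{p}$ is nondegenerate together with the orthogonal decomposition $\g{g}=\g{k}\oplus\g{p}$ and the fact that $\g{t}\subset\g{k}$, $\g{a}\oplus\g{n}$ projects isomorphically onto $\g{p}$ under $\pi_\g{p}$... but wait, that last claim is false ($\g{n}$ is not in $\g{p}$). Let me reconsider and work with the Cartan decomposition carefully.

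First I would observe that $\spp=\{\xi\in\g{p}:\xi\perp\g{s}\}=\{\xi\in\g{p}:\xi\perp\pi_\g{p}(\g{s})\}$, since for $\xi\in\g{p}$ and $Y\in\g{s}$ we have $\langle\xi,Y\rangle=\langle\xi,\pi_\g{p}(Y)\rangle$ because $\g{k}\perp\g{p}$. Thus $\spp=(\pi_\g{p}(\g{s}))^{\perp_\g{p}}$, the orthogonal complement taken within $\g{p}$. Since $\langle\,\cdot\,,\,\cdot\,\rangle$ is positive definite on $\g{p}$, we get $(\spp)^{\perp_\g{p}}=\pi_\g{p}(\g{s})$. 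Next I would unwind the right-hand side of the lemma: $\{X\in\g{a}\oplus\g{n}:X\perp\spp\}=\{X\in\g{a}\oplus\g{n}:\pi_\g{p}(X)\perp\spp\text{ in }\g{p}\}$, again because $\spp\subset\g{p}$ and $\g{k}\perp\g{p}$; this equals $\{X\in\g{a}\oplus\g{n}:\pi_\g{p}(X)\in(\spp)^{\perp_\g{p}}=\pi_\g{p}(\g{s})\}=\pi_\g{p}^{-1}(\pi_\g{p}(\g{s}))\cap(\g{a}\oplus\g{n})$.

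So the statement reduces to the purely linear-algebraic identity $\pi_{\g{a}\oplus\g{n}}(\g{s})=\pi_\g{p}^{-1}(\pi_\g{p}(\g{s}))\cap(\g{a}\oplus\g{n})$. The inclusion $\subseteq$ holds because if $Y\in\g{s}$ then $\pi_{\g{a}\oplus\g{n}}(Y)\in\g{a}\oplus\g{n}$ and $\pi_\g{p}(\pi_{\g{a}\oplus\g{n}}(Y))=\pi_\g{p}(Y)$ once we know $\pi_\g{p}=\pi_\g{p}\circ\pi_{\g{a}\oplus\g{n}}$ on $\g{t}\oplus\g{a}\oplus\g{n}$; this last fact follows from $\g{t}\subset\g{k}$, so $\pi_\g{p}(\g{t})=0$, hence for $Y=T+H+n$ with $T\in\g{t}$, $H\in\g{a}$, $n\in\g{n}$ we have $\pi_{\g{a}\oplus\g{n}}(Y)=H+n$ and $\pi_\g{p}(Y)=\pi_\g{p}(H+n)=\pi_\g{p}(\pi_{\g{a}\oplus\g{n}}(Y))$. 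For the reverse inclusion $\supseteq$, take $X\in\g{a}\oplus\g{n}$ with $\pi_\g{p}(X)=\pi_\g{p}(Y)$ for some $Y\in\g{s}$; writing $Y=T+H+n$ as above, $\pi_\g{p}(X)=\pi_\g{p}(H+n)$, so $X-(H+n)\in(\g{a}\oplus\g{n})\cap\ker\pi_\g{p}$. The hard part, and the one place some structure theory is needed, is to check that $(\g{a}\oplus\g{n})\cap\ker\pi_\g{p}=\{0\}$: indeed $\pi_\g{p}(H+n)=H+\tfrac12(1-\theta)n'$ after writing $n=\sum n_\lambda$ with $n_\lambda\in\g{g}_\lambda$, and since the components $H$ and $\tfrac12(1-\theta)n_\lambda$ lie in $\g{a}$ and in $\g{p}_\lambda$ respectively for distinct $\lambda\in\Sigma^+$, these are linearly independent in $\g{p}$, so $\pi_\g{p}(H+n)=0$ forces $H=0$ and each $n_\lambda=0$. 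Hence $X=H+n\in\g{s}_n$, which finishes the proof.

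\begin{proof}
For $\xi\in\g{p}$ and $Y\in\g{s}$ we have $\langle\xi,Y\rangle=\langle\xi,\pi_\g{p}(Y)\rangle$ since $\g{k}\perp\g{p}$; hence $\spp=(\pi_\g{p}(\g{s}))^{\perp}$, the orthogonal complement taken within $\g{p}$ with respect to the positive definite form $\langle\,\cdot\,,\,\cdot\,\rangle|_\g{p}$. Consequently $(\spp)^{\perp}=\pi_\g{p}(\g{s})$ inside $\g{p}$. For $X\in\g{a}\oplus\g{n}$, again using $\g{k}\perp\g{p}$ and $\spp\subset\g{p}$, the condition $X\perp\spp$ is equivalent to $\pi_\g{p}(X)\perp\spp$, that is, to $\pi_\g{p}(X)\in\pi_\g{p}(\g{s})$. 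Thus
\begin{equation*}
\{X\in\g{a}\oplus\g{n}:X\perp\spp\}=\{X\in\g{a}\oplus\g{n}:\pi_\g{p}(X)\in\pi_\g{p}(\g{s})\}.
\end{equation*}

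Write any $Y\in\g{s}\subset\g{t}\oplus\g{a}\oplus\g{n}$ as $Y=T+H+n$ with $T\in\g{t}$, $H\in\g{a}$, $n\in\g{n}$. Since $\g{t}\subset\g{k}$ we have $\pi_\g{p}(Y)=\pi_\g{p}(H+n)=\pi_\g{p}(\pi_{\g{a}\oplus\g{n}}(Y))$, so $\pi_\g{p}(\g{s})=\pi_\g{p}(\g{s}_n)$ and $\g{s}_n\subset\{X\in\g{a}\oplus\g{n}:\pi_\g{p}(X)\in\pi_\g{p}(\g{s})\}$.

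For the reverse inclusion, let $X\in\g{a}\oplus\g{n}$ with $\pi_\g{p}(X)=\pi_\g{p}(Y)$ for some $Y\in\g{s}$, and write $Y=T+H+n$ as above. Then $\pi_\g{p}(X)=\pi_\g{p}(H+n)$, so $X-(H+n)\in(\g{a}\oplus\g{n})\cap\ker\pi_\g{p}$. Writing $n=\sum_{\lambda\in\Sigma^+}n_\lambda$ with $n_\lambda\in\g{g}_\lambda$ and $X-(H+n)=H'+\sum_\lambda n'_\lambda$ accordingly, we have
\begin{equation*}
0=\pi_\g{p}\bigl(H'+\textstyle\sum_\lambda n'_\lambda\bigr)=H'+\textstyle\sum_\lambda\tfrac12(1-\theta)n'_\lambda,
\end{equation*}
and since $H'\in\g{a}$ and $\tfrac12(1-\theta)n'_\lambda\in\g{p}_\lambda$ for distinct $\lambda\in\Sigma^+$ all lie in pairwise complementary subspaces of $\g{p}$, it follows that $H'=0$ and $n'_\lambda=0$ for all $\lambda$, i.e. $X=H+n=\pi_{\g{a}\oplus\g{n}}(Y)\in\g{s}_n$. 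This proves the lemma.
\end{proof}
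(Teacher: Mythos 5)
Your proof is correct and rests on the same structural facts as the paper's own argument: that $\g{k}\perp\g{p}$, that $\g{t}\subset\g{k}$, and that (via orthogonality of the root space decomposition) the projection $\pi_\g{p}$ sets up a bijective correspondence between $\g{a}\oplus\g{n}$ and $\g{p}$. The paper phrases the reverse inclusion by producing, for each $Y\in(\g{a}\oplus\g{n})\ominus\g{s}_n$, the vector $H+\sum_\lambda(1-\theta)Y_\lambda\in\spp$ and then using $\g{g}_{-\lambda}\perp\g{a}\oplus\g{n}$, whereas you organize it through the double orthogonal complement $(\spp)^{\perp_\g{p}}=\pi_\g{p}(\g{s}_n)$ together with injectivity of $\pi_\g{p}|_{\g{a}\oplus\g{n}}$; these are two equivalent packagings of the same idea.
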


\begin{proof}
If $X\in\g{s}$, then for all $\xi\in\spp$ we have $\langle
X,\xi\rangle=0$ by definition. Since $\g{k}$ and $\g{p}$ are
orthogonal, $\pi_{\g{a}\oplus\g{n}}(X)\perp\spp$.

Conversely, let $X\in\g{a}\oplus\g{n}$ such that $X\perp\spp$, and
choose $Y\in(\g{a}\oplus\g{n})\ominus\g{s}_n$. We may write
$Y=H+\sum_{\lambda\in\Sigma^+}Y_\lambda$ with $H\in\g{a}$ and
$Y_\lambda\in\g{g}_\lambda$. Clearly,
$Y-\sum_{\lambda\in\Sigma^+}\theta
Y_\lambda=H+\sum_{\lambda\in\Sigma^+}(1-\theta)Y_\lambda\in\g{p}$,
and if $Z\in\g{s}$ we have $\langle
Y-\sum_{\lambda\in\Sigma^+}\theta Y_\lambda,Z\rangle=\langle
Y,Z\rangle-\sum_{\lambda\in\Sigma^+}\langle\theta
Y_\lambda,Z\rangle=0$, because $Y$ and $Z$ are perpendicular and so
are $\g{g}_{-\lambda}$ and $\g{t}\oplus\g{a}\oplus\g{n}$. This
proves that $Y-\sum_{\lambda\in\Sigma^+}\theta Y_\lambda\in\spp$. By
assumption we have $X\perp\spp$, and so $0=\langle
X,Y-\sum_{\lambda\in\Sigma^+}\theta Y_\lambda\rangle=\langle
X,Y\rangle-\sum_{\lambda\in\Sigma^+}\langle X,\theta
Y_\lambda\rangle=\langle X,Y\rangle$, again because
$\g{g}_{-\lambda}$ and $\g{a}\oplus\g{n}$ are perpendicular. Since
$Y\in(\g{a}\oplus\g{n})\ominus\g{s}$ is arbitrary, we conclude that
$X\in(\g{a}\oplus\g{n})\ominus((\g{a}\oplus\g{n})\ominus\g{s}_n)
=\g{s}_n$.
\end{proof}

Proposition \ref{thAbelian} and Lemma \ref{thsspp} allow us to
conclude the first step of the proof of our classification.

\begin{theorem}\label{thsppLevel1}
Let $\g{s}\subset\g{t}\oplus\g{a}\oplus\g{n}$ be a subalgebra such
that $\spp\subset\g{a}\oplus\g{p}^1$ is abelian. Then there are an
orthogonal subset $\Phi\subset\Lambda$, numbers $a_\alpha\in\R$ and
nonzero vectors $E_\alpha\in\g{g}_\alpha$ for each $\alpha\in\Phi$,
and a linear subspace $V\subset\g{a}_\Phi$ such that
\begin{equation*}
\g{s}_n=(V\oplus\g{a}^\Phi\oplus\g{n})\ominus
\left(\bigoplus_{\alpha\in\Phi}\R (a_\alpha
H_\alpha+E_\alpha)\right).
\end{equation*}
\end{theorem}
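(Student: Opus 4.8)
The plan is to combine the classification of abelian subspaces of $\g{a}\oplus\g{p}^1$ in Proposition~\ref{thAbelian} with the hypothesis that $\g{s}$ is a subalgebra, the latter being needed only at one place: to force the abelian space $\spp$ to decompose into mutually orthogonal root directions. First I would reformulate everything in terms of $\spp$. By Lemma~\ref{thsspp} we have $\g{s}_n=\{X\in\g{a}\oplus\g{n}:X\perp\spp\}$; since for $X\in\g{a}\oplus\g{n}$ and $\xi\in\g{p}$ the inner product $\langle X,\xi\rangle$ equals the inner product of $X$ with the element of $\g{a}\oplus\g{n}$ obtained from $\xi$ by deleting its $\theta$-components, this exhibits $\g{s}_n$ as the orthogonal complement in $\g{a}\oplus\g{n}$ of that ``deleted'' copy of $\spp$, and $\spp$ in turn as determined by $\g{s}_n$. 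Because $\spp\subset\g{a}\oplus\g{p}^1$, one gets at once $\g{n}\ominus\g{n}^1\subset\g{s}_n$ together with the orthogonal splitting $\g{s}_n=(\g{s}_n\cap(\g{a}\oplus\g{n}^1))\oplus(\g{n}\ominus\g{n}^1)$; in particular the part of any element of $\g{s}_n$ of level $\geq 2$ again lies in $\g{s}_n$.

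Applying Proposition~\ref{thAbelian} to $\g{q}=\spp$ produces the set $\Psi=\{\alpha\in\Lambda:\pi_{\g{g}_\alpha}(\spp)\neq 0\}$, its decomposition $\Psi=\bigcup_{i=1}^k\Psi_i$ into mutually disconnected connected pieces, scalars $c_\alpha$, vectors $E_i\in\bigoplus_{\alpha\in\Psi_i}\g{g}_\alpha$ with all components $E_\alpha:=\pi_{\g{g}_\alpha}(E_i)$ nonzero, and the inclusion $\spp\subset\g{v}_\spp$. Taking orthogonal complements as above, $\g{s}_n$ contains $\g{n}\ominus\g{n}^1$, contains $\g{g}_\gamma$ for every $\gamma\in\Lambda\setminus\Psi$ and $\g{g}_\alpha\ominus\R E_\alpha$ for every $\alpha\in\Psi$, while $E_\alpha\notin\g{s}_n$; moreover $\spp\cap\g{a}\subset\g{a}_\Psi$, since an element of $\spp$ lying in $\g{a}$ has vanishing $E_i$-coefficients.

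The heart of the argument, and the step I expect to be the most delicate, is to show that each $\Psi_i$ is a single root — so that $\Phi:=\Psi$ is orthogonal — and that $\spp=(\spp\cap\g{a})\oplus\bigoplus_{\alpha\in\Phi}\R(c_\alpha H_\alpha+(1-\theta)E_\alpha)$; this is where the subalgebra hypothesis is used. One lifts to $\g{s}$ well-chosen elements of $\g{s}_n$ — members of $\g{g}_\alpha\ominus\R E_\alpha$, root spaces of level $\geq 2$, and, when $\alpha,\beta$ lie in a common $\Psi_i$, ``mixed'' vectors such as a multiple of $E_\alpha$ balanced against $E_\beta$ that also lie in $\g{s}_n$ — keeping the toroidal ambiguity of these lifts under control via $\g{t}\subset\g{k}_0$ and Lemma~\ref{thTspp}, so that the toroidal contributions to brackets stay orthogonal to $\spp$. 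Bracketing two such lifts, extracting the level-one part with the splitting of the first paragraph, and using Lemmas~\ref{thPolarization} and~\ref{thNonzeroBracket} for the bookkeeping, yields an element of $\bigoplus_{\alpha\in\Psi}\g{g}_\alpha$ that must again lie in $\g{s}_n$; if some $\Psi_i$ contained two connected roots $\alpha,\beta$, this element could be arranged to have nonzero $E_\alpha$-component, contradicting $E_\alpha\notin\g{s}_n$. Hence each $\Psi_i$ is a single root. An analogous bracket computation across two distinct — hence now disconnected — roots of $\Phi$ forces the stated splitting of $\spp$. Conceptually: being a subalgebra pushes $\spp$ to break up root by root, whereas being abelian tolerates breaking it across $\alpha,\beta$ only when $\alpha+\beta\notin\Sigma$, since two transverse directions $(1-\theta)E_\alpha$, $(1-\theta)E_\beta$ with $\alpha+\beta\in\Sigma$ fail to commute because of the nonzero term $(1+\theta)[E_\alpha,E_\beta]$; the only possibility compatible with both constraints is $\Phi$ orthogonal with $\spp$ split.

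Finally, put $a_\alpha:=c_\alpha$ and $V:=\g{a}_\Phi\ominus(\spp\cap\g{a})\subset\g{a}_\Phi$. By the previous paragraph the ``deleted'' copy of $\spp$ is $(\spp\cap\g{a})\oplus\bigoplus_{\alpha\in\Phi}\R(a_\alpha H_\alpha+E_\alpha)$, and since $\langle\alpha,\beta\rangle=0$ for distinct $\alpha,\beta$ in $\Phi$ the vectors $a_\alpha H_\alpha+E_\alpha$ are mutually orthogonal and orthogonal to $V\oplus\g{a}^\Phi$ in the relevant sense. Computing the orthogonal complement of this subspace inside $\g{a}\oplus\g{n}$ then gives precisely $\g{s}_n=(V\oplus\g{a}^\Phi\oplus\g{n})\ominus(\bigoplus_{\alpha\in\Phi}\R(a_\alpha H_\alpha+E_\alpha))$, as claimed.
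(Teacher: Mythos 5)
Your outline correctly identifies the scaffolding that the paper uses: apply Proposition~\ref{thAbelian} to $\spp$, pass between $\g{s}_n$ and $\spp$ via Lemma~\ref{thsspp}, control the toroidal lifts with Lemma~\ref{thTspp}, and exploit the subalgebra hypothesis by bracketing lifts and reading off the level-one part. However, the mechanism you propose for the crucial contradiction in the $\Psi_i$-singleton step does not work as stated. You claim that the bracket-produced element of $\g{s}_n\cap\bigoplus_{\gamma\in\Psi_i}\g{g}_\gamma$ ``could be arranged to have nonzero $E_\alpha$-component, contradicting $E_\alpha\notin\g{s}_n$.'' But $\g{s}_n\cap\bigoplus_{\gamma\in\Psi_i}\R E_\gamma$ is exactly $\bigl(\bigoplus_{\gamma\in\Psi_i}\R E_\gamma\bigr)\ominus\R E_i$, which has dimension $|\Psi_i|-1>0$; a generic element of it has all $E_\gamma$-components nonzero. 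So a nonzero $E_\alpha$-component is entirely consistent with $E_\alpha\notin\g{s}_n$, and no contradiction follows from a single bracket. What the paper actually does is form, for every $\alpha\in\Psi_i$, the element $Z_\alpha=H_\alpha+\sum_\beta z_{\alpha\beta}E_\beta$ orthogonal to $\spp$, bracket its lift against the lift of a fixed $X=\sum_\beta x_\beta E_\beta\perp E_i$, and extract the scalar relation $\sum_{\beta\in\Psi_i} A_{\beta\alpha}\,x_\beta\langle E_\beta,E_\beta\rangle=0$ for each $\alpha$; the contradiction then comes from the \emph{nonsingularity of the Cartan matrix} of the connected subdiagram $\Psi_i$, an ingredient that never appears in your sketch. (Lemmas~\ref{thPolarization} and~\ref{thNonzeroBracket}, which you invoke here, play no role at this point; they are used earlier, in the proofs of Lemma~\ref{thq1dim} and Proposition~\ref{thAbelian}.)

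The second delicate step is also misdescribed. To obtain $\bigoplus_{\alpha\in\Phi}\R(a_\alpha H_\alpha+(1-\theta)E_\alpha)\subset\spp$ one does not bracket ``across two distinct roots of $\Phi$''; the paper instead brackets, for a fixed $\alpha$, a lift of $\langle E_\alpha,E_\alpha\rangle H_\alpha - a_\alpha\langle\alpha,\alpha\rangle E_\alpha$ (note the $\g{a}$-component) against the lift of a general element $\xi_0+\sum x_\beta(a_\beta H_\beta+E_\beta)$ of $\g{s}_n\cap D(\g{q})$, isolates the coefficient $x_\alpha\langle\alpha,\alpha\rangle(\langle E_\alpha,E_\alpha\rangle+a_\alpha^2\langle\alpha,\alpha\rangle)$ of $E_\alpha$ after again discarding the Lemma~\ref{thTspp}-controlled and level-$\geq 2$ pieces, and this time \emph{does} conclude $x_\alpha=0$ precisely because $E_\alpha$ is not orthogonal to $\spp$. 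So your final intuition is roughly right for this step, but it is attached to the wrong place (the singleton step, not the splitting step), and in the singleton step an entirely different linear-algebra input — the Cartan matrix — is required.
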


\begin{proof}
Since $\spp\subset\g{a}\oplus\g{p}^1$ is abelian, by Proposition
\ref{thAbelian} we have that $\spp$ is a linear subspace of
\begin{equation*}
\g{q}=\g{a}_\Phi
\oplus\left(\bigoplus_{i=1}^k\R\left(\sum_{\alpha\in\Phi_i}
{a_\alpha}H_\alpha +(1-\theta)E_i \right)\right).
\end{equation*}
Here, as usual,
$\Phi=\{\alpha\in\Lambda:\pi_{\g{g}_\alpha}(\spp)\neq
0\}=\bigcup_{i=1}^k\Phi_i$, with $\Phi_i$ connected, and $\Phi_i$
disconnected to $\Phi_j$ whenever $i\neq j$,
$E_i\in\bigoplus_{\alpha\in\Phi_i}\g{g}_\alpha$ with
$\pi_{\g{g}_\alpha}(E_i)\neq 0$ for all $\alpha\in\Phi_i$, and
$a:\Phi\to\R$ a real-valued function. Our first step is to prove
that each $\Phi_i$ consists of exactly one root.

Fix $i\in\{1,\dots,k\}$ and assume that $\Phi_i$ has more than one
root. We may write $E_i=\sum_{\alpha\in\Phi_i}E_\alpha$ with
$E_\alpha\in\g{g}_\alpha$. Also, take $\xi=\xi_0+\sum_j
y_j(\sum_{\alpha\in\Phi_j} {a_\alpha}H_\alpha
+(1-\theta)E_j)\in\spp$ with $y_i\neq 0$. Since
$\dim\left(\bigoplus_{\alpha\in\Phi_i}\R E_\alpha\right)>1$, there
exists a nonzero vector $X\in\bigoplus_{\alpha\in\Phi_i}\R E_\alpha$
such that $X$ is orthogonal to $E_i$ (or equivalently, to
$(1-\theta)E_i$, or to $\xi$, or to $\spp$). By Lemma \ref{thsspp},
there exists $S\in\g{t}$ such that $S+X\in\g{s}$. We write
$X=\sum_{\alpha\in\Phi_i}x_\alpha E_\alpha$. Now, for each
$\alpha\in\Phi_i$, and again for dimension reasons we can find a
vector
$Z_\alpha=H_\alpha+\sum_{\beta\in\Phi_i}z_{\alpha\beta}E_\beta$ such
that $Z_\alpha$ is orthogonal to $\xi$ (and hence to $\spp$). Lemma
\ref{thsspp} ensures that there exists $T_\alpha\in\g{t}$ such that
$T_\alpha+Z_\alpha\in\g{s}$ for each $\alpha\in\Phi_i$. By Lemma
\ref{thTspp} we get
$\langle[T_\alpha,X],\xi\rangle=\langle[S,Z_\alpha],\xi\rangle=0$.
Since $\g{s}$ is a subalgebra and $[E_\beta,E_\gamma]\in\g{n}^2$, we
have
\begin{equation*}
0=\langle[T_\alpha+Z_\alpha,S+X],\xi\rangle
=\langle\sum_{\beta\in\Phi_i}x_\beta\langle\alpha,\beta\rangle
E_\beta,\xi\rangle=y_i\sum_{\beta\in\Phi_i}
\langle\alpha,\beta\rangle x_\beta\langle E_\beta,E_\beta\rangle
\end{equation*}
for each $\alpha\in\Phi_i$. Putting $A_{\beta\alpha}=
\frac{2\langle\alpha,\beta\rangle}{\langle\alpha,\alpha\rangle}$ and
taking into account that $y_i\neq 0$, the previous equation is
equivalent to $\sum_{\beta\in\Phi_i}A_{\beta\alpha} x_\beta\langle
E_\beta,E_\beta\rangle=0$. Of course, $(A_{\beta\alpha})$ is the
Cartan matrix of the Dynkin subdiagram associated with $\Phi_i$.
Cartan matrices are known to be nonsingular (see for example
\cite[Proposition 2.52 (e)]{K96}) which means that $x_\beta\langle
E_\beta,E_\beta\rangle=0$ for all $\beta\in\Phi_i$. This contradicts
the fact that $X$ is a nonzero vector and proves that $\Phi_i$ has
exactly one root.

Thus from now one we can assume that $\spp$ is a linear subspace of
\begin{equation*}
\g{q}=\g{a}_\Phi \oplus\left(\bigoplus_{\alpha\in\Phi} \R\left(
{a_\alpha}H_\alpha +(1-\theta)E_\alpha \right)\right),
\end{equation*}
with $E_\alpha\in\g{g}_\alpha$ and $\Phi$ an orthogonal subset of
$\Lambda$. Thus, we have to show that
$\bigoplus_{\alpha\in\Phi}\R({a_\alpha}H_\alpha
+(1-\theta)E_\alpha)\subset\spp$. Consider a vector $\xi=\xi_0+
\sum_{\alpha\in\Phi}x_\alpha({a_\alpha}H_\alpha
+(1-\theta)E_\alpha)\in\g{q}$ orthogonal to $\spp$, with
$\xi_0\in\g{a}_\Phi$ and $x_\alpha\in\R$. Since $\xi$ is orthogonal
to $\spp$, by Lemma \ref{thsspp} we can find $S\in\g{t}$ such that
the vector $X=S+\xi_0+
\sum_{\alpha\in\Phi}x_\alpha({a_\alpha}H_\alpha +E_\alpha)$ is in
$\g{s}$. On the other hand, it is clear, using again Lemma
\ref{thsspp}, that there exists $T_\alpha\in\g{t}$ such that
$Z_\alpha=T_\alpha+\langle E_\alpha,E_\alpha\rangle
H_\alpha-c_\alpha\langle\alpha,\alpha\rangle E_\alpha$ is a vector in
$\g{s}$. Since $\g{s}$ is a subalgebra, $[Z_\alpha,X]\in\g{s}$ for
each $\alpha\in\Phi$. A calculation using the facts that
$\g{t}\oplus\g{a}$ is abelian, $\xi_0\in\g{a}_\Phi$ and
$\alpha+\beta\not\in\Sigma$ if $\alpha,\beta\in\Phi$ and
$\alpha\neq\beta$, gives
\begin{equation*}
[Z_\alpha,X]=x_\alpha\langle\alpha,\alpha\rangle (\langle
E_\alpha,E_\alpha\rangle+a_\alpha^2\langle\alpha,\alpha\rangle)
E_\alpha+[T_\alpha,\sum_{\beta\in\Phi}x_\beta
E_\beta]+[S,a_\alpha\langle\alpha,\alpha\rangle E_\alpha].
\end{equation*}
Lemma \ref{thTspp} implies that the last two addends above are
orthogonal to $\spp$. Since $[Z_\alpha,X]$ is orthogonal to $\spp$,
the first addend must be orthogonal to $\spp$ as well. By definition
of $\spp$, the only way this can happen is when $x_\alpha=0$ for all
$\alpha\in\Phi$. This implies $\xi=\xi_0\in\g{a}_\Phi$ and proves
$\bigoplus_{\alpha\in\Phi}\R({a_\alpha}H_\alpha
+(1-\theta)E_\alpha)\subset\spp$. Now the theorem follows after a
straightforward application of Lemma \ref{thsspp}.
\end{proof}

Motivated by Theorem \ref{thsppLevel1} we introduce the following
notation. Let $a : \Phi \to {\mathbb R}$ be a map and define
$a_\alpha = a(\alpha)$ for all $\alpha \in \Phi$. Furthermore, for
each $\alpha \in \Phi$ we choose a nonzero vector $E_\alpha \in
\ell_\Phi \cap {\mathfrak g}_\alpha$. Consider
\begin{equation*}
\g{s}_{\Phi,V,a}=(V\oplus\g{a}^\Phi \oplus\g{n})
\ominus\left(\bigoplus_{\alpha\in\Phi}\R
(a_{\alpha}H_{\alpha}+E_{\alpha})\right).
\end{equation*}
As above we will see in Proposition \ref{thCongruency} that this
does not depend on the particular choice of $\ell_\Phi$. We
obviously have $\g{s}_{\Phi,V,0}=\g{s}_{\Phi,V}$ for the zero map $0
: \Phi \to {\mathbb R}$.

\begin{proposition}\label{thCongruency}
We have $\Ad(g)\g{s}_{\Phi,V,a}={\g{s}}_{\Phi,V}$ with
$g=\Exp(-\sum_{\alpha\in\Phi}a_\alpha E_\alpha) \in N$ if $\Phi \neq
\emptyset$ and $g = {\rm id}_G$ if $\Phi = \emptyset$. In
particular, $\g{s}_{\Phi,V,a}$ is a subalgebra of
$\g{a}\oplus\g{n}$. Moreover, the corresponding connected subgroup
$S_{\Phi,V,a}$ is conjugate to $S_{\Phi,V}$ and induces a hyperpolar
homogeneous foliation. We also have
\begin{equation*}
(\g{s}_{\Phi,V,a})_{\g{p}}^\perp=(\g{a}_\Phi\ominus V)\oplus
\left(\bigoplus_{\alpha\in\Phi}\R (a_{\alpha}H_{\alpha}
+(1-\theta)E_{\alpha})\right).
\end{equation*}
\end{proposition}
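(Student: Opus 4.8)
The whole proposition reduces to one conjugacy statement: $\Ad(g)\,\g{s}_{\Phi,V,a}=\g{s}_{\Phi,V}$ for $g=\Exp(-\sum_{\alpha\in\Phi}a_\alpha E_\alpha)$. When $\Phi=\emptyset$ this is vacuous, since $\g{a}^\emptyset=\{0\}$ forces $\g{s}_{\emptyset,V,a}=V\oplus\g{n}=\g{s}_{\emptyset,V}$ and $g=\mathrm{id}_G$; so assume $\Phi\neq\emptyset$ and set $X=\sum_{\alpha\in\Phi}a_\alpha E_\alpha\in\g{n}$ and $g=\Exp(-X)\in N$. First I would make $\Ad(g)=e^{-\ad X}$ completely explicit on $\g{a}\oplus\g{n}$. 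Orthogonality of $\Phi$ gives $\alpha+\beta\notin\Sigma$ for distinct $\alpha,\beta\in\Phi$, hence $[E_\alpha,E_\beta]=0$ for all $\alpha,\beta\in\Phi$; thus $\ad X$ annihilates $X$ and every $E_\alpha$, while it maps $\g{a}$ into $\g{n}^1$ and $\g{n}^k$ into $\g{n}^{k+1}$. Therefore $e^{-\ad X}$ fixes each $E_\alpha$, fixes every $H\in\g{a}_\Phi$, sends $H_\alpha\mapsto H_\alpha+a_\alpha\langle\alpha,\alpha\rangle E_\alpha$ for $\alpha\in\Phi$ (using $\beta(H_\alpha)=\langle\beta,\alpha\rangle=\delta_{\alpha\beta}\langle\alpha,\alpha\rangle$ on $\Phi$), and preserves $\g{n}\ominus\ell_\Phi$, because it preserves $\g{n}$, raises the grading, and $\ell_\Phi\subset\g{n}^1$.

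For the conjugacy it suffices to show $\Ad(g)\,\g{s}_{\Phi,V,a}\subseteq\g{s}_{\Phi,V}$, since both sides are subspaces of $V\oplus\g{a}^\Phi\oplus\g{n}$ of the same codimension $|\Phi|$ and $\Ad(g)$ is injective. Here I would record, using $\g{a}^\Phi=\bigoplus_{\alpha\in\Phi}\R H_\alpha$ and the orthogonality relations $\langle H_\alpha,H_\beta\rangle=\delta_{\alpha\beta}\langle\alpha,\alpha\rangle$, $\langle E_\alpha,E_\beta\rangle=\delta_{\alpha\beta}\langle E_\alpha,E_\alpha\rangle$, $\langle H_\alpha,E_\beta\rangle=0$, the orthogonal splittings $\g{s}_{\Phi,V,a}=\g{c}\oplus\bigoplus_{\alpha}\R(\langle E_\alpha,E_\alpha\rangle H_\alpha-a_\alpha\langle\alpha,\alpha\rangle E_\alpha)$ and $\g{s}_{\Phi,V}=\g{c}\oplus\bigoplus_{\alpha}\R H_\alpha$ with common summand $\g{c}=V\oplus(\g{n}\ominus\ell_\Phi)$. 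By the explicit action above, $\Ad(g)$ fixes $V$ and preserves $\g{n}\ominus\ell_\Phi$, so $\Ad(g)\,\g{c}\subseteq\g{s}_{\Phi,V}$; and a one-line computation gives $\Ad(g)(\langle E_\alpha,E_\alpha\rangle H_\alpha-a_\alpha\langle\alpha,\alpha\rangle E_\alpha)=\langle E_\alpha,E_\alpha\rangle H_\alpha+a_\alpha\langle\alpha,\alpha\rangle(\langle E_\alpha,E_\alpha\rangle-1)E_\alpha$, which lies in $\bigoplus_\alpha\R H_\alpha\subset\g{s}_{\Phi,V}$ once one normalizes $\langle E_\alpha,E_\alpha\rangle=1$. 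This proves the conjugacy, and as an immediate corollary $\g{s}_{\Phi,V,a}=\Ad(g^{-1})\g{s}_{\Phi,V}$ is a subalgebra of $\g{a}\oplus\g{n}$ — it is $\Ad(g^{-1})$ of the subalgebra $\g{s}_{\Phi,V}$, and $g\in N\subset AN$ normalizes $\g{a}\oplus\g{n}$.

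The remaining claims are now formal. The connected subgroup $S_{\Phi,V,a}$ has Lie algebra $\Ad(g^{-1})\g{s}_{\Phi,V}$, hence $S_{\Phi,V,a}=g^{-1}S_{\Phi,V}g$; conjugation by the isometry $g^{-1}$ of $M$ sends the hyperpolar homogeneous foliation $\mathcal F_{\Phi,V}$ (the orbit foliation of $S_{\Phi,V}$, by Theorem~\ref{maintheorem}(i)) onto the orbit foliation of $S_{\Phi,V,a}$, which is therefore again hyperpolar and homogeneous. For the normal-space formula I would argue directly, rather than through $\Ad(g)$, since $\Ad(g)$ does not preserve $\g{p}$: the linear isomorphism $P\colon\g{a}\oplus\g{n}\to\g{p}$, $H+\sum_\lambda Y_\lambda\mapsto H+\sum_\lambda(1-\theta)Y_\lambda$, satisfies $\langle P(Z),W\rangle=\langle Z,W\rangle$ for every $W\in\g{a}\oplus\g{n}$, so $(\g{s}_{\Phi,V,a})_{\g{p}}^\perp=P\bigl((\g{a}\oplus\g{n})\ominus\g{s}_{\Phi,V,a}\bigr)$; and $(\g{a}\oplus\g{n})\ominus\g{s}_{\Phi,V,a}=(\g{a}_\Phi\ominus V)\oplus\bigoplus_{\alpha}\R(a_\alpha H_\alpha+E_\alpha)$ is immediate from the definition of $\g{s}_{\Phi,V,a}$, so applying $P$ yields exactly $(\g{a}_\Phi\ominus V)\oplus\bigoplus_{\alpha}\R(a_\alpha H_\alpha+(1-\theta)E_\alpha)$.

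The one place that demands care is the explicit computation of $\Ad(g)$ and the matching of the two subalgebras: because $\Ad(g)$ is not an isometry of $(\g{g},\langle\,\cdot\,,\,\cdot\,\rangle)$, the two subalgebras must be compared on explicit spanning sets instead of by transporting orthogonal complements, and it is precisely the orthogonality of $\Phi$ — giving $[E_\alpha,E_\beta]=0$ and the near-nilpotence of $\ad X$ — that keeps this short.
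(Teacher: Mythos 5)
Your proof is correct, and it takes a genuinely different route from the paper's. The paper factors $g=\prod_{\alpha}g_\alpha$ with $g_\alpha=\Exp(-a_\alpha E_\alpha)$, introduces the one‑root‑at‑a‑time subspaces $\g{s}_\alpha=(V\oplus\g{a}^\Phi\oplus\g{n})\ominus\R E_\alpha$ and $\hat{\g{s}}_\alpha=(V\oplus\g{a}^\Phi\oplus\g{n})\ominus\R\xi_\alpha$, and shows $\Ad(g_\alpha)\hat{\g{s}}_\alpha=\g{s}_\alpha$ and $\Ad(g_\alpha)\hat{\g{s}}_\beta=\hat{\g{s}}_\beta$ by transporting the orthogonality conditions through the adjoint $\Ad(\Exp(a_\alpha\theta E_\alpha))$, then recovers the result by intersecting; in other words, it never writes down $\Ad(g)$ on a basis, but instead checks that the defining orthogonality relations are carried from $\g{s}_{\Phi,V,a}$ to $\g{s}_{\Phi,V}$. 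You instead compute $e^{-\ad X}$ explicitly on a spanning set, using $[E_\alpha,E_\beta]=0$ and the grading to see that $\ad X$ is two‑step nilpotent on $\g{a}\oplus\ell_\Phi$ and raises degree on $\g{n}$, so that $\Ad(g)$ fixes $V$ and each $E_\alpha$, sends $H_\alpha\mapsto H_\alpha+a_\alpha\langle\alpha,\alpha\rangle E_\alpha$, and preserves $\g{n}\ominus\ell_\Phi$. This is shorter and arguably more transparent, at the cost of normalizing $\langle E_\alpha,E_\alpha\rangle=1$ at the outset — a normalization the paper's proof also (silently) invokes, and which is needed for the specific element $g=\Exp(-\sum a_\alpha E_\alpha)$ in the statement to be the correct one. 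You also supply an explicit derivation of the normal‑space formula via the isomorphism $P\colon\g{a}\oplus\g{n}\to\g{p}$ (the paper leaves this computation to the reader), and correctly flag that one cannot transport orthogonal complements by $\Ad(g)$ because it is not an isometry of $\langle\cdot,\cdot\rangle$ — which is precisely why the paper's proof resorts to the pairing with $\Ad(\Exp(a_\alpha\theta E_\alpha))$ instead. Both approaches are sound.
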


\begin{proof}
We define $\xi_{\alpha}=a_{\alpha}H_{\alpha}+E_{\alpha}$ for
$\alpha\in\Phi$. Then the subalgebra $\g{s}_{\Phi,V,a}$ can
equivalently be written as
$\g{s}_{\Phi,V,a}=(V\oplus\g{a}^\Phi\oplus\g{n})
\ominus\left(\bigoplus_{\alpha\in\Phi}\R\xi_{\alpha}\right)$. Let
$g_\alpha=\Exp(-a_{\alpha}E_{\alpha})$ and
$g=\prod_{\alpha\in\Phi}g_\alpha$. Since $\alpha$ and $\beta$ are
not connected in the Dynkin diagram, we have $[E_\alpha,E_\beta]=0$,
and so $g=\Exp(-\sum_{\alpha\in\Phi}a_\alpha E_\alpha)$. Our aim is
to prove that $\Ad(g)\g{s}_{\Phi,V,a}={\g{s}}_{\Phi,V}$.

We introduce the following notation:
\begin{equation*}
\g{s}_\alpha=(V\oplus\g{a}^\Phi\oplus\g{n})\ominus\R
E_{\alpha},\quad
\hat{\g{s}}_\alpha=(V\oplus\g{a}^\Phi\oplus\g{n})
\ominus\R\xi_{\alpha}.
\end{equation*}
First we prove that $\Ad(g_\alpha)\hat{\g{s}}_\alpha=\g{s}_\alpha$
for each $\alpha\in\Phi$. Note that, since $-a_{\alpha}
E_{\alpha}\in\g{a}\oplus\g{n}$, it follows that
$\Ad(g_\alpha)(\g{a}\oplus\g{n})=\g{a}\oplus\g{n}$. Now let
$X\in\hat{\g{s}}_\alpha$. Since $E_{\alpha}$ is a unit vector and
$X\in\g{a}\oplus\g{n}$, we have
\begin{align*}
\langle\Ad(g_\alpha)X,E_{\alpha}\rangle &=\langle
X,\Ad(\Exp(a_{\alpha}\theta E_{\alpha}))E_{\alpha}\rangle  = \langle
X,e^{a_{\alpha}\ad(\theta E_{\alpha})}
        E_{\alpha}\rangle\\
&=\langle X,E_{\alpha}+a_{\alpha} H_{\alpha}
    +\frac{a_{\alpha}^2}{2}
    \lvert\alpha\rvert^2\theta E_{\alpha}\rangle
=\langle X,\xi_{\alpha}\rangle =0.
\end{align*}
Also, if $H\in\g{a}_\Phi\ominus V$ then $\alpha(H)=0$ for each
$\alpha\in\Phi$, so
\begin{align*}
\langle\Ad(g_\alpha)X,H\rangle &=\langle X,\Ad(\Exp(a_{\alpha}\theta
E_{\alpha}))H\rangle
=\langle X,e^{a_{\alpha}\ad(\theta E_{\alpha})}H\rangle\\
&=\langle X,H+a_{\alpha}\alpha(H)\theta E_{\alpha}\rangle= \langle
X,H\rangle = 0.
\end{align*}
Altogether this proves that $\Ad(g_\alpha)\hat{\g{s}}_\alpha=
\g{s}_\alpha$.

Now let $\alpha,\beta\in\Phi$ with $\alpha\neq\beta$. We prove that
$\Ad(g_\alpha)\hat{\g{s}}_\beta=\hat{\g{s}}_\beta$. Since $\alpha$
and $\beta$ are simple roots, $\beta-m\alpha$ is not a root for
$m\geq 1$. Hence,
\begin{equation*}
\Ad(\Exp(t\theta E_{\alpha}))E_{\beta} =e^{t\ad(\theta
E_{\alpha})}E_{\beta} =\sum_{m=0}^\infty\frac{t^m}{m!}\ad(\theta
E_{\alpha})^m E_{\beta} =E_{\beta}.
\end{equation*}
If $H\in\g{a}$ we also get
\begin{equation*}
\Ad(\Exp(t\theta E_{\alpha}))H =e^{t\ad(\theta E_{\alpha})}H
=H+t\alpha(H)\theta E_{\alpha}.
\end{equation*}
Now, let $X\in\hat{\g{s}}_\beta$. Using the previous equations we
obtain
\begin{equation*}
\langle\Ad(g_\alpha)X,\xi_{\beta}\rangle =\langle X,
\Ad(\Exp(a_{\alpha}\theta E_{\alpha}))\xi_{\beta}\rangle =\langle
X,\xi_{\beta}\rangle +a_{\alpha}\langle {\alpha},{\beta}\rangle
\langle X,\theta E_{\alpha}\rangle =0.
\end{equation*}
Also, if $H\in\g{a}_\Phi\ominus V$ we get
\begin{equation*}
\langle\Ad(g_\alpha)X,H\rangle =\langle X,\Ad(\Exp(a_{\alpha}\theta
E_{\alpha}))H\rangle =\langle X,H\rangle
+a_{\alpha}{\alpha}(H)\langle X,\theta E_{\alpha}\rangle=0.
\end{equation*}
Altogether this proves $\Ad(g_\alpha)\hat{\g{s}}_\beta
=\hat{\g{s}}_\beta$. A similar argument shows that
$\Ad(g_\alpha){\g{s}}_\beta={\g{s}}_\beta$. However, since
$\g{s}_{\Phi,V,a}=\bigcap_{\alpha\in\Phi}\hat{\g{s}}_\alpha$ and
${\g{s}}_{\Phi,V}=\bigcap_{\alpha\in\Phi}\g{s}_\alpha$, using the
previous two equalities and a simple induction argument, we obtain
$\Ad(g)\g{s}_{\Phi,V,a}=(\prod_{\alpha\in\Phi}\Ad(g_\alpha))
\g{s}_{\Phi,V,a}={\g{s}}_{\Phi,V}$.
\end{proof}

This is a good point to recall the contents of Theorem
\ref{thsppLevel1}, which says that if
$\g{s}\subset\g{t}\oplus\g{a}\oplus\g{n}$ is a subalgebra such that
$\spp\subset\g{a}\oplus\g{p}^1$ is abelian, then there are an
orthogonal subset $\Phi\subset\Lambda$, numbers $a_\alpha\in\R$ and
nonzero vectors $E_\alpha\in\g{g}_\alpha$ for each $\alpha\in\Phi$,
and a linear subspace $V\subset\g{a}_\Phi$ such that
$\g{s}_n=\g{s}_{\Phi,V,a}$.

\begin{proposition}\label{thProperties}
Let $\g{s}\subset\g{t}\oplus\g{a}\oplus\g{n}$ be a subalgebra such
that
\begin{equation*}
\g{s}_n=  \g{s}_{\Phi,V,a} = (V\oplus\g{a}^\Phi\oplus\g{n})\ominus
\left(\bigoplus_{\alpha\in\Phi}\R(a_\alpha H_\alpha+E_\alpha)
\right)
\end{equation*}
with $\Phi$ a subset of orthogonal simple roots,
$E_\alpha\in\g{g}_\alpha$ nonzero vectors, $V$ a linear subspace of
$\g{a}_\Phi$ and $a_\alpha\in\R$, and define
$E=-\sum_{\alpha\in\Phi}a_\alpha E_\alpha$ and $g=\Exp(E)$. Then the
following statements hold:
\begin{itemize}
\item[(i)] $\Ad(g)\g{s}$ is a subalgebra of
$\g{t}\oplus\g{a}\oplus\g{n}$ and
$(\Ad(g)\g{s})_n\subset\Ad(g)\g{s}_n=\g{s}_{\Phi,V}$.

\item[(ii)] For each $\alpha\in\Phi$ the projection of
$(\g{t}\oplus\g{a}\oplus(\g{n}\ominus\g{g}_\alpha))\cap\g{s}$ onto
$\g{t}$ centralizes $E_\alpha$.

\item[(iii)] $V\subset(\Ad(g)\g{s})_n$.

\item[(iv)] Assume that $\lambda\in\Sigma^+\setminus\Phi$ satisfies
$\lambda+\alpha\not\in\Sigma$ for each $\alpha\in\Phi$. Then
$\g{g}_\lambda\subset(\Ad(g)\g{s})_n$.
\end{itemize}
In addition, assume that the orbits of the connected subgroup $S$ of
$G$ whose Lie algebra is $\g{s}$ form a homogeneous foliation. Then
the following further statements hold:
\begin{itemize}
\item[(v)] $(\Ad(g)\g{s})_n=\Ad(g)\g{s}_n=\g{s}_{\Phi,V}$.

\item[(vi)] Denote by $\g{s}_c$ the projection of $\g{s}$ onto
$\g{t}$. Then $\g{s}_c$ is an abelian subalgebra that centralizes
each $E_\alpha$. In particular, $[\g{s}_c,\spp]=0$.

\item[(vii)] With the notation as in \emph{(vi)}, let $S_c$ be the
connected subgroup of $G$ whose Lie algebra is $\g{s}_c$. Then $S_c$
acts trivially on $\nu_o(S\cdot o)$.
\end{itemize}
\end{proposition}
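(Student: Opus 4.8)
The plan splits along the two halves of the statement: (i)--(iv) use only that $\g{s}$ is a subalgebra of $\g{t}\oplus\g{a}\oplus\g{n}$ with $\g{s}_n=\g{s}_{\Phi,V,a}$, while (v)--(vii) bring in the foliation hypothesis.

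\emph{Parts (i)--(iv).} For (i), write $E=-\sum_{\alpha\in\Phi}a_\alpha E_\alpha\in\bigoplus_{\alpha\in\Phi}\g{g}_\alpha\subset\g{n}$, so that $\Ad(g)=e^{\ad E}$. Since $\g{t}\subset\g{k}_0$ normalizes every restricted root space and $[\g{n},\g{a}\oplus\g{n}]\subset\g{n}$, the space $\g{t}\oplus\g{a}\oplus\g{n}$ is $\Ad(g)$-invariant, so $\Ad(g)\g{s}$ is a subalgebra of it; and for $X=X_c+X_n\in\g{s}$ with $X_c\in\g{t}$, $X_n\in\g{s}_n$, we have $\Ad(g)X_n\in\Ad(g)\g{s}_n=\g{s}_{\Phi,V}$ by Proposition~\ref{thCongruency}, while $\pi_{\g{a}\oplus\g{n}}(\Ad(g)X_c)=\ad(E)X_c+\tfrac12\ad(E)^2X_c\in\bigoplus_\alpha(\g{g}_\alpha\oplus\g{g}_{2\alpha})$ has $\g{g}_\alpha$-component proportional to $[X_c,E_\alpha]$, which is orthogonal to $E_\alpha$ because $\ad(X_c)$ is skew-symmetric; hence it lies in $\g{n}\ominus\ell_\Phi\subset\g{s}_{\Phi,V}$, and $(\Ad(g)\g{s})_n\subset\g{s}_{\Phi,V}$. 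Part (ii) is the technical heart: for $W\in\g{g}_\alpha\ominus\R E_\alpha$ one checks $W\in\g{s}_n$, so Lemma~\ref{thsspp} provides $T_W\in\g{t}$ with $T_W+W\in\g{s}$; if $X\in\g{s}$ has vanishing $\g{g}_\alpha$-component, the $\g{g}_\alpha$-component of $[X,T_W+W]$ equals $[\pi_{\g{t}}(X),W]+\alpha(\pi_{\g{a}}(X))W$, and since $[\g{s},\g{s}]\subset\g{n}$ and $\g{s}\cap\g{n}\subset\g{s}_n\cap\g{n}=\g{n}\ominus\ell_\Phi$, this component is orthogonal to $E_\alpha$; pairing with $E_\alpha$ and using skew-symmetry of $\ad(\pi_{\g{t}}(X))$ together with $W\perp E_\alpha$ gives $\langle W,[\pi_{\g{t}}(X),E_\alpha]\rangle=0$, and letting $W$ range over $\g{g}_\alpha\ominus\R E_\alpha$ forces $[\pi_{\g{t}}(X),E_\alpha]=0$. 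Parts (iii) and (iv) then follow quickly from (ii): a vector $v\in V$ (respectively $Y\in\g{g}_\lambda$ with $\lambda\in\Sigma^+\setminus\Phi$) lies in $\g{s}_n$, so $T+v\in\g{s}$ (respectively $T_Y+Y\in\g{s}$) for some element of $\g{t}$, and this element has vanishing $\g{g}_\alpha$-component for every $\alpha\in\Phi$; hence by (ii) its $\g{t}$-part centralizes each $E_\alpha$, so $\ad(E)$ annihilates it and $\Ad(g)$ fixes it (for (iv) one also uses $[E,Y]=0$, which follows from $\lambda+\alpha\notin\Sigma$); therefore $v$ (respectively $Y$) equals $\pi_{\g{a}\oplus\g{n}}$ of an element of $\Ad(g)\g{s}$.

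\emph{Part (v).} Now assume the orbits of $S$ form the homogeneous foliation $\mathcal F$. Then the orbits of $\Ad(g)S=gSg^{-1}$ are the leaves of the homogeneous foliation $g\cdot\mathcal F$ on the Hadamard manifold $M$, so by Proposition~\ref{thAllOrbitsPrincipal} all its leaves, in particular $\Ad(g)S\cdot o$, have dimension $\dim(S\cdot o)=\dim\g{s}_n$. Using $(\g{a}\oplus\g{n})\cap\g{k}=0$ this reads $\dim\g{s}-\dim(\Ad(g)\g{s}\cap\g{t})=\dim\g{s}-\dim(\g{s}\cap\g{t})$; since (ii) shows $\Ad(g)$ fixes every element of $\g{s}\cap\g{t}$, we have $\g{s}\cap\g{t}\subset\Ad(g)\g{s}\cap\g{t}$, so the two coincide and $\dim(\Ad(g)\g{s})_n=\dim\g{s}-\dim(\g{s}\cap\g{t})=\dim\g{s}_n=\dim\g{s}_{\Phi,V}$; combined with the inclusion from (i), this gives $(\Ad(g)\g{s})_n=\g{s}_{\Phi,V}$.

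\emph{Parts (vi)--(vii).} Since $\g{s}_c\subset\g{t}$ and $\g{t}$ is abelian, $\g{s}_c$ is automatically an abelian subalgebra. Because $\nu_o(S\cdot o)=\spp=(\g{a}_\Phi\ominus V)\oplus\bigoplus_{\alpha\in\Phi}\R(a_\alpha H_\alpha+(1-\theta)E_\alpha)$ and $\g{s}_c\subset\g{k}_0$ centralizes $\g{a}$, one computes $[\g{s}_c,\spp]=\bigoplus_{\alpha\in\Phi}(1-\theta)[\g{s}_c,E_\alpha]$; as $1-\theta$ is injective on $\g{g}_\alpha$, the assertion ``$\g{s}_c$ centralizes each $E_\alpha$'' is equivalent to $[\g{s}_c,\spp]=0$, and (vii) follows from either of these since $S_c$ is connected. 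So everything reduces to showing that $\g{s}_c$ centralizes each $E_\alpha$. Using (ii) on elements of $\g{s}$ with vanishing $\g{g}_\alpha$-component, the distinguished elements $Z_\alpha\in\g{s}$ with $\pi_{\g{a}\oplus\g{n}}(Z_\alpha)=\langle E_\alpha,E_\alpha\rangle H_\alpha-a_\alpha\langle\alpha,\alpha\rangle E_\alpha$ (for which one first checks $[\pi_{\g{t}}(Z_\alpha),E_\alpha]=0$ by the bracketing trick of (ii)), and the elements $T_W+W$ with $W\in\g{g}_\alpha\ominus\R E_\alpha$, one subtracts from an arbitrary $X\in\g{s}$ suitable combinations to annihilate its $\g{g}_\alpha$-component and thereby reduces the claim to $[\pi_{\g{t}}(T_W+W),E_\alpha]=0$. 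This is the expected main obstacle, since (ii) does not apply to $T_W+W$, whose $\g{g}_\alpha$-component $W$ is nonzero. The plan here is to exploit the foliation hypothesis in the form of principality of $S\cdot o$ (triviality of the slice representation at $o$, which already gives that $\g{s}\cap\g{t}$ centralizes each $E_\alpha$), sharpened by the rigidity of part (v), namely $(\Ad(g)\g{s})_n=\g{s}_{\Phi,V}$ and $\g{s}\cap\g{t}=\Ad(g)\g{s}\cap\g{t}$, to conclude that $\g{s}$ splits as $(\g{s}\cap\g{t})\oplus\g{s}_n$; then $\g{s}_c=\g{s}\cap\g{t}$ and the conclusions of (vi) and (vii) are immediate. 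This splitting is, moreover, exactly what is needed afterwards to show that $S$ and its projection induce the same foliation.
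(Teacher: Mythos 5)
Parts (i)--(v) of your proposal are correct and follow essentially the same route as the paper. For (i) your skew-symmetry observation is the same as the paper's invocation of Lemma~\ref{thTspp}; for (ii) you reproduce the bracket computation with $T_W+W$; for (iii) you should also note that $[E,v]=0$ for $v\in V\subset\g{a}_\Phi$ (it is trivial, but you only mention the analogous fact for (iv)); and for (v) your dimension count for $\Ad(g)S\cdot o$ versus $S\cdot o$ is a slight variant of the paper's comparison of principal isotropy algebras, but both are fine.

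The gap is in (vi). You reduce to the element $T_W+W$ with $W\in\g{g}_\alpha\ominus\R E_\alpha$ and then propose to get $[T_W,E_\alpha]=0$ by concluding that ``$\g{s}$ splits as $(\g{s}\cap\g{t})\oplus\g{s}_n$'', equivalently $\g{s}_c=\g{s}\cap\g{t}$, from the rigidity of (v) together with the triviality of the slice representation. But (v) only says that $\pi_{\g{a}\oplus\g{n}}\colon\Ad(g)\g{s}\to\g{s}_{\Phi,V}$ is surjective with kernel $\g{s}\cap\g{t}$; this is a short exact sequence of vector spaces, and it does not imply that $\g{s}_{\Phi,V}\subset\Ad(g)\g{s}$ (nor that $\g{s}_n\subset\g{s}$). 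So the splitting is an extra claim, not a consequence of (v), and it is left unproven. (It is also not what the paper uses later: Proposition~\ref{thCongruencyGeneral} does not rely on any such splitting; it proves orbit equivalence by a group-theoretic rewriting $tan(o)=an''(o)$.)

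The paper's actual argument fills exactly this gap. First it treats the case $a=0$ (i.e.\ $\g{s}_n=\g{s}_{\Phi,V}$), with no foliation hypothesis: since $H_\alpha\in\g{a}^\Phi\subset\g{s}_n$, pick $T_{H_\alpha}\in\g{t}$ with $T_{H_\alpha}+H_\alpha\in\g{s}$; then $[T_{H_\alpha}+H_\alpha,S_X+X]=(\ad T_{H_\alpha}+\langle\alpha,\alpha\rangle\,1_{\g{g}_\alpha})X\in\g{s}\cap\g{g}_\alpha$ for all $X\in\g{g}_\alpha\ominus\R E_\alpha$; because $\ad T_{H_\alpha}+\langle\alpha,\alpha\rangle\,1$ is an isomorphism of $\g{g}_\alpha\ominus\R E_\alpha$ (using (ii) for $T_{H_\alpha}$), this forces $\g{g}_\alpha\ominus\R E_\alpha\subset\g{s}$. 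This is the precise ``partial splitting'' that lets one subtract the $\g{g}_\alpha$-component of any element of $\g{s}$ and then apply (ii). Only then, for general $a$, is the foliation hypothesis invoked through (v): one conjugates by $g$, applies the $a=0$ argument to $\Ad(g)\g{s}$, and observes that $\pi_{\g{t}}(\Ad(g)(T+H+X))=T$ since $g\in N$. Your plan skips this bracket construction and substitutes an unproven (and, I believe, generally false) global splitting; that is the missing step you would need to supply.
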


\begin{remark}
Remark \ref{exNoFoliation} shows that the hypothesis that the orbits
of $S$ form a homogeneous foliation is necessary in Proposition
\ref{thProperties} (v), (vi) and (vii).
\end{remark}

\begin{proof} (i) First note that since $\g{n}$ is an ideal of
$\g{t}\oplus\g{a}\oplus\g{n}$ we have
$\Ad(g)(\g{t}\oplus\g{a}\oplus\g{n})\subset
\g{t}\oplus\g{a}\oplus\g{n}$ and
$\Ad(g)\g{s}\subset\g{t}\oplus\g{a}\oplus\g{n}$. Proposition
\ref{thCongruency} implies that $\Ad(g){\g{s}}_n=\g{s}_{\Phi,V}$. We
have to prove that $(\Ad(g){\g{s}})_n\subset\g{s}_{\Phi,V}$. Let
$T+H+X\in{\g{s}}$ with $T\in\g{t}$, $H\in\g{a}$ and $X\in\g{n}$. We
already have $\Ad(g)(H+X)\in\Ad(g)\g{s}_n=\g{s}_{\Phi,V}$, so it
suffices to prove that the projection of $\Ad(g)T$ onto
$\g{a}\oplus\g{n}$ is in $\g{s}_{\Phi,V}$. Since $\g{n}$ is an ideal
of $\g{t}\oplus\g{a}\oplus\g{n}$ this projection is
$\Ad(g)T-T=\sum_{k=1}^\infty\frac{1}{k!}\ad(E)^k T\in\g{n}$. We have
to prove that $\langle\Ad(g)T-T,E_\alpha\rangle=0$ for all
$\alpha\in\Phi$. Since $[E,T]\in\g{n}$ and $\g{n}$ is nilpotent it
follows that $\ad(E)^k T\in\g{n}\ominus\g{n}^1$ for all $k\geq 2$,
and for $k=1$, we have $\langle[E,T],E_\alpha\rangle=0$ for all
$\alpha\in\Phi$ by Lemma \ref{thTspp}. Hence,
$(\Ad(g)\g{s})_n\subset\g{s}_{\Phi,V}=\Ad(g)\g{s}_n$.

(ii) Let $\alpha\in\Phi$ and $T$ be in the image of the projection
of $(\g{t}\oplus\g{a}\oplus(\g{n}\ominus\g{g}_\alpha))\cap\g{s}$
onto $\g{t}$. Note that $[T,E_\alpha]\in\g{g}_\alpha\ominus\R
E_\alpha$ since $\ad(T)$ preserves each root space and is
skewsymmetric. Hence, we only have to show that
$\langle\g{g}_\alpha\ominus\R E_\alpha,[T,E_\alpha]\rangle=0$. Let
$X\in\g{g}_\alpha\ominus\R E_\alpha$ be arbitrary. Since
$X\in\g{s}_n$, there exists $S_X\in\g{t}$ such that $S_X+X\in\g{s}$.
By definition of $T$, there exist $H\in\g{a}$ and
$Y\in\g{n}\ominus\g{g}_\alpha$ such that $T+H+Y\in\g{s}$. As $\g{s}$
is a subalgebra we have
$[S_X,Y]+[X,T]+[X,H]+[X,Y]=[S_X+X,T+H+Y]\in\g{s}$. Since
$[S_X,Y]\in\g{n}\ominus\g{g}_\alpha$,
$[X,H]=-\alpha(H)X\in\g{g}_\alpha\ominus\R E_\alpha$ and
$[X,Y]\in\g{n}\ominus\g{n}^1$, the definition of $\g{s}_n$ yields
\begin{equation*}
0=\langle [S_X+X,T+H+Y],a_\alpha H_\alpha+(1-\theta)E_\alpha\rangle
=\langle[X,T],E_\alpha\rangle=-\langle X,[T,E_\alpha]\rangle,
\end{equation*}
which completes the proof of (ii).

(iii) Let $H\in V\subset\g{a}_\Phi$.
Since $H\in\g{s}_n$ there is $T_H\in\g{t}$ such that
$T_H+H\in\g{s}$. By (ii) we have $[T_H,E]=0$ and by definition
$\alpha(H)=0$ for all $\alpha\in\Phi$, so
$\Ad(g^{-1})(T_H+H)=e^{-\ad(E)}(T_H+H)=T_H+H\in\g{s}$. Hence
$T_H+H\in\Ad(g)\g{s}$ and $H\in(\Ad(g)\g{s})_n$.

(iv) Assume $\lambda\in\Sigma^+\setminus\Phi$ and
$\lambda+\alpha\not\in\Sigma$ for any $\alpha\in\Phi$. Take
$X\in\g{g}_\lambda$ and $T_X\in\g{t}$ such that $T_X+X\in\g{s}$. By
(ii) we have $[T_X,E]=0$. Since $\lambda+\alpha\not\in\Sigma$ for
any $\alpha\in\Phi$, we also have $[X,E]=0$. Hence,
$\Ad(g^{-1})(T_X+X)=e^{-\ad(E)}(T_X+X)=T_X+X\in\g{s}$. This implies
$T_X+X\in\Ad(g)\g{s}$ and $X\in(\Ad(g)\g{s})_n$, so (iv) follows.

(v) By (i) we already know
$(\Ad(g)\g{s})_n\subset\Ad(g)\g{s}_n=\g{s}_{\Phi,V}$. We prove the
equality by showing that $\dim (\Ad(g)\g{s})_n=\dim \Ad(g)\g{s}_n$.

By hypothesis and Proposition \ref{thAllOrbitsPrincipal}, all the
orbits of $S$ are principal and the same is true of $I_g(S)$. Hence
the isotropy groups $S_o=S\cap K$ and $I_g(S)_o=I_g(S)\cap K$ are
conjugate. Their Lie algebras are $\g{s}\cap\g{t}$ and
$(\Ad(g)\g{s})\cap\g{t}$, respectively. By (ii) we have
$[\g{s}\cap\g{t},E]=0$ so $\Ad(g)=e^{\ad(E)}$ acts as the identity
on $\g{s}\cap\g{t}$. Hence
$\g{s}\cap\g{t}\subset(\Ad(g)\g{s})\cap\g{t}$ and thus equality
follows by hypothesis. This implies $\dim (\Ad(g)\g{s})_n=\dim
\Ad(g)\g{s}-\dim(\Ad(g)\g{s})\cap\g{t}=\dim \g{s}-\dim
\g{s}\cap\g{t}= \dim \g{s}_n=\dim \Ad(g)\g{s}_n$.

(vi) Obviously, $\g{s}_c$ is an abelian subalgebra because $\g{t}$
is abelian. For the second part, we assume first that
$\g{s}_n=\g{s}_{\Phi,V}$. Fix $\alpha\in\Phi$ and let
$X\in\g{g}_\alpha\ominus\R E_\alpha$ be arbitrary. Then there exists
$S_X\in\g{t}$ such that $S_X+X\in\g{s}$. Since
$H_\alpha\in\g{a}^\Phi\subset\g{s}_n$, there exists
$T_{H_\alpha}\in\g{t}$ such that $T_{H_\alpha}+H_\alpha\in\g{s}$. As
$\g{s}$ is a subalgebra we have
$[T_{H_\alpha}+H_\alpha,S_X+X]=(\ad(T_{H_\alpha})
+\langle\alpha,\alpha\rangle
1_{\g{g}_\alpha})X\in\g{g}_\alpha\cap\g{s}\subset\g{g}_\alpha\ominus\R
E_\alpha$, where $1_{\g{g}_\alpha}$ is the identity transformation
of $\g{g}_\alpha$. Since $\langle\alpha,\alpha\rangle\neq 0$,
$\ad(T_{H_\alpha}) +\langle\alpha,\alpha\rangle 1_{\g{g}_\alpha}$ is
an isomorphism. Thus, the previous equality implies
$\g{g}_\alpha\ominus\R E_\alpha\in\g{s}$. This implies
$S_X\in\g{s}\cap\g{t}$ and thus $[S_X,E_\alpha]=0$ by (ii). Also
according to (ii), and since $\alpha\in\Phi$ is arbitrary, (vi)
follows when $\g{s}_n=\g{s}_{\Phi,V}$.

Now we finish the proof of (vi). Let
$\g{s}\subset\g{t}\oplus\g{a}\oplus\g{n}$ be a subalgebra such that
$\g{s}_n=\g{s}_{\Phi,V,a}$ and assume that all the orbits of the
corresponding connected subgroup $S$ of $G$ whose Lie algebra is
$\g{s}$ are principal. By (v) we get
$(\Ad(g)\g{s})_n=\g{s}_{\Phi,V}$. Take an element $T+H+X\in\g{s}$
with $T\in\g{t}$, $H\in\g{a}$ and $X\in\g{n}$. Since
$H+X\in\g{s}_n$, it follows that $\Ad(g)(H+X)\in\Ad(g)\g{s}_n=
\g{s}_{\Phi,V}$ by Proposition \ref{thCongruency}. Hence, the
projection of $\Ad(g)(T+H+X)$ onto $\g{t}$ is the same as the
projection of $\Ad(g)T$ onto $\g{t}$, and as $g\in N$, that
projection is $T$. Now, since $(\Ad(g)\g{s})_n=\g{s}_{\Phi,V}$,
applying the argument in the previous paragraph to the subalgebra
$\Ad(g)\g{s}$ we get
$[T,E_\alpha]=[\pi_{\g{t}}(\Ad(g)(T+H+X)),E_\alpha]=0$ for all
$\alpha\in\Phi$. This already implies $[E,T]=0$ and thus
$\Ad(g)T=T$, so $\Ad(g)(T+H+X)=T+\Ad(g)(H+X)$ and
$\Ad(g)(H+X)\in\g{a}\oplus\g{n}$. Since $[\g{t},\g{a}]=0$, we obtain
$[T,(\g{a}_\Phi\ominus
V)\oplus\left(\bigoplus_{\alpha\in\Phi}\R(a_\alpha
H_\alpha+(1-\theta)E_\alpha)\right)]=0$ and the result follows.

(vii) Let $t\in S_c$ and $\xi\in\nu_o(S\cdot o)$. Since
$\spp\subset\g{p}$ we may identify $\spp$ and $\nu_o(S\cdot o)$. By
(vi), $\g{s}_c$ centralizes $\spp$, so with the above identification
we get $t_*\xi=\Ad(t)\xi=\xi$.
\end{proof}

We will need the following result:

\begin{lemma}\label{thBracketsn}
Let $\g{s}$ be a subalgebra of $\g{t}\oplus\g{a}\oplus\g{n}$ and
$\g{s}_n$ its projection onto $\g{a}\oplus\g{n}$. Let $\lambda$ and
$\mu$ be two positive roots (not necessarily different). If
$\g{g}_\lambda+\g{g}_\mu\subset\g{s}_n$, then
$\g{g}_{\lambda+\mu}\subset\g{s}_n$.
\end{lemma}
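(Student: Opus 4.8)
The plan is to use that $\g{s}$ is a subalgebra, together with the hypothesis, to produce $[\g{g}_\lambda,\g{g}_\mu]\subset\g{s}_n$, and then to identify $[\g{g}_\lambda,\g{g}_\mu]$ with $\g{g}_{\lambda+\mu}$.

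First I would note that in the fixed maximally noncompact Borel subalgebra $\g{t}\oplus\g{a}\oplus\g{n}$ the Cartan subalgebra $\g{t}\oplus\g{a}$ is abelian, so $\g{t}$ commutes with $\g{a}$ and hence $\g{t}\subset\g{k}\cap Z_{\g{g}}(\g{a})=\g{k}_0$; in particular $\ad(T)$ preserves every restricted root space for each $T\in\g{t}$. Now fix $X\in\g{g}_\lambda$ and $Y\in\g{g}_\mu$. Since $\g{g}_\lambda,\g{g}_\mu\subset\g{s}_n=\pi_{\g{a}\oplus\g{n}}(\g{s})$, I may pick $S,T\in\g{t}$ with $S+X\in\g{s}$ and $T+Y\in\g{s}$. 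As $\g{s}$ is a subalgebra, $[S+X,T+Y]\in\g{s}$, and since $\g{t}$ is abelian this bracket equals $[S,Y]+[X,T]+[X,Y]$, with $[S,Y]\in\g{g}_\mu$, $[X,T]\in\g{g}_\lambda$, and $[X,Y]\in\g{g}_{\lambda+\mu}$. Thus it lies in $\g{n}$, so it coincides with its orthogonal projection onto $\g{a}\oplus\g{n}$ and therefore belongs to $\g{s}_n$. Subtracting $[S,Y]\in\g{g}_\mu\subset\g{s}_n$ and $[X,T]\in\g{g}_\lambda\subset\g{s}_n$ (both in $\g{s}_n$ by hypothesis) leaves $[X,Y]\in\g{s}_n$. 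Letting $X,Y$ vary gives $[\g{g}_\lambda,\g{g}_\mu]\subset\g{s}_n$.

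It then remains to show $[\g{g}_\lambda,\g{g}_\mu]=\g{g}_{\lambda+\mu}$; if $\lambda+\mu\notin\Sigma$ then $\g{g}_{\lambda+\mu}=\{0\}$ and the lemma is immediate, so assume $\lambda+\mu\in\Sigma$. I would either quote this as a standard fact about restricted root spaces (see e.g.\ \cite{K96}) or argue directly: if some nonzero $Z\in\g{g}_{\lambda+\mu}$ were orthogonal to $[\g{g}_\lambda,\g{g}_\mu]$, then using $\langle\ad(X)Y,W\rangle=-\langle\ad(\theta X)W,Y\rangle$ and nondegeneracy of $\langle\,\cdot\,,\,\cdot\,\rangle$ on root spaces one gets $[\g{g}_\lambda,\theta Z]=\{0\}=[\g{g}_\mu,\theta Z]$; by Lemma \ref{thPolarization} $[\theta Z,Z]=\langle Z,Z\rangle H_{\lambda+\mu}$ (it lies in $\g{a}$ since $\theta[\theta Z,Z]=-[\theta Z,Z]$), and then the Jacobi identity together with $[\theta Z,Y]=0$ for $Y\in\g{g}_\mu$ gives $\langle Z,Z\rangle\langle\mu,\lambda+\mu\rangle Y=[\theta Z,[Z,Y]]$; pairing with $Y$ yields $\langle Z,Z\rangle\langle\mu,\lambda+\mu\rangle\langle Y,Y\rangle=-\langle[Z,Y],[Z,Y]\rangle\le 0$, hence $\langle\mu,\lambda+\mu\rangle\le 0$, and symmetrically $\langle\lambda,\lambda+\mu\rangle\le 0$, so $\langle\lambda+\mu,\lambda+\mu\rangle\le 0$, contradicting $\lambda+\mu\ne 0$. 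I expect the first paragraph to be routine; the point needing the most care, or an external citation, is the identity $[\g{g}_\lambda,\g{g}_\mu]=\g{g}_{\lambda+\mu}$, since the bracket argument by itself produces only $[\g{g}_\lambda,\g{g}_\mu]$ inside $\g{s}_n$, while $\g{s}_n$ is merely a linear subspace.
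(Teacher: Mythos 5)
Your first paragraph reproduces the paper's proof essentially verbatim: lift $X$ and $Y$ to $S+X,\,T+Y\in\g{s}$, take the bracket, observe it lands in $\g{n}$ and hence in $\g{s}_n$, and strip off the $\g{g}_\lambda$- and $\g{g}_\mu$-components using the hypothesis to isolate $[X,Y]\in\g{s}_n$. The only difference is cosmetic sign bookkeeping ($[X,T]$ versus $-[T,X]$).

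Where you go beyond the paper is in the second paragraph. The paper simply writes $\g{g}_{\lambda+\mu}=[\g{g}_\lambda,\g{g}_\mu]$ and stops, treating this as a known fact about restricted root spaces. You correctly identify this as the step that actually needs justification — the bracket argument by itself only delivers $[\g{g}_\lambda,\g{g}_\mu]\subset\g{s}_n$ — and you supply a self-contained proof: assume some nonzero $Z\in\g{g}_{\lambda+\mu}$ is orthogonal to $[\g{g}_\lambda,\g{g}_\mu]$, deduce $[\g{g}_\lambda,\theta Z]=[\g{g}_\mu,\theta Z]=\{0\}$ from nondegeneracy and the $\ad$-adjointness identity, use Lemma~\ref{thPolarization} to compute $[\theta Z,Z]=\langle Z,Z\rangle H_{\lambda+\mu}$, and then Jacobi plus pairing with $Y$ gives $\langle\mu,\lambda+\mu\rangle\le 0$ and symmetrically $\langle\lambda,\lambda+\mu\rangle\le 0$, so $\lvert\lambda+\mu\rvert^2\le 0$, a contradiction. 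I checked this computation and it is correct, including the case $\lambda=\mu$. So the proof is right; the paper just leaves implicit the fact you proved, while you make it explicit. No gap.
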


\begin{proof}
We may assume that $\lambda+\mu$ is a root; otherwise there is
nothing to prove. Let $X\in\g{g}_\lambda$ and $Y\in\g{g}_\mu$. By
definition there exist $S,T\in\g{t}$ such that $S+X,T+Y\in\g{s}$.
Then, $[S+X,T+Y]=[S,Y]-[T,X]+[X,Y]\in\g{s}$. Recall that
$[\g{k}_0,\g{g}_\nu]\subset\g{g}_\nu$ for any $\nu\in\Sigma$. The
vector $[S,Y]-[T,X]+[X,Y]$ is in $\g{n}$ and hence in $\g{s}_n$. On
the other hand,
$[S,Y]-[T,X]\in\g{g}_\mu+\g{g}_\lambda\subset\g{s}_n$, so
$[X,Y]\in\g{s}_n$. Since, $X$ and $Y$ are arbitrary,
$\g{g}_{\lambda+\mu}=[\g{g}_\lambda,\g{g}_\mu]\subset\g{s}_n$.
\end{proof}

We now drop the assumption $\spp\subset\g{a}\oplus\g{p}^1$.

Let ${\g{s}}$ be a subalgebra of $\g{t}\oplus\g{a}\oplus\g{n}$ such
that ${\g{s}}_{\g{p}}^\perp$ is abelian. From now on we assume that
the orbits of the connected closed subgroup ${S}$ of $G$ whose Lie
algebra is ${\g{s}}$ form a homogeneous foliation on $M$. As usual,
we denote by ${\g{s}}_n=\pi_{\g{a}\oplus\g{n}}(\g{s})$ the
projection of $\g{s}$ onto the noncompact part of
$\g{t}\oplus\g{a}\oplus\g{n}$.

We define $\tilde{\g{s}}={\g{s}}+(\g{n}^2\oplus\dots\oplus\g{n}^m) =
{\g{s}}+({\mathfrak n} \ominus {\mathfrak n}^1)$ where
$m=m_\emptyset$ is the level of the highest root of $\Sigma$. Since
${\mathfrak n} \ominus {\mathfrak n}^1$ is an ideal of
$\g{t}\oplus\g{a}\oplus\g{n}$ it follows that $\tilde{\g{s}}$ is a
subalgebra of $\g{t}\oplus\g{a}\oplus\g{n}$. Also,
${\g{s}}\subset\tilde{\g{s}}$ and thus
$\tilde{\g{s}}_{\g{p}}^\perp\subset{\g{s}}_{\g{p}}^\perp$, which
means that $\tilde{\g{s}}_{\g{p}}^\perp$ is also an abelian subspace
of $\g{p}$.

It is obvious by definition that
$\tilde{\g{s}}_{\g{p}}^\perp\subset\g{a}\oplus\g{p}^1$. Hence
Theorem \ref{thsppLevel1} implies that
$\tilde{\g{s}}_n:=\pi_{\g{a}\oplus\g{n}}(\tilde{\g{s}})
=\g{s}_{\Phi,V,a}$ with $\Phi\subset\Lambda$ a subset of orthogonal
simple roots, $a_\alpha\in\R$, $0\neq E_\alpha\in\g{g}_\alpha$ and
$V\subset\g{a}_\Phi$ as usual. By Proposition \ref{thCongruency},
there exists $g\in N$ such that
$\Ad(g)\tilde{\g{s}}_n=\g{s}_{\Phi,V}$. This element can be taken to
be $g=\Exp(E)$ with $E=-\sum_{\alpha\in\Phi}a_\alpha E_\alpha$.

We define $\hat{\g{s}}=\Ad(g)\g{s}$. The subgroup of $G$ whose Lie
algebra is $\hat{\g{s}}$ is $\hat{S}=I_g({S})$. Obviously, $\hat{S}$
induces a hyperpolar homogeneous foliation on $M$. By Proposition
\ref{thProperties} (i) we get
$\hat{\g{s}}_n:=\pi_{\g{a}\oplus\g{n}}(\hat{\g{s}})
\subset(\Ad(g)\tilde{\g{s}})_n\subset\Ad(g)\tilde{\g{s}}_n
=\g{s}_{\Phi,V}$. Then it follows that $(\g{a}_\Phi\ominus
V)\oplus\left(\bigoplus_{\alpha\in\Phi}\R
(1-\theta)E_\alpha\right)\subset\hat{\g{s}}_{\g{p}}^\perp$. Since
$\hat{\g{s}}_{\g{p}}^\perp$ is abelian, we have that
$\hat{\g{s}}_{\g{p}}^\perp$ must be contained in the centralizer
$Z_{\g{p}}((\g{a}_\Phi\ominus
V)\oplus\left(\bigoplus_{\alpha\in\Phi}\R
(1-\theta)E_\alpha)\right)$ of $(\g{a}_\Phi\ominus
V)\oplus\left(\bigoplus_{\alpha\in\Phi}\R (1-\theta)E_\alpha\right)$
in $\g{p}$. Our first aim is essentially to calculate this
centralizer. Eventually, this will allow us to determine
$\hat{\g{s}}_n$ and later ${\g{s}}_n$.

We start with $\bigoplus_{\alpha\in\Phi}\R (1-\theta)E_\alpha$ where
the situation is a bit more involved. We deal with this in a series
of lemmas.

\begin{lemma}\label{thCentre}
Let $\alpha\in\Phi$ and let $\xi\in\g{p}$ be written as
$\xi=\xi_0+\sum_{\lambda\in\Sigma^+} (1-\theta)\xi_\lambda$ with
$\xi_0\in\g{a}$ and $\xi_\lambda\in\g{g}_\lambda$ for each
$\lambda\in\Sigma^+$. Then $\xi$ is in the centralizer
$Z_{\g{p}}(\R(1-\theta)E_\alpha)$ of $\R(1-\theta)E_\alpha$ in
$\g{p}$ if and only if $\xi_0\in\g{a}_{\{\alpha\}}$,
$\xi_\alpha\in\R E_\alpha$, $\xi_{2\alpha}=0$ and
$[\xi_{\lambda-\alpha},E_\alpha]=[\xi_{\lambda+\alpha},\theta
E_\alpha]$ for all $\lambda\in\Sigma^+\setminus\{\alpha,2\alpha\}$.
\end{lemma}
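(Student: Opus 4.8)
The plan is to compute $[(1-\theta)E_\alpha,\xi]$, sort it by restricted root spaces, and read off when it vanishes. Since $[(1-\theta)E_\alpha,\xi]\in[\g{p},\g{p}]\subset\g{k}$ and the subspaces $\g{k}_\mu=\g{k}\cap(\g{g}_\mu\oplus\g{g}_{-\mu})$ are pairwise orthogonal, the bracket is zero if and only if each of its $\g{k}_\mu$-components is, and each such component is determined by its $\g{g}_\mu$-part. Expanding $\xi=\xi_0+\sum_{\lambda\in\Sigma^+}(1-\theta)\xi_\lambda$ and using $\ad(\g{a})$-weights, a short bookkeeping of the terms $[E_\alpha,\xi_\lambda]$, $[E_\alpha,\theta\xi_\lambda]$, $[\theta E_\alpha,\xi_\lambda]$, $[\theta E_\alpha,\theta\xi_\lambda]$ shows that for $\mu\in\Sigma^+$ with $\mu\neq\alpha,2\alpha$ the $\g{g}_\mu$-part of $[(1-\theta)E_\alpha,\xi]$ equals $[E_\alpha,\xi_{\mu-\alpha}]-[\theta E_\alpha,\xi_{\mu+\alpha}]$, with the convention that $\xi_\nu=0$ when $\nu\notin\Sigma^+$; here one uses that $\alpha$ is simple (so $\mu-\alpha$ is never $0$ nor a negative root) and $3\alpha\notin\Sigma$, which also guarantees that these components do not involve $\xi_\alpha$ or $\xi_{2\alpha}$. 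Hence all of these vanish precisely when $[\xi_{\lambda-\alpha},E_\alpha]=[\xi_{\lambda+\alpha},\theta E_\alpha]$ for every $\lambda\in\Sigma^+\setminus\{\alpha,2\alpha\}$, which is the last condition in the statement. It remains to treat the components in $\g{g}_0$, $\g{g}_{\pm\alpha}$ and $\g{g}_{\pm2\alpha}$, and the same bookkeeping shows these involve only $\alpha(\xi_0)$, $\xi_\alpha$ and $\xi_{2\alpha}$.

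For the $\g{g}_0$-part one finds $\pi_{\g{g}_0}[(1-\theta)E_\alpha,\xi]=-(1+\theta)[E_\alpha,\theta\xi_\alpha]$. Writing $\xi_\alpha=cE_\alpha+X_\alpha$ with $\langle X_\alpha,E_\alpha\rangle=0$ and using $(1+\theta)H_\alpha=0$ together with Lemma \ref{thPolarization}, this equals a nonzero scalar multiple of $[\theta X_\alpha,E_\alpha]$, which already lies in $\g{k}_0$. A Jacobi-identity computation in the spirit of Lemma \ref{th1dim}, using Lemma \ref{thPolarization} and $[H_\alpha,E_\alpha]=\langle\alpha,\alpha\rangle E_\alpha$, yields
\begin{equation*}
\langle[\theta X_\alpha,E_\alpha],[\theta X_\alpha,E_\alpha]\rangle
=\langle\alpha,\alpha\rangle\langle E_\alpha,E_\alpha\rangle\langle X_\alpha,X_\alpha\rangle
+\langle[X_\alpha,E_\alpha],[X_\alpha,E_\alpha]\rangle,
\end{equation*}
a sum of nonnegative terms; hence the $\g{g}_0$-part vanishes if and only if $X_\alpha=0$, i.e.\ $\xi_\alpha\in\R E_\alpha$, and in that case the $\g{g}_{\pm2\alpha}$-part $(1+\theta)[E_\alpha,\xi_\alpha]$ vanishes automatically. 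For the $\g{g}_{\pm\alpha}$-part one obtains the equation $\alpha(\xi_0)E_\alpha+[\theta E_\alpha,\xi_{2\alpha}]=0$; applying $\ad(E_\alpha)$ and using $[E_\alpha,\xi_{2\alpha}]\in\g{g}_{3\alpha}=\{0\}$, $[E_\alpha,\theta E_\alpha]=-\langle E_\alpha,E_\alpha\rangle H_\alpha$ and $[H_\alpha,\xi_{2\alpha}]=2\langle\alpha,\alpha\rangle\xi_{2\alpha}$, this forces $\xi_{2\alpha}=0$ and then $\alpha(\xi_0)=0$, i.e.\ $\xi_0\in\g{a}_{\{\alpha\}}=\ker\alpha$. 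Conversely, all three of these components visibly vanish when $\xi_0\in\g{a}_{\{\alpha\}}$, $\xi_\alpha\in\R E_\alpha$ and $\xi_{2\alpha}=0$. Combining this with the first paragraph proves the lemma.

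The main obstacle is the $\g{g}_0$-equation: extracting $\xi_\alpha\in\R E_\alpha$ requires the Jacobi/polarization identity above, which is the analogue here of Lemma \ref{th1dim}; the point is that the term $[X_\alpha,E_\alpha]$ (which need not vanish when $2\alpha\in\Sigma$) enters the identity with a favourable sign, so no extra hypothesis is needed. Everything else is the routine but slightly tedious root-space bookkeeping of the first paragraph, together with the verification that the components for $\mu\in\{0,\pm\alpha,\pm2\alpha\}$ genuinely decouple from the rest.
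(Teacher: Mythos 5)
Your proof is correct and follows the same overall scaffold as the paper's: expand the bracket $[(1-\theta)E_\alpha,\xi]$, sort it into restricted root spaces, and identify the vanishing conditions component by component. For the generic components ($\mu\in\Sigma^+\setminus\{\alpha,2\alpha\}$) and for the $\g{g}_\alpha$-component your bookkeeping matches the paper exactly (you apply $\ad(E_\alpha)$ to kill $\xi_{2\alpha}$ before reading off $\alpha(\xi_0)=0$, whereas the paper takes inner product with $E_\alpha$ first; both are equally short).

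Where you genuinely diverge is in the $\g{g}_0$-component. The paper first uses the $\g{k}_{2\alpha}$-component to deduce $[\xi_\alpha,E_\alpha]=0$, and only then invokes Lemma~\ref{th1dim} (which needs the $[\g{v},\g{v}]=0$ hypothesis) together with the $\g{k}_0$-equation to conclude $\xi_\alpha\in\R E_\alpha$. You instead extract $\xi_\alpha\in\R E_\alpha$ from the $\g{g}_0$-component alone: after writing $\xi_\alpha=cE_\alpha+X_\alpha$ with $X_\alpha\perp E_\alpha$ and reducing that component to a scalar multiple of $[\theta X_\alpha,E_\alpha]$, your Jacobi/polarization identity
\[
\lVert[\theta X_\alpha,E_\alpha]\rVert^2=\langle\alpha,\alpha\rangle\langle E_\alpha,E_\alpha\rangle\langle X_\alpha,X_\alpha\rangle+\lVert[X_\alpha,E_\alpha]\rVert^2
\]
(which I have checked) makes the term $[X_\alpha,E_\alpha]\in\g{g}_{2\alpha}$ harmless because it enters with a plus sign. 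This self-contained argument buys two things: it does not presuppose $[\xi_\alpha,E_\alpha]=0$, so the $\g{g}_{2\alpha}$-equation becomes an automatic consequence rather than an input; and it does not need Lemma~\ref{th1dim} at all, making the lemma's proof independent of that auxiliary result. The paper's route is slightly shorter on the page because it outsources the computation to Lemma~\ref{th1dim}, but your version is logically tighter at this step.
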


\begin{proof}
If the vector $\xi$ commutes with $(1-\theta)E_\alpha$ a simple
calculation yields
\begin{equation*}
0=[\xi,(1-\theta)E_\alpha] =(1+\theta)\left(\alpha(\xi_0)E_\alpha
+\sum_{\lambda\in\Sigma^+}[\xi_\lambda,E_\alpha]
-\sum_{\lambda\in\Sigma^+\setminus\Lambda}
[\theta\xi_\lambda,E_\alpha] -[\theta\xi_\alpha,E_\alpha]\right).
\end{equation*} The above vector is zero if and only if each of
its components in $\g{k}_\lambda$, $\lambda\in\Sigma^+\cup\{0\}$, is
zero.

The $\g{k}_0$-component is zero if and only if
$[\theta\xi_\alpha,E_\alpha]\in\g{a}$, and the
$\g{k}_{2\alpha}$-component is zero if and only if
$[\xi_\alpha,E_\alpha]=0$. Denote by $\g{v}$ the vector subspace of
$\g{g}_\alpha$ spanned by $E_\alpha$ and $\xi_\alpha$. The above two
conditions imply $[\g{v},\theta\g{v}]\subset\g{a}$ and
$[\g{v},\g{v}]=0$. Since $\alpha$ is a simple root, Lemma
\ref{th1dim} implies that $\g{v}$ is 1-dimensional and hence
$\xi_\alpha\in\R E_\alpha$.

The $\g{k}_\alpha$-component vanishes if and only if
$\alpha(\xi_0)E_\alpha-[\xi_{2\alpha},\theta E_\alpha]=0$. Taking
inner product with $E_\alpha$ yields
\begin{equation*}
0=\langle\alpha(\xi_0)E_\alpha-[\xi_{2\alpha},\theta E_\alpha],
E_\alpha\rangle =\alpha(\xi_0)\langle E_\alpha,E_\alpha\rangle
-\langle\xi_{2\alpha},[E_\alpha,E_\alpha]\rangle
=\alpha(\xi_0)\langle E_\alpha,E_\alpha\rangle.
\end{equation*}
Hence, $\xi_0\in\g{a}_{\{\alpha\}}=\g{a}\ominus\R H_\alpha$. Taking
into account the above equation, this also implies
$[\xi_{2\alpha},\theta E_\alpha]=0$. Using the Jacobi identity we
get
\begin{equation*}
0=[[\xi_{2\alpha},\theta E_\alpha],E_\alpha] =-[[\theta
E_\alpha,E_\alpha],\xi_{2\alpha}] =-\langle
E_\alpha,E_\alpha\rangle[H_\alpha,\xi_{2\alpha}]
=-2\lvert\alpha\rvert^2\langle E_\alpha,
E_\alpha\rangle\xi_{2\alpha}
\end{equation*}
which implies $\xi_{2\alpha}=0$.

Finally, if $\lambda\in\Sigma^+\setminus\{\alpha,2\alpha\}$, the
$\g{k}_\lambda$-component is
$(1+\theta)([\xi_{\lambda-\alpha},E_\alpha]
-[\xi_{\lambda+\alpha},\theta E_\alpha])$. This vanishes if and only
if $[\xi_{\lambda-\alpha},E_\alpha] -[\xi_{\lambda+\alpha},\theta
E_\alpha]=0$ because $\g{g}_\lambda$ and $\g{g}_{-\lambda}$ are
linearly independent. Since the ``only if '' part is elementary, the
result follows.
\end{proof}

\begin{lemma}\label{thString}
Let $\alpha\in\Phi$ and
$\lambda\in\Sigma^+\setminus\{\alpha,2\alpha\}$, and assume that the
$\alpha$-string of $\lambda$ has length greater than one. Then
$\bigoplus_{m\in\mathbb{Z}}
\g{g}_{\lambda+m\alpha}\subset\hat{\g{s}}_n$.
\end{lemma}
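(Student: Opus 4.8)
The plan is to exploit the $\alpha_0$-type gradation structure already recorded in Section \ref{secPreliminaries} together with Proposition \ref{thProperties} and the bracket-stability Lemma \ref{thBracketsn}. Fix $\alpha\in\Phi$ and $\lambda\in\Sigma^+\setminus\{\alpha,2\alpha\}$ whose $\alpha$-string $\{\lambda-p\alpha,\dots,\lambda+q\alpha\}$ has length $p+q+1>1$. After replacing $\lambda$ by the bottom of the string we may assume $p=0$ and $q\geq 1$, so that $\lambda-\alpha\notin\Sigma$ and $\lambda+\alpha\in\Sigma^+$; the whole string then lies in $\Sigma^+$ because $\alpha\in\Phi$ is simple and $\lambda\neq\alpha,2\alpha$ forces every member to stay positive. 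First I would observe that $\lambda\notin\Phi$: if $\lambda$ were simple then $\lambda+\alpha$ could not be a root. Hence Proposition \ref{thProperties} does not immediately place $\g{g}_\lambda$ in $(\Ad(g)\g{s})_n$, but we still have $(\Ad(g)\g{s})_n=\g{s}_{\Phi,V}$ by Proposition \ref{thProperties} (v), and by definition of $\g{s}_{\Phi,V}=(\g{a}^\Phi\oplus V)\oplus(\g{n}_\Phi\ominus\ell_\Phi)$ every root space $\g{g}_\mu$ with $\mu\in\Sigma^+\setminus(\Phi\cup\{\text{roots of }\Sigma_\Phi\})$ that is not $\alpha$ itself sits inside $\hat{\g{s}}_n:=\pi_{\g{a}\oplus\g{n}}(\hat{\g{s}})\supset$ ... — more carefully, since $\Phi$ is orthogonal, $\Sigma_\Phi=\Phi$, so $\g{n}_\Phi=\bigoplus_{\mu\in\Sigma^+\setminus\Phi}\g{g}_\mu$ and $\ell_\Phi\subset\bigoplus_{\alpha\in\Phi}\g{g}_\alpha$. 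Thus $\g{g}_\mu\subset\hat{\g{s}}_n$ for every $\mu\in\Sigma^+\setminus\Phi$, and in particular $\g{g}_{\lambda+j\alpha}\subset\hat{\g{s}}_n$ for $j\geq 1$ and $\g{g}_\lambda\subset\hat{\g{s}}_n$.

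With this in hand the strategy is to climb down the string from the top using Lemma \ref{thCentre}. We know $\hat{\g{s}}_{\g{p}}^\perp$ is abelian and contains $\R(1-\theta)E_\alpha$, so by Lemma \ref{thCentre} any $\xi\in\hat{\g{s}}_{\g{p}}^\perp$ satisfies $[\xi_{\nu-\alpha},E_\alpha]=[\xi_{\nu+\alpha},\theta E_\alpha]$ for all $\nu$. The key point, which I would isolate as the crux, is to transfer this string-coupling condition into a statement about $\hat{\g{s}}_n$ itself: since $\hat{\g{s}}_n=\{X\in\g{a}\oplus\g{n}:X\perp\hat{\g{s}}_{\g{p}}^\perp\}$ by Lemma \ref{thsspp}, and since $\hat{\g{s}}$ is a subalgebra containing $T_X+X$ for suitable $T_X\in\g{t}$ whenever $X\in\hat{\g{s}}_n$, I can bracket $\R E_\alpha\subset\hat{\g{s}}_n$ (note $\alpha\in\Phi$ but $E_\alpha\in\ell_\Phi$, so actually $E_\alpha\notin\hat{\g{s}}_n$ — this is the subtlety) ... so instead I would bracket the element $T_\alpha+E_\alpha$-type combinations living in $\hat{\g{s}}$ against the elements $T_{\lambda+j\alpha}+X_{\lambda+j\alpha}\in\hat{\g{s}}$ carrying $\g{g}_{\lambda+j\alpha}$. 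Because $\hat{\g{s}}$ is a subalgebra, $[\,\cdot\,,\,\cdot\,]$ of these lies in $\hat{\g{s}}$, and projecting to $\g{a}\oplus\g{n}$ and using that $\ad(E_\alpha):\g{g}_{\lambda+j\alpha}\to\g{g}_{\lambda+(j+1)\alpha}$ and $\ad(\theta E_\alpha):\g{g}_{\lambda+j\alpha}\to\g{g}_{\lambda+(j-1)\alpha}$ are, respectively, surjective and injective along a string (a standard $\g{sl}_2$ fact), I descend from $\g{g}_{\lambda+q\alpha}\subset\hat{\g{s}}_n$ step by step to $\g{g}_\lambda\subset\hat{\g{s}}_n$, and then continue formally: but $\lambda$ is already the bottom, so in fact the content is that everything on the string, including $\g{g}_\lambda$, lies in $\hat{\g{s}}_n$. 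The final move is to pull this back: since $\Ad(g^{-1})=e^{-\ad(E)}$ with $E=-\sum_\beta a_\beta E_\beta$ and $[E,\g{g}_{\lambda+m\alpha}]\subset\bigoplus_{m'}\g{g}_{\lambda+m'\alpha}\oplus(\text{higher-level spaces already in }\g{s}_n)$ by orthogonality of $\Phi$, one checks $\bigoplus_m\g{g}_{\lambda+m\alpha}\subset\Ad(g^{-1})\hat{\g{s}}_n\subset\g{s}_n$ modulo the already-controlled part, and Lemma \ref{thBracketsn} mops up any bracket generated along the way.

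The main obstacle I anticipate is the bookkeeping around the fact that $E_\alpha$ itself is \emph{not} in $\hat{\g{s}}_n$ (it spans the removed line $\ell_\Phi\cap\g{g}_\alpha$), so one cannot simply bracket $E_\alpha$ into things; one must instead work with the genuine elements $T+E_\alpha+(\text{correction})$ of $\hat{\g{s}}$ furnished by Lemma \ref{thsspp} and Proposition \ref{thProperties} (ii) (which says the relevant toroidal parts centralize $E_\alpha$), and verify that the toroidal corrections $T$ do not interfere because $\ad(T)$ preserves each root space and is skew-symmetric, hence sends $\g{g}_{\lambda+j\alpha}$ into itself and cannot create components outside the string. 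Keeping the gradation bookkeeping ($\g{g}_{\lambda+m\alpha}$ versus the ideal $\g{n}\ominus\g{n}^1$ and the higher graded pieces) straight, so that every error term is visibly already inside $\hat{\g{s}}_n$ or killed by Lemma \ref{thBracketsn}, is where the real work lies; the $\g{sl}_2$ string arithmetic itself is routine.
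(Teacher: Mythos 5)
Your proposal has a genuine gap that appears right at the outset and unravels the whole argument. You assert that $(\Ad(g)\g{s})_n=\g{s}_{\Phi,V}$ ``by Proposition~\ref{thProperties}~(v)'' and then read off from the shape of $\g{s}_{\Phi,V}$ that every root space $\g{g}_\mu$ with $\mu\notin\Phi$ lies in $\hat{\g{s}}_n$, which of course includes all of $\g{g}_{\lambda+m\alpha}$ and makes the lemma trivial. But this is circular. Proposition~\ref{thProperties}~(v) has as a standing hypothesis that $\g{s}_n=\g{s}_{\Phi,V,a}$. At the point where Lemma~\ref{thString} is stated we only know this for the \emph{enlarged} algebra $\tilde{\g{s}}=\g{s}+(\g{n}\ominus\g{n}^1)$, namely $\tilde{\g{s}}_n=\g{s}_{\Phi,V,a}$ by Theorem~\ref{thsppLevel1}. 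What is recorded for $\hat{\g{s}}=\Ad(g)\g{s}$ just before the lemma is only the one-sided inclusion $\hat{\g{s}}_n\subset\Ad(g)\tilde{\g{s}}_n=\g{s}_{\Phi,V}$, coming from Proposition~\ref{thProperties}~(i) applied to $\tilde{\g{s}}$. The reverse inclusion --- hence the equality $\hat{\g{s}}_n=\g{s}_{\Phi,V}$, and equivalently $\g{s}_n=\g{s}_{\Phi,V,a}$ --- is precisely what the chain Lemma~\ref{thString} $\to$ Lemma~\ref{thspp1} $\to$ Lemma~\ref{thsppImproved} $\to$ Lemma~\ref{thhatsn} is built to establish. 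Nor can you rescue the step by applying Proposition~\ref{thProperties}~(v) to $\tilde{\g{s}}$: its other standing hypothesis (that the orbits form a homogeneous foliation) is asserted only for $S$ and $\hat S=I_g(S)$, not for the group with Lie algebra $\tilde{\g{s}}$, and nothing in the paper supplies that.

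Once the circular premise is removed, the remaining sketch (``climb down the string'', bracket elements $T+E_\alpha$ against $T_{\lambda+j\alpha}+X_{\lambda+j\alpha}$, mop up with Lemma~\ref{thBracketsn}) does not constitute a proof, because you no longer have any of the root spaces $\g{g}_{\lambda+j\alpha}$ in $\hat{\g{s}}_n$ to start from; indeed your own framing concedes that the $\g{sl}_2$ climbing was ``superfluous'' content on top of an already-complete conclusion. The paper's actual argument must work harder: it takes an arbitrary $\xi\in\hat{\g{s}}_{\g{p}}^\perp$, invokes the abelianness of $\hat{\g{s}}_{\g{p}}^\perp$ and Lemma~\ref{thCentre} to obtain the string-coupling constraints $\xi_\alpha\in\R E_\alpha$, $\xi_{2\alpha}=0$ and $[\xi_{\nu-\alpha},E_\alpha]=[\xi_{\nu+\alpha},\theta E_\alpha]$, and then proves $\xi_{\lambda+m\alpha}=0$ by an explicit case analysis on the string length ($A_2$, $B_2$/$BC_2$, $G_2$), using Lemma~\ref{thNonzeroBracket}, Lemma~\ref{thPolarization}, Lemma~\ref{thTspp} and carefully chosen bracketings inside $\hat{\g{s}}$. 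The length-$3$ case in particular is genuinely delicate (it requires constructing a suitable $\ad(S)$-invariant subspace of $\g{p}_\lambda\oplus\g{p}_{\lambda+2\alpha}$). None of that is reproduced or replaced by your sketch, so the proposal does not prove the lemma.
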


\begin{proof}
Since $(1-\theta)E_\alpha\in\hat{\g{s}}_{\g{p}}^\perp$ and
$\hat{\g{s}}_{\g{p}}^\perp$ is abelian, we have
${\hat{\g{s}}_{\g{p}}^\perp}\subset Z_{\g{p}}
(\R(1-\theta)E_\alpha)$. Let $\xi\in{\hat{\g{s}}_{\g{p}}^\perp}$ and
write as usual
$\xi=\xi_0+\sum_{\lambda\in\Sigma^+}(1-\theta)\xi_\lambda$ with
$\xi_0\in\g{a}$ and $\xi_\lambda\in\g{g}_\lambda$ for each
$\lambda\in\Sigma^+$. Lemma \ref{thCentre} already implies that
$\xi_0\in\g{a}_{\{\alpha\}}$, $\xi_\alpha\in\R E_\alpha$ and
$\xi_{2\alpha}=0$. We have to prove that $\xi_{\lambda+m\alpha}=0$
for all $m\in\mathbb{Z}$. We prove this assertion depending on the
whether the length of the $\alpha$-string of $\lambda$ is 2, 3 or 4.
Note that $\lambda\not\in\Phi$.

Assume that the length of the $\alpha$-string of $\lambda$ is 2. In
this case we may assume
$\lambda-\alpha,\lambda+2\alpha\not\in\Sigma^+$ and
$\lambda,\lambda+\alpha\in\Sigma^+$ (switch to $\lambda-\alpha$ if
necessary). Then $\alpha$ and $\lambda$ span a root system of type
$A_2$. Since $\lambda+2\alpha\not\in\Sigma^+$, Lemma \ref{thCentre}
implies $[\xi_\lambda,E_\alpha]=[\xi_{\lambda+2\alpha},\theta
E_\alpha]=0$. Since $\lambda-\alpha\not\in\Sigma$ and
$\lambda+\alpha\in\Sigma$ we get from Lemma \ref{thNonzeroBracket}
that $\xi_\lambda=0$. Similarly, by Lemma \ref{thCentre} we have
$[\xi_{\lambda+\alpha},\theta
E_\alpha]=[\xi_{\lambda-\alpha},E_\alpha]=0$. Since
$\lambda+\alpha-(-\alpha)=\lambda+2\alpha\not\in\Sigma$ and
$\lambda+\alpha+(-\alpha)=\lambda\in\Sigma$, Lemma
\ref{thNonzeroBracket} yields $\xi_{\lambda+\alpha}=0$ which
finishes the proof in this case.

Assume that the $\alpha$-string of $\lambda$ has length 3. In this
case we may assume $\lambda-\alpha,\lambda+3\alpha\not\in\Sigma^+$
and $\lambda,\lambda+\alpha,\lambda+2\alpha\in\Sigma^+$. Then,
$\lambda$ and $\alpha$ span a root system of type $B_2$ or $BC_2$.
First we claim that $\xi_{\lambda+\alpha}\in[\g{g}_{\alpha}\ominus\R
E_\alpha,\g{g}_\lambda]$. Since the root system spanned by $\lambda$
and $\alpha$ is of type $B_2$ or $BC_2$, $\lambda+\alpha$ and
$\alpha$ are orthogonal and have the same length. This implies that
there exists an element of the Weyl group that maps $\alpha$ to
$\lambda+\alpha$. Hence $\g{g}_{\lambda+\alpha}$ and $\g{g}_\alpha$
have the same dimension. By Lemma \ref{thNonzeroBracket}, for any
nonzero $Z_\lambda\in\g{g}_\lambda$,
$\ad(Z_\lambda):\g{g}_\alpha\to\g{g}_{\lambda+\alpha}$ is injective,
hence bijective. We write $\xi_{\lambda+\alpha}=[Z_\lambda,c
E_\alpha+Z_\alpha]$ with $c\in\R$ and
$Z_\alpha\in\g{g}_\alpha\ominus\R E_\alpha$. Since $\langle
Z_\alpha,E_\alpha\rangle=0$, Lemma \ref{thPolarization} yields
$(1-\theta)[\theta E_\alpha,Z_\alpha]=0$ and thus $[\theta
E_\alpha,Z_\alpha]\in\g{k}_0$. Hence $\ad([\theta
E_\alpha,Z_\alpha])$ is skewsymmetric. Lemma \ref{thCentre} implies
$[\xi_{\lambda+\alpha},\theta
E_\alpha]=[\xi_{\lambda-\alpha},E_\alpha]=0$. Then, using the Jacobi
identity, we get
\begin{align*}
0&=\langle[\xi_{\lambda+\alpha},\theta E_\alpha],Z_\lambda\rangle =
\langle[[Z_\lambda,c E_\alpha+Z_\alpha],\theta E_\alpha],
Z_\lambda\rangle\\
&=-c\langle[[E_\alpha,\theta E_\alpha],Z_\lambda],Z_\lambda\rangle
-\langle[[Z_\alpha,\theta E_\alpha],Z_\lambda],Z_\lambda\rangle\\
&=c\langle\lambda,\alpha\rangle\langle E_\alpha,E_\alpha\rangle
\langle Z_\lambda,Z_\lambda\rangle -\langle\ad([Z_\alpha,\theta
E_\alpha])Z_\lambda,Z_\lambda\rangle =
c\langle\lambda,\alpha\rangle\langle E_\alpha,E_\alpha\rangle
\langle Z_\lambda,Z_\lambda\rangle.
\end{align*}
Hence $c=0$ and our assertion follows.

Now we claim that
$\g{g}_\lambda\oplus\g{g}_{\lambda+2\alpha}\subset\hat{\g{s}}_n$. By
Lemma \ref{thCentre} we have
\begin{equation*}
Z_{\g{p}_\lambda\oplus\g{p}_{\lambda+2\alpha}}(\R(1-\theta)E_\alpha)=
\{(1-\theta)(\eta_\lambda+\eta_{\lambda+2\alpha})
\in\g{p}_\lambda\oplus\g{p}_{\lambda+2\alpha}:
[\eta_\lambda,E_\alpha]=[\eta_{\lambda+2\alpha},\theta E_\alpha]\}
\end{equation*}
whose dimension coincides with $\dim\g{p}_\lambda$. Note that
$\eta_{\lambda+2\alpha}$ is uniquely determined by $\eta_\lambda$
since
\begin{equation*}
[[\eta_\lambda,E_\alpha],E_\alpha]=[[\eta_{\lambda+2\alpha},\theta
E_\alpha],E_\alpha]=-[[\theta
E_\alpha,E_\alpha],\eta_{\lambda+2\alpha}]=-\lvert
E_\alpha\rvert^2\langle\alpha,\lambda+2\alpha\rangle
\eta_{\lambda+2\alpha}.
\end{equation*}

Since $H_\alpha\in\hat{\g{s}}_n$, by Lemma \ref{thCentre} there
exists $S\in\g{t}$ such that $S+H_\alpha\in\hat{\g{s}}$. We prove
that
$Z_{\g{p}_\lambda\oplus\g{p}_{\lambda+2\alpha}}(\R(1-\theta)E_\alpha)$
is invariant under $\ad(S)$. Let $X\in\g{g}_\alpha\ominus\R
E_\alpha$. Then there exists $T\in\g{t}$ such that
$T+X\in\hat{\g{s}}$ by Lemmas \ref{thsspp} and \ref{thCentre}.
Hence, $[S+H_\alpha,T+X]=(\ad(S)+\langle\alpha,\alpha\rangle
1_{\g{g}_\alpha})X\in\hat{\g{s}}\cap\g{g}_\alpha\subset\hat{\g{s}}_n$.
Using again Lemma \ref{thCentre} we get
$0=\langle(\ad(S)+\langle\alpha,\alpha\rangle
1_{\g{g}_\alpha})X,(1-\theta)E_\alpha\rangle=-\langle
X,\ad(S)E_\alpha\rangle$. Since $X\in\g{g}_\alpha\ominus\R E_\alpha$
and $\ad(S)E_\alpha\in\g{g}_\alpha\ominus\R E_\alpha$ (because
$\ad(S)$ is skewsymmetric), the above equation implies
$[S,E_\alpha]=0$. Note that $[S,\theta
E_\alpha]=\theta[S,E_\alpha]=0$. Now assume that
$(1-\theta)(\eta_\lambda+\eta_{\lambda+2\alpha})\in
Z_{\g{p}_\lambda\oplus\g{p}_{\lambda+2\alpha}}(\R(1-\theta)E_\alpha)$.
We have to show that
$(1-\theta)([S,\eta_\lambda]+[S,\eta_{\lambda+2\alpha}])\in
Z_{\g{p}_\lambda\oplus\g{p}_{\lambda+2\alpha}}(\R(1-\theta)E_\alpha)$.
Indeed, using the Jacobi identity and $[S,E_\alpha]=0$ we get
\begin{equation*}
[[S,\eta_\lambda],E_\alpha]=-[[\eta_\lambda,E_\alpha],S]
=-[[\eta_{\lambda+2\alpha},\theta
E_\alpha],S]=[[S,\eta_{\lambda+2\alpha}],E_\alpha].
\end{equation*}
This proves that
$Z_{\g{p}_\lambda\oplus\g{p}_{\lambda+2\alpha}}(\R(1-\theta)E_\alpha)$
is invariant under $\ad(S)$.

Let $Z_\lambda\in\g{g}_\lambda$. Then there exists
$Z_{\lambda+2\alpha}\in\g{g}_{\lambda+2\alpha}$ such that
$Z_\lambda+Z_{\lambda+2\alpha}$ is perpendicular to
$Z_{\g{p}_\lambda\oplus\g{p}_{\lambda+2\alpha}}
(\R(1-\theta)E_\alpha)$. Thus Lemma \ref{thsspp} yields that
$Z_\lambda+Z_{\lambda+2\alpha}\in\hat{\g{s}}_n$, and so there exists
$T\in\g{t}$ such that
$T+Z_\lambda+Z_{\lambda+2\alpha}\in\hat{\g{s}}$. Hence
$[S+H_\alpha,T+Z_\lambda+Z_{\lambda+2\alpha}]=[S,Z_\alpha
+Z_{\lambda+2\alpha}] +\langle\lambda, \alpha\rangle
Z_\lambda+\langle\lambda+2\alpha,\alpha\rangle
Z_{\lambda+2\alpha}\in\hat{\g{s}}\cap(\g{g}_\lambda
\oplus\g{g}_{\lambda+2\alpha})$. As $Z_\lambda+Z_{\lambda+2\alpha}$
is perpendicular to
$Z_{\g{p}_\lambda\oplus\g{p}_{\lambda+2\alpha}}(\R(1-\theta)E_\alpha)$
and
$Z_{\g{p}_\lambda\oplus\g{p}_{\lambda+2\alpha}}(\R(1-\theta)E_\alpha)$
is $\ad(S)$-invariant, it follows that
$[S,Z_\lambda+Z_{\lambda+2\alpha}]$ is also perpendicular to
$Z_{\g{p}_\lambda\oplus\g{p}_{\lambda+2\alpha}}
(\R(1-\theta)E_\alpha)$,
and so $[S,Z_\lambda+Z_{\lambda+2\alpha}]\in\hat{\g{s}}_n$ by Lemma
\ref{thTspp}. Hence, $\langle\lambda, \alpha\rangle
Z_\lambda+\langle\lambda+2\alpha,\alpha\rangle
Z_{\lambda+2\alpha}\in\hat{\g{s}}_n$. Since $\lambda$ and $\alpha$
span a root system of type $B_2$ or $BC_2$ we know that
$\langle\lambda,\alpha\rangle<0$ and
$\langle\lambda+2\alpha,\alpha\rangle>0$. Therefore we have
$Z_\lambda\in\hat{\g{s}}_n$, which implies
$\g{g}_{\lambda}\subset\hat{\g{s}}_n$. Similarly, one can show
$\g{g}_{\lambda+2\alpha}\subset\hat{\g{s}}_n$, which proves our
claim. Hence, $\xi_{\lambda},\xi_{\lambda+2\alpha}=0$.

We have that $(\g{g}_{\alpha}\ominus\R
E_\alpha)\oplus\g{g}_\lambda\subset\hat{\g{s}}_n$. Let
$X\in\g{g}_\alpha\ominus\R E_\alpha$ and $Y\in\g{g}_\lambda$. There
exist $S,T\in\g{t}$ such that $S+X,T+Y\in\hat{\g{s}}$. Hence,
$[S+X,T+Y]=[S,Y]-[T,X]+[X,Y]\in\hat{\g{s}}\cap(\g{g}_\alpha\oplus
\g{g}_\lambda\oplus\g{g}_{\alpha+\lambda})\subset\hat{\g{s}}_n$.
Then, by Lemma \ref{thsspp}, the right-hand side of the previous
equation is orthogonal to $(1-\theta)E_\alpha$ so
$[T,X]\in\g{g}_\alpha\ominus\R E_\alpha\subset\hat{\g{s}}_n$. Since
$[S,Y]\in\g{g}_\lambda\subset\hat{\g{s}}_n$ we conclude
$[X,Y]\in\hat{\g{s}}_n$. As $X$ and $Y$ are arbitrary,
$\xi_{\lambda+\alpha}\in[\g{g}_\alpha\ominus\R
E_\alpha,\g{g}_\lambda]\subset\hat{\g{s}}_n$. This implies
$\xi_{\lambda+\alpha}=0$ and finishes the proof for $\alpha$-strings
of $\lambda$ of length 3.

Finally, assume that the length of the $\alpha$-string of $\lambda$
is 4. In this case we may assume
$\lambda-\alpha,\lambda+4\alpha\not\in\Sigma^+$ and
$\lambda,\lambda+\alpha,\lambda+2\alpha,\lambda+3\alpha\in\Sigma^+$.
Then $\alpha$ and $\lambda$ span a root system of type $G_2$. A
consequence of this fact is that all four restricted root spaces
have the same dimension. Now, by Lemma \ref{thNonzeroBracket}, the
linear map $\ad(E_\alpha):\g{g}_\lambda\to\g{g}_{\lambda+\alpha}$ is
injective, and hence bijective. Thus, we can write
$\xi_{\lambda+\alpha}=[E_\alpha,X_\lambda]$ with
$X_\lambda\in\g{g}_\lambda$.  We get from Lemma \ref{thCentre} that
\[
0=[\xi_{\lambda-\alpha},E_\alpha]=[\xi_{\lambda+\alpha}, \theta
E_\alpha] =[[E_\alpha,X_\lambda],\theta E_\alpha]
=-[[\theta E_\alpha,E_\alpha],X_\lambda]
=-\langle\lambda,\alpha\rangle\langle E_\alpha,E_\alpha\rangle
X_\lambda.
\]
Since $\langle\lambda,\alpha\rangle\neq 0$, this implies
$X_\lambda=0$, and hence $\xi_{\lambda+\alpha}=0$. Since
$\lambda+3\alpha-(-\alpha)\not\in\Sigma$ and
$\lambda+3\alpha+(-\alpha)\in\Sigma$, Lemma \ref{thNonzeroBracket}
and $[\xi_{\lambda+3\alpha},\theta
E_\alpha]=[\xi_{\lambda+\alpha},E_\alpha]=0$ imply
$\xi_{\lambda+3\alpha}=0$.

Another application of Lemma \ref{thNonzeroBracket} implies that
$\ad(\theta
E_\alpha):\g{g}_{\lambda+3\alpha}\to\g{g}_{\lambda+2\alpha}$ is
injective and hence bijective. A similar argument as before writing
$\xi_{\lambda+2\alpha}=[\theta E_\alpha,X_{\lambda+3\alpha}]$ with
$X_{\lambda+3\alpha}\in\g{g}_{\lambda+3\alpha}$ yields, using Lemma
\ref{thCentre},
\begin{align*}
0&=[\xi_{\lambda+4\alpha},\theta E_\alpha]=[\xi_{\lambda+2\alpha},
E_\alpha]
=[[\theta E_\alpha,X_{\lambda+3\alpha}],E_\alpha]\\
&=-[[E_\alpha,\theta E_\alpha],X_{\lambda+3\alpha}]=
\langle\lambda+3\alpha,\alpha\rangle\langle E_\alpha,
E_\alpha\rangle X_{\lambda+3\alpha}.
\end{align*}
Hence, $X_{\lambda+3\alpha}=0$ and $\xi_{\lambda+2\alpha}=0$ because
$\langle\lambda+3\alpha,\alpha\rangle\neq 0$. Since
$\lambda-\alpha\not\in\Sigma$ and $\lambda+\alpha\in\Sigma$, Lemma
\ref{thNonzeroBracket} and
$[\xi_{\lambda},E_\alpha]=[\xi_{\lambda+2\alpha},\theta E_\alpha]=0$
imply $\xi_{\lambda}=0$.
\end{proof}

We define
$\Psi=\{\gamma\in\Lambda:\langle\gamma,\alpha\rangle=0\mbox{ for all
}\alpha\in\Phi\}$. The root subsystem of $\Sigma$ generated by
$\Psi$ is denoted by $\Sigma_\Psi$. We also denote by $2\Phi$ the
set of roots of the form $2\alpha$ with $\alpha\in\Phi$. Of course,
the number of elements of $2\Phi$ is at most the number of
irreducible components of $\Sigma$. The root subsystem generated by
$\Psi\cup\Phi$ is $\Sigma_\Psi\cup\Phi\cup 2\Phi$.

\begin{lemma}\label{thspp1}
We have
\begin{equation*}
{\hat{\g{s}}_{\g{p}}^\perp}\subset\g{a}_\Phi
\oplus\left(\bigoplus_{\alpha\in\Phi}
\R(1-\theta)E_\alpha\right)\oplus \left( {\mathfrak p}_\Psi \ominus
{\mathfrak a} \right) .
\end{equation*}
\end{lemma}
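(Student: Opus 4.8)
The plan is to show that an arbitrary $\xi\in\hat{\g{s}}_{\g{p}}^\perp$, written as $\xi=\xi_0+\sum_{\lambda\in\Sigma^+}(1-\theta)\xi_\lambda$ with $\xi_0\in\g{a}$ and $\xi_\lambda\in\g{g}_\lambda$, satisfies $\xi_0\in\g{a}_\Phi$, $\xi_\alpha\in\R E_\alpha$ and $\xi_{2\alpha}=0$ for every $\alpha\in\Phi$, and $\xi_\lambda=0$ for every $\lambda\in\Sigma^+\setminus(\Sigma_\Psi\cup\Phi\cup2\Phi)$. Since $(1-\theta)\g{g}_\lambda=\g{p}_\lambda$, since $\g{p}_\Psi\ominus\g{a}=\bigoplus_{\lambda\in\Sigma_\Psi^+}\g{p}_\lambda$, and since $\Phi$ and $2\Phi$ are disjoint from $\Sigma_\Psi$, this is exactly the stated inclusion.

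The first part uses only the two preceding lemmas. For each $\alpha\in\Phi$ we have $(1-\theta)E_\alpha\in\hat{\g{s}}_{\g{p}}^\perp$, and as $\hat{\g{s}}_{\g{p}}^\perp$ is abelian, $\xi$ commutes with $(1-\theta)E_\alpha$; so Lemma \ref{thCentre} gives $\xi_0\in\g{a}_{\{\alpha\}}$, $\xi_\alpha\in\R E_\alpha$, $\xi_{2\alpha}=0$ and $[\xi_{\lambda-\alpha},E_\alpha]=[\xi_{\lambda+\alpha},\theta E_\alpha]$ for $\lambda\notin\{\alpha,2\alpha\}$. Running over $\alpha\in\Phi$ yields $\xi_0\in\g{a}_\Phi$; in particular the $\g{a}$-component of every element of $\hat{\g{s}}_{\g{p}}^\perp$ lies in $\g{a}_\Phi$, so $\g{a}^\Phi\subset\hat{\g{s}}_n$ by Lemma \ref{thsspp}. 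Now let $\mu\in\Sigma^+\setminus\Phi$ be a root which is not orthogonal to $\Phi$. If $\mu\in2\Phi$, write $\mu=2\alpha$ with $\alpha\in\Phi$; then $\xi_{2\alpha}=0$ for all $\xi\in\hat{\g{s}}_{\g{p}}^\perp$, so $\g{g}_{2\alpha}\subset\hat{\g{s}}_n$ by Lemma \ref{thsspp}. If $\mu\notin2\Phi$, choose $\alpha\in\Phi$ with $\langle\mu,\alpha\rangle\neq0$; then $\mu\notin\{\alpha,2\alpha\}$ and the $\alpha$-string through $\mu$ has length greater than one, so Lemma \ref{thString} gives $\g{g}_\mu\subset\hat{\g{s}}_n$. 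In either case $\xi_\mu=0$. Hence after this stage the only possibly nonzero components of $\xi$ are $\xi_0\in\g{a}_\Phi$, the $\xi_\alpha\in\R E_\alpha$ with $\alpha\in\Phi$, and the $\xi_\lambda$ with $\lambda$ orthogonal to all of $\Phi$; moreover we have recorded that $\g{g}_\mu\subset\hat{\g{s}}_n$ for \emph{every} $\mu\in\Sigma^+$ that is not in $\Phi$ and not orthogonal to $\Phi$, a fact needed below.

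It remains to prove $\xi_\lambda=0$ whenever $\lambda\in\Sigma^+$ is orthogonal to $\Phi$ and $\lambda\notin\Sigma_\Psi$; it suffices to show $\g{g}_\lambda\subset\hat{\g{s}}_n$. If $\lambda+\alpha\in\Sigma$ for some $\alpha\in\Phi$ — equivalently, since $\langle\lambda,\alpha\rangle=0$, $\lambda-\alpha\in\Sigma$ as well — then the $\alpha$-string through $\lambda$ has length at least three, $\lambda\notin\{\alpha,2\alpha\}$, and Lemma \ref{thString} applies to $\lambda$ itself. Otherwise $\lambda\pm\alpha\notin\Sigma$ for all $\alpha\in\Phi$, and I would induct on the level of $\lambda$. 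Because $\lambda$ is orthogonal to $\Phi$ while $\lambda\notin\Sigma_\Psi$, the support of $\lambda$ must contain a root $\alpha\in\Phi$ together with a simple root $\gamma$ adjacent to $\alpha$ (otherwise $\langle\lambda,\alpha\rangle$ would be nonzero for some $\alpha\in\Phi$); splitting $\lambda$ across this part of its support yields a decomposition $\lambda=\mu+\nu$ with $\mu,\nu\in\Sigma^+$, both not orthogonal to $\Phi$ and neither lying in $\Phi$ (here one uses that, $\lambda$ being orthogonal to $\Phi$, $\nu$ is automatically non-orthogonal to $\Phi$ as soon as $\mu$ is). For such $\mu,\nu$ the root spaces $\g{g}_\mu,\g{g}_\nu$ already lie in $\hat{\g{s}}_n$ by the remark at the end of the previous paragraph — or, in the low-level case where one of them is adjacent to $\Phi$ and the other lies in $\Phi$, by a further application of Lemma \ref{thString} — and Lemma \ref{thBracketsn} gives $\g{g}_\lambda=[\g{g}_\mu,\g{g}_\nu]\subset\hat{\g{s}}_n$. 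The crux, and the step I expect to require the most care, is this combinatorial splitting: proving that a positive root orthogonal to $\Phi$, with trivial $\Phi$-strings and support not contained in $\Psi$, can always be written as a sum of two positive roots each non-orthogonal to $\Phi$; I would settle it by a short case analysis of the possible local configurations in the Dynkin diagram, using root strings. Once all these $\xi_\lambda$ vanish, $\xi$ is supported on $\g{a}_\Phi$, on $\bigoplus_{\alpha\in\Phi}\R(1-\theta)E_\alpha$, and on $\bigoplus_{\lambda\in\Sigma_\Psi^+}(1-\theta)\g{g}_\lambda=\g{p}_\Psi\ominus\g{a}$, which is the assertion.
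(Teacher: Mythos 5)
Your first stage is correct and is also the paper's starting point: Lemma \ref{thCentre} gives $\xi_0\in\g{a}_\Phi$, $\xi_\alpha\in\R E_\alpha$, $\xi_{2\alpha}=0$ for $\alpha\in\Phi$, and Lemma \ref{thString} takes care of every $\mu\in\Sigma^+\setminus(\Phi\cup2\Phi)$ with $\langle\mu,\alpha\rangle\neq0$ for some $\alpha\in\Phi$. So far so good. You also correctly note that when $\lambda\perp\Phi$ but $\lambda\pm\alpha\in\Sigma$ for some $\alpha\in\Phi$, the $\alpha$-string has length $\geq 3$ and Lemma \ref{thString} applies directly.

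The gap is the remaining case, which you delegate to an unproven combinatorial claim: that every $\lambda\in\Sigma^+$ orthogonal to $\Phi$, with trivial $\alpha$-strings for all $\alpha\in\Phi$, and with support not contained in $\Psi$, decomposes as $\lambda=\mu+\nu$ with $\mu,\nu\in\Sigma^+$ both non-orthogonal to $\Phi$. You describe this as something to ``settle by a short case analysis,'' but this is where essentially all the work lies, and the case analysis is not short. The paper does \emph{not} attempt this global decomposition. Instead it invokes the gradation $\g{g}=\bigoplus_k\g{g}^k_{\Phi\cup\Psi}$ associated with the parabolic $\g{q}_{\Phi\cup\Psi}$, with characteristic element $Z=H^{\Phi\cup\Psi}$. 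Because the gradation is of type $\alpha_0$ (i.e.\ $\g{g}^{k+1}=[\g{g}^1,\g{g}^k]$), Lemma \ref{thBracketsn} reduces the whole problem to showing $\g{g}^1_{\Phi\cup\Psi}\subset\hat{\g{s}}_n$, that is, to roots $\lambda$ with $\lambda(Z)=1$. Only for these level-one roots does the paper prove the existence of $\alpha\in\Phi$ with $\lambda\pm\alpha\in\Sigma^+$, and that proof is genuinely delicate: after the easy sub-case $\langle\lambda,\alpha\rangle\neq 0$, it runs a Cartan-integer computation showing that the remaining configuration forces an $\alpha$-short, $\beta$-long $B_2$/$BC_2$ pair, hence a Dynkin component of type $B_r$, $C_r$, $BC_r$ or $F_4$, and then inspects the explicit root list. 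Your sketch does not engage with any of this.

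Two further remarks. First, your plan has to decompose roots of \emph{arbitrary} level, whereas the paper's gradation trick restricts the hard combinatorics to level one; this is not a cosmetic difference, since at level one the constraint $\sum_{\gamma\notin\Phi\cup\Psi}n_\gamma=1$ is precisely what makes the Cartan-matrix argument tractable. Second, the parenthetical ``or, in the low-level case where one of them \ldots\ lies in $\Phi$, by a further application of Lemma \ref{thString}'' does not make sense as written: if $\mu\in\Phi$ then $\g{g}_\mu\not\subset\hat{\g{s}}_n$ (only $\g{g}_\mu\ominus\R E_\mu$ is), so Lemma \ref{thBracketsn} cannot be applied to the pair $(\mu,\nu)$ and Lemma \ref{thString} would have to be applied to $\lambda$ itself — which is exactly the sub-case you already disposed of. In short: the architecture (Lemmas \ref{thCentre}, \ref{thString}, \ref{thsspp}, \ref{thBracketsn}) is the right one, but the reduction to the gradation of $\g{q}_{\Phi\cup\Psi}$ and the $B_2$-type case analysis for level-one roots are the substance of the paper's proof, and your proposal replaces them with an assertion.
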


\begin{proof}
If $\Phi=\emptyset$ then $\Psi=\Lambda$ and $\Sigma_\Psi=\Sigma$,
and so the assertion is that
${\hat{\g{s}}_{\g{p}}^\perp}\subset\g{p}$ and there is nothing to
prove in that case. Hence, we may assume that $\Phi\neq\emptyset$.
It can happen that $\Phi\cup\Psi=\Lambda$. By definition of $\Psi$
this implies that $\Sigma$ is reducible, and in fact it is the
direct sum of two root systems, one generated by $\Psi$ and the
other one generated by $\Phi$. Moreover, each element of $\Phi$ is
in an irreducible component of rank one of $\Sigma$. In that case,
$\Sigma_\Psi=\Sigma\setminus(\Phi\cup 2\Phi)$ and the result follows
readily from Lemma \ref{thCentre}. Hence, we may also assume that
$\Phi\cup\Psi\neq\Lambda$.

Let $Z = H^{\Phi \cup \Psi}$ be the characteristic element in
$\g{a}$ of the gradation ${\mathfrak g} = \bigoplus_{k \in {\mathbb
Z}} {\mathfrak g}^k_{\Phi \cup \Psi}$ of ${\mathfrak g}$
corresponding to the parabolic subalgebra ${\mathfrak q}_{\Phi \cup
\Psi}$. We claim that if $\lambda\in\Sigma^+$ and $\lambda(Z)=1$
then there exists $\alpha\in\Phi$ such that
$\lambda+\alpha\in\Sigma^+$ or $\lambda-\alpha\in\Sigma^+$.

In order to prove this, let $\lambda\in\Sigma^+$ be such that
$\lambda(Z)=1$. If $\langle\lambda,\alpha\rangle\neq 0$ for some
$\alpha\in\Phi$, then $A_{\lambda\alpha}\neq 0$, where
$A_{\lambda\alpha}$ is the corresponding Cartan integer. This
clearly implies our claim. Thus we may assume
$\langle\lambda,\alpha\rangle=0$ for all $\alpha\in\Phi$. Write
$\lambda=\sum_{\gamma\in\Lambda}n_\gamma\gamma$ with $n_\gamma\geq
0$. Then by hypothesis,
$1=\lambda(Z)=\sum_{\gamma\in\Lambda\setminus(\Phi\cup\Psi)}
n_\gamma$. Since $n_\gamma\geq 0$ we can then write
$\lambda=\sum_{\alpha\in\Phi}n_\alpha\alpha+\beta+\mu$, where
$\beta\in\Lambda\setminus(\Phi\cup\Psi)$ and
$\mu\in\mathop{\mbox{span}}\Psi$. Now, since $\Phi$ consists of
orthogonal roots, for each $\alpha\in\Phi$ we have
$0=\langle\lambda,\alpha\rangle=n_\alpha\langle\alpha,\alpha\rangle
+\langle\beta,\alpha\rangle=(n_\alpha+A_{\beta\alpha}/2)
\langle\alpha,\alpha\rangle$ so the Cartan integer satisfies
$A_{\beta\alpha}=-2n_\alpha$. It cannot happen that $n_\alpha=0$ for
all $\alpha\in\Phi$ because in that case the previous equality
implies $\langle\beta,\alpha\rangle=0$ for all $\alpha\in\Phi$ and
hence $\beta\in\Psi$, contradiction. Therefore we can find
$\alpha\in\Phi$ such that $n_\alpha>0$. We will see that
$\lambda\pm\alpha\in\Sigma^+$, from where the claim will follow.

By the properties of Cartan integers, the equation
$A_{\beta\alpha}=-2n_\alpha$ can only hold when $n_\alpha=1$ and
$\{\alpha,\beta\}$ spans a root system of type $B_2$ (or $BC_2$). It
is also obvious that $\lambda$ must be a root of the root subsystem
of $\Sigma$ determined by the irreducible component where both
$\alpha$ and $\beta$ lie. Hence, the connected component of $\alpha$
and $\beta$ in the Dynkin diagram of the original root system
$\Sigma$ has a double arrow pointing to $\alpha$. Therefore, this
connected component is one of $B_r$, $C_r$, $BC_r$ or $F_4$. Relabel
the corresponding reduced Dynkin diagram as indicated in the
following figure:

\bigskip

$$
\xy \POS
(0,0) *\cir<2pt>{} ="a",
(15,0) *\cir<2pt>{}="b",
(30,0) *\cir<2pt>{}="c",
(45,0) *\cir<2pt>{}="d",
(60,0) *\cir<2pt>{}="e",
(75,0) *\cir<2pt>{}="f",
(90,0) ="g",
(0,-5) *{\alpha_1},
(15,-5) *{\alpha_2},
(30,-5) *{\alpha_{l-2}},
(45,-5) *{\beta},
(60,-5) *{\alpha},
(75,-5) *{\alpha_{l+1}},
\ar @{-} "a";"b",
\ar @{.} "b";"c",
\ar @{-} "c";"d",
\ar @2{->} "d";"e",
\ar @{-} "e";"f",
\ar @{.} "f";"g",
\endxy
$$

\bigskip

According to this labeling
$\mu\in\mathop{\mbox{span}}\{\alpha_1,\dots,\alpha_{l-2},
\alpha_{l+2},\dots\}$ (whenever the corresponding simple roots
exist). However, since $\alpha_{i}$ is orthogonal to $\alpha$ and
$\beta$ for $i\geq l+2$, it follows that $\lambda$ can be a root
only if $\mu\in\mathop{\mbox{span}}\{\alpha_1,\dots,\alpha_{l-2}\}$,
so the problem reduces to studying a root of the form
$\lambda=\sum_{i=1}^l n_i\alpha_i$ in a root system of type $B_l$ or
$BC_l$ with the labeling as above and with $n_{l-1}=1$, $n_l\geq 1$.
By the description of all roots for these root systems, $\lambda$
must be of the form $\lambda=\alpha_i+\dots+\alpha_l$ or
$\lambda=\alpha_i+\dots+\alpha_{l-1}+2\alpha_l$. Only the first of
these two possibilities can be orthogonal to $\alpha$, and in that
case it follows that $\lambda\pm\alpha\in\Sigma^+$.

Therefore, if $\lambda\in\Sigma^+$ and $\lambda(Z)=1$ there exists
$\alpha\in\Phi$ such that $\lambda+\alpha\in\Sigma^+$ or
$\lambda-\alpha\in\Sigma^+$. Since $\lambda\neq\alpha,2\alpha$, we
can now apply Lemma \ref{thString} to obtain that $\g{g}^1_{\Phi
\cup \Psi}\subset\hat{\g{s}}_n$. Since the gradation is of type
$\alpha_0$, it follows from Lemma \ref{thBracketsn} that
$\bigoplus_{k\geq 1}\g{g}^k_{\Phi \cup \Psi} \subset\hat{\g{s}}_n$.
This implies that ${\hat{\g{s}}_{\g{p}}^\perp}$ is contained in the
projection of $\g{g}^0$ onto $\g{p}$. Obviously $\lambda(Z)=0$ if
and only if $\lambda\in\Sigma_\Psi\cup\Phi\cup 2\Phi$. Combining
this with Lemma \ref{thCentre} we get the result.
\end{proof}

Now we turn our attention to the $\g{a}$-part of
${\hat{\g{s}}_{\g{p}}^\perp}$ and define
$\bar\Sigma=\{\lambda\in\Sigma:\lambda(\g{a}_\Phi\ominus
V)=0\}=\{\lambda\in\Sigma:H_\lambda\in V\oplus\g{a}^\Phi\}$. It is
obvious that $\bar\Sigma$ is a (possibly empty) root subsystem of
$\Sigma$. We denote by $\bar\Sigma^+$ a set of positive roots with
respect to an ordering consistent with that of $\Sigma$. We define
$\Pi=\Lambda\cap\bar\Sigma$ and denote by $\Sigma_\Pi$ the
corresponding root subsystem of $\Sigma$ generated by $\Pi$.
Consider in $\Sigma_\Pi$ an ordering compatible with that of
$\Sigma$ so that $\Sigma_\Pi^+=\Sigma_\Pi\cap\Sigma^+$.

\begin{lemma}\label{thsppImproved}
We have
\begin{equation*}
{\hat{\g{s}}_{\g{p}}^\perp}\subset\g{a}_\Phi
\oplus\left(\bigoplus_{\alpha\in\Phi}
\R(1-\theta)E_\alpha\right)\oplus \left( {\mathfrak p}_\Pi \ominus
{\mathfrak a} \right) .
\end{equation*}
\end{lemma}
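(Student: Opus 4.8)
The plan is to bootstrap Lemma~\ref{thspp1} using one ingredient not yet exploited: the inclusion $\g{a}_\Phi\ominus V\subseteq\hat{\g{s}}_{\g{p}}^\perp$ recorded just before Lemma~\ref{thCentre}, together with the abelianness of $\hat{\g{s}}_{\g{p}}^\perp$. First I would note that this gives $\hat{\g{s}}_{\g{p}}^\perp\subseteq Z_{\g{p}}(\g{a}_\Phi\ominus V)$, and then compute this centralizer: for $\xi=\xi_0+\sum_{\lambda\in\Sigma^+}(1-\theta)\xi_\lambda\in\g{p}$ (with $\xi_0\in\g{a}$, $\xi_\lambda\in\g{g}_\lambda$) and $H\in\g{a}$ one has $[\xi,H]=-\sum_{\lambda\in\Sigma^+}\lambda(H)(1+\theta)\xi_\lambda$; since the subspaces $\g{k}_\lambda$ ($\lambda\in\Sigma^+$) are linearly independent and $(1+\theta)$ is injective on each $\g{g}_\lambda$, the vector $\xi$ centralizes $\g{a}_\Phi\ominus V$ precisely when $\xi_\lambda=0$ for every root $\lambda$ not vanishing on $\g{a}_\Phi\ominus V$, i.e.\ for every $\lambda\notin\bar\Sigma$. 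Hence each $\xi\in\hat{\g{s}}_{\g{p}}^\perp$ has its root components supported on $\bar\Sigma$, and consequently, for every $\mu\in\Sigma^+\setminus\bar\Sigma$ the whole root space $\g{g}_\mu$ is orthogonal to $\hat{\g{s}}_{\g{p}}^\perp$, so $\g{g}_\mu\subseteq\hat{\g{s}}_n$ by Lemma~\ref{thsspp}.

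Next I would feed these root spaces into the type-$\alpha_0$ gradation machinery, but now for the parabolic $\g{q}_\Pi$ rather than the $\g{q}_{\Phi\cup\Psi}$ used in Lemma~\ref{thspp1}. Since $H_\alpha\in\g{a}^\Phi\subseteq V\oplus\g{a}^\Phi$ for $\alpha\in\Phi$ we have $\Phi\subseteq\Pi$, so $\g{q}_\Pi$ is defined and $\bigoplus_{k\geq 1}\g{g}_\Pi^k=\g{n}_\Pi=\bigoplus_{\lambda\in\Sigma^+\setminus\Sigma_\Pi^+}\g{g}_\lambda$. The key assertion is that $\g{g}_\Pi^1\subseteq\hat{\g{s}}_n$: a positive root $\mu$ with $\mu(H^\Pi)=1$ decomposes as $\mu=\epsilon+\mu'$ with $\epsilon\in\Lambda\setminus\Pi$ and $\mu'$ a non-negative integral combination of simple roots of $\Pi$; each simple root of $\Pi$ vanishes on $\g{a}_\Phi\ominus V$ (this is the meaning of $\Pi\subseteq\bar\Sigma$), so $\mu$ and $\epsilon$ have the same restriction to $\g{a}_\Phi\ominus V$, and since $\epsilon\in\Lambda\setminus\Pi=\Lambda\setminus(\Lambda\cap\bar\Sigma)$ this restriction is nonzero; thus $\mu\notin\bar\Sigma$ and $\g{g}_\mu\subseteq\hat{\g{s}}_n$ by the first step. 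Because the $\g{q}_\Pi$-gradation is of type $\alpha_0$ (see \cite{KA88}), we have $\g{g}_\Pi^{k+1}=[\g{g}_\Pi^1,\g{g}_\Pi^k]$ for all $k\geq 1$, so an induction on $k$, in which Lemma~\ref{thBracketsn} applied to root spaces $\g{g}_\lambda\subseteq\g{g}_\Pi^1$ and $\g{g}_\mu\subseteq\g{g}_\Pi^k$ gives $\g{g}_{\lambda+\mu}\subseteq\hat{\g{s}}_n$, yields $\g{n}_\Pi\subseteq\hat{\g{s}}_n$.

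Finally, $\g{n}_\Pi\subseteq\hat{\g{s}}_n$ forces every $\xi\in\hat{\g{s}}_{\g{p}}^\perp=\{\eta\in\g{p}:\eta\perp\hat{\g{s}}_n\}$ to have all its root components supported on $\Sigma_\Pi^+$. Combining this with Lemma~\ref{thspp1}, which already confines the $\g{a}$-component $\xi_0$ to $\g{a}_\Phi$, and with Lemma~\ref{thCentre}, which gives $\xi_\alpha\in\R E_\alpha$ and $\xi_{2\alpha}=0$ for all $\alpha\in\Phi$, and using that $\Phi\cup 2\Phi\subseteq\Sigma_\Pi$, one arrives at
\[
\hat{\g{s}}_{\g{p}}^\perp\subseteq\g{a}_\Phi\oplus\Bigl(\bigoplus_{\alpha\in\Phi}\R(1-\theta)E_\alpha\Bigr)\oplus(\g{p}_\Pi\ominus\g{a}),
\]
which is the claimed inclusion.

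I expect the second step to be the main obstacle. The naive hope of passing from $\Sigma_\Psi$ to $\Sigma_\Pi$ by a purely combinatorial comparison of root systems is not available, since $\Sigma_\Pi$ can be strictly smaller than $\bar\Sigma$: there may well be roots vanishing on $\g{a}_\Phi\ominus V$ that are not $\mathbb{Z}$-combinations of simple roots with that property. The resolution is that no such comparison is needed. It suffices that the degree-one part $\g{g}_\Pi^1$ of the $\g{q}_\Pi$-gradation already lies in $\hat{\g{s}}_n$, which is forced by the centralizer computation of the first step together with the elementary fact that a root of $\g{q}_\Pi$-degree one cannot vanish on $\g{a}_\Phi\ominus V$; the type-$\alpha_0$ property and Lemma~\ref{thBracketsn} then propagate this to all of $\g{n}_\Pi$.
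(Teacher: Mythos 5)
Your proof is correct, and while it shares the same basic strategy as the paper's — compute the centralizer of $\g{a}_\Phi\ominus V$ in $\g{p}$, then use a type-$\alpha_0$ gradation together with Lemma~\ref{thBracketsn} to push root spaces into $\hat{\g{s}}_n$ — it differs in two appreciable ways. First, you grade by $\g{q}_\Pi$ itself rather than by $\g{q}_{\Lambda\setminus(\Psi\setminus\Pi)}$ as the paper does; since $\Pi\subseteq\Lambda\setminus(\Psi\setminus\Pi)$, your nilradical $\g{n}_\Pi$ is larger, so you obtain the intermediate inclusion $\g{n}_\Pi\subseteq\hat{\g{s}}_n$ directly, which is a priori stronger than the paper's $\g{n}_{\Lambda\setminus(\Psi\setminus\Pi)}\subseteq\hat{\g{s}}_n$. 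Second, this choice lets you run the degree-one argument uniformly: a root $\mu$ with $\mu(H^\Pi)=1$ decomposes as $\mu=\epsilon+\mu'$ with $\epsilon\in\Lambda\setminus\Pi$ (so $\epsilon\notin\bar\Sigma$) and $\mu'\in\mathop{\mathrm{span}}\Pi$ (so $\mu'$ kills $\g{a}_\Phi\ominus V$), forcing $\mu\notin\bar\Sigma$. The paper's proof, by contrast, must first invoke Lemma~\ref{thspp1} to cut the possible support of $\hat{\g{s}}_{\g{p}}^\perp$ down to $\Phi\cup(\Sigma_\Psi^+\cap\bar\Sigma)$ and then handles the degree-one roots of its gradation in two cases (those in $\Sigma_\Psi^+$ versus those not), one case using the splitting argument and the other invoking Lemma~\ref{thspp1} again. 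Your version postpones the use of Lemma~\ref{thspp1} (and Lemma~\ref{thCentre}) to the final assembly, where they are only needed to pin down the $\g{a}$-component and the $\g{g}_\alpha$, $\g{g}_{2\alpha}$ components for $\alpha\in\Phi$. This is a cleaner organization of essentially the same ideas, and your closing paragraph correctly diagnoses why the naive comparison of $\Sigma_\Pi$ with $\bar\Sigma$ is unnecessary — the gradation machinery bypasses it entirely.
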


\begin{proof}
Write $\xi\in\g{p}$ as
$\xi=\xi_0+\sum_{\lambda\in\Sigma^+}(1-\theta)\xi_\lambda$ with
$\xi_0\in\g{a}$ and $\xi_\lambda\in\g{g}_\lambda$ for each
$\lambda\in\Sigma^+$. Given $H\in\g{a}_\Phi\ominus V$ we get
$[H,\xi]=(1+\theta)\sum_{\lambda\in\Sigma^+}\lambda(H)\xi_\lambda$,
which implies that the centralizer of $\g{a}_\Phi\ominus V$ in
$\g{p}$ is $Z_{\g{p}}(\g{a}_\Phi\ominus V)=
\g{a}\oplus\left(\bigoplus_{\lambda\in\bar\Sigma^+}
\g{p}_\lambda\right)$.

For any $\alpha\in\Phi$ it is obvious that $\alpha(\g{a}_\Phi\ominus
V)=0$, and so $\Phi\subset\bar\Sigma$.  Using Lemma \ref{thspp1} and
$Z_{\g{p}}(\g{a}_\Phi\ominus V)=
\g{a}\oplus\left(\bigoplus_{\lambda\in\bar\Sigma^+}
\g{p}_\lambda\right)$ we easily get
\begin{equation*}
{\hat{\g{s}}_{\g{p}}^\perp}\subset\g{a}_\Phi
\oplus\left(\bigoplus_{\alpha\in\Phi}
\R(1-\theta)E_\alpha\right)\oplus
\left(\bigoplus_{\lambda\in\Sigma_\Psi^+\cap\bar\Sigma}
\g{p}_\lambda\right).
\end{equation*}
This implies that for any
$\lambda\in\Sigma^+\setminus(\Phi\cup(\Sigma_\Psi\cap\bar\Sigma))$
we have $\g{g}_\lambda\in\hat{\g{s}}_n$. We will use this fact
several times during this proof.

Let $Z = H^{\Lambda \setminus (\Psi \setminus \Pi)}$ be the
characteristic element in $\g{a}$ of the gradation ${\mathfrak g} =
\bigoplus_{k \in {\mathbb Z}} {\mathfrak g}^k_{\Lambda \setminus
(\Psi \setminus \Pi)}$ of ${\mathfrak g}$ corresponding to the
parabolic subalgebra ${\mathfrak q}_{\Lambda \setminus (\Psi
\setminus \Pi)}$. Let $\lambda\in\Sigma_\Psi^+$ be written as
$\lambda=\sum_{\gamma\in\Psi}n_\gamma \gamma$ and assume
$\lambda(Z)=1$. Then $1=\lambda(Z)=
\sum_{\gamma\in\Psi\setminus\Pi}n_\gamma$, so we can write
$\lambda=\alpha+\mu$, where $\alpha\in\Psi\setminus\Pi$ and
$\mu\in\mathop{\mbox{span}}\Pi$. By definition of $\mu$ it is
obvious that $\mu(\g{a}_\Phi\ominus V)=0$, and by definition of
$\alpha$ it is clear that $\lambda(\g{a}_\Phi\ominus
V)=\alpha(\g{a}_\Phi\ominus V)\neq 0$, that is,
$\lambda\not\in\bar\Sigma$. Thus we have
$\g{g}_\lambda\subset\hat{\g{s}}_n$. On the other hand, assume
$\lambda\in\Sigma^+\setminus\Sigma_\Psi^+$ satisfies $\lambda(Z)=1$.
Then we conclude $\g{g}_\lambda\subset\hat{\g{s}}_n$ unless
$\lambda\in\Phi$. The latter case is not possible since
$\Phi\subset\Lambda\setminus\Psi\subset\Lambda
\setminus(\Psi\setminus\Pi)$, which would imply $\lambda(Z)=0$.
Hence, the conclusion is that for any $\lambda\in\Sigma^+$
satisfying $\lambda(Z)=1$ we have
$\g{g}_\lambda\subset\hat{\g{s}}_n$. This implies that
$\g{g}^1_{\Lambda \setminus (\Psi \setminus \Pi)}
\subset\hat{\g{s}}_n$, and hence, by Lemma 4.13 we have
$\bigoplus_{k\geq 1}\g{g}^k_{\Lambda \setminus (\Psi \setminus \Pi)}
\subset\hat{\g{s}}_n$. Combining this with the above inclusion for
${\hat{\g{s}}_{\g{p}}^\perp}$ implies the result.
\end{proof}

We are now ready to determine $\g{s}_n$.

\begin{lemma}\label{thhatsn}
We have $\g{s}_n=\g{s}_{\Phi,V,a}$.
\end{lemma}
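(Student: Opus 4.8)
The plan is to prove the nontrivial inclusion by a dimension count after transporting to $\hat{\g{s}} = \Ad(g)\g{s}$, using the structural information about $\hat{\g{s}}_{\g{p}}^\perp$ accumulated so far. The easy inclusion $\g{s}_n\subseteq\g{s}_{\Phi,V,a}$ is immediate, since $\g{s}\subseteq\tilde{\g{s}}$ forces $\g{s}_n = \pi_{\g{a}\oplus\g{n}}(\g{s})\subseteq\pi_{\g{a}\oplus\g{n}}(\tilde{\g{s}}) = \tilde{\g{s}}_n = \g{s}_{\Phi,V,a}$. Since $\dim\g{s}_{\Phi,V,a} = \dim\g{s}_{\Phi,V}$, it then suffices to show $\dim\g{s}_n\geq\dim\g{s}_{\Phi,V}$, and I would deduce this from the two facts $\g{s}\cap\g{t}\subseteq\hat{\g{s}}\cap\g{t}$ and $\hat{\g{s}}_n = \g{s}_{\Phi,V}$ via
\[
\dim\g{s}_n = \dim\g{s} - \dim(\g{s}\cap\g{t}) = \dim\hat{\g{s}} - \dim(\g{s}\cap\g{t}) \geq \dim\hat{\g{s}} - \dim(\hat{\g{s}}\cap\g{t}) = \dim\hat{\g{s}}_n = \dim\g{s}_{\Phi,V}.
\]
The inclusion $\g{s}\cap\g{t}\subseteq\hat{\g{s}}\cap\g{t}$ is the easy one: by Proposition \ref{thProperties}(ii) applied to $\tilde{\g{s}}$ (whose projection onto $\g{a}\oplus\g{n}$ is $\g{s}_{\Phi,V,a}$), every $T\in\g{s}\cap\g{t}\subseteq\tilde{\g{s}}$ centralizes each $E_\alpha$, hence $E$, so $\Ad(g) = e^{\ad(E)}$ fixes $\g{s}\cap\g{t}$ pointwise and $\g{s}\cap\g{t} = \Ad(g)(\g{s}\cap\g{t})\subseteq\hat{\g{s}}\cap\g{t}$.

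Thus the real work is to establish $\hat{\g{s}}_n = \g{s}_{\Phi,V}$, equivalently, by Lemma \ref{thsspp}, $\hat{\g{s}}_{\g{p}}^\perp = (\g{s}_{\Phi,V})_{\g{p}}^\perp = (\g{a}_\Phi\ominus V)\oplus\bigoplus_{\alpha\in\Phi}\R(1-\theta)E_\alpha$. The inclusion $\supseteq$ was already recorded before Lemma \ref{thCentre}. For $\subseteq$, I would combine Lemma \ref{thsppImproved} (which puts $\hat{\g{s}}_{\g{p}}^\perp$ into $\g{a}_\Phi\oplus\bigoplus_{\alpha\in\Phi}\R(1-\theta)E_\alpha\oplus(\g{p}_\Pi\ominus\g{a})$) with Lemma \ref{thCentre} (which, for each $\alpha\in\Phi$, forces the $\g{g}_\alpha$-component of any element of $\hat{\g{s}}_{\g{p}}^\perp$ into $\R E_\alpha$ and kills its $\g{g}_{2\alpha}$-component). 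Dualizing through Lemma \ref{thsspp}, this already yields $\g{a}^\Phi\subseteq\hat{\g{s}}_n$, $\g{g}_\alpha\ominus\ell_\alpha\subseteq\hat{\g{s}}_n$ and $\g{g}_{2\alpha}\subseteq\hat{\g{s}}_n$ for $\alpha\in\Phi$, and $\g{g}_\mu\subseteq\hat{\g{s}}_n$ for every $\mu\in\Sigma^+\setminus\Sigma_\Pi$. Comparing with $\g{s}_{\Phi,V}$, what remains is to show $V\subseteq\hat{\g{s}}_n$ and $\g{g}_\lambda\subseteq\hat{\g{s}}_n$ for every $\lambda\in\Sigma_\Pi^+\setminus(\Phi\cup 2\Phi)$. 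I would first note that every such $\lambda$ is orthogonal to $\Phi$ (otherwise its $\alpha$-string for some $\alpha\in\Phi$ has length greater than one and Lemma \ref{thString} already puts $\bigoplus_m\g{g}_{\lambda+m\alpha}$ into $\hat{\g{s}}_n$), and then, since $\lambda\in\Sigma_\Pi$ forces $H_\lambda\in V\oplus\g{a}^\Phi$ and $\lambda\perp\Phi$ forces $H_\lambda\perp\g{a}^\Phi$, that $H_\lambda\in V$.

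The hard part will be this last step, and I would attack it exactly as in Lemmas \ref{thString}--\ref{thsppImproved}. Taking $\xi\in\hat{\g{s}}_{\g{p}}^\perp$ and subtracting its components along the subspaces $\g{a}_\Phi\ominus V$ and $\R(1-\theta)E_\alpha$ (which are already known to lie in $\hat{\g{s}}_{\g{p}}^\perp$) reduces matters to $\zeta = \xi_0 + \sum_\lambda(1-\theta)\xi_\lambda\in\hat{\g{s}}_{\g{p}}^\perp$ with $\xi_0\in V$ and $\xi_\lambda\in\g{g}_\lambda$ over $\lambda\in\Sigma_\Pi^+\setminus(\Phi\cup 2\Phi)$; I would then use Lemma \ref{thsspp} to lift the available generators of $\hat{\g{s}}_n$ to elements of the subalgebra $\hat{\g{s}}$, Lemma \ref{thTspp} to discard their toroidal parts when pairing against $\zeta$, and Lemma \ref{thBracketsn} to propagate conclusions along root strings, arriving at $\xi_\lambda = 0$ for all $\lambda$ and then $\xi_0 = 0$. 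The foliation hypothesis — which is exactly what fails in Remark \ref{exNoFoliation} — is what makes this possible: it forces all orbits of $\hat{S}$ to be principal (Proposition \ref{thAllOrbitsPrincipal}), hence the slice representation at $o$ to be trivial, i.e.\ $[\hat{\g{s}}\cap\g{t},\hat{\g{s}}_{\g{p}}^\perp] = 0$, which rigidifies the interaction of the toroidal parts of elements of $\hat{\g{s}}$ with $\hat{\g{s}}_{\g{p}}^\perp$. Once $\hat{\g{s}}_{\g{p}}^\perp = (\g{a}_\Phi\ominus V)\oplus\bigoplus_{\alpha\in\Phi}\R(1-\theta)E_\alpha$ is established, Lemma \ref{thsspp} gives $\hat{\g{s}}_n = \g{s}_{\Phi,V}$ and the dimension estimate above concludes.
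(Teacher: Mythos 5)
Your outer framing is a genuinely different and cleaner way to conclude than the paper's: once $\hat{\g{s}}_n = \g{s}_{\Phi,V}$ is in hand, you avoid the paper's conjugation-back step (applying Proposition~\ref{thProperties}(vi) to $\hat{\g{s}}$ and pushing $\Ad(g^{-1})$ through $\pi_{\g{a}\oplus\g{n}}$) by the dimension count $\dim\g{s}_n = \dim\g{s} - \dim(\g{s}\cap\g{t}) \geq \dim\hat{\g{s}} - \dim(\hat{\g{s}}\cap\g{t}) = \dim\hat{\g{s}}_n$, the inequality coming from $\g{s}\cap\g{t}\subseteq\hat{\g{s}}\cap\g{t}$, which you correctly derive from Proposition~\ref{thProperties}(ii). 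That part is sound.

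However the central claim $\hat{\g{s}}_n = \g{s}_{\Phi,V}$, which you rightly call ``the real work,'' is only sketched, and the sketch is missing the key moves. After Lemmas~\ref{thCentre}, \ref{thString}, \ref{thspp1}, \ref{thsppImproved}, what remains is to show $\g{g}_\lambda\subset\hat{\g{s}}_n$ for $\lambda\in\Sigma_\Pi^+\setminus(\Phi\cup 2\Phi)$ orthogonal to $\Phi$ (after which $V\subset\hat{\g{s}}_n$ follows). You propose to get this by lifting the already-known pieces of $\hat{\g{s}}_n$ into $\hat{\g{s}}$ via Lemma~\ref{thsspp}, discarding toroidal parts via Lemma~\ref{thTspp}, and ``propagating along root strings'' via Lemma~\ref{thBracketsn}. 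But Lemma~\ref{thBracketsn} only propagates upward, and no positive root of $\Sigma_\Pi$ can be written as a sum of positive roots outside $\Sigma_\Pi$ (the $\Lambda\setminus\Pi$-coordinates are nonnegative and cannot cancel), so the full root spaces you start from cannot reach the simple roots in $\Pi$ orthogonal to $\Phi$. The paper's argument first passes to the \emph{larger} algebra $\Ad(g)\tilde{\g{s}}=\hat{\g{s}}+(\g{n}\ominus\g{n}^1)$, where Proposition~\ref{thProperties}(iii) yields $S+H_\alpha\in\Ad(g)\tilde{\g{s}}$ and Proposition~\ref{thProperties}(iv) yields $T+X\in\Ad(g)\tilde{\g{s}}$ for $X\in\g{g}_\alpha$; since $\ad(S)+\langle\alpha,\alpha\rangle 1_{\g{g}_\alpha}$ is skew-plus-scalar and hence invertible, the bracket $[S+H_\alpha,T+X]$ forces $\g{g}_\alpha\subset\Ad(g)\tilde{\g{s}}$. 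One then lifts a basis $E_i$ of $\bar{\g{n}}^1$ to $E_i+X_i\in\hat{\g{s}}$ with $X_i\in\g{n}\ominus\g{n}^1$ and performs a \emph{downward} induction on the level $s$, comparing the iterated brackets $[E_{i_1}+X_{i_1},\dots,E_{i_s}+X_{i_s}]\in\hat{\g{s}}\cap\g{n}$ with $[E_{i_1},\dots,E_{i_s}]$ modulo $\hat{\g{s}}_n$ to conclude $\bar{\g{n}}^s\subset\hat{\g{s}}_n$. None of the auxiliary algebra $\Ad(g)\tilde{\g{s}}$, Proposition~\ref{thProperties}(iii) and (iv), the invertibility argument, or the downward level induction appears in your sketch. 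Your observation that $\lambda\perp\Phi$ forces $H_\lambda\in V$ is the right starting point (it is exactly what puts $H_\alpha$ into $(\Ad(g)\tilde{\g{s}})_n$ via Proposition~\ref{thProperties}(iii)), but as written the argument does not close.
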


\begin{proof}
Fix $\alpha\in\Pi$. Since $\alpha\in\bar\Sigma$ we  have
$\alpha(\g{a}_\Phi\ominus V)=0$ and hence $H_\alpha\in
V\oplus\g{a}^\Phi$. On the other hand, $\alpha\in\Sigma_\Psi^+$ so
$H_\alpha\in\g{a}_\Phi$. Since $\g{a}_\Phi\cap\g{a}^\Phi=\{0\}$,
Proposition \ref{thProperties} (iii) applied to $\tilde{{\g{s}}}$
implies $H_\alpha\in V\subset(\Ad(g)\tilde{{\g{s}}})_n$. Thus there
exists $S\in\g{t}$ such that $S+H_\alpha\in\Ad(g)\tilde{{\g{s}}}$.
Let $X\in\g{g}_\alpha$. By definition of $\Pi$ and Proposition
\ref{thProperties} (iv) we get
$\g{g}_\alpha\subset(\Ad(g)\tilde{{\g{s}}})_n$. Then there exists
$T\in\g{t}$ such that $T+X\in\Ad(g)\tilde{{\g{s}}}$. Since
$\Ad(g)\tilde{{\g{s}}}$ is a Lie algebra and
$[\g{t},\g{g}_\alpha]\subset\g{g}_\alpha$ we have
$[S+H_\alpha,T+X]=[S,X]+[H_\alpha,X]
=(\ad(S)+\langle\alpha,\alpha\rangle 1_{\g{g}_{\alpha}})X
\in(\Ad(g)\tilde{{\g{s}}})\cap\g{g}_\alpha$, where
$1_{\g{g}_{\alpha}}$ is the identity of ${\g{g}_{\alpha}}$. Since
the linear map $\ad(S)$ is skewsymmetric, it follows that
$\ad(S)+\langle\alpha,\alpha\rangle
1_{\g{g}_{\alpha}}:\g{g}_\alpha\to\g{g}_\alpha$ is an isomorphism.
However, $X\in\g{g}_\alpha$ is arbitrary, and so
$\g{g}_\alpha\subset\Ad(g)\tilde{{\g{s}}}$. Since $\alpha\in\Pi$ is
also arbitrary it follows that for all $\alpha\in\Pi$ we have
$\g{g}_\alpha\subset\Ad(g)\tilde{{\g{s}}}$.

Let us denote by $\bar{\g{n}}^s = {\mathfrak n}^s \cap {\mathfrak
n}_\Pi$ the direct sum of root spaces associated with roots of
$\Sigma_\Pi^+$ of level $s$ (note that the level of a root in
$\Sigma_\Pi^+$ coincides with the level of this root as a root of
$\Sigma$). The previous argument shows that
$\bar{\g{n}}^1\subset\Ad(g)\tilde{{\g{s}}}$. Choose a basis
$\{E_1,\dots,E_k\}$ of $\bar{\g{n}}^1$. Since $g\in N$ it follows
that $\Ad(g)(\g{n} \ominus \g{n}^1) \subset \g{n} \ominus \g{n}^1$,
and as $\Ad(g)$ is an automorphism equality holds. Hence, by
definition of $\tilde{{\g{s}}}$ and $\hat{\g{s}}$ it is obvious that
$\Ad(g)\tilde{{\g{s}}}=\hat{\g{s}}+(\g{n} \ominus \g{n}^1)$.
Therefore, for each $i$, there exists $X_i\in\g{n} \ominus \g{n}^1$
such that $E_i+X_i\in\hat{\g{s}}$.

We introduce the following notation. Define
$[Y_1,Y_2,\dots,Y_l]=[Y_1,[Y_2,\dots,Y_l]]$ inductively, being
$[Y_1,Y_2]$ the usual Lie bracket. Denote by $k$ the level of the
highest root of $\Sigma_\Pi^+$. Let $s$ be the smallest integer for
which $\bar{\g{n}}^s\oplus\dots
\oplus\bar{\g{n}}^k\subset\hat{\g{s}}_n$. Our aim is to prove $s=1$.

First we prove $s\leq k$, that is,
$\bar{\g{n}}^k\subset\hat{\g{s}}_n$. Since
$[\g{n}^a,\g{n}^b]\subset\g{n}^{a+b}$ and
$\g{n}\ominus(\g{n}^1\oplus\bar{\g{n}}^2\oplus\dots
\oplus\bar{\g{n}}^k)\subset\hat{\g{s}}_n$ by Lemma
\ref{thsppImproved}, we have
$[E_{i_1}+X_{i_1},\dots,E_{i_k}+X_{i_k}]\equiv[E_{i_1},\dots,E_{i_m}]
\mod\hat{\g{s}}_n$. Here we have used the fact that the level of a
root in $\Sigma_\Pi^+$ is the same as the level of that root as a
root of $\Sigma^+$. The brackets of $k$ vectors in the right-hand
side of the previous formula span $\bar{\g{n}}^k$ whereas the
brackets on the left-hand side belong to $\hat{\g{s}}\cap\g{n}$
because $\hat{\g{s}}\cap\g{n}$ is a subalgebra. Since
$\hat{\g{s}}\cap\g{n}\subset\hat{\g{s}}_n$, this implies
$\bar{\g{n}}^k\subset\hat{\g{s}}_n$.

Now assume $s>1$. Hence
$\bar{\g{n}}^s\oplus\dots\oplus\bar{\g{n}}^k\subset\hat{\g{s}}_n$
but $\bar{\g{n}}^{s-1}\not\subset\hat{\g{s}}_n$. We use again
$[\g{n}^a,\g{n}^b]\subset\g{n}^{a+b}$ and
$\g{n}\ominus(\g{n}^1\oplus\bar{\g{n}}^2\oplus\dots
\oplus\bar{\g{n}}^{s-1})\subset\hat{\g{s}}_n$, which follows from
Lemma \ref{thsppImproved} and the definition of $s$. Thus we get
$[E_{i_1}+X_{i_1},\dots,E_{i_{s-1}}+X_{i_{s-1}}]\equiv
[E_{i_1},\dots,E_{i_{s-1}}]\mod\hat{\g{s}}_n$. Again, the brackets
in the right-hand side of the congruency span $\bar{\g{n}}^{s-1}$
whereas the brackets in the left-hand side belong to
$\hat{\g{s}}\cap\g{n}\subset\hat{\g{s}}_n$. Then we get
$\bar{\g{n}}^{s-1}\subset\hat{\g{s}}_n$, which is a contradiction.
Therefore $s=1$ and thus
$\bar{\g{n}}^1\oplus\dots\oplus\bar{\g{n}}^k\subset\hat{\g{s}}_n$.
Altogether this implies
\begin{equation*}
(\g{a}_\Phi\ominus V) \oplus\left(\bigoplus_{\alpha\in\Phi}
\R(1-\theta)E_\alpha\right)\subset
{\hat{\g{s}}_{\g{p}}^\perp}\subset\g{a}_\Phi
\oplus\left(\bigoplus_{\alpha\in\Phi} \R(1-\theta)E_\alpha\right).
\end{equation*}

Since $V\subset(\Ad(g)\tilde{{\g{s}}})_n$ by Proposition
\ref{thProperties} (iii) and $\g{n} \ominus \g{n}^1
\subset\hat{\g{s}}_n$ by the above equation, it follows from
$\Ad(g)\tilde{{\g{s}}}=\hat{\g{s}}+(\g{n} \ominus \g{n}^1)$ that
$\hat{\g{s}}_n=\g{s}_{\Phi,V}$.

Let $T+H+X\in\hat{\g{s}}$ with $T\in\g{t}$, $H\in\g{a}$ and
$X\in\g{n}$. By hypothesis, the connected subgroup $\hat{S}$ of $G$
with Lie algebra ${\hat{\g{s}}}$ induces a hyperpolar foliation. Let
$E=-\sum_{\alpha\in\Phi}a_\alpha E_\alpha$ and $g=\Exp(E)$. It
follows from Proposition \ref{thProperties} (vi) that
$[\hat{\g{s}}_c,E]=0$, and hence
$\Ad(g^{-1})(T+H+X)=T+\Ad(g^{-1})(H+X)$. Proposition
\ref{thCongruency} shows that
$\Ad(g^{-1})\g{s}_{\Phi,V}=\g{s}_{\Phi,V,a}$. Since
$\g{s}=\Ad(g^{-1})\hat{\g{s}}$, the result follows.
\end{proof}

To conclude our proof we need to prove the following result:

\begin{proposition}\label{thCongruencyGeneral}
Let $\g{t}\oplus\g{a}\oplus\g{n}$ be a maximally noncompact Borel
subalgebra of $\g{g}$. Let ${\g{s}}$ be a subalgebra of
$\g{t}\oplus\g{a}\oplus\g{n}$ such that
${\g{s}}_n=\pi_{\g{a}\oplus\g{n}}(\g{s}) =\g{s}_{\Phi,V,a}$ with
some orthogonal subset $\Phi$ of $\Lambda$. Assume that the orbits
of the connected subgroup $S$ of $G$ whose Lie algebra is $\g{s}$
form a homogeneous foliation on $M$. Then the actions of $S$ and
$S_{\Phi,V}$ are orbit equivalent.
\end{proposition}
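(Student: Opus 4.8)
The plan is to conjugate away the parameter $a$, and then to identify the orbits of $S$ with the leaves of $\mathcal{F}_{\Phi,V}$ by wedging $S$ between $S_{\Phi,V}$ and an auxiliary group $S^+$ built out of $S_{\Phi,V}$ and the toroidal part of $\g{s}$.

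First I would set $E=-\sum_{\alpha\in\Phi}a_\alpha E_\alpha$, $g=\Exp(E)\in N$ (and $g=\mathrm{id}_G$ if $\Phi=\emptyset$), and replace $S$ by its isometric conjugate $\hat S=I_g(S)$. By Propositions \ref{thCongruency} and \ref{thProperties}(i),(v), the subalgebra $\hat{\g{s}}=\Ad(g)\g{s}$ lies in $\g{t}\oplus\g{a}\oplus\g{n}$ with $\pi_{\g{a}\oplus\g{n}}(\hat{\g{s}})=\g{s}_{\Phi,V}$, while $\Ad(g)\g{s}_{\Phi,V,a}=\g{s}_{\Phi,V}$ and $\hat S$ again induces a homogeneous foliation; since $I_g$ preserves orbit equivalence, it suffices to prove that $\hat S$ and $S_{\Phi,V}$ have the same orbits. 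Hence I may assume $\g{s}_n=\g{s}_{\Phi,V}$, and then Proposition \ref{thProperties}(vi) gives that $\g{s}_c=\pi_{\g{t}}(\g{s})$ is abelian and centralises each $E_\alpha$.

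Next I would form $\g{s}^+=\g{s}_c\oplus\g{s}_{\Phi,V}$. Because $\g{s}_c\subset\g{k}_0$ preserves each restricted root space, commutes with $\g{a}$, and (being skew-symmetric and centralising $E_\alpha$) preserves every $\g{g}_\alpha\ominus\R E_\alpha$, a short check gives $[\g{s}_c,\g{s}_{\Phi,V}]\subset\g{s}_{\Phi,V}$, so $\g{s}^+$ is a subalgebra with $\g{s}_{\Phi,V}$ an ideal; as $\g{s}\subset\g{s}_c\oplus\g{s}_n=\g{s}^+$, the connected subgroup $S^+$ of $G$ with Lie algebra $\g{s}^+$ contains both $S$ and $S_{\Phi,V}$, equals $S_cS_{\Phi,V}$, and $S_c$ normalises $S_{\Phi,V}$. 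The key claim is that $S^+\cdot p=S_{\Phi,V}\cdot p$ for every $p\in M$. To prove it, note first that $T_o(S\cdot o)=\pi_{\g{p}}(\g{s})=\pi_{\g{p}}(\g{s}_n)=\pi_{\g{p}}(\g{s}_{\Phi,V})=T_o(S_{\Phi,V}\cdot o)$, since $\g{s}\subset\g{t}\oplus\g{s}_n$ and $\g{t}\subset\g{k}$; hence $\nu_o(S\cdot o)=\nu_o(S_{\Phi,V}\cdot o)$. By Proposition \ref{thProperties}(vii) the group $S_c$ acts trivially on $\nu_o(S\cdot o)$. The action of $S_{\Phi,V}$ is hyperpolar (Theorem \ref{maintheorem}(i)), so it admits a flat section $\mathcal{S}^\perp$ through $o$; being a complete totally geodesic submanifold of the Hadamard manifold $M$, this section is $\mathcal{S}^\perp=\exp_o\!\big(\nu_o(S_{\Phi,V}\cdot o)\big)$, and since $S_c$ fixes $o$ and acts trivially on $\nu_o(S_{\Phi,V}\cdot o)$ it fixes $\mathcal{S}^\perp$ pointwise. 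As $S_c$ normalises $S_{\Phi,V}$, for $q\in\mathcal{S}^\perp$ and $t\in S_c$ we get $t\cdot(S_{\Phi,V}\cdot q)=S_{\Phi,V}\cdot(t\cdot q)=S_{\Phi,V}\cdot q$, so $S^+\cdot q=S_{\Phi,V}\cdot q$; since the section meets every leaf of $\mathcal{F}_{\Phi,V}$, this yields $S^+\cdot p=S_{\Phi,V}\cdot p$ for all $p$.

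Finally I would compare $S$ with $S^+$. From $\g{s}\subset\g{s}^+$ we have $S\cdot p\subset S^+\cdot p=S_{\Phi,V}\cdot p$ and $T_p(S\cdot p)\subset T_p(S^+\cdot p)=T_p(S_{\Phi,V}\cdot p)$ for every $p$; as $\pi_{\g{p}}|_{\g{a}\oplus\g{n}}$ is injective, $\dim(S\cdot p)=\dim(S\cdot o)=\dim\pi_{\g{p}}(\g{s})=\dim\g{s}_n=\dim\g{s}_{\Phi,V}=\dim(S_{\Phi,V}\cdot p)$, so the two tangent spaces agree at every point. Thus the homogeneous foliation given by the $S$-orbits and the foliation $\mathcal{F}_{\Phi,V}$ have the same tangent distribution, hence the same leaves, and therefore $S\cdot p=S_{\Phi,V}\cdot p$ for all $p$; undoing the conjugation by $g$, the actions of $S$ and $S_{\Phi,V}$ are orbit equivalent. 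I expect the main obstacle to be the third paragraph — showing that $S_c$ preserves every leaf of $\mathcal{F}_{\Phi,V}$ — which is exactly where the flat section supplied by Theorem \ref{maintheorem}(i) and the triviality of the $S_c$-action on $\nu_o$ from Proposition \ref{thProperties}(vii) have to be combined; the passage from $S^+$ back to $S$ is then soft, using only that both induce foliations of equal leaf dimension with one contained in the other.
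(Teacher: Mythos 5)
Your proof is correct, and it takes a genuinely different route from the paper's. The paper's argument works directly with the two groups $S$ and $S_n=S_{\Phi,V}$: it first checks that $\ad$ of the $\g{t}\oplus\g{a}$ part of $\g{s}$ preserves $\g{n}\ominus\ell_\Phi$, uses this to rewrite a generic element $tan\in S$ as $an''t$ at the group level so that $S\cdot o=S_{\Phi,V}\cdot o$, and then propagates the equality of orbits to an arbitrary point $p=\exp_o(\xi)$ via the equivariant normal field (this is where Proposition~\ref{thProperties}(vii), i.e.\ $t_*\xi=\xi$, enters). Your argument instead introduces the intermediate subalgebra $\g{s}^+=\g{s}_c\oplus\g{s}_{\Phi,V}$, observes that $\g{s}_{\Phi,V}$ is an ideal in it (using that $\g{s}_c$ centralises $\g{a}$ and each $E_\alpha$ and preserves every root space), and then compares $S^+$ with $S_{\Phi,V}$ \emph{globally} by using the flat section $\mathcal{S}^\perp=\exp_o(\nu_o(S_{\Phi,V}\cdot o))$ supplied by Theorem~\ref{maintheorem}(i): since $S_c$ fixes $o$, acts trivially on $\nu_o$, and normalises $S_{\Phi,V}$, each $S_{\Phi,V}$-leaf is $S_c$-stable, so $S^+$ and $S_{\Phi,V}$ have the same orbits; the inclusion $S\subset S^+$ together with equality of leaf dimensions (using injectivity of $\pi_{\g{p}}$ on $\g{a}\oplus\g{n}$ and the foliation hypothesis) then forces $S\cdot p=S^+\cdot p=S_{\Phi,V}\cdot p$. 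Both proofs hinge on Proposition~\ref{thProperties}(vii) at the key step, but your route replaces the paper's element-by-element manipulation of $tan$ by a structural sandwich $S\subset S^+\supset S_{\Phi,V}$, which makes the transfer from the base point $o$ to general $p$ essentially automatic once the section argument is in place. The reduction from $\g{s}_{\Phi,V,a}$ to $\g{s}_{\Phi,V}$ via $\Ad(\Exp(-\sum a_\alpha E_\alpha))$ and Proposition~\ref{thProperties}(i),(v) is the same in both.
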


\begin{proof}
First, assume $\g{s}$ is a subalgebra of
$\g{t}\oplus\g{a}\oplus\g{n}$ such that $\g{s}_n=\g{s}_{\Phi,V}$.
Certainly, $\g{s}_n$ is a subalgebra of $\g{a}\oplus\g{n}$. Denote
by $S$ and $S_n$ the corresponding connected subgroups of $G$. Also,
denote by $\bar{N}$ the connected subgroup of $G$ whose subalgebra
is $\g{n}\ominus(\bigoplus_{\alpha\in\Phi}\R E_\alpha)$. We prove
that $S$ and $S_n$ have the same orbits.

Assume that $T+H\in\g{s}$ with $H\in\g{a}$ and $t\in\g{t}$. Let
$X\in\g{n}\ominus\left(\bigoplus_{\alpha\in\Phi}\R E_\alpha\right)$.
By definition, there exists $R\in\g{s}$ such that $R+X\in\g{s}$. As
$\g{t}\oplus\g{a}$ is abelian, $[T+H,X]=[T+H,R+X]\in\g{s}\cap\g{n}$.
Hence, if $tan\in S$, there exists $n'\in\bar{N}$ such that
$tan=n'ta$. Since $\g{t}\oplus\g{a}$ is abelian we have $ta=at$, and
since $\g{a}$ normalizes
$\g{n}\ominus\left(\bigoplus_{\alpha\in\Phi}\R E_\alpha\right)$,
there exists $n''\in\bar{N}$ such that $n'a=an''$. Altogether this
implies $tan=n'ta=n'at=an''t$. Thus, $tan(o)=an''t(o)=an''(o)$ and
hence $S\cdot o\subset S_n\cdot o$. Since both orbits $S\cdot o$ and
$S_n\cdot o$ have the same dimension and are connected and complete
we conclude $S\cdot o=S_n\cdot o$. Now, let $p=\exp_o(\xi)$ with
$\xi\in\nu_o(S\cdot o)$. Using the fact that $S$ acts isometrically
on $M$ and that $t_*\xi=\xi$ by Proposition \ref{thProperties} (vii)
we get
\begin{equation*}
tan(p)=\exp_{tan(o)}((tan)_*\xi)
=\exp_{an''(o)}((an'')_*\xi)=an''(p).
\end{equation*}
Hence, $S\cdot p\subset S_n\cdot p$, and thus equality holds. Since
the action of $S$ is hyperpolar, all the orbits can be obtained in
this way, and so $S$ and $S_n$ have the same orbits as claimed
above.

Now we deal with the general case, that is, ${\g{s}}_n
=\g{s}_{\Phi,V,a}$. Let ${S}$ be the connected subgroup of $G$ with
Lie algebra ${\g{s}}$. By Proposition \ref{thProperties} (v), there
exists $g\in N$ such that $(\Ad(g){\g{s}})_n=\Ad(g){\g{s}}_n=
\g{s}_{\Phi,V}$. The subgroup $I_g(S_n)$ whose Lie algebra is
$\Ad(g){\g{s}}_n=(\Ad(g){\g{s}})_n$ has the same orbits as $I_g(S)$
by the previous argument. Then $I_g({S}_n)$ and $S$ have the same
orbits and hence the theorem follows.
\end{proof}

Now we finish the proof of Theorem \ref{maintheorem} (ii). Let $H$
be a closed subgroup of the isometry group of $M$ inducing a
hyperpolar homogeneous foliation on $M$. By Proposition
\ref{thBorel}, the action of $H$ is orbit equivalent to the action
of a closed solvable subgroup $S$ whose Lie algebra $\g{s}$ is
contained in a maximally noncompact Borel subalgebra. Then, there
exists a Cartan decomposition of $\g{g}=\g{k}\oplus\g{p}$ and a root
space decomposition $\g{g}=\g{g}_0\oplus\left(
\bigoplus_{\lambda\in\Sigma}\g{g}_\lambda\right)$ with respect to a
maximal abelian subspace $\g{a}$ of $\g{p}$ such that the projection
of $\g{s}$ onto $\g{a}\oplus\g{n}$ is given by
$\g{s}_n=\g{s}_{\Phi,V,a}$ by Lemma \ref{thhatsn}. Proposition
\ref{thCongruencyGeneral} then implies that the actions of the
connected subgroups of $G$ with Lie algebras $\g{s}$ and
$\g{s}_{\Phi,V,a}$ are orbit equivalent. Hence, the action of $H$ is
orbit equivalent to the action of $S_{\Phi,V}$ on $M$, which
concludes the proof of Theorem \ref{maintheorem}.

\section{Geometry of the leaves of hyperpolar homogeneous
foliations}\label{secGeometry}

In this section we study the extrinsic geometry of the leaves
of hyperpolar homogeneous foliations on noncompact symmetric
spaces. We identify $M$ with $AN$ endowed with the inner
product $\langle\,\cdot\,,\,\cdot\,\rangle_{AN}$ as described
in Section \ref{secPreliminaries}. However, we still denote by
$\ominus$ orthogonal complement in $\g{g}$ with respect to
$\langle\,\cdot\,,\,\cdot\,\rangle$.

\begin{proposition}\label{thOrbits}
The orbit $S_{\Phi,V}\cdot p$ is isometrically congruent to
$S_{\Phi,V,a}\cdot o$ for some {$a:\Phi\to\R$}.
\end{proposition}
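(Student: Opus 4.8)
The plan is to identify, for an arbitrary point $p \in M = AN$, an isometry of $M$ carrying the orbit $S_{\Phi,V}\cdot p$ onto an orbit of the form $S_{\Phi,V,a}\cdot o$, for a suitable map $a:\Phi\to\R$ depending on $p$. Since $AN$ acts simply transitively on $M$, I would write $p = h(o)$ for a unique $h \in AN$, and then the orbit $S_{\Phi,V}\cdot p = S_{\Phi,V}h(o)$. Conjugating, $S_{\Phi,V}h = h\,(h^{-1}S_{\Phi,V}h)$, so $S_{\Phi,V}\cdot p = h\cdot(h^{-1}S_{\Phi,V}h\cdot o)$. Because $h$ acts as an isometry of $M$, the orbit $S_{\Phi,V}\cdot p$ is isometrically congruent to the orbit of the conjugated subgroup $h^{-1}S_{\Phi,V}h$ through $o$, whose Lie algebra is $\Ad(h^{-1})\g{s}_{\Phi,V}$. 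So the task reduces to showing that $\Ad(h^{-1})\g{s}_{\Phi,V}$ has the same orbits through $o$ as some $\g{s}_{\Phi,V,a}$.

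The next step is to compute $\Ad(h^{-1})\g{s}_{\Phi,V}$, or at least its projection onto $\g{a}\oplus\g{n}$, up to the freedom of adding a compact (toroidal) part that acts trivially on the normal space. Write $h = \Exp(H_0)\Exp(Y)$ with $H_0\in\g{a}$ and $Y\in\g{n}$; the relevant part of the computation is how $\Ad$ of such an element moves the vectors $a_\alpha H_\alpha + E_\alpha$ that were removed in the definition of $\g{s}_{\Phi,V}$, and more precisely what orthogonal complement in $\g{a}\oplus\g{n}$ results. The key structural input is Proposition~\ref{thCongruency}, which already shows that $\Ad(\Exp(-\sum_{\alpha\in\Phi}a_\alpha E_\alpha))\g{s}_{\Phi,V,a}=\g{s}_{\Phi,V}$; running this in reverse, conjugating $\g{s}_{\Phi,V}$ by an element of $N$ of the form $\Exp(\sum_{\alpha\in\Phi}c_\alpha E_\alpha)$ produces precisely $\g{s}_{\Phi,V,a}$ with $a_\alpha$ read off from the $c_\alpha$. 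The $\g{a}_\Phi$-direction of $h$ contributes a rescaling of the $E_\alpha$ which can be absorbed into the choice of the generator of $\ell_\alpha$ (using that $\ell_\Phi$ may be chosen arbitrarily, as established around Proposition~\ref{thCongruency}), and the components of $Y$ outside the relevant root spaces either lie in $\g{n}\ominus\g{n}^1$ (harmless, by the ideal argument used repeatedly in Section~\ref{secClassification}) or in directions orthogonal to $\ell_\Phi$ that do not change the subalgebra. I would organize this as: first reduce to $h\in N$ by handling the $\g{a}$-part; then reduce to $h = \Exp(\sum_{\alpha\in\Phi}c_\alpha E_\alpha)$ modulo $\Exp(\g{n}\ominus\ell_\Phi)$, the latter factor normalizing $\g{s}_{\Phi,V}$ up to orbit equivalence; then invoke Proposition~\ref{thCongruency}.

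The main obstacle I anticipate is controlling the ``error terms'': when one conjugates $\g{s}_{\Phi,V}$ by a general element of $N$, the result need not literally equal some $\g{s}_{\Phi,V,a}$ on the nose, only up to a subgroup acting trivially on $\nu_o$, so one must invoke the orbit-equivalence machinery (Proposition~\ref{thCongruencyGeneral} together with Proposition~\ref{thProperties}) rather than a bare Lie-algebra identity. Concretely, after conjugation the projection onto $\g{a}\oplus\g{n}$ should still be of the form $\g{s}_{\Phi,V',a'}$, and then Proposition~\ref{thCongruencyGeneral} gives that the orbits agree with those of $S_{\Phi,V'}$; one finally checks $V'=V$ because conjugation by $N$ fixes $\g{a}$ pointwise modulo $\g{n}$, so the $\g{a}_\Phi$-part is unchanged. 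The bookkeeping of which root-space components of $h$ matter — essentially only the $\ell_\Phi$-components in $\bigoplus_{\alpha\in\Phi}\g{g}_\alpha$ — is routine given Lemma~\ref{thNonzeroBracket} and the orthogonality of $\Phi$, so I expect the proof to be short once the reduction to Proposition~\ref{thCongruency} is set up correctly.
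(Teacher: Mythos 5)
Your approach is essentially correct but follows a genuinely different route from the paper. The paper first replaces $p$ by a point in the section of the polar action, noting that the section lies in the totally geodesic submanifold $\Exp((\g{a}\ominus V)\oplus\ell_\Phi)\cdot o$; this reduces to a single exponential $g=\Exp(X)$ with $X\in(\g{a}\ominus V)\oplus\ell_\Phi$, which is then factored via the subalgebra $\R H_\alpha\oplus\R E_\alpha$. You instead take arbitrary $p=h(o)$ with $h\in AN$ and peel $h$ apart using the group structure. The key algebraic fact, which you use only implicitly, is that $\g{n}\ominus\ell_\Phi$ is an ideal of $\g{a}\oplus\g{n}$, so $AN=N_\perp\cdot A\cdot\Exp(\ell_\Phi)$ with $N_\perp$ the connected subgroup with Lie algebra $\g{n}\ominus\ell_\Phi$, and moreover $N_\perp\subset S_{\Phi,V}$. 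Because of this inclusion the $N_\perp$-factor of $h$ drops out of $S_{\Phi,V}\cdot h(o)$ entirely; it is not merely ``normalizing up to orbit equivalence.'' Consequently your anticipated ``error terms'' never arise: after removing $N_\perp$, $\Ad(h^{-1})\g{s}_{\Phi,V}$ literally equals some $\g{s}_{\Phi,V,a}$ by Proposition~\ref{thCongruency} and the easy observation that $\Ad(A)$ preserves every root space and $\g{a}$, hence preserves $\g{s}_{\Phi,V}$. Propositions~\ref{thCongruencyGeneral} and~\ref{thProperties} are about compact $\g{t}$-parts that do not appear here and are not needed. One small imprecision: $\alpha$ vanishes on $\g{a}_\Phi$, so $\Ad(\Exp\g{a}_\Phi)$ fixes $E_\alpha$ rather than rescaling it; the rescaling comes from $\g{a}^\Phi$, but either way $\R E_\alpha$ and hence $\g{s}_{\Phi,V}$ are preserved. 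The paper's section-based argument buys the geometric byproduct that $\Exp((\g{a}\ominus V)\oplus\ell_\Phi)\cdot o$ is totally geodesic and contains the section, which is reused in the curvature computations later in Section~\ref{secGeometry}; your decomposition-based argument is more purely algebraic and avoids that detour.
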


\begin{proof}
Let $\mathcal{D}$ be the left-invariant distribution on $M$
determined by $(\g{a}\ominus V)\oplus\ell_\Phi$. Obviously,
$(\g{a}\ominus V)\oplus\ell_\Phi$ is a subalgebra of
$\g{a}\oplus\g{n}$, and since $AN$ is simply connected, the
leaf of $\mathcal{D}$ through $o$ is
$\mathcal{D}_o=\Exp((\g{a}\ominus V)\oplus\ell_\Phi)\cdot o$.
We prove that $\mathcal{D}$ is autoparallel. Using the formula
for the Levi-Civita connection it is easy to obtain $\nabla_H
H'=\nabla_H E_\alpha=0$,
$\nabla_{E_\alpha}H=-\alpha(H)E_\alpha$ and
$2\nabla_{E_\alpha}E_\beta=\langle E_\alpha,E_\beta\rangle
H_\alpha$ for any $H,H'\in\g{a}\ominus V$ and
$\alpha,\beta\in\Phi$.

In particular, the leaf $\mathcal{D}_o$ is totally geodesic in $M$
and since $\nu_o(S_{\Phi,V}\cdot o)=(\g{a}_\Phi\ominus
V)\oplus\ell_\Phi$, it contains the section of $S_{\Phi,V}$ through
$o$. Since a section of $S_{\Phi,V}$ intersects all the orbits, we
may assume that $p$ lies in that section and so, the point $p$ can
be written as $p=g(o)$ with $g=\Exp(X)$ and $X\in(\g{a}\ominus
V)\oplus\ell_\Phi$. Hence, $S_{\Phi,V}\cdot
p=g(g^{-1}S_{\Phi,V}g)\cdot o=g I_g(S_{\Phi,V})\cdot o$. Since $g$
is an isometry of $M$, the orbit $S_{\Phi,V}\cdot p$ is
isometrically congruent to $I_g(S_{\Phi,V})\cdot o$. We will now
prove that $I_g(S_{\Phi,V})=S_{\Phi,V,a}$ for some $a:\Phi\to\R$,
and for that we will show that
$\Ad(g)\g{s}_{\Phi,V}=\g{s}_{\Phi,V,a}$.

Let $H\in\g{a}$. We show that $\Ad(\Exp
H)\g{s}_{\Phi,V}=\g{s}_{\Phi,V}$. Since $\Ad(\Exp
H)\g{s}_{\Phi,V}\subset\g{a}\oplus\g{n}$, it suffices to prove that
$\Ad(\Exp H)\g{s}_{\Phi,V}$ is orthogonal to $(\g{a}_\Phi\ominus
V)\oplus\ell_\Phi$. Let $X\in\g{s}_{\Phi,V}$,
$H'\in\g{a}_\Phi\ominus V$ and $\alpha\in\Phi$. Then our assertion
follows from $\langle\Ad(\Exp H)X,H'\rangle=\langle
X,\Ad(\Exp(-\theta H))H'\rangle=\langle
X,e^{\ad(H)}H'\rangle=\langle X,H'\rangle=0$, and
\begin{align*}
\langle\Ad(\Exp H)X,E_\alpha\rangle &= \langle
X,e^{\ad(H)}E_\alpha\rangle =\sum_{k=0}^\infty
\frac{\alpha(H)^k}{k!}\langle X,E_\alpha\rangle=
e^{\alpha(H)}\langle X,E_\alpha\rangle=0.
\end{align*}

Write $X=H+\sum_{\alpha\in\Phi}(x_\alpha H_\alpha+y_\alpha
E_\alpha)$ with $H\in\g{a}_\Phi\ominus V$ and
$x_\alpha,y_\alpha\in\R$. Since $\R H_\alpha\oplus\R E_\alpha$ is a
subalgebra, there exist constants $a_\alpha,b_\alpha\in\R$ such that
$\Exp(a_\alpha E_\alpha)\cdot\Exp(b_\alpha H_\alpha)=\Exp(x_\alpha
H_\alpha+y_\alpha E_\alpha)$. This equation and
$[\g{a}_\Phi,\g{g}_\Phi]=[\g{g}_{\{\alpha\}},\g{g}_{\{\beta\}}]=\{0\}$
for any $\alpha,\beta\in\Phi$, $\alpha\neq\beta$, imply
\begin{align*}
g   &= \left(\prod_{\alpha\in\Phi}\Exp(x_\alpha H_\alpha+y_\alpha
E_\alpha)\right)\Exp H = \left(\prod_{\alpha\in\Phi}\Exp(a_\alpha
E_\alpha)\Exp(b_\alpha
H_\alpha)\right)\Exp H\\
&= \Exp\left(\sum_{\alpha\in\Phi}a_\alpha
E_\alpha\right)\Exp\left(H+\sum_{\alpha\in\Phi}b_\alpha
H_\alpha\right).
\end{align*}
Hence, the equality $\Ad(\Exp H')\g{s}_{\Phi,V}=\g{s}_{\Phi,V}$ for
any $H'\in\g{a}$ and Proposition \ref{thCongruency} imply
\begin{equation*}
\Ad(g)\g{s}_{\Phi,V}=\Ad\left(\Exp\left(\sum_{\alpha\in\Phi}a_\alpha
E_\alpha\right)\right)
\Ad\left(\Exp\left(H+\sum_{\alpha\in\Phi}b_\alpha
H_\alpha\right)\right)=\g{s}_{\Phi,V,a},
\end{equation*}
where $a:\Phi\to\R$, $\alpha\mapsto a_\alpha$. This concludes the
proof.
\end{proof}

In view of Proposition \ref{thOrbits}, in order to calculate the
geometry of the orbits of $S_{\Phi,V}$, it suffices to study the
geometry of the orbit through the origin $o$ of $S_{\Phi,V,a}$,
where $\Phi$ is an orthogonal subset of $\Lambda$, $V$ is a linear
subspace of $\g{a}_\Phi$ and $a:\Phi\to\R$ is a function. Hence, we
consider
\begin{equation*}
{\g{s}_{\Phi,V,a}}=(V\oplus\g{a}^\Phi\oplus\g{n})\ominus
\left(\bigoplus_{\alpha\in\Phi}\R(a_{\alpha}H_{\alpha}+E_{\alpha})
\right),
\end{equation*}
for certain $a_\alpha\in\R$ and $E_\alpha\in\g{g}_\alpha$ with $\langle E_\alpha,E_\alpha\rangle=1$.
Note that
\begin{equation*}
{\g{s}_{\Phi,V,a}}=V\oplus\left(\bigoplus_{\alpha\in\Phi}\R
X_{\alpha}\right)\oplus\left(\bigoplus_{\alpha\in\Phi}
((\g{g}_{\alpha}\ominus\R
E_{\alpha})\oplus\g{g}_{2\alpha})\right)\oplus
\left(\bigoplus_{\lambda\in\Sigma^+ \setminus(\Phi\cup2\Phi)}
\g{g}_\lambda\right),
\end{equation*}
where $X_{\alpha}=\frac{1}{\lvert\alpha\rvert^2}H_{\alpha}
-a_{\alpha}E_{\alpha}$. We denote by $2\Phi$ the set of roots of the
form $2\alpha$ with $\alpha\in\Phi$.

Let $X,Y\in{\g{s}_{\Phi,V,a}}$ and
$\xi\in\nu_o(S_{\Phi,V,a}\cdot o)$. Using the
formula for the Levi-Civita connection with respect to
left-invariant vector fields we easily obtain
\begin{equation*}
4\langle A_\xi X,Y\rangle_{AN}=\langle[(1-\theta)\xi,X],Y\rangle.
\end{equation*}

From now on let $Z\in\g{s}_{\Phi,V,a}$.

Assume first that $\xi\in\g{a}_\Phi\ominus V$. If $X\in V$,
then $2\langle A_\xi
X,Z\rangle_{AN}=\langle[\xi,X],Z\rangle=0$. Since
$\xi\in\g{a}_\Phi$ we also have $2\langle A_\xi
X_{\alpha},Z\rangle_{AN}=\langle[\xi,X_{\alpha}],Z\rangle=
-a_{\alpha}\alpha(\xi)\langle E_{\alpha},Z\rangle=0$.
Analogously, if $X\in\g{g}_{\alpha}\ominus\R E_{\alpha}$ then
$2\langle A_\xi X,Z\rangle_{AN}=\alpha(\xi)\langle
X,Z\rangle=0$, and similarly, if $X\in\g{g}_{2\alpha}$ then
also $A_\xi X=0$. Finally, if $X\in\g{g}_\lambda$ with
$\lambda\in\Sigma^+\setminus(\Phi\cup2\Phi)$, then $2\langle
A_\xi X,Z\rangle_{AN}=\langle
[\xi,X],Z\rangle=\lambda(\xi)\langle
X,Z\rangle=2\lambda(\xi)\langle X,Z\rangle_{AN}$. In
particular, $\tr
A_\xi=\sum_{\lambda\in\Sigma^+\setminus(\Phi\cup 2\Phi)}(\dim
\g{g}_\lambda)\lambda(\xi) =2\delta(\xi)$, where as usual
$\delta=\frac{1}{2}\sum_{\lambda\in\Sigma^+}(\dim
\g{g}_\lambda)\lambda$ (see \cite[p. 329]{K96}).

Now let $\xi=a_{\alpha}H_{\alpha}+2E_{\alpha}$ for some
$\alpha\in\Phi$. The vector $\xi$ is orthogonal to
$\g{s}_{\Phi,V,a}$ with respect to
$\langle\,\cdot\,,\,\cdot\,\rangle_{AN}$ and has length
$\sqrt{2+a_\alpha^2\lvert\alpha\rvert^2}$. If $X\in V$, then it
follows that
\begin{equation*}
\langle A_{\xi} X,Z\rangle_{AN}=\frac{1}{4}\langle[(1-\theta)\xi,X],Z\rangle
=-\frac{1}{2}\alpha(X)\langle E_\alpha,Z\rangle =0.
\end{equation*}
For $\beta\in\Phi$ we calculate
\begin{equation*}
\langle A_{\xi} X_{\beta},Z\rangle_{AN}
=\left\langle -a_{\alpha}a_{\beta}\langle\alpha,\beta\rangle E_{\beta}
-\frac{\langle\alpha,\beta\rangle}{\lvert{\beta}\rvert^2}
E_{\alpha} +\frac{1}{2}a_{\beta} \langle
E_{\alpha},E_{\beta}\rangle H_\alpha,Z\right\rangle_{AN}.
\end{equation*}
If $\alpha\neq\beta$, we have as usual that $\alpha$ and $\beta$ are
orthogonal and hence $A_{\xi} X_{\beta}=0$. If $\alpha=\beta$, we
can write the above expression in terms of $\xi$ and $X_{\alpha}$ to
get
\begin{align*}
\langle A_{\xi} X_{\alpha},Z\rangle_{AN}
=\langle a_{\alpha}\lvert\alpha\rvert^2X_{\alpha}
-\frac{1}{2}\xi,Z\rangle_{AN}
=a_{\alpha}\lvert\alpha\rvert^2\langle X_{\alpha},Z\rangle_{AN}.
\end{align*}
Assume now that $X\in\g{g}_{\beta}\ominus\R E_{\beta}$ with
$\beta\in\Phi$ and $\alpha\neq\beta$. Since $\alpha$ and $\beta$ are
orthogonal we get
$[(1-\theta)\xi,X]=2(a_{\alpha}\langle\alpha,\beta\rangle X
+[E_{\alpha},X] -[\theta E_{\alpha},X])=0$, and thus $A_{\xi}X=0$.
One can  prove in a similar way that $A_{\xi}X=0$ if
$X\in\g{g}_{2\beta}$ with $\beta\in\Phi$ and $\alpha\neq\beta$.

Now we turn our attention to the subspace
$(\g{g}_{\alpha}\ominus\R E_\alpha)\oplus\g{g}_{2\alpha}$. Let
$X\in\g{g}_{\alpha}\ominus\R E_\alpha$. Clearly, $[\theta
E_{\alpha},X]\in\g{g}_0$ and $\langle[\theta
E_{\alpha},X],H\rangle=\alpha(H)\langle X,E_\alpha\rangle=0$ for
all $H\in\g{a}$. Then we get
\begin{equation*}
\langle A_{\xi}X,Z\rangle_{AN}=\frac{1}{2}\langle[a_{\alpha}H_{\alpha}+E_{\alpha}
-\theta E_{\alpha},X],Z\rangle=\langle a_{\alpha}\lvert\alpha\rvert^2
X+[E_{\alpha},X],Z\rangle_{AN}.
\end{equation*}
On the other hand, if $Y\in\g{g}_{2\alpha}$ we have
\begin{equation*}
\langle A_{\xi}Y,Z\rangle_{AN}=\frac{1}{2}\langle[a_{\alpha}H_{\alpha}+E_{\alpha}
-\theta E_{\alpha},Y],Z\rangle=\langle 2a_{\alpha}\lvert\alpha\rvert^2
Y-[\theta E_{\alpha},Y],Z\rangle_{AN}.
\end{equation*}
Since $\langle[\theta
E_{\alpha},Y],E_\alpha\rangle=
-\langle Y,[E_{\alpha},E_{\alpha}]\rangle=0$,
$A_{\xi}$ leaves the subspace
$(\g{g}_{\alpha}\ominus\R E_\alpha)\oplus\g{g}_{2\alpha}$ invariant.
Moreover, since for $Y\in\g{g}_{2\alpha}$ we have
$[E_{\alpha},[\theta E_{\alpha},Y]]=-[Y,[E_{\alpha},\theta
E_{\alpha}]]=-2\lvert\alpha\rvert^2 Y$, the linear map
$\ad(E_{\alpha}){\vert\ad(\theta
E_{\alpha})(\g{g}_{2\alpha})}:\ad(\theta
E_{\alpha})(\g{g}_{2\alpha})\to\g{g}_{2\alpha}$ is an isomorphism.
From here we get the decomposition
$\g{g}_{\alpha}=\mathop{\rm{Ker}}(\ad(E_{\alpha}){\vert
\g{g}_{\alpha}})\oplus\ad(\theta E_{\alpha})(\g{g}_{2\alpha})$.
Hence, if $X\in\mathop{\mbox{Ker}}(\ad(E_{\alpha}){\vert
\g{g}_{\alpha}})$ we get from the previous expression that
$A_{\xi}X=a_{\alpha}\lvert\alpha\rvert^2 X$, so $A_{\xi}$ restricted
to $\mathop{\rm{Ker}}(\ad(E_{\alpha}){\vert \g{g}_{\alpha}})\ominus\R E_\alpha$ is
$a_{\alpha}\lvert\alpha\rvert^21_{\mathop{\rm
Ker}(\ad(E_{\alpha}){\vert \g{g}_{\alpha}})\ominus\R E_{\alpha}}$ and the
multiplicity is $\dim \g{g}_{\alpha}-\dim \g{g}_{2\alpha}-1$. On the
other hand, for nonzero $Y\in\g{g}_{2\alpha}$ define $X=[\theta
E_{\alpha},Y]\in\g{g}_{\alpha}$. Then the previous formulas read
$A_{\xi}X=a_{\alpha}\lvert\alpha\rvert^2 X-2\lvert\alpha\rvert^2Y$
and $A_{\xi}Y=-X+2a_{\alpha}\lvert\alpha\rvert^2 Y$. The
eigenvalues of the matrix
\begin{equation*}
\left(
\begin{array}{cc}
a_{\alpha}\lvert\alpha\rvert^2   &   -1\\
-2\lvert\alpha\rvert^2         &
2a_{\alpha}\lvert\alpha\rvert^2
\end{array}\right)
\end{equation*}
are $\frac{\lvert\alpha\rvert}{2}
\left(3a_{\alpha}\lvert\alpha\rvert
\pm\sqrt{8+a_{\alpha}^2\lvert\alpha\rvert^2}\right)$.

Before continuing we need the following (recall that
$\xi=a_{\alpha}H_{\alpha}+2E_{\alpha}$)

\begin{lemma}\label{thad(1-z)xi}
$\ad((1-\theta)\xi)
\left(\g{n}\ominus\left(\bigoplus_{\gamma\in\Phi}
(\g{g}_{\gamma}\oplus\g{g}_{2\gamma})\right)\right)
\subset\g{n}\ominus\left(\bigoplus_{\gamma\in\Phi}
(\g{g}_{\gamma}\oplus\g{g}_{2\gamma})\right)$.
\end{lemma}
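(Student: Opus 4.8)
The plan is to turn the statement into a purely combinatorial fact about restricted roots. First I would expand $(1-\theta)\xi = 2\bigl(a_\alpha H_\alpha + E_\alpha - \theta E_\alpha\bigr)$, using $\theta H_\alpha = -H_\alpha$ and $\theta E_\alpha\in\g{g}_{-\alpha}$, and record that the subspace in question is a sum of root spaces,
\[
\g{n}\ominus\Bigl(\bigoplus_{\gamma\in\Phi}(\g{g}_\gamma\oplus\g{g}_{2\gamma})\Bigr)=\bigoplus_{\lambda\in\Sigma^+\setminus(\Phi\cup 2\Phi)}\g{g}_\lambda=:\g{w}.
\]
By linearity of $\ad$ it suffices to show that each of $\ad(H_\alpha)$, $\ad(E_\alpha)$, $\ad(\theta E_\alpha)$ maps $\g{w}$ into itself. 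The first is immediate, since $\ad(H_\alpha)$ acts on $\g{g}_\lambda$ as multiplication by $\langle\alpha,\lambda\rangle$, hence preserves every root space. Since $\ad(E_\alpha)\g{g}_\lambda\subset\g{g}_{\lambda+\alpha}$ and $\ad(\theta E_\alpha)\g{g}_\lambda\subset\g{g}_{\lambda-\alpha}$ (with the convention $\g{g}_\mu=0$ whenever $\mu\notin\Sigma\cup\{0\}$), everything reduces to the claim that for $\lambda\in\Sigma^+\setminus(\Phi\cup 2\Phi)$ one has $\lambda\pm\alpha\in\Sigma^+\setminus(\Phi\cup 2\Phi)$ whenever $\lambda\pm\alpha\in\Sigma$. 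Note that $\lambda\ne\alpha$, since $\alpha\in\Phi$, so in particular $\lambda-\alpha\ne 0$.

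The routine part of the claim I would dispatch as follows. A sum of two positive roots is positive, so $\lambda+\alpha\in\Sigma^+$; and $\lambda-\alpha$ cannot be negative, for $\lambda-\alpha=-\mu$ with $\mu\in\Sigma^+$ would express the simple root $\alpha=\lambda+\mu$ as a sum of two positive roots. Next, $\lambda+\alpha$ is a sum of two positive roots, hence not simple, so $\lambda+\alpha\notin\Phi$; and if $\lambda-\alpha=\beta\in\Phi$ then $\alpha+\beta=\lambda\in\Sigma$, which by orthogonality of $\Phi$ forces $\beta=\alpha$ and hence $\lambda=2\alpha\in 2\Phi$, contrary to hypothesis. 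So $\lambda\pm\alpha\in\Sigma^+$ and $\lambda\pm\alpha\notin\Phi$.

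What is left — and this is the delicate point — is to exclude $\lambda\pm\alpha\in 2\Phi$, say $\lambda\pm\alpha=2\gamma$ with $\gamma\in\Phi$. If $\gamma=\alpha$ then $\lambda=\alpha$ (already excluded) or $\lambda=3\alpha$, which is never a restricted root. If $\gamma\ne\alpha$, then $\gamma$ and $\alpha$ are orthogonal simple roots; for $\lambda+\alpha=2\gamma$ the simple-root expansion of $\lambda=2\gamma-\alpha$ would have coefficient $-1$ on $\alpha$, impossible for a positive root, while for $\lambda-\alpha=2\gamma$, i.e. $\lambda=\alpha+2\gamma$, I would apply the Weyl reflection $s_\alpha$: since $\langle\gamma,\alpha\rangle=0$ one computes $s_\alpha(\alpha+2\gamma)=2\gamma-\alpha$, whose simple-root expansion again has coefficient $-1$ on $\alpha$ and $+2$ on $\gamma$, so it is neither positive nor negative — contradicting that $s_\alpha$ permutes $\Sigma$. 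The hard part is precisely this $2\Phi$ analysis: orthogonal roots may sum to a root (witness $B_2$), so orthogonality of $\Phi$ alone does not kill $\lambda=\alpha+2\gamma$, and one genuinely needs the reflection argument (equivalently, the symmetry of the $\alpha$-string through $2\gamma$, or an inspection of the $BC$-type component where doubled roots live). Once this combinatorial claim is established, $\ad((1-\theta)\xi)\g{w}\subset\g{w}$ follows at once.
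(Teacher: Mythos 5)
Your proof is correct, and it takes a genuinely different route from the paper's. The paper works dually: it first observes $\ad((1-\theta)\xi)Y\subset\g{n}$, then exploits the fact that $(1-\theta)\xi\in\g{p}$ makes $\ad((1-\theta)\xi)$ self-adjoint, transfers the operator onto a test vector $Z\in\g{g}_\beta$ or $Z\in\g{g}_{2\beta}$ with $\beta\in\Phi$, and checks that $\langle\ad((1-\theta)\xi)Z,Y\rangle=0$ by noting that each term of $\ad((1-\theta)\xi)Z$ lands either in a root space orthogonal to $Y$ or in a root space asserted to vanish (namely $\g{g}_{\alpha+\beta}$, $\g{g}_{\beta-\alpha}$, $\g{g}_{\alpha+2\beta}$, $\g{g}_{2\beta-\alpha}$ for distinct $\alpha,\beta\in\Phi$, and $\g{g}_{3\alpha}$). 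You instead argue primally, root space by root space, and reduce to the combinatorial claim that $\Sigma^+\setminus(\Phi\cup2\Phi)$ is stable under adding or subtracting $\alpha$. The two arguments are combinatorially equivalent: the paper's assertion $\g{g}_{\alpha+2\beta}=\g{g}_{2\beta-\alpha}=0$ is exactly your claim that $\lambda\pm\alpha\notin2\Phi$, and what the paper leaves as an unproved assertion you justify explicitly, either by the negative-coefficient argument for $2\gamma-\alpha$ or by applying $s_\alpha$ (equivalently, the symmetry of the $\alpha$-string through $2\gamma$) to rule out $\alpha+2\gamma$. Your version buys transparency about which root-system facts are doing the work; the paper's dualization is slicker in that one never has to enumerate where $\ad(\theta E_\alpha)\g{g}_\lambda$ lands for the infinitely many $\lambda$, only where $\ad((1-\theta)\xi)$ sends the two root spaces $\g{g}_\beta,\g{g}_{2\beta}$. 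One small inaccuracy in your remark: the $B_2$ example of orthogonal roots summing to a root involves two non-simple roots; for two orthogonal \emph{simple} roots the sum is never a root (a fact you correctly use in the $\lambda-\alpha\in\Phi$ case), so the genuinely delicate case is only $\lambda\mp\alpha=2\gamma$, as you say.
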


\begin{proof}
Let $Y\in\g{n}\ominus\left(\bigoplus_{\gamma\in\Phi}
(\g{g}_{\gamma}\oplus\g{g}_{2\gamma})\right)$. By the properties of
root systems, it is clear that $\ad((1-\theta)\xi)Y\subset\g{n}$.
For $\beta\in\Phi$ and $Z\in\g{g}_{\beta}$ we calculate
\begin{equation*}
\langle\ad((1-\theta)\xi)Y,Z\rangle
=\langle\ad((1-\theta)\xi)Z,Y\rangle
=2a_{\alpha}\langle\alpha,\beta\rangle \langle Z,Y\rangle
+\langle[E_{\alpha},Z]-[\theta E_{\alpha},Z],Y\rangle.
\end{equation*}
By assumption we have $\langle Z,Y\rangle=0$. Now, if
$\alpha\neq\beta$, $[E_{\alpha},Z]\in\g{g}_{\alpha+\beta}=0$ and
$[\theta E_{\alpha},Z]\in\g{g}_{\beta-\alpha}=0$. If $\alpha=\beta$,
$[E_{\alpha},Z]\in\g{g}_{2\alpha}$ and so
$\langle[E_{\alpha},Z],Y\rangle=0$, and $[\theta E_{\alpha},Z]
\in\g{g}_0$, and so $\langle[\theta E_{\alpha},Z],Y\rangle=0$. In
any case, $\langle\ad((1-\theta)\xi)Y,Z\rangle=0$.

If $Z\in\g{g}_{2\beta}$ a similar calculation shows
$\langle\ad((1-\theta)\xi)Y,Z\rangle =\langle[E_{\alpha},Z],Y\rangle
-\langle[\theta E_{\alpha},Z],Y\rangle$. If $\alpha\neq\beta$,
$[E_{\alpha},Z]\in\g{g}_{\alpha+2\beta}=0$ and $[\theta
E_{\alpha},Z]\in\g{g}_{2\beta-\alpha}=0$. If $\alpha=\beta$,
$[E_{\alpha},Z]\in\g{g}_{3\alpha}=0$ and $[\theta
E_{\alpha},Z]\in\g{g}_{\alpha}$ so $\langle[\theta
E_{\alpha},Z],Y\rangle=0$. In any case,
$\langle\ad((1-\theta)\xi)Y,Z\rangle=0$ and the result is proved.
\end{proof}

Now define $\phi=\Exp(\frac{\pi}{\sqrt{2}\lvert\alpha\rvert}
(1+\theta)E_{\alpha}))$ for $\alpha\in\Phi$. It is well known that
$\phi\in N_K(\g{a})$. Moreover, we have

\begin{lemma}\label{thAdphi}
$\Ad(\phi)\left(\g{n}\ominus\left(\bigoplus_{\gamma\in\Phi}
(\g{g}_{\gamma}\oplus\g{g}_{2\gamma})\right)\right)
\subset\g{n}\ominus\left(\bigoplus_{\gamma\in\Phi}
(\g{g}_{\gamma}\oplus\g{g}_{2\gamma})\right)$.
\end{lemma}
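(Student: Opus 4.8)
The goal is to show that the element $\phi=\Exp\bigl(\tfrac{\pi}{\sqrt{2}\lvert\alpha\rvert}(1+\theta)E_\alpha\bigr)$ preserves the subspace $\g{n}\ominus\bigl(\bigoplus_{\gamma\in\Phi}(\g{g}_\gamma\oplus\g{g}_{2\gamma})\bigr)$ under the adjoint action. Since $\phi=\Exp(W)$ with $W=\tfrac{\pi}{\sqrt{2}\lvert\alpha\rvert}(1+\theta)E_\alpha$, we have $\Ad(\phi)=e^{\ad(W)}$, so it suffices to prove that $\ad(W)$ — equivalently $\ad((1+\theta)E_\alpha)$ — maps the subspace into itself; exponentiating a stable endomorphism then gives the claim. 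The strategy is thus completely parallel to Lemma~\ref{thad(1-z)xi}, only with $(1-\theta)\xi$ replaced by $(1+\theta)E_\alpha$, so I would essentially transcribe that computation.

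\textbf{Key steps.} First I would note $(1+\theta)E_\alpha = E_\alpha + \theta E_\alpha \in \g{g}_\alpha\oplus\g{g}_{-\alpha}$, so that $\ad((1+\theta)E_\alpha)$ shifts a root space $\g{g}_\mu$ into $\g{g}_{\mu+\alpha}\oplus\g{g}_{\mu-\alpha}\oplus(\g{g}_\mu$-part via $\g{g}_0)$. Step two: fix $Y\in\g{n}\ominus\bigl(\bigoplus_{\gamma\in\Phi}(\g{g}_\gamma\oplus\g{g}_{2\gamma})\bigr)$; since $[\theta E_\alpha,\g{n}]$ can drop down into $\g{g}_{-\alpha}$ only from $\g{g}_0$-level terms, and $Y$ lives in strictly positive root spaces not equal to $\g{g}_\gamma$ or $\g{g}_{2\gamma}$, elementary root-string bookkeeping shows $\ad((1+\theta)E_\alpha)Y\subset\g{n}$ (this is the same observation used in Lemma~\ref{thad(1-z)xi}). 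Step three, the content of the lemma: show $\langle\ad((1+\theta)E_\alpha)Y,Z\rangle=0$ for every $Z\in\g{g}_\beta$ and every $Z\in\g{g}_{2\beta}$ with $\beta\in\Phi$. Using skew/self-adjointness of $\ad$ and $\langle\ad((1+\theta)E_\alpha)Y,Z\rangle=\langle Y,\ad((1+\theta)E_\alpha)Z\rangle$ (here $\theta$-invariance of the chosen inner product and $\theta((1+\theta)E_\alpha)=(1+\theta)\theta E_\alpha$, which after applying $\theta$ on both sides reduces to the same bracket up to sign), the problem becomes: $\ad((1+\theta)E_\alpha)Z$ for $Z\in\g{g}_\beta$ lands in $\g{g}_{\beta+\alpha}\oplus\g{g}_{\beta-\alpha}\oplus\g{g}_\beta$-via-$\g{g}_0$; when $\alpha\neq\beta$ both $\alpha+\beta$ and $\beta-\alpha$ fail to be roots (orthogonal simple roots), so only a $\g{g}_0$-type term survives, which is orthogonal to $\g{n}\ni Y$; when $\alpha=\beta$, $[E_\alpha,Z]\in\g{g}_{2\alpha}\subset\bigoplus_\Phi\g{g}_{2\gamma}$ and $[\theta E_\alpha,Z]\in\g{g}_0$, both orthogonal to $Y$. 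The case $Z\in\g{g}_{2\beta}$ is identical: for $\alpha\neq\beta$, $2\beta\pm\alpha$ are not roots; for $\alpha=\beta$, $[E_\alpha,Z]\in\g{g}_{3\alpha}=0$ and $[\theta E_\alpha,Z]\in\g{g}_\alpha\subset\bigoplus_\Phi\g{g}_\gamma$. Finally, since $\ad((1+\theta)E_\alpha)$ preserves the subspace, so does every power, hence so does $e^{\ad(W)}=\Ad(\phi)$.

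\textbf{Main obstacle.} There is no deep obstacle; the only point requiring care is the reduction from $\Ad(\phi)$-invariance to $\ad$-invariance and then the sign/adjointness manipulation turning $\langle\ad((1+\theta)E_\alpha)Y,Z\rangle$ into $\langle Y,\ad((1+\theta)E_\alpha)Z\rangle$ — one must be careful that $(1+\theta)E_\alpha$ is \emph{not} in $\g{p}$ (unlike $(1-\theta)\xi$), so $\ad((1+\theta)E_\alpha)$ is neither symmetric nor skew on all of $\g{g}$; instead one writes it as $\ad(E_\alpha)+\ad(\theta E_\alpha)$ and uses $\langle\ad(E_\alpha)Y,Z\rangle=-\langle Y,\ad(\theta E_\alpha)Z\rangle$ together with $\langle\ad(\theta E_\alpha)Y,Z\rangle=-\langle Y,\ad(E_\alpha)Z\rangle$, which combine to give exactly $\langle\ad((1+\theta)E_\alpha)Y,Z\rangle=-\langle Y,\ad((1+\theta)E_\alpha)Z\rangle$. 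After that sign bookkeeping, everything reduces to the root-combinatorial facts ($\alpha,\beta\in\Phi$ orthogonal simple $\Rightarrow \alpha\pm\beta,2\beta\pm\alpha\notin\Sigma$, $3\alpha\notin\Sigma$) already exploited repeatedly in the paper, so I expect the proof to be two or three lines mirroring Lemma~\ref{thad(1-z)xi} verbatim.
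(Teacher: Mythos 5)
Your proof is correct and takes exactly the paper's route: the paper's own proof is a one-liner, observing that the computation of Lemma~\ref{thad(1-z)xi} applies verbatim with $(1+\theta)\xi$ (equivalently $(1+\theta)E_\alpha$, since $\theta H_\alpha=-H_\alpha$ gives $(1+\theta)\xi=2(1+\theta)E_\alpha$) in place of $(1-\theta)\xi$, and then exponentiating. You have simply expanded that one line into the detailed root-space bookkeeping, which is accurate.

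One small factual slip in your ``main obstacle'' paragraph: you assert that $\ad((1+\theta)E_\alpha)$ is ``neither symmetric nor skew'' on $\g{g}$, and then immediately derive the identity $\langle\ad((1+\theta)E_\alpha)Y,Z\rangle=-\langle Y,\ad((1+\theta)E_\alpha)Z\rangle$ --- which \emph{is} skew-adjointness. In fact $(1+\theta)E_\alpha$ is $\theta$-fixed, hence lies in $\g{k}$, and for any $W\in\g{k}$ the operator $\ad(W)$ is skew with respect to $\langle\,\cdot\,,\,\cdot\,\rangle=-B(\,\cdot\,,\theta\,\cdot\,)$, since $\ad(W)^*=-\ad(\theta W)=-\ad(W)$. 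So the ``obstacle'' you flagged is not really an obstacle at all: $(1+\theta)E_\alpha\in\g{k}$ plays the role that $(1-\theta)\xi\in\g{p}$ (self-adjoint $\ad$) plays in Lemma~\ref{thad(1-z)xi}. This error is self-cancelling and does not affect the validity of your argument, but it is worth recognizing the conceptual point: $\theta$-invariance (resp.\ $\theta$-anti-invariance) of the argument controls skew-adjointness (resp.\ self-adjointness) of its $\ad$.
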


\begin{proof}
The argument given in the proof of the previous lemma can be applied
here to show that $\ad((1+\theta)\xi)
\left(\g{n}\ominus\left(\bigoplus_{\gamma\in\Phi}
(\g{g}_{\gamma}\oplus\g{g}_{2\gamma})\right)\right)
\subset\g{n}\ominus\left(\bigoplus_{\gamma\in\Phi}
(\g{g}_{\gamma}\oplus\g{g}_{2\gamma})\right)$. Since
$\Ad(\phi)=\sum_{m=1}^\infty\frac{1}{m!}
(\frac{\pi}{\sqrt{2}\lvert\alpha\rvert})^m \ad((1+\theta)\xi)^m$,
the result follows.
\end{proof}

Finally, let $X\in\g{g}_\lambda$ with
$\lambda\in\Sigma^+\setminus(\Phi\cup2\Phi)$. It follows from Lemma \ref{thad(1-z)xi} that $A_\xi$ leaves $\g{n}\ominus(\oplus_{\gamma\in\Phi}(\g{g}_\gamma\oplus\g{g}_{2\gamma}))$ invariant. Thus, if $Z\in\g{a}\oplus(\oplus_{\gamma\in\Phi}(\g{g}_\gamma\oplus\g{g}_{2\gamma}))$ then $\langle\Ad(\phi)A_\xi X,Z\rangle_{AN}=\langle A_\xi\Ad(\phi) X,Z\rangle_{AN}=0$. Assume $Z\in\g{n}\ominus(\oplus_{\gamma\in\Phi}
(\g{g}_{\gamma}\oplus\g{g}_{2\gamma}))$. Using the previous two
lemmas, $\Ad(\phi)H_{\alpha}=-H_{\alpha}$ and
$\Ad(\phi)((1-\theta)E_{\alpha})=-(1-\theta)E_{\alpha}$ we get
\begin{align*}
\langle\Ad(\phi)A_\xi X,Z\rangle_{AN}
&= \frac{1}{2}\langle\Ad(\phi)A_\xi X,Z\rangle
 = \frac{1}{2}\langle A_\xi X,\Ad(\phi^{-1})Z\rangle\\
&= \langle A_\xi X,\Ad(\phi^{-1})Z\rangle_{AN}
 = \frac{1}{4}\langle[(1-\theta)\xi,X],\Ad(\phi^{-1})Z\rangle\\
&= \frac{1}{2}\langle[\Ad(\phi)(a_\alpha H_\alpha+(1-\theta)E_\alpha),\Ad(\phi)X],Z\rangle\\
&= -\frac{1}{4}\langle[(1-\theta)\xi,\Ad(\phi)X],Z\rangle
 = -\langle A_\xi\Ad(\phi)X,Z\rangle_{AN}.
\end{align*}
In particular this implies $\tr
A_{\xi}=a_{\alpha}\lvert\alpha\rvert^2(\dim \g{g}_{\alpha}+2\dim
\g{g}_{2\alpha})$.

We summarize all these calculations in the following

\begin{proposition}
Let ${S_{\Phi,V,a}}$ be the connected subgroup of $G$ whose Lie
algebra is $\g{s}_{\Phi,V,a}$. Let us write
$X_{\alpha}=\frac{1}{\lvert\alpha\rvert^2}H_{\alpha}
-a_{\alpha}E_{\alpha}$ and denote by $A_\xi$ the shape operator of
${S_{\Phi,V,a}}\cdot o$ with respect to a normal vector
$\xi\in\nu_o(S_{\Phi,V,a}\cdot o)$. We have:
\begin{enumerate}
\item If $\xi\in\g{a}_\Phi\ominus V$, then the restriction of $A_\xi$
to the linear subspace $V\oplus\left({\bigoplus}_{\gamma\in\Phi}\R
X_{\gamma}\right)\oplus
\left({\bigoplus}_{\gamma\in\Phi}(\g{g}_{\gamma}\ominus\R
E_{\gamma})\right)\oplus
\left({\bigoplus}_{\gamma\in\Phi}\g{g}_{2\gamma}\right)$ is zero and
the restriction of $A_\xi$ to $\g{g}_\lambda$ for
$\lambda\in\Sigma^+\setminus(\Phi\cup2\Phi)$ is
$\lambda(\xi)1_{\g{g}_\lambda}$.

\item If $\xi=a_\alpha H_\alpha+2E_\alpha$ with $\langle E_\alpha,E_\alpha\rangle=1$ we have $\langle\xi,\xi\rangle_{AN}=2+a_\alpha^2\lvert\alpha\rvert^2$ and:
\begin{enumerate}
\item The restriction of $A_{\xi}$ to 
$V\oplus\left({\bigoplus}_{\gamma\in\Phi\setminus\{\alpha\}}\R
X_{\gamma}\right)
\oplus\left({\bigoplus}_{\gamma\in\Phi\setminus\{\alpha\}}
(\g{g}_{\gamma}\ominus\R E_{\gamma})\right)\oplus
\left(\bigoplus_{\gamma\in\Phi\setminus\{\alpha\}}
\g{g}_{2\gamma}\right)$ is zero.

\item $A_{\xi}X_{\alpha}=a_{\alpha}\lvert\alpha\rvert^2
X_{\alpha}$.

\item We can decompose $\g{g}_{\alpha}$ as
$\g{g}_{\alpha}=\mathop{\rm Ker}(\ad(E_{\alpha}){\vert
\g{g}_{\alpha}})\oplus\ad(\theta E_{\alpha})(\g{g}_{2\alpha})$. The
restriction of $A_{\xi}$ to $\mathop{\rm Ker}(\ad(E_{\alpha}){\vert
\g{g}_{\alpha}})\ominus\R E_{\alpha}$ is
$a_{\alpha}\lvert\alpha\rvert^21_{\mathop{\rm
Ker}(\ad(E_{\alpha}){\vert \g{g}_{\alpha}})\ominus\R E_{\alpha}}$
and the dimension of $\mathop{\rm Ker}(\ad(E_{\alpha}){\vert
\g{g}_{\alpha}})\ominus\R E_{\alpha}$ is $\dim \g{g}_{\alpha}-\dim
\g{g}_{2\alpha}-1$. The subspace $\ad(\theta
E_{\alpha})(\g{g}_{2\alpha})\oplus\g{g}_{2\alpha}$ is invariant
under $A_{\xi}$ and $A_{\xi}$ acts with eigenvalues
\begin{equation*}
\frac{\lvert\alpha\rvert}{2}
\left(3a_{\alpha}\lvert\alpha\rvert
\pm\sqrt{8+a_{\alpha}^2\lvert\alpha\rvert^2}\right)
\end{equation*}
whose multiplicities are $\dim \g{g}_{2\alpha}$.

\item If $\phi=\Exp\left(\frac{\pi}{\sqrt{2}\lvert\alpha\rvert}
(1+\theta)E_{\alpha})\right)\in N_K(\g{a})$ then $\g{n}\ominus
\left(\bigoplus_{\gamma\in\Phi}
(\g{g}_{\gamma}\oplus\g{g}_{2\gamma})\right)$ is invariant by
$A_{\xi}$ and $\Ad(\phi)$, and $A_{\xi}\Ad(\phi)=-\Ad(\phi)A_{\xi}$.
In particular, if $A_{\xi}X=c X$, then
$A_{\xi}\Ad(\phi)X=-c\Ad(\phi)X$.
\end{enumerate}
\end{enumerate}
\end{proposition}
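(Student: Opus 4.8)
The plan is to assemble the bracket computations carried out in the paragraphs immediately preceding the statement, organised according to the type of normal vector. The starting point is Proposition~\ref{thOrbits}, which reduces everything to the orbit $S_{\Phi,V,a}\cdot o$, together with Proposition~\ref{thCongruency}, which gives $(\g{s}_{\Phi,V,a})_{\g{p}}^\perp=(\g{a}_\Phi\ominus V)\oplus\bigl(\bigoplus_{\alpha\in\Phi}\R(a_\alpha H_\alpha+(1-\theta)E_\alpha)\bigr)$. Passing to the $AN$-model and matching $\langle\,\cdot\,,\,\cdot\,\rangle$ with $\langle\,\cdot\,,\,\cdot\,\rangle_{AN}$, one sees that $\nu_o(S_{\Phi,V,a}\cdot o)$ is spanned by $\g{a}_\Phi\ominus V$ together with the vectors $a_\alpha H_\alpha+2E_\alpha$, $\alpha\in\Phi$, the extra factor $2$ on $E_\alpha$ coming from the $\tfrac12$ in the definition of $\langle\,\cdot\,,\,\cdot\,\rangle_{AN}$. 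Since $A_\xi$ depends linearly on $\xi$, it suffices to treat these two families separately, which is precisely the division into statements (1) and (2). The only computational tool is the formula $4\langle A_\xi X,Y\rangle_{AN}=\langle[(1-\theta)\xi,X],Y\rangle$ recorded above the proposition, which follows from the Koszul formula of Section~\ref{secPreliminaries}; in addition one uses $(1-\theta)\xi=2\xi$ for $\xi\in\g{a}$ and $(1-\theta)(a_\alpha H_\alpha+2E_\alpha)=2(a_\alpha H_\alpha+E_\alpha-\theta E_\alpha)$.

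For statement (1) I would take $\xi\in\g{a}_\Phi\ominus V$ and evaluate $[\xi,\,\cdot\,]$ on each summand of the explicit decomposition of $\g{s}_{\Phi,V,a}$ displayed above the proposition. The bracket vanishes on $V$ because $\g{a}$ is abelian, and on $\R X_\gamma$, on $\g{g}_\gamma\ominus\R E_\gamma$ and on $\g{g}_{2\gamma}$ for $\gamma\in\Phi$ because $\gamma(\xi)=0$; on $\g{g}_\lambda$ with $\lambda\in\Sigma^+\setminus(\Phi\cup 2\Phi)$ it equals $\lambda(\xi)X$, so $A_\xi$ acts there as $\lambda(\xi)\,1_{\g{g}_\lambda}$. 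Adding the eigenvalues with multiplicities, and using once more that $\lambda(\xi)=0$ for $\lambda\in\Phi\cup 2\Phi$, gives $\tr A_\xi=2\delta(\xi)$.

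For statement (2) I would take $\xi=a_\alpha H_\alpha+2E_\alpha$ with $\langle E_\alpha,E_\alpha\rangle=1$; then $\langle\xi,\xi\rangle_{AN}=2+a_\alpha^2\lvert\alpha\rvert^2$, and one runs through the same decomposition. Orthogonality of distinct simple roots in $\Phi$ makes the contributions of $V$ and of all summands attached to $\beta\in\Phi$ with $\beta\neq\alpha$ vanish, which is (2a). The identities $A_\xi X_\alpha=a_\alpha\lvert\alpha\rvert^2 X_\alpha$ and the description of $A_\xi$ on $(\g{g}_\alpha\ominus\R E_\alpha)\oplus\g{g}_{2\alpha}$ are exactly the bracket computations already performed: from $[E_\alpha,[\theta E_\alpha,Y]]=-2\lvert\alpha\rvert^2Y$ for $Y\in\g{g}_{2\alpha}$ one gets the splitting $\g{g}_\alpha=\mathrm{Ker}(\ad(E_\alpha)\vert_{\g{g}_\alpha})\oplus\ad(\theta E_\alpha)(\g{g}_{2\alpha})$, on the first piece minus $\R E_\alpha$ the operator is the scalar $a_\alpha\lvert\alpha\rvert^2$ (dimension $\dim\g{g}_\alpha-\dim\g{g}_{2\alpha}-1$), and on $\ad(\theta E_\alpha)(\g{g}_{2\alpha})\oplus\g{g}_{2\alpha}$ it is given by the displayed $2\times 2$ matrix, whose eigenvalues $\tfrac{\lvert\alpha\rvert}{2}(3a_\alpha\lvert\alpha\rvert\pm\sqrt{8+a_\alpha^2\lvert\alpha\rvert^2})$ each occur with multiplicity $\dim\g{g}_{2\alpha}$. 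This gives (2b) and (2c).

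The only point requiring an idea beyond direct bracket manipulation is (2d), the behaviour of $A_\xi$ on $\g{n}\ominus\bigl(\bigoplus_{\gamma\in\Phi}(\g{g}_\gamma\oplus\g{g}_{2\gamma})\bigr)$, where $A_\xi$ fails to be diagonal. Here I would invoke Lemma~\ref{thad(1-z)xi} and Lemma~\ref{thAdphi}: they show that this subspace is invariant under both $A_\xi$ and $\Ad(\phi)$, where $\phi=\Exp\bigl(\frac{\pi}{\sqrt{2}\lvert\alpha\rvert}(1+\theta)E_\alpha\bigr)\in N_K(\g{a})$. Combining $\Ad(\phi)H_\alpha=-H_\alpha$ and $\Ad(\phi)((1-\theta)E_\alpha)=-(1-\theta)E_\alpha$ — so that $\Ad(\phi)$ reverses the sign of the relevant part of $(1-\theta)\xi$ — with the shape operator formula and the fact that $\Ad(\phi)$ is a $\langle\,\cdot\,,\,\cdot\,\rangle$-isometry preserving $\g{n}$ (hence preserving $\langle\,\cdot\,,\,\cdot\,\rangle_{AN}$ on $\g{n}$), one obtains $A_\xi\Ad(\phi)=-\Ad(\phi)A_\xi$ on this subspace; in particular the nonzero eigenvalues of $A_\xi$ there come in pairs $\pm c$ and contribute nothing to the trace. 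Collecting all contributions — $a_\alpha\lvert\alpha\rvert^2$ from $X_\alpha$, $a_\alpha\lvert\alpha\rvert^2(\dim\g{g}_\alpha-\dim\g{g}_{2\alpha}-1)$ from the scalar part, $3a_\alpha\lvert\alpha\rvert^2\dim\g{g}_{2\alpha}$ from the $2\times 2$ blocks, and $0$ from the rest — yields $\tr A_\xi=a_\alpha\lvert\alpha\rvert^2(\dim\g{g}_\alpha+2\dim\g{g}_{2\alpha})$ and finishes (2). The main obstacle is not conceptual but organisational: keeping the two inner products and their factors of $2$ consistent at every step, and establishing the two invariance lemmas underlying (2d) cleanly rather than by exhaustive case-checking.
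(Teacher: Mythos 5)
Your proposal follows the paper's own computations step by step: the Koszul-derived formula $4\langle A_\xi X,Y\rangle_{AN}=\langle[(1-\theta)\xi,X],Y\rangle$, the reduction to the orbit through $o$ via Proposition~\ref{thOrbits}, the identification of the normal space with the factor $2$ on $E_\alpha$ explained correctly, the case split according to the type of normal vector, the $2\times 2$ block on $\ad(\theta E_\alpha)(\g{g}_{2\alpha})\oplus\g{g}_{2\alpha}$, and Lemmas~\ref{thad(1-z)xi} and~\ref{thAdphi} for the anticommutation statement~(2d). The only slip is the parenthetical claim that $\Ad(\phi)$ preserves $\g{n}$ — it does not, since the reflection $s_\alpha$ sends $\g{g}_\alpha\oplus\g{g}_{2\alpha}$ to $\g{g}_{-\alpha}\oplus\g{g}_{-2\alpha}$ — but this is harmless because you simultaneously invoke Lemma~\ref{thAdphi}, which provides the correct invariant subspace $\g{n}\ominus\bigoplus_{\gamma\in\Phi}(\g{g}_\gamma\oplus\g{g}_{2\gamma})$ on which $\langle\,\cdot\,,\,\cdot\,\rangle_{AN}=\tfrac12\langle\,\cdot\,,\,\cdot\,\rangle$ and $\Ad(\phi)\in\Ad(K)$ is a $\langle\,\cdot\,,\,\cdot\,\rangle$-isometry.
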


The mean curvature vector $\mathcal{H}$, defined with respect to an
orthonormal basis $\{e_i\}$ as $\mathcal{H}=\sum_i{I\!I}(e_i,e_i)$,
is in our case
\begin{equation*}
\mathcal{H}=2\pi_{\g{a}_\Phi\ominus V}(H_\delta)
+\sum_{\alpha\in\Phi}
\frac{a_{\alpha}\lvert\alpha\rvert^2}{2+a_\alpha^2\lvert\alpha\rvert^2}(\dim
\g{g}_{\alpha}+2\dim
\g{g}_{2\alpha})(a_{\alpha}H_{\alpha}+2E_{\alpha}),
\end{equation*}
where as usual $\pi_{\g{a}_\Phi\ominus V}$ denotes orthogonal
projection onto $\g{a}_\Phi\ominus V$.

We recall that the horocycle foliation is induced by the group $N$,
the action of the nilpotent part of some Iwasawa decomposition. In
this case we have:

\begin{corollary}
The orbits of the horocycle foliation are isometrically congruent to
each other and their shape operator with respect to a vector
$\xi\in\g{a}$ is given by
$A_\xi=\ad(\xi)_{\mid\g{n}}=\bigoplus_{\lambda\in\Sigma^+}
\lambda(\xi)1_{\g{g}_\lambda}$.
\end{corollary}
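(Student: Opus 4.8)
The plan is to derive this as the degenerate case $\Phi=\emptyset$, $V=\{0\}$ of the results already obtained in this section. Recall from the introduction that the horocycle foliation is $\mathcal{F}_{\emptyset,\{0\}}$, induced by the group $N=S_{\emptyset,\{0\}}$, with associated subalgebra $\g{s}_{\emptyset,\{0\}}=\g{n}$. For $\Phi=\emptyset$ one has $\g{a}^\emptyset=\{0\}$, $\ell_\emptyset=\{0\}$ and $\g{a}_\emptyset=\g{a}$, so $\g{s}_{\emptyset,\{0\},a}=\g{n}$ for the unique map $a:\emptyset\to\R$, and by Proposition~\ref{thCongruency} the normal space satisfies $(\g{s}_{\emptyset,\{0\}})_{\g{p}}^\perp=\g{a}$. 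Under the identification $M=AN$ the orbit $N\cdot o$ is the subgroup $N$ itself, with $T_o(N\cdot o)=\g{n}$ and, with respect to $\langle\,\cdot\,,\,\cdot\,\rangle_{AN}$, normal space $\nu_o(N\cdot o)=\g{a}$.

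For the first assertion I would apply Proposition~\ref{thOrbits} with $\Phi=\emptyset$ and $V=\{0\}$: it gives that $N\cdot p=S_{\emptyset,\{0\}}\cdot p$ is isometrically congruent to $S_{\emptyset,\{0\},a}\cdot o$ for some $a:\emptyset\to\R$. Since there is only one such map and $\g{s}_{\emptyset,\{0\},a}=\g{n}$, we have $S_{\emptyset,\{0\},a}=N$, so $N\cdot p$ is isometrically congruent to $N\cdot o$ for every $p\in M$; hence any two orbits of the horocycle foliation are isometrically congruent.

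For the shape operator I would specialize the Proposition above computing the shape operator of $S_{\Phi,V,a}\cdot o$ to $\Phi=\emptyset$, $V=\{0\}$ (so $2\Phi=\emptyset$ and the whole tangent space is $\g{s}_{\emptyset,\{0\},a}=\g{n}=\bigoplus_{\lambda\in\Sigma^+}\g{g}_\lambda$). A normal vector is $\xi\in\g{a}=\g{a}_\emptyset\ominus\{0\}$, and part~(1) of that Proposition yields that the restriction of $A_\xi$ to $\g{g}_\lambda$ is $\lambda(\xi)\,1_{\g{g}_\lambda}$ for every $\lambda\in\Sigma^+$, i.e.\ $A_\xi=\bigoplus_{\lambda\in\Sigma^+}\lambda(\xi)\,1_{\g{g}_\lambda}$. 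Because $[\xi,X]=\lambda(\xi)X$ for $X\in\g{g}_\lambda$ and $\xi\in\g{a}$, this is exactly $\ad(\xi)_{\mid\g{n}}$, which is the claimed formula. One may also obtain this directly from $4\langle A_\xi X,Y\rangle_{AN}=\langle[(1-\theta)\xi,X],Y\rangle$, using $\theta\xi=-\xi$ for $\xi\in\g{a}$ and $\langle\,\cdot\,,\,\cdot\,\rangle=2\langle\,\cdot\,,\,\cdot\,\rangle_{AN}$ on $\g{n}\times\g{n}$.

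There is essentially no difficulty beyond the bookkeeping of the degenerate case: one must check that for $\Phi=\emptyset$ all the $\alpha$-indexed summands disappear, so that $\g{s}_{\emptyset,\{0\},a}$ collapses to $\g{n}$ and the normal space collapses to $\g{a}$, and that the identification of $\nu_o(N\cdot o)$ with a subspace of $\g{a}$ used in the statement of the corollary is the one used in the preceding proposition.
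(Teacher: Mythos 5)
Your proposal is correct and is exactly the intended derivation: the paper presents this as a corollary of the immediately preceding proposition on the shape operator of $S_{\Phi,V,a}\cdot o$, and the only content is the specialization $\Phi=\emptyset$, $V=\{0\}$, $\g{s}_{\emptyset,\{0\},a}=\g{n}$, $\nu_o(N\cdot o)=\g{a}$, together with Proposition~\ref{thOrbits} for the congruence of orbits. The bookkeeping you carry out (the $\alpha$-indexed summands vanish, $\Sigma^+\setminus(\Phi\cup 2\Phi)=\Sigma^+$, and part~(1) gives $\lambda(\xi)1_{\g{g}_\lambda}$ on each root space, which is $\ad(\xi)_{\mid\g{n}}$) is precisely what is needed, and your alternative direct check via $4\langle A_\xi X,Y\rangle_{AN}=\langle[(1-\theta)\xi,X],Y\rangle$ is a valid consistency check.
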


\begin{remark}
Let $M$ be a symmetric space of rank one. Assume that
$\Lambda=\{\alpha\}$. There are up to congruency two possible
hyperpolar homogeneous foliations, namely, the horosphere foliation,
which is the same as the horocycle foliation in this case, and the
solvable foliation, where $\Phi=\{\alpha\}$. In both cases the
foliation is by homogeneous hypersurfaces.

All the leaves of the horosphere foliation are congruent. The
principal curvatures of horospheres are $\lvert\alpha\rvert$ and
$2\lvert\alpha\rvert$ with multiplicities $\dim\g{g}_\alpha$ and
$\dim\g{g}_{2\alpha}$ respectively.

Now we consider the solvable foliation, whose leaves are the orbits
of the group $S_{\{\alpha\},\{0\}}$. If $\gamma$ is a geodesic
parametrized by unit speed such $\gamma(0)=o$ and $\dot\gamma(0)$ is
orthogonal to $S_{\{\alpha\},\{0\}}\cdot o$, then the path of
$\gamma$ is a section of this hyperpolar foliation. The principal
curvatures of the orbit $S_{\{\alpha\},\{0\}}\cdot\gamma(r)$ are
\begin{equation*}
-\lvert\alpha\rvert\tanh(\lvert\alpha\rvert r),\quad
-\frac{3\lvert\alpha\rvert}{2}\tanh(\lvert\alpha\rvert r)
\pm\frac{\lvert\alpha\rvert}{2}\sqrt{4-3\tanh(\lvert\alpha\rvert r)},
\end{equation*}
with multiplicities $\dim\g{g}_{\alpha}$, $\dim\g{g}_{2\alpha}$ and
$\dim\g{g}_{2\alpha}$, respectively.

For cohomogeneity one homogeneous foliations see \cite{BT03}.
\end{remark}


\end{document}